\documentclass[a4paper,reqno,10pt]{amsart}

\usepackage{amsfonts,amssymb,amsthm,amsmath,amscd,mathtools,mathrsfs}
\usepackage{graphicx,color,xcolor,cite,leftidx,comment,enumitem}
\usepackage{fancybox,multirow,makecell,mathdots,caption,subcaption}
\usepackage{extarrows,tasks,thmtools}
\usepackage[all,2cell,cmtip]{xy}
\xyoption{curve}
\usepackage{tikz,tikz-cd,float}
\usetikzlibrary{arrows,decorations.pathmorphing,decorations.markings,backgrounds,positioning,fit,petri,patterns,matrix,shapes.geometric}
\usepackage[hypertexnames=false,draft]{hyperref}
\usepackage[nameinlink]{cleveref}

\let\shorttwoheadrightarrow\twoheadrightarrow
\renewcommand{\hookrightarrow}{\lhook\joinrel\longrightarrow}
\renewcommand{\twoheadrightarrow}{\relbar\joinrel\relbar\joinrel\shorttwoheadrightarrow}
\makeatletter
\newcommand{\xdashrightarrow}[2][]{\ext@arrow 0359\tofill@@{#1}{#2}}
\newcommand{\shortdash}{
  \mathord{\rule[0.54ex]{0.75ex}{0.37pt}}
}
\def\tofill@@{\arrowfill@@\relax\shortdash\dashrightarrow}
\def\arrowfill@@#1#2#3#4{
$\m@th\thickmuskip0mu\medmuskip\thickmuskip\thinmuskip\thickmuskip
   \relax#4#1
   \xleaders\hbox{$#4\mkern1.6mu#2\mkern1.6mu$}\hfill
   #3$
}
\def\Hom{\operatorname{Hom}}
\declaretheoremstyle[headfont=\bfseries,bodyfont=\itshape]{plainstyle}
\declaretheoremstyle[headfont=\bfseries,bodyfont=\normalfont]{defstyle}
\declaretheoremstyle[headfont=\bfseries,bodyfont=\normalfont]{remarkstyle}
\declaretheorem[style=plainstyle,numberwithin=section,name=Theorem]{theorem}
\declaretheorem[style=plainstyle,sibling=theorem,name=Lemma]{lemma}
\declaretheorem[style=plainstyle,sibling=theorem,name=Proposition]{proposition}
\declaretheorem[style=plainstyle,sibling=theorem,name=Definition-Proposition]{definition-proposition}
\declaretheorem[style=plainstyle,sibling=theorem,name=Corollary]{corollary}
\declaretheorem[style=defstyle,sibling=theorem,name=Definition]{definition}
\declaretheorem[style=defstyle,sibling=theorem,name=Example]{example}
\declaretheorem[style=remarkstyle,sibling=theorem,name=Remark]{remark}

\declaretheorem[style=defstyle,sibling=theorem,name=Fact]{fact}
\declaretheorem[style=defstyle,sibling=theorem,name=Definition-Lemma]{defnlem}
\crefname{equation}{}{}
\Crefname{equation}{}{}
\numberwithin{equation}{section}
\title[Exceptional model structures]{Exceptional model structures and \\ the induced extriangulated categories}
\author{Chencheng Zhang, \ \ Pu Zhang$^\ast$}
\dedicatory{Dedicated to the memory of Idun Reiten}
\thanks{$^*$ Corresponding author}
\thanks{zhangchencheng@sjtu.edu.cn \ \ pzhang@sjtu.edu.cn}
\thanks{Supported by National Natural Science Foundation of China with Grant No. 12131015.}
\address{Chencheng Zhang, School of Mathematical Sciences, Shanghai Jiao Tong University, Shanghai 200240, PR China}
\address{Pu Zhang, School of Mathematical Sciences, Shanghai Jiao Tong University, Shanghai 200240, PR China}

\begin{document}
\begin{abstract} A Hovey triple $(\mathcal{C}, \mathcal{F}, \mathcal{W})$ is {\it exceptional}, if
$(\mathcal{C} \cap \mathcal{F}, \ \mathcal{C} \cap \mathcal{F} \cap \mathcal{W})$ is not a Frobenius pair. One has a disjoint union
$\{\text{Hovey triple}\} = \{\text{Hereditary Hovey triple}\} \ \dot\bigcup \ \{\text{Exceptional Hovey triple}\}$ \ $\dot\bigcup \ \{\text{non-hereditary and non-exceptional Hovey triple}\}.$
Exceptional Hovey triples appear widely. For a selfinjective Nakayama algebra $A= kC_n/J^t$,
$A\mbox{-}{\rm mod}$ admits exceptional Hovey triples if and only if $\gcd (n, t) \ge 2$ and $t \geq 3$. Their homotopy categories reveal new
phenomena. Nakaoka-Palu's Theorem implies that there is a triangulation on $\frac{\mathcal C\cap\mathcal F}{\mathcal C\cap\mathcal F\cap\mathcal W}.$
It is proved that a Hovey triple in a weakly idempotent complete extriangulated category $(\mathcal A, \mathbb E, \mathfrak s)$ is exceptional if and only if
the induced extriangulated structure $\bigl(\frac{\mathcal C\cap\mathcal F}{\mathcal C\cap\mathcal F\cap\mathcal W}, \ \overline{\mathbb E}, \ \overline{\mathfrak s}\bigr)$ is not {\it canonically triangulated}.
Between this extriangulated structure and the one arising from the triangulation, the intermediate extriangulated structures are in one-to-one correspondence with Serre subcategories of  $\mathrm{fp}((\frac{\mathcal P(\mathcal C\cap\mathcal F)}{\mathcal C\cap\mathcal F\cap\mathcal W})^{\mathrm{op}}, \mathrm{Ab})$.

  \vskip5pt
  {\it Keywords and phrases.} Frobenius pair, exceptional Hovey triple, cotorsion pair, Nakayama algebra, extriangulated category.
  \vskip5pt
  2020 Mathematics Subject Classification. 18N40,  16G20, 16G70, 18G80, 18G65, 18E35.
\end{abstract}
\maketitle
\section{\bf Introduction}

\vskip5pt

A fundamental theorem claims that the stable category $\mathcal A/\mathcal P(\mathcal A)$ of a Frobenius category $\mathcal A$ is triangulated (\cite{Hel}, \cite{Hap87}), where $\mathcal P(\mathcal A)$ is the
full subcategory of projective-injective object. By the theory of model structures (\cite{Q1}, \cite{Hov02}, \cite{NP19}, \cite{Gil25}),  one realizes  that the additive quotient
$\mathcal A/\mathcal B$ of an additive category $\mathcal A$ is possibly also triangulated, even if $\mathcal A$ is not Frobenius, or $\mathcal A$ is Frobenius but $\mathcal B \ne \mathcal P(\mathcal A)$.
We will investigated the phenomena by the so-called {\it exceptional exact model structures}.

\subsection{Exceptional exact model structures} $\,$

\vskip5pt

Exact model structures are one of the most important model structures: they are uniquely determined by Hovey triples (\cite{Hov02}), and their homotopy categories are triangulated (\cite{NP19}).
Let $(\mathcal C,\mathcal F,\mathcal W)$ be a Hovey triple in a weakly idempotent complete extriangulated category $(\mathcal A,\mathbb E,\mathfrak s)$.
Then the homotopy category $\mathsf{Ho}(\mathcal{A})$ is equivalent to the additive quotient $\frac{\mathcal{C} \cap \mathcal{F}}{\mathcal{C} \cap \mathcal{F} \cap \mathcal{W}}$ as an additive category.
If the Hovey triple is hereditary, then $(\mathcal{C} \cap \mathcal{F}, \ \mathcal{C} \cap \mathcal{F} \cap \mathcal{W})$ is a Frobenius pair, i.e.,
$\mathcal{C} \cap \mathcal{F}$ is a Frobenius  extriangulated category, and $\mathcal{C} \cap \mathcal{F} \cap \mathcal{W}$ is precisely the full subcategory of projective-injective objects, and $\mathsf{Ho}(\mathcal{A})\cong \frac{\mathcal{C} \cap \mathcal{F}}{\mathcal{C} \cap \mathcal{F} \cap \mathcal{W}}$ is triangulated.
However, as H. Nakaoka and Y. Palu \cite[Theorem 6.20]{NP19} shown, for any exact model structure, $\mathsf{Ho}(\mathcal{A})$ is always triangulated, even if the Hovey triple is not hereditary.

\vskip5pt

Recently, J. Gillespie \cite{Gil26} gives a new method to construct Hovey triples. With this approach, non-hereditary Hovey triples could be obtained by combining a non-hereditary cotorsion pair with an injective cotorsion pair.
However, for the example of non-hereditary Hovey triple given in \cite[Example 4.2]{Gil26}, the pair $(\mathcal{C} \cap \mathcal{F}, \ \mathcal{C} \cap \mathcal{F} \cap \mathcal{W})$ is still a Frobenius pair.

\vskip5pt

It is natural to ask whether there exits {\it exceptional exact model structures}.

\vskip5pt

\noindent {\bf Definition} {\rm (Definition \ref{exceptional})} \ {\it $(1)$ \ A pair $(\mathcal A, \mathcal P)$ is {\it a Frobenius pair}, if $\mathcal A$ is a Frobenius  extriangulated category, and $\mathcal P = \mathcal P(\mathcal A),$  where $\mathcal{P}(\mathcal{A})$ is the full subcategory of projective-injective objects.

\vskip5pt

$(2)$ \ A \textup{Hovey} triple $(\mathcal{C}, \mathcal{F}, \mathcal{W})$ in an extriangulated category $(\mathcal A,\mathbb E,\mathfrak s)$ is \emph{exceptional},
if the pair $(\mathcal{C} \cap \mathcal{F}, \ \mathcal{C} \cap \mathcal{F} \cap \mathcal{W})$ is not a Frobenius pair. More explicitly,

$\quad \textup{(i)}$ \ An exceptional  \textup{Hovey} triple $(\mathcal{C}, \mathcal{F}, \mathcal{W})$ is of \emph{Type} {\rm I},  if $\mathcal{C} \cap \mathcal{F}$ is a Frobenius extriangulated category,
but $\mathcal P(\mathcal{C} \cap \mathcal{F})\ne \mathcal{C} \cap \mathcal{F} \cap \mathcal{W}$ $($or equivalently, $\mathcal{C} \cap \mathcal{F} \cap \mathcal{W} \subsetneqq \mathcal P(\mathcal{C} \cap \mathcal{F}))$.

$\quad \textup{(ii)}$ \ An exceptional  \textup{Hovey} triple $(\mathcal{C}, \mathcal{F}, \mathcal{W})$ is of \emph{Type} {\rm II},  if $\mathcal{C} \cap \mathcal{F}$ is not a Frobenius extriangulated category.

\vskip5pt

$(3)$ \ An exact model structure on an extriangulated category is \emph{an exceptional exact model structure}, or simply, \emph{an exceptional model structure}, provided that the corresponding \textup{Hovey} triple under the \textup{Hovey} correspondence is exceptional.}

\vskip5pt

It is clear that an exceptional model structure is not hereditary; however, the converse is not true:
Gillespie's example \cite[Example 4.2]{Gil26} gives a non-hereditary and non-exceptional Hovey triple.
One has the partition

$$\{\text{Hovey triple}\} = \left \{\substack{\text{Hereditary} \\ \text {Hovey triple}}\right\} \ \dot\bigcup \ \left \{\substack{\text{Exceptional} \\ \text {Hovey triple}}\right\}
  \ \dot\bigcup \ \left \{\substack{\text{non-hereditary and} \\ \text{non-exceptional} \\ \text {Hovey triple}}\right\}.$$

\vskip5pt
\noindent
As we will see, the two kinds of exceptional model structures appear widely.

\subsection{\bf Exceptional \textup{Hovey} triples by selfinjective Nakayama algebras}$\,$

\vskip5pt

To find out exceptional \textup{Hovey} triples, consider the category $A\mbox{-}{\rm mod}$  of finitely generated left $A$-modules, where $A$ is a Nakayama algebra without simple projective module.
Thus $A\cong k C_n/I$, where $C_n \ (n\ge 1)$ is the quiver
\begin{equation*}
  \xymatrix{
  1 \ar@{->}[r]^{a_2} & 2 \ar@{->}[r]^{a_3} & 3 \ar@{->}[r]^{a_4} & \cdots \ar@{->}[r]^{a_n} & n \ar@/^1pc/@{->}[llll]^{a_1}
  }
\end{equation*}

\vskip10pt
\noindent $I$ is an ideal of the path algebra $kC_n$ generated by
some paths with $J^m \subseteq I \subseteq J^2$, and $J$ is the ideal generated by arrows. Let $S_1, \dots, S_n$ be the pairwise non-isomorphic simple $A$-modules,
$P_i$ (respectively, $I_i$) the indecomposable projective (respectively, injective) module with top (respectively, socle) $S_i$,
$(c_1, \ldots, c_n)$ the Kupisch series of $A$. Denote by
$M_{i, l}$ the indecomposable $A$-module with top $S_i$ and length $l$.
Then $M_{i, l}$ \ ($1\le l \le c_i$, $i\in \mathbb Z/n\mathbb Z$) give all the indecomposable $A$-modules.

\vskip5pt

For $M\in A\mbox{-}{\rm mod}$, denote by $\operatorname{add}M$ the full subcategory consisting of direct summands of finite direct sums of copies of $M$.

\vskip5pt

We first consider selfinjective Nakayama algebras $A$. This is precisely the case $A = kC_n/J^t$ with $t\ge 2.$
Then \ $\mathcal{P}(A) = \mathcal{I}(A) = \operatorname{add} (P)$ where $P  = \bigoplus\limits _{1\le i\le n} P_i  = \bigoplus\limits_{1\le i \le n} M_{i, t}.$

\vskip5pt

\noindent {\bf Theorem A.} \label{introthm1} {\rm (Theorem \ref{thm_classification}, Theorem \ref{type})} \ {\it Let $A= kC_n/J^t \ (t\ge 2)$ be a selfinjective Nakayama algebra. Then

\vskip5pt

$(1)$ \ $A\mbox{-}{\rm mod}$ admits an exceptional \textup{Hovey} triple if and only if $d \ge 2$ and $t \geq 3$, where $d = \gcd(n, t)$.

\vskip5pt

$(2)$ \ Assume that $d \ge 2$ and $t\ge 3$. Put $R = d\mathbb Z/n\mathbb Z \subseteq \mathbb Z/n\mathbb Z$.
Then $(\mathcal{C}, \mathcal{F}, \mathcal{W})$ is an exceptional \textup{Hovey} triple in $A\mbox{-}{\rm mod}$, where
  {\small \begin{align*}\mathcal{C} & =\operatorname{add}(P\oplus\bigoplus\limits_{\substack{i\notin R, \  1\leq l<t}}M_{i,l}),  \ \  \ \
              \mathcal{F} =\operatorname{add}(P\oplus\bigoplus\limits_{\substack{i+l-1\notin R,                                 \\ 1\leq l<t}}M_{i,l}) \\
              \mathcal{W} & =\operatorname{add}(P\oplus\bigoplus\limits_{i\in R}(M_{i,1}\oplus M_{i+1,t-1})).\end{align*}}
Moreover, it is of \textup{Type I} if and only if $d=2$, and of \textup{Type II} if and only if $d\ge 3$.}

\subsection{\bf Exceptional \textup{Hovey} triples by Nakayama algebras with a unique maximal module} $\,$ \vskip5pt
 Next, we consider the class of non-selfinjective Nakayama algebras without simple projective module, which admit a unique maximal module.
These are precisely the Nakayama algebras $A$ having a unique zero relation.
Thus $A=kC_n/\langle a_t\cdots a_2a_1\rangle$ ($n\ge 2$). The Kupisch series is $(c_1, \ldots, c_n)$ with $c_i=t+n-i$ for $1\le i\le n$.
The indecomposable $A$-modules are $M_{i,l}$, where $i\in \mathbb Z/n\mathbb Z$ and $1 \leq l \leq c_i$.
Then $P_i=M_{i,c_i}$, $1\le i\le n$,   and  $I_i=M_{1,t+i}$ for $1\leq i\leq n-1$ and $I_n=M_{1,t}$.
The unique maximal module is the unique indecomposable projective-injective module $P_1 = I_{n-1} = M_{1,t+n-1}$.

\vskip5pt

\noindent {\bf Theorem B.} \label{introthm2} {\rm (Theorem \ref{thm_m_nak_classification}, Theorem \ref{prop_m_nak_injective_type})} \ {\it Let  $A=kC_n/\langle a_t\cdots a_2a_1\rangle \ (n\ge 2)$.
Then

\vskip5pt

$(1)$ \  $A\mbox{-}{\rm mod}$ admits an exceptional \textup{Hovey} triple if and only if $t=dn$ with  $d\geq2$.

\vskip5pt

$(2)$ \  Assume that $t=dn$ with $d\geq2$.   Then $(\mathcal C,\mathcal F,\mathcal W)$ is an exceptional \textup{Hovey} triple of \textup{Type I} in $A\mbox{-}{\rm mod}$, where
  {\small \begin{align*}
      \mathcal C
      = & \operatorname{add}(\bigoplus\limits_{1 \leq l \leq t}M_{n,l}\oplus\bigoplus_{\substack {1 \leq i \leq n-1 \\ 1\leq l \leq n-i-1}} M_{i,l}
      \oplus
      \bigoplus_{\substack {1 \leq i \leq n-1                                                                       \\ t+1-i\leq l\leq t+n-i}} M_{i,l}
      )                                                                                                             \\
      \mathcal F
      = & \operatorname{add}(
      I
      \oplus
      \bigoplus_{1\leq r\leq d-1}M_{1,rn}
      \oplus
      \bigoplus_{0\leq r\leq d-1}M_{n,rn+1}
      )                                                                                                             \\
      \mathcal W
      = & \operatorname{add}(
      \bigoplus_{1\le i\le n}
      (
      \bigoplus_{1\le l\le n-i}M_{i,l}
      \oplus
      \bigoplus_{t-i+1\le l\le t+n-i}M_{i,l}
      )).
    \end{align*}}}

\vskip10pt

\subsection{Homotopy in additive categories} $\,$

\vskip5pt

For a category $\mathcal A$ equipped with a model structure $(\mathsf{Cofib}, \mathsf{Fib}, \mathsf{Weq})$, let $\mathcal C$,  $\mathcal F$ and $\mathcal W$ denote the class of cofibrant objects, the class of fibrant objects, and the class of trivial objects, respectively. Quillen introduced the left homotopy relation $\overset{l}{\sim}$ and the right homotopy relation $\overset{r}{\sim}$.
It is known that $\overset{l}{\sim}$ and $\overset{r}{\sim}$ coincide on $\mathcal C\cap\mathcal F$. Denote this equivalence relation on $\mathcal C\cap\mathcal F$ by $\sim$ and the corresponding quotient category by $(\mathcal C\cap\mathcal F)/_{\sim}$.
Quillen's Fundamental Theorem of Model Structures states that if a category $\mathcal A$ has a zero object, finite coproducts, finite products, and a model structure, then there is an equivalence
$(\mathcal{C}\cap \mathcal F)/_\sim\cong\mathsf{Ho}(\mathcal{A})$ as categories. For details see Theorem \ref{thm}.

\vskip5pt

The following result describes the homotopy relation $\sim$ for an arbitrary model structure on a weakly idempotent complete additive category. The proof is a direct application of the Factorization axiom of a model structure.
For exact model structures on weakly idempotent complete exact categories, this result is known in \cite[Proposition 4.4]{Gil11}.

\vskip5pt

\noindent  {\bf Theorem C} \label {introthm3} \ {\rm(Theorem \ref{htcatisaddq})}  \ {\it Let $(\mathsf{Cofib}, \mathsf{Fib}, \mathsf{Weq})$ be a model structure on a weakly idempotent complete additive category  $\mathcal{A}$,
and $f$ and $g$ be morphisms in $\mathcal{C}\cap \mathcal{F}$. Then $f \sim g$ if and only if $f-g$ factors through an object in $\mathcal{C}\cap \mathcal{F}\cap \mathcal{W}$.}

\vskip5pt

As a consequence, one recovers the following description for the homotopy category of an arbitrary  model structure on a weakly idempotent complete additive category via the additive quotient  $\frac{\mathcal{C}\cap \mathcal{F}}{\mathcal{C}\cap \mathcal{F}\cap \mathcal{W}}$.
It has been proved in \cite[Theorem 1.1]{LZ}, but here it is a direct proof of Theorem C.

\vskip5pt

\noindent  {\bf Corollary D}  \label {introthm4} \ {\rm(Corollary \ref{hcatonadditve})} \ {\it Let $(\mathsf{Cofib}, \mathsf{Fib}, \mathsf{Weq})$ be a model structure on a weakly idempotent complete additive category  $\mathcal{A}$.
Then the homotopy category $\mathsf{Ho}(\mathcal{A})$ is an additive category, and $\mathsf{Ho}(\mathcal{A}) \cong \frac{\mathcal{C}\cap \mathcal{F}}{\mathcal{C}\cap \mathcal{F}\cap \mathcal{W}}$ as additive categories.}

\vskip5pt

Corollary D is known for exact model structures on weakly idempotent complete exact categories in Gillespie {\rm \cite[Proposition 4.4]{Gil11}};
for the $\omega$-model structures on weakly idempotent complete exact categories in A. Beligiannis and I. Reiten {\rm \cite[VIII, Theorem 4.2]{BR}}  and {\rm \cite[Theorem 1.1]{CLZ}};
and for exact model structures on triangulated categories in Nakaoka {\rm \cite[Proposition 6.10]{N}}.

\subsection{The homotopy category of exceptional exact model structures} \ Exceptional exact model structures can be characterized by their homotopy categories.

\vskip5pt

Let $(\mathcal C,\mathcal F,\mathcal W)$ be a Hovey triple in a weakly idempotent complete extriangulated category $(\mathcal A, \mathbb E, \mathfrak s)$.
Then one has the induced extriangulated structure $(\frac{\mathcal C\cap\mathcal F}{\mathcal C\cap\mathcal F\cap\mathcal W}, \ \overline{\mathbb E}, \ \overline{\mathfrak s})$.
By \cite[Theorem 6.20]{NP19},
$\mathsf{Ho}(\mathcal A)\cong \frac{\mathcal C\cap\mathcal F}{\mathcal C\cap\mathcal F\cap\mathcal W}$ is a triangulated category.
Thus, one gets another extriangulated structure
\[\xymatrix{(\frac{\mathcal C\cap\mathcal F}{\mathcal C\cap\mathcal F\cap\mathcal W}, \ \Hom_{\frac{\mathcal C\cap\mathcal F}{\mathcal C\cap\mathcal F\cap\mathcal W}}(-, \Sigma -), \ \mathfrak s_\triangle)}\]
determined by Nakaoka-Palu's triangulated structure $(\frac{\mathcal C\cap\mathcal F}{\mathcal C\cap\mathcal F\cap\mathcal W}, \ \Sigma, \ \triangle)$.
However, the extriangulated structure $(\frac{\mathcal C\cap\mathcal F}{\mathcal C\cap\mathcal F\cap\mathcal W}, \ \overline{\mathbb E}, \ \overline{\mathfrak s})$ is not necessarily {\it canonically triangulated} (see Definition \ref{cantri}); and the two
extriangulated structures on $\frac{\mathcal C\cap\mathcal F}{\mathcal C\cap\mathcal F\cap\mathcal W}$ are not  necessarily {\it extriangle-equivalent} (see Definition  \ref{extrifunctor}). See Examples in subsection 6.3. The differences between the two extriangulated structures remain mysterious.
As shown in the following theorem, the exceptionality gives an explanation of  the differences.

\vskip10pt

\noindent  {\bf Theorem E} \label{introthm5}  {\rm(Lemma \ref {lem_extriangulated_functor}, Theorem \ref{thm_Frobenius_pair}, Proposition \ref{prop_ghosts})} \ {\it Let $(\mathcal C,\mathcal F,\mathcal W)$ be a Hovey triple in a weakly idempotent complete extriangulated category $(\mathcal A,\mathbb E,\mathfrak s)$.
Then

\vskip5pt

$(1)$ \ There is an extriangle functor
\[\xymatrix{({\rm Id}_{\frac{\mathcal C\cap\mathcal F}{\mathcal C\cap\mathcal F\cap\mathcal W}}, \eta): (\frac{\mathcal C\cap\mathcal F}{\mathcal C\cap\mathcal F\cap\mathcal W}, \ \overline{\mathbb E}, \ \overline{\mathfrak s})\longrightarrow (\frac{\mathcal C\cap\mathcal F}{\mathcal C\cap\mathcal F\cap\mathcal W},  \ \Hom_{\frac{\mathcal C\cap\mathcal F}{\mathcal C\cap\mathcal F\cap\mathcal W}}(-, \Sigma -), \ \mathfrak s_\triangle)}\] such that each group homomorphism
$\eta_{Z,X}: \overline{\mathbb E}(Z,X)\longrightarrow \operatorname{Hom}_{\frac{\mathcal C\cap\mathcal F}{\mathcal C\cap\mathcal F\cap\mathcal W}}(Z,\Sigma X)
$ is injective.

\vskip5pt

Moreover, $\overline{\mathbb E}$ is a closed subbifunctor of $\Hom_{\frac{\mathcal C\cap\mathcal F}{\mathcal C\cap\mathcal F\cap\mathcal W}}(-, \Sigma -)$.

\vskip5pt

Moreover, if $\mathcal C\cap\mathcal F$ has enough projective objects, then for $X, Z\in\mathcal C\cap\mathcal F$, the image
$\eta_{Z,X}(\overline{\mathbb E}(Z,X))$ is precisely the set $\mathrm{Gh}_{\frac{\mathcal P(\mathcal C\cap\mathcal F)}{\mathcal C\cap\mathcal F\cap\mathcal W}}(Z, \Sigma X)$ of ghost maps
$($in the sense of {\rm \cite{Chr98} and \cite{Bel15}}$)$.

\vskip10pt

$(2)$ \ The following are equivalent$:$

\vskip5pt

\hskip17pt   $\textup{(i)}$ \ $(\mathcal C,\mathcal F,\mathcal W)$ is exceptional, i.e., $(\mathcal C\cap\mathcal F, \ \mathcal C\cap\mathcal F\cap\mathcal W)$ is not a Frobenius pair.

\vskip5pt

\hskip17pt  $\textup{(ii)}$ \ Extriangulated categories  $(\frac{\mathcal C\cap\mathcal F}{\mathcal C\cap\mathcal F\cap\mathcal W}, \ \overline{\mathbb E}, \ \overline{\mathfrak s})$ and $(\frac{\mathcal C\cap\mathcal F}{\mathcal C\cap\mathcal F\cap\mathcal W},  \ \Hom_{\frac{\mathcal C\cap\mathcal F}{\mathcal C\cap\mathcal F\cap\mathcal W}}(-, \Sigma -), \ \mathfrak s_\triangle)$  are not extriangle-equivalent.

\vskip5pt

\hskip17pt  $\textup{(iii)}$ \ Functors  $\overline{\mathbb E}$ and $\Hom_{\frac{\mathcal C\cap\mathcal F}{\mathcal C\cap\mathcal F\cap\mathcal W}}(-, \Sigma -)$  are
not naturally isomorphic, or equivalently, $\overline{\mathbb E}$ is a proper closed subbifunctor of $\Hom_{\frac{\mathcal C\cap\mathcal F}{\mathcal C\cap\mathcal F\cap\mathcal W}}(-, \Sigma -)$.

\vskip5pt

\hskip17pt $\textup{(iv)}$ \  $\bigl(\frac{\mathcal C\cap\mathcal F}{\mathcal C\cap\mathcal F\cap\mathcal W}, \ \overline{\mathbb E}, \ \overline{\mathfrak s}\bigr)$ is not canonically triangulated.}

\vskip10pt

\subsection{Extriangulated structures arising from an exceptional model structure} $\,$

\vskip5pt

Let $\mathcal A$ be an essentially small additive category.
Denote by $\mathrm{fp}(\mathcal A^{\mathrm{op}}, \mathrm{Ab})$ the category of finitely presented contravariant additive functors from $\mathcal A$ to $\mathrm{Ab}$.

\vskip5pt

Let $(\mathcal C, \mathcal F,\mathcal W)$ be a Hovey triple in a weakly idempotent complete extriangulated category. As seen  from Theorem E,
if this Hovey triple is exceptional, then $\overline{\mathbb E}$ is a proper closed subbifunctor of $\Hom_{\frac{\mathcal C\cap\mathcal F}{\mathcal C\cap\mathcal F\cap\mathcal W}}(-, \Sigma -)$.
Using Y. Ogawa's work \cite{Oga21} on Auslander's defect theory in extriangulated categories and H. Enomoto's work  \cite{Eno21} on closed additive subbifunctors,
the closed additive subbifunctors $\mathbb F$ of $\operatorname{Hom}_{\frac{\mathcal C\cap\mathcal F}{\mathcal C\cap\mathcal F\cap\mathcal W}}(-, \Sigma-)$ with
$\overline{\mathbb E}\subseteq\mathbb F$
are in one-to-one correspondence with Serre subcategories of  $\mathrm{fp}((\frac{\mathcal P(\mathcal C\cap\mathcal F)}{\mathcal C\cap\mathcal F\cap\mathcal W})^{\mathrm{op}}, \mathrm{Ab})$, in case $\mathcal C\cap\mathcal F$ is essentially small and  has enough projective objects.

\vskip5pt

\noindent  {\bf Theorem F} \label{introthm6}  {\rm(Theorem \ref{thm_enomoto_ogawa})} \ {\it \ Let $(\mathcal C,\mathcal F,\mathcal W)$ be a  Hovey triple in a weakly idempotent complete extriangulated category $(\mathcal A,\mathbb E,\mathfrak s)$.
Assume that $\mathcal C\cap\mathcal F$ is essentially small.

\vskip5pt

$(1)$  \ If $\mathcal C\cap\mathcal F$ has enough projective objects, then the map $\mathbb F\mapsto {\rm def}\mathbb F$ gives an isomorphism of posets$:$

\[\xymatrix{\left\{\mathbb F \ \middle| \ \overline{\mathbb E}\subseteq\mathbb F,
  \ \mathbb F \ \textup{is a closed additive subbifunctor of} \operatorname{Hom}_{\frac{\mathcal C\cap\mathcal F}{\mathcal C\cap\mathcal F\cap\mathcal W}}(-, \Sigma-)\right\}\ar[d] \\
  \left\{\textup{Serre subcategories of} \ \mathrm{fp}((\frac{\mathcal P(\mathcal C\cap\mathcal F)}{\mathcal C\cap\mathcal F\cap\mathcal W})^{\mathrm{op}}, \mathrm{Ab}). \right\}}\]

\vskip5pt

$(2)$ \ If $\mathcal C\cap\mathcal F$ has enough projective objects and enough injective objects, then there is an isomorphism of posets
from $\{\textup{Serre subcategories of }
  \ \mathrm{fp}((\frac{\mathcal P(\mathcal C\cap\mathcal F)}{\mathcal C\cap\mathcal F\cap\mathcal W})^{\mathrm{op}}, \mathrm{Ab})\}$ to $\{
  \textup{Serre subcategories of }
  \ \mathrm{fp}(\frac{\mathcal I(\mathcal C\cap\mathcal F)}{\mathcal C\cap\mathcal F\cap\mathcal W}, \mathrm{Ab})\}$.}

  \vskip10pt

If $(\mathcal C,\mathcal F,\mathcal W)$ is exceptional, then
$\overline{\mathbb E} \subsetneqq \operatorname{Hom}_{\frac{\mathcal C\cap\mathcal F}{\mathcal C\cap\mathcal F\cap\mathcal W}}(-, \Sigma-)$, and Theorem F indeed  gives intermediate  extriangulated structures $(\frac{\mathcal C\cap\mathcal F}{\mathcal C\cap\mathcal F\cap\mathcal W}, \ \mathbb F, \ (\mathfrak s_\triangle)|_\mathbb F)$.

\vskip5pt  As an application of Theorem F and Proposition \ref{prop_D_enough_PI} one gets

  \vskip5pt

  \noindent  {\bf Corollary G} \ {\rm(Corollary  \ref{number})} \ {\it Let $A$ be a representation-finite finite-dimensional algebra over a field $k$, and $(\mathcal C,\mathcal F,\mathcal W)$ a {\rm Hovey} triple in $A\text{-}\mathrm{mod}$. Then
$\mathcal C\cap\mathcal F$ has enough projective and enough injective objects, with $\left|\operatorname{Ind}\mathcal P(\mathcal C\cap\mathcal F)\right| = \left|\operatorname{Ind}\mathcal I(\mathcal C\cap\mathcal F)\right|;$ there are $2^n$ closed $k$-linear subbifunctors $\mathbb F$ satisfying
$\overline{\mathbb E}\subseteq\mathbb F\subseteq
\Hom_{\frac{\mathcal C\cap\mathcal F}{\mathcal C\cap\mathcal F\cap\mathcal W}}(-, \Sigma -),$
  and there are $2^n$ Serre subcategories of \ $\mathrm{fp}((\frac{\mathcal P(\mathcal C\cap\mathcal F)}{\mathcal C\cap\mathcal F\cap\mathcal W})^{\mathrm{op}}, k\mbox{-}{\rm mod})$, where $n = \left|
\operatorname{Ind}\mathcal P(\mathcal C\cap\mathcal F) \right| - \left|
\operatorname{Ind}(\mathcal C\cap\mathcal F\cap\mathcal W)
\right|$.}

\vskip5pt

\subsection{Organisation} Preliminaries are recalled in $\S$2. Proposition \ref{prop_ctp_1} (which claims that any cotorsion pair in  $A\mbox{-}{\rm mod}$ is complete, where $A$ is a representation-finite algebra)  and
Proposition \ref{prop_D_enough_PI} (which claims that for any \textup{Hovey} triple $(\mathcal{C} , \mathcal{F}, \mathcal{W})$ in  $A\text{-}\mathrm{mod}$,
$\mathcal{C} \cap \mathcal{F}$ has enough projective objects and enough injective objects, where $A$ is a representation-finite algebra) seem to be new; and Proposition \ref{prop_Gillespie} and  Example \ref{nonherednonexcep} give Hovey triples which are neither hereditary nor exceptional.

\vskip5pt

Using the Auslander-Reiten quiver of Nakayama algebras and Gillespie's  the new construction  of  Hovey triples (Theorem \ref{theorem_merge}),  Theorems A and B will be proved in $\S 3$ and $\S 4$, respectively. For clarity, we put some technical lemmas in $\S8$ as an appendix.

\vskip5pt

\vskip5pt

Theorem C and Corollary D are proved in $\S$5;  Theorem E is proved in $\S$6; and Theorem F  and Corollary G are proved in $\S$7.

\section{\bf Preliminaries}\label{sec_Pre}

\subsection{Extriangulated categories} \ We recall extriangulated categories from Nakaoka and Palu \cite{NP19}.
Let $\mathcal{A}$ be an additive category, and $\mathbb E: \mathcal{A}^{\mathrm{op}} \times \mathcal{A} \longrightarrow \mathbf{Ab}$ an additive bifunctor.
For $f: X\longrightarrow X'$, let $f^\ast = \mathbb E(f, -): \mathbb E(X', -)\longrightarrow \mathbb E(X, -)$ and $f_\ast = \mathbb E(-, f): \mathbb E(-, X)\longrightarrow \mathbb E(-, X')$
be the natural transformations. An element $\delta\in \mathbb{E}(Z, X)$
is called {\it an $\mathbb{E}$-extension}, and $0\in\mathbb{E}(Z, X)$ {\it the split $\mathbb{E}$-extension}.
Since $\mathbb E$ is a bifunctor, for  $h : Z' \longrightarrow Z$ and  $\delta \in \mathbb E(Z, X)$, one has
\[
  (f_\ast)_{Z'} (h^\ast)_{X} (\delta) = (h^\ast)_{X'} (f_\ast)_{Z} (\delta)\in \mathbb E(Z', X').
\]

For $\mathbb{E}$-extensions $\delta \in \mathbb E(Z,X)$ and $\delta' \in \mathbb E(Z', X')$, {\it a morphism of $\mathbb E$-extensions}
$\delta \longrightarrow \delta'$ is a pair of morphisms $(\alpha, \gamma)$ with $\alpha: X\longrightarrow X'$ and $\gamma: Z\longrightarrow Z'$, such that $\alpha_\ast \delta = \gamma^\ast \delta'\in \mathbb E(Z, X')$.
Denote by $\delta \oplus \delta' \in \mathbb E(Z \oplus Z', X \oplus X')$ the image of $(\delta, \delta') \in \mathbb E(Z,X) \oplus \mathbb E(Z',X')$ under the natural inclusion
\[
  \mathbb E(Z,X) \oplus \mathbb E(Z',X') \hookrightarrow  \mathbb E(Z,X) \oplus \mathbb E(Z',X')\oplus \mathbb E(Z',X) \oplus \mathbb E(Z,X') \cong  \mathbb E(Z \oplus Z', X \oplus X').
\]
In particular, if $X=X'$ and $Z=Z'$, then
\[
  \left(\begin{smallmatrix}\mathrm{Id}_Z\\ \mathrm{Id}_Z\end{smallmatrix}\right)^\ast
  \left(\begin{smallmatrix}\mathrm{Id}_X,&\mathrm{Id}_X\end{smallmatrix}\right)_\ast
  (\delta \oplus \delta')
  =
  \left(\begin{smallmatrix}\mathrm{Id}_X,&\mathrm{Id}_X\end{smallmatrix}\right)
  \left(\begin{smallmatrix}\delta&0\\0&\delta'\end{smallmatrix}\right)
  \left(\begin{smallmatrix}\mathrm{Id}_Z\\ \mathrm{Id}_Z\end{smallmatrix}\right)
  = \delta + \delta' =
  \left(\begin{smallmatrix}\mathrm{Id}_X,&\mathrm{Id}_X\end{smallmatrix}\right)_\ast \left(\begin{smallmatrix}\mathrm{Id}_Z\\ \mathrm{Id}_Z\end{smallmatrix}\right)^\ast
  (\delta \oplus \delta').
\]

By definition, two sequences $X \xlongrightarrow{f} Y \xlongrightarrow{g} Z$ and $X \xlongrightarrow{f'} Y' \xlongrightarrow{g'} Z$ are \emph{equivalent} if there exists an isomorphism $\varphi: Y \longrightarrow Y'$ such that the following diagram commutes:
\begin{equation*}
  \xymatrix@R=0.4cm{
  X\ar[r]^{f}\ar@{=}[d] & Y\ar[r]^{g}\ar[d]^{{\varphi }} & Z\ar@{=}[d] \\
  X\ar[r]^{{f'}} & Y'\ar[r]^{{g'}} & Z
  }
\end{equation*}

A {\it realization} $\mathfrak s$ of $\mathbb E$ is a collection of ``mappings'', sending each $\delta \in \mathbb E(Z,X)$  to an equivalence class of sequences of the form $[X \xlongrightarrow{f} Y \xlongrightarrow{g} Z]$, satisfying that for any morphism of $\mathbb E$-extensions $(\alpha, \gamma): \delta \longrightarrow \delta'$ and any representative $X \xlongrightarrow{f} Y \xlongrightarrow{g} Z$ of $\mathfrak s(\delta)$ and $X' \xlongrightarrow{f'} Y' \xlongrightarrow{g'} Z'$ of $\mathfrak s(\delta')$, there exists $\beta: Y \longrightarrow Y'$ such that the following diagram commutes:
\begin{equation*}
  \xymatrix@R=0.4cm{
  X\ar[r]^{f}\ar[d]_-{\alpha} & Y\ar[r]^{g}\ar@{..>}[d]^-{\beta} & Z\ar[d]^-{\gamma} \\
  X'\ar[r]^{{f'}} & Y'\ar[r]^{{g'}} & Z'
  }
\end{equation*}
In this case, one says that an $\mathbb E$-extension $\delta$ is {\it realized} by $X \xlongrightarrow f Y \xlongrightarrow g Z$, if
$\mathfrak s (\delta) = \left[X \xlongrightarrow f Y \xlongrightarrow g Z\right].$

\vskip5pt

A realization $\mathfrak s$ of $\mathbb E$ is {\it additive}, if any split $\mathbb E$-extension $0\in \mathbb E(Y,X)$ is realized by
$X \xlongrightarrow{\binom{\mathrm{Id}_X}{0}} X \oplus Y \xlongrightarrow{(0,\ \mathrm{Id}_Y)} Y$; and if
$\mathfrak s(\delta_i)=[X_i \xlongrightarrow {f_i}Y_i\xlongrightarrow {g_i}Z_i]$, $i =1, 2$, then $\delta_1 \oplus \delta_2$ is realized by
\begin{equation*}			X_1 \oplus X_2 \xlongrightarrow{\left(\begin{smallmatrix}f_1&0\\0&f_2\end{smallmatrix}\right)} Y_1 \oplus Y_2 \xlongrightarrow{\left(\begin{smallmatrix}g_1&0\\0&g_2\end{smallmatrix}\right)} Z_1 \oplus Z_2.
\end{equation*}

\vskip5pt

In the following, if an $\mathbb E$-extension $\delta$ is realized by $X \xlongrightarrow{f} Y \xlongrightarrow{g} Z$,
then it will be denoted by $X \xlongrightarrow{f} Y \xlongrightarrow{g} Z \xdashrightarrow{\delta}{}$, and we call it an \textit{$\mathbb E$-triangle}, $f$ an \textit{$\mathbb E$-inflation} and $g$ an \textit{$\mathbb E$-deflation}.
{\it A morphism} of $\mathbb E$-triangles is a triple $(\alpha, \beta, \gamma)$ such that $\alpha_\ast \delta = \gamma ^\ast \delta '$ and the following diagram commutes:
\begin{equation*}\label{morphism}
  \xymatrix@R=0.4cm{
  X\ar[r]^{f}\ar[d]_-{\alpha} & Y\ar[r]^{g}\ar[d]^-{\beta} & Z\ar@{-->}[r]^{\delta}\ar[d]^-{\gamma} & {} \\
  X'\ar[r]^{{f'}} & Y'\ar[r]^{{g'}} & Z'\ar@{-->}[r]^{{\delta'}} & {}
  }
\end{equation*}

\vskip5pt

\begin{definition} \ {\rm (\cite[Definition 2.12]{NP19})} \ An {\it extriangulated category} is a triplet $(\mathcal{A}, \mathbb E, \mathfrak s)$, satisfying the following axioms:

  \vskip5pt

  {\bf ET1.} \ $\mathcal{A}$ is an additive category, and $\mathbb E: \mathcal{A}^{\mathrm{op}}\times \mathcal{A} \longrightarrow \mathbf{Ab}$ is an additive bifunctor.

  \vskip5pt

  {\bf ET2.} \ $\mathfrak s$ is an additive realization of $\mathbb E$.

  \vskip5pt

  {\bf ET3.} \ Let $X \xlongrightarrow{f} Y \xlongrightarrow{g} Z \xdashrightarrow{\delta}{}$ and $X' \xlongrightarrow{f'} Y' \xlongrightarrow{g'} Z' \xdashrightarrow{\delta'}{}$ be $\mathbb E$-triangles. If there are $\alpha : X \longrightarrow X'$ and $\beta : Y \longrightarrow Y'$ such that $\beta \circ f = f' \circ \alpha$,
  then there exists $\gamma: Z\longrightarrow Z'$ such that $(\alpha, \beta, \gamma)$ is a morphism of $\mathbb E$-triangles.

  \vskip5pt

  {\bf ET3$^{\mathrm{op}}$.} \ Let $X \xlongrightarrow{f} Y \xlongrightarrow{g} Z \xdashrightarrow{\delta}{}$ and $X' \xlongrightarrow{f'} Y' \xlongrightarrow{g'} Z' \xdashrightarrow{\delta'}{}$ be $\mathbb E$-triangles. If there are $\beta : Y \longrightarrow Y'$ and $\gamma : Z \longrightarrow Z'$ such that $\gamma \circ g = g' \circ \beta$,
  then there exists $\alpha:X\longrightarrow X'$ such that $(\alpha, \beta, \gamma)$ is a morphism of $\mathbb E$-triangles.

  \vskip5pt

  {\bf ET4.} \ Let $A \xlongrightarrow{f} B \xlongrightarrow{g} D \xdashrightarrow{\delta}{}$ and $B \xlongrightarrow{u} C \xlongrightarrow{v} E \xdashrightarrow{\varepsilon}{}$ be $\mathbb E$-triangles. Then there exists a commutative diagram
  \begin{equation*}
    \xymatrix@R=0.4cm{
    A\ar[r]^{f}\ar@{=}[d] & B\ar[r]^{g}\ar[d]^{u} & D\ar@{-->}[r]^{\delta}\ar@{..>}[d]^{w} & {} \\
    A\ar@{..>}[r]^{m} & C\ar@{..>}[r]^{h}\ar[d]^{v} & F\ar@{..>}[r]^{\theta}\ar@{..>}[d]^{q} & {} \\
    {} & E\ar@{=}[r]\ar@{-->}[d]^{{\varepsilon }} & E\ar@{..>}[d]^{\eta} & {} \\
    {} & {} & {} & {}
    }
  \end{equation*}
  such that $A \xlongrightarrow{m} C \xlongrightarrow{h} F \xdashrightarrow{\theta}{}$ and $D \xlongrightarrow{w} F \xlongrightarrow{q} E \xdashrightarrow{\eta}{}$ are $\mathbb E$-triangles and $w^\ast \theta=\delta$, $g_\ast \varepsilon=\eta$ and $f_\ast \theta= q^\ast\varepsilon$. In particular, $\mathbb E$-inflations are closed under compositions.

  \vskip5pt

  {\bf ET4$^{\mathrm{op}}$.} \ Let $A \xlongrightarrow{m} C \xlongrightarrow{h} F \xdashrightarrow{\theta}{}$ and $D \xlongrightarrow{w} F \xlongrightarrow{q} E \xdashrightarrow{\eta}{}$ be $\mathbb E$-triangles. Then there exists a commutative diagram
  \begin{equation*}
    \xymatrix@R=0.4cm{
    A\ar@{..>}[r]^{f}\ar@{=}[d] & B\ar@{..>}[r]^{g}\ar@{..>}[d]^{u} & D\ar@{..>}[r]^{\delta}\ar[d]^{w} & {} \\
    A\ar[r]^{m} & C\ar[r]^{h}\ar@{..>}[d]^{v} & F\ar@{-->}[r]^{\theta}\ar[d]^{q} & {} \\
    {} & E\ar@{=}[r]\ar@{..>}[d]^{{\varepsilon }} & E\ar@{-->}[d]^{\eta} & {} \\
    {} & {} & {} & {}
    }
  \end{equation*}
  such that $A \xlongrightarrow{f} B \xlongrightarrow{g} D \xdashrightarrow{\delta}{}$ and $B \xlongrightarrow{u} C \xlongrightarrow{v} E \xdashrightarrow{\varepsilon}{}$ are $\mathbb E$-triangles and $w^\ast \theta=\delta$, $g_\ast \varepsilon=\eta$ and $f_\ast \theta= q^\ast\varepsilon$. In particular, $\mathbb E$-deflations are closed under compositions.
\end{definition}

\vskip5pt

For an $\mathbb E$-triangle $X \xlongrightarrow{f} Y \xlongrightarrow{g} Z \xdashrightarrow{\delta}{}$,
write $\mathrm{Cone}(f) = Z$ and $\mathrm{CoCone}(g) = X$. By \cite[Remark 3.10]{NP19},  $\mathrm{Cone}(f)$ and $\mathrm{CoCone}(g)$ are unique up to isomorphism.

\vskip5pt

Exact categories and triangulated categories are important classes of extriangulated categories (\cite[Example 2.13, Proposition 3.22]{NP19}). For an exact category $\mathcal{A}$, the bifunctor $\mathbb E$ is the Yoneda extension bifunctor $\operatorname{Ext}^1_{\mathcal{A}}$, and $\mathfrak s$ assigns to an extension class the corresponding equivalence class of conflations.
For a triangulated category $(\mathcal{T},[1],\triangle)$, one takes
$\mathbb E(Z,X)=\operatorname{Hom}_{\mathcal{T}}(Z,X[1])$, with $\mathfrak s(\delta)$ represented by a distinguished triangle
$X\longrightarrow Y\longrightarrow Z\xlongrightarrow{\delta}X[1].$

\vskip5pt

An object $P$ of an extriangulated category $(\mathcal{A},\mathbb E,\mathfrak s)$ is \emph{projective} if $\mathrm{Hom}_{\mathcal A}(P,-)$ sends every $\mathbb E$-deflation to a surjection, or equivalently, $\mathbb E(P,-)=0$ (cf. \cite[Propositions 3.23 and 3.24]{NP19}).
An object $I$ is \emph{injective} if $\mathrm{Hom}_{\mathcal A}(-,I)$ sends every $\mathbb E$-inflation to a surjection; equivalently, $\mathbb E(-,I)=0$.
Denote by $\mathcal P(\mathcal A)$ and $\mathcal I(\mathcal A)$ respectively the full subcategories of projective objects and injective objects. If $\mathcal A$ is the category $A$-mod of finitely generated left $A$-modules, where $A$ is a finite-dimensional algebra, then $\mathcal P(\mathcal A)$ and $\mathcal I(\mathcal A)$ are written as $\mathcal P(A)$ and $\mathcal I(A)$, respectively.

\vskip5pt

The category $\mathcal A$ has \emph{enough projective objects} if every object $X$ occurs in an $\mathbb E$-triangle $K \longrightarrow P\longrightarrow X\dashrightarrow$ with $P\in\mathcal P(\mathcal A)$,
and it has \emph{enough injective objects} if every $X$ occurs in an $\mathbb E$-triangle $X\longrightarrow I\longrightarrow C \dashrightarrow$ with $I\in\mathcal I(\mathcal A)$.
Let $\mathcal P(X,Y)$ (respectively, $\mathcal I(X,Y)$) be the subgroups of $\mathrm{Hom}_{\mathcal A}(X,Y)$ consisting of morphisms factoring through a projective (respectively, an injective) object, and put
\[
  \underline{\operatorname{Hom}}_{\mathcal A}(X,Y)
  =\mathrm{Hom}_{\mathcal A}(X,Y)/\mathcal P(X,Y),
  \qquad
  \overline{\operatorname{Hom}}_{\mathcal A}(X,Y)
  =\mathrm{Hom}_{\mathcal A}(X,Y)/\mathcal I(X,Y).
\]
The ideal quotients $\mathcal A/\mathcal P(\mathcal A)$ and $\mathcal A/\mathcal I(\mathcal A)$ are called the \emph{projectively stable category} and the \emph{injectively stable category} of $\mathcal A$, respectively.
When $\mathcal A$ is Frobenius, these two categories coincide and are called the \emph{stable category} of $\mathcal A$.

\vskip5pt

\subsection{Cotorsion pairs in extriangulated categories} \ Cotorsion pairs were introduced by Salce \cite{Sal79}, discussed systematically by Enochs and Jenda \cite{EJ00}, and extended to extriangulated categories by Nakaoka and Palu \cite[Section~4]{NP19}.
Let $(\mathcal{A},\mathbb E,\mathfrak s)$ be an extriangulated category.
Write $X\perp Y$ for $\mathbb E(X,Y)=0$.
For a class of objects $\mathcal{L}$, write $\mathcal{L}^\perp$ (respectively, ${}^\perp\mathcal{L}$) for the class of objects $Z$ such that $L\perp Z$ (respectively, $Z\perp L$) for all $L\in\mathcal{L}$.
A \emph{cotorsion pair} $(\mathcal{C},\mathcal{X})$ in $(\mathcal{A},\mathbb E,\mathfrak s)$ is a pair of classes of objects such that $\mathcal{C}^\perp=\mathcal{X}$ and ${}^\perp\mathcal{X}=\mathcal{C}$.
A cotorsion pair is \emph{complete} if for any object $M\in\mathcal{A}$, there are $\mathbb E$-triangles
\begin{equation*}
  X^M \longrightarrow C^M \longrightarrow M \dashrightarrow,
  \qquad
  M \longrightarrow X_M \longrightarrow C_M \dashrightarrow
\end{equation*}
with $C^M,C_M\in\mathcal{C}$ and $X^M,X_M\in\mathcal{X}$.

\vskip5pt

\begin{lemma}\label{lem_ctp_1} \ {\rm(\cite {EJ00})} \ Let $(\mathcal{A},\mathbb E,\mathfrak s)$ be an extriangulated category with enough projective and injective objects.
  Then the following are equivalent for a cotorsion pair $(\mathcal{C},\mathcal{X})$ in $(\mathcal{A},\mathbb E,\mathfrak s)$.
  \vskip5pt
  $(1)$ \ $(\mathcal{C},\mathcal{X})$ is complete$;$
  \vskip5pt
  $(2)$ \ For any $M\in\mathcal{A}$, there is an $\mathbb E$-triangle
  $X^M \longrightarrow C^M \longrightarrow M \dashrightarrow$
  with $C^M\in\mathcal{C}$ and $X^M\in\mathcal{X};$
  \vskip5pt
  $(2')$ \ For any $M\in\mathcal{A}$, there is an $\mathbb E$-triangle
  $M \longrightarrow X_M \longrightarrow C_M \dashrightarrow$
  with $C_M\in\mathcal{C}$ and $X_M\in\mathcal{X}.$
\end{lemma}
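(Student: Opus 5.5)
The implications $(1)\Rightarrow(2)$ and $(1)\Rightarrow(2')$ hold by the definition of completeness. By duality (replace $(\mathcal A,\mathbb E,\mathfrak s)$ with its opposite extriangulated category, which swaps the roles of projectives and injectives and of $\mathcal C$ and $\mathcal X$), it suffices to prove $(2)\Rightarrow(2')$. The plan is Salce's trick, run inside the extriangulated setting using the axioms ET4 and ET4$^{\mathrm{op}}$ together with the long exact sequences for $\mathbb E$ (\cite[Corollary 3.12]{NP19}).

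Assume (2) and fix $M\in\mathcal A$. Since $\mathcal A$ has enough injectives, choose an $\mathbb E$-triangle $M\xrightarrow{i} I\xrightarrow{p} N\dashrightarrow$ with $I\in\mathcal I(\mathcal A)$. Apply (2) to $N$ to get $X^N\xrightarrow{a} C^N\xrightarrow{b} N\dashrightarrow$ with $C^N\in\mathcal C$ and $X^N\in\mathcal X$. Now apply ET4$^{\mathrm{op}}$ to the pair of deflations $p$ and $b$; this amounts to forming the homotopy pullback of $p$ along $b$. It produces an object $E$ and $\mathbb E$-triangles
\[
M\longrightarrow E\longrightarrow C^N\dashrightarrow, \qquad X^N\longrightarrow E\longrightarrow I\dashrightarrow .
\]
The first of these is the candidate for $M\to X_M\to C_M\dashrightarrow$, with $C_M=C^N\in\mathcal C$. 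It remains to show $E\in\mathcal X$.

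From the second triangle, for any $C\in\mathcal C$ the long exact sequence gives the exact segment $\mathbb E(C,X^N)\to\mathbb E(C,E)\to\mathbb E(C,I)$. Here $\mathbb E(C,X^N)=0$ because $X^N\in\mathcal X=\mathcal C^\perp$, and $\mathbb E(C,I)=0$ because $I$ is injective. Hence $\mathbb E(C,E)=0$ for all $C\in\mathcal C$, so $E\in\mathcal C^\perp=\mathcal X$.

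The only delicate point is invoking the correct octahedral axiom with the correct orientation, so that the triangles obtained really are $M\to E\to C^N$ and $X^N\to E\to I$. Concretely, ET4$^{\mathrm{op}}$ takes two triangles whose deflations have a common target $F$ and composes them. To fit that shape, I would view the deflation $b\colon C^N\to N$ and the deflation $p\colon I\to N$ as the two triangles $D\xrightarrow{w}F\xrightarrow{q}E$ and $A\xrightarrow{m}C\xrightarrow{h}F$ in the axiom, with $F=N$. Alternatively, I would use the standard consequence of ET4$^{\mathrm{op}}$ in \cite[Proposition 3.15]{NP19}, which gives exactly this pullback diagram with rows and columns as $\mathbb E$-triangles; citing that proposition avoids the bookkeeping. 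Enough projectives is used only for the dual implication.
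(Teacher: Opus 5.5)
Your proposal is correct and follows the paper's proof. Both take $M\to I\to N\dashrightarrow$ with $I$ injective and an $\mathbb E$-triangle $X^N\to C^N\to N\dashrightarrow$ from (2), then apply \cite[Proposition 3.15]{NP19} to obtain $M\to E\to C^N\dashrightarrow$ and $X^N\to E\to I\dashrightarrow$, which forces $E\in\mathcal X$; your long exact sequence argument is just the extension-closedness of $\mathcal X=\mathcal C^\perp$ that the paper uses implicitly.

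One small correction concerns your direct appeal to ET4$^{\mathrm{op}}$. That axiom composes two composable deflations $C\to F\to E$; it does not take two deflations with a common target, so your matching with $F=N$ does not fit the axiom's shape. Proposition 3.15 of \cite{NP19}, which you offer as the alternative, is the right citation, and it is exactly what the paper invokes.
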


\begin{proof} \ This is well-known for the module category of a ring (see \cite [Proposition 7.1.7]{EJ00}).

  \vskip5pt We only justify $(2)\Longrightarrow (2')$. For $M\in\mathcal{A}$, there is an $\mathbb E$-triangle $M\longrightarrow I\longrightarrow N\xdashrightarrow{\xi}$ with $I$ injective.
  By $(2)$, there is an $\mathbb E$-triangle $X^N\longrightarrow C^N\longrightarrow N\xdashrightarrow{\delta}$ with $C^N\in\mathcal C$ and $X^N\in\mathcal X$.
  By \cite[Proposition 3.15]{NP19} there is a  commutative diagram of $\mathbb E$-triangles:
  \begin{equation*}
    \xymatrix@R=15pt{
    {} & X^N \ar@{..>}[d] & X^N \ar@{=}[l] \ar[d] & {} \\
    M \ar@{=}[d] \ar@{..>}[r] & X_M \ar@{..>}[r] \ar@{..>}[d] & C^N \ar[d] \ar@{-->}[r]^{\lambda} & {} \\
    M \ar[r] & I \ar[r] \ar@{-->}[d]_{\nu} & N \ar@{-->}[r]^{\xi} \ar@{-->}[d]^{\delta} & {} \\
    {} & {} & {} & {}
    }
  \end{equation*}
  The middle column gives $X_M\in\mathcal X$, and hence the middle row is the required $\mathbb E$-triangle.
\end{proof}

\vskip5pt

A finite-dimensional algebra $A$ is {\it representation-finite} if the category $A\mbox{-}{\rm mod}$  of finitely generated left $A$-modules contains only finitely many isomorphism classes of indecomposable modules.

\begin{proposition}\label{prop_ctp_1} \ Let $A$ be a representation-finite algebra, and  $(\mathcal{C}, \mathcal{X})$ a cotorsion pair in $A\mbox{-}{\rm mod}$.
  Then $(\mathcal{C}, \mathcal{X})$ is always  complete.
\end{proposition}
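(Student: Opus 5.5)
By Lemma \ref{lem_ctp_1} it is enough to produce, for each $M\in A\mbox{-}{\rm mod}$, an $\mathbb E$-triangle (short exact sequence) $0\to X^M\to C^M\to M\to 0$ with $C^M\in\mathcal C$ and $X^M\in\mathcal X$. Here $A\mbox{-}{\rm mod}$ has enough projectives and injectives, $\mathbb E=\operatorname{Ext}^1_A$, and $\mathcal C={}^\perp\mathcal X$ is closed under direct sums and direct summands and contains $\mathcal P(A)$. Since $A$ is representation-finite, $\mathcal C=\operatorname{add}C$ for the finite direct sum $C$ of the indecomposable modules lying in $\mathcal C$.

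First take a right $\mathcal C$-approximation. Because $\operatorname{Hom}_A(C,M)$ is finite-dimensional over $k$, choose a basis $f_1,\dots,f_m$ and put
\[
f=(f_1,\dots,f_m)\colon C^m\to M .
\]
Every map from $C$ to $M$ factors through $f$, hence so does every map from an object of $\operatorname{add}C$. Adding a projective cover $P\to M$ (note $P\in\mathcal C$) makes the map surjective. So we get an exact sequence $0\to K\to C_0\xrightarrow{g}M\to 0$ with $C_0\in\mathcal C$ and $g$ a right $\mathcal C$-approximation.

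The kernel $K$ need not lie in $\mathcal X$, so we refine the approximation. The main obstacle is to show that a right $\mathcal C$-approximation, suitably chosen, has kernel in $\mathcal C^\perp=\mathcal X$; I expect this to be the delicate step. I would use a Wakamatsu-type argument: take $g$ to be a \emph{minimal} right $\mathcal C$-approximation, which exists by finite length, by splitting off summands of $C_0$ on which $g$ vanishes (Krull–Schmidt). One then wants $\operatorname{Ext}^1(C',K)=0$ for $C'\in\mathcal C$.

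Wakamatsu's lemma in its usual form needs $\mathcal C$ closed under extensions, and ${}^\perp\mathcal X$ is indeed extension-closed by the long exact Ext sequence. Given $\xi\in\operatorname{Ext}^1(C',K)$, form the pushout $0\to C_0\to E\to C'\to 0$, which lies in $\mathcal C$. The composite $E\to M$ factors through $g$ by the approximation property, and minimality then forces the extension to split. This gives $\xi=0$ after the standard diagram chase.

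Hence $K\in\mathcal C^\perp=\mathcal X$, which establishes condition $(2)$ of Lemma \ref{lem_ctp_1}. Condition $(2')$ then follows from the lemma, so $(\mathcal C,\mathcal X)$ is complete. If the minimality chase proves awkward, a fallback is the dual approach: take a minimal left $\mathcal X$-approximation $M\to X_M$ (note $\mathcal X=\operatorname{add}X$ and contains the injectives, so the map is injective) and show the cokernel lies in ${}^\perp\mathcal X=\mathcal C$ by the dual Wakamatsu argument.
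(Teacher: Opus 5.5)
Your proposal is correct and follows essentially the same route as the paper: use $\mathcal C=\operatorname{add}N$ to get a surjective right minimal $\mathcal C$-approximation of $M$, apply Wakamatsu's lemma to put its kernel in $\mathcal C^\perp=\mathcal X$, and conclude by Lemma~\ref{lem_ctp_1}. The only difference is that you construct the approximation and sketch the Wakamatsu diagram chase yourself, where the paper cites \cite{AR91} and \cite{Wak90}; the fallback you mention is not needed.
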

\begin{proof} \ Since $A$ is representation-finite, $\mathcal{C} = \operatorname{add}N$ for some $N\in A$-mod. Thus every $A$-module $M$ admits a right $\mathcal{C}$-approximation, which is necessarily surjective, and hence a right minimal $\mathcal{C}$-approximation (see, e.g., \cite[Proposition 1.1]{AR91}). By Wakamatsu's Lemma (cf. \cite[Proposition 1]{Wak90}; see also \cite [Lemma 1.3]{AR91}, or \cite[Corollary 7.2.3]{EJ00}), the kernel of a right minimal $\mathcal{C}$-approximation is in $\mathcal{X}$.
  By \Cref{lem_ctp_1},  $(\mathcal{C}, \mathcal{X})$ is complete.
\end{proof}

A cotorsion pair $(\mathcal{C},\mathcal{X})$ is \emph{hereditary} if, for every $\mathbb E$-triangle $A\longrightarrow B\longrightarrow C\dashrightarrow$, one has $A\in\mathcal C$ whenever $B,C\in\mathcal C$, and $C\in\mathcal X$ whenever $A,B\in\mathcal X$.

\vskip5pt

\begin{lemma}\label{lem_ctp_2} \label{cor_ctp_2} \ Let $(\mathcal{C},\mathcal{X})$ be a complete cotorsion pair in an extriangulated category $(\mathcal A,\mathbb E,\mathfrak s)$.

  \vskip5pt

  $(1)$ \ {\rm (\cite{Gil25}, \cite{S}, \cite{HZZZ})} \ The following are equivalent$:$

  \hskip15pt $\textup{(i)}$ \ $(\mathcal{C},\mathcal{X})$ is hereditary$;$

  \hskip15pt
  $\textup{(ii)}$ \ For every $\mathbb E$-triangle $A\longrightarrow B\longrightarrow C\dashrightarrow$ with $B,C\in\mathcal C$, one has $A\in\mathcal C$.

  \hskip15pt
  $\textup{(iii)}$ \ For every $\mathbb E$-triangle $A\longrightarrow B\longrightarrow C\dashrightarrow$ with $A,B\in\mathcal X$, one has $C\in\mathcal X;$

  \vskip5pt
  $(2)$ \  {\rm (\cite{GR99})} \  If $\mathcal{A}$ has enough projective objects, then the conditions in $(1)$ are also equivalent to the following$:$

  \hskip15pt
  $\textup{(i)}$ \ For every $\mathbb E$-triangle $M\longrightarrow P\longrightarrow C\dashrightarrow$ with $C\in\mathcal C$ and $P$ projective, one has $M\in\mathcal C;$

  \hskip15pt
  $\textup{(ii)}$ \ For every $C\in\mathcal C$, there is an $\mathbb E$-triangle $M\longrightarrow P\longrightarrow C\dashrightarrow$ with $P$ projective and $M\in\mathcal C.$

  \vskip5pt
  $(2')$ \ {\rm (\cite{GR99})} \ If $\mathcal{A}$ has enough injective objects, then the conditions in $(1)$ are also equivalent to the following$:$

  \hskip15pt
  $\textup{(i)}$ \ For every $\mathbb E$-triangle $X\longrightarrow I\longrightarrow M\dashrightarrow$ with $X\in\mathcal X$ and $I$ injective, one has $M\in\mathcal X;$

  \hskip15pt
  $\textup{(ii)}$ \ For every $X\in\mathcal X$, there is an $\mathbb E$-triangle $X\longrightarrow I\longrightarrow M\dashrightarrow$ with $I$ injective and $M\in\mathcal X.$
\end{lemma}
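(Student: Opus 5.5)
The plan is to argue directly with the axioms ET3, ET4 and their duals, and to use completeness only in (2) and (2'). I will prove (1) first; (2) and (2') then follow by dimension-shifting arguments. Throughout, the basic tool is that each $\mathbb E$-triangle $A\to B\to C\dashrightarrow$ gives the exact sequences $\mathbb E(C,-)\to\mathbb E(B,-)\to\mathbb E(A,-)$ and $\mathbb E(-,A)\to\mathbb E(-,B)\to\mathbb E(-,C)$. Note, however, that we do \emph{not} have higher extension groups in a general extriangulated category.

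\emph{Part (1).} By definition, (i) is equivalent to (ii) and (iii) holding together, so it suffices to show (ii)$\Rightarrow$(iii); the converse is dual.

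Take an $\mathbb E$-triangle $A\to B\to C\dashrightarrow$ with $A,B\in\mathcal X$. We must show $\mathbb E(L,C)=0$ for every $L\in\mathcal C$.

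Using completeness, choose $X\to D\to L\dashrightarrow$ with $D\in\mathcal C$ and $X\in\mathcal X$. Given $\gamma\in\mathbb E(L,C)$, the goal is to produce a factorization of $\gamma$ that lands in a term killed by the orthogonality. Concretely, realize $\gamma$ as $C\to E\to L\dashrightarrow$ and compose deflations via ET4$^{\mathrm{op}}$ with $B\to C$. This yields an $\mathbb E$-triangle $A\to E'\to E\dashrightarrow$, with $E'$ fitting into $B\to E'\to L\dashrightarrow$. Since $B\in\mathcal X$ and $L\in\mathcal C$, the last triangle splits, so $E'\cong B\oplus L$.

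A cleaner alternative is the standard trick. Take a special $\mathcal C$-precover of $L$ and compare. Because $L\in\mathcal C$, I would instead pull $\gamma$ back along a deflation $D\to L$ with kernel in $\mathcal C$. One gets such a deflation by choosing $K\to D'\to L\dashrightarrow$ with $D'\in\mathcal C$ arbitrary and checking, via (ii), that $K\in\mathcal C$.

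The key computation is then the exact sequence
\[\mathbb E(L,B)\to\mathbb E(L,C)\to \mathbb E(K,A)\]
obtained from the connecting morphism. This would show that $\gamma$ maps to zero, since $K\in\mathcal C$ and $A\in\mathcal X$, and hence that $\gamma$ lifts to $\mathbb E(L,B)=0$.

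The main obstacle is producing this ``connecting'' exactness without $\mathbb E^2$. I will get it from the $3\times3$ diagram of \cite[Proposition 3.15]{NP19}, as in the proof of Lemma \ref{lem_ctp_1}. From $\gamma$ and the triangle $A\to B\to C$, that diagram gives a square whose middle object $M$ sits in triangles $A\to M\to E$ and $B\to M\to\cdots$. Splitting arguments then show that $\gamma$ is a pushforward of an element of $\mathbb E(\text{something in }\mathcal C,\ \text{something in }\mathcal X)=0$.

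\emph{Part (2).} The implication (1)(ii)$\Rightarrow$(2)(i) is immediate, since projectives lie in $\mathcal C$. Also (2)(i)$\Rightarrow$(2)(ii), using enough projectives.

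For (2)(ii)$\Rightarrow$(1)(iii), take $A,B\in\mathcal X$, $A\to B\to C\dashrightarrow$, and $L\in\mathcal C$ with a triangle $M\to P\to L\dashrightarrow$ where $M\in\mathcal C$. Any $\gamma\in\mathbb E(L,C)$ satisfies $\gamma=\delta\circ f$ for a morphism $f\colon M\to C$, where $\delta$ denotes the extension of $M\to P\to L$; here I use the universal property of the projective $P$ (by \cite[Proposition 3.24]{NP19}, $\mathbb E(L,-)$ is a quotient of $\mathrm{Hom}(M,-)$).

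Since $\mathbb E(M,A)=0$, the morphism $f$ lifts along $B\to C$ to a map $M\to B$. Hence $\gamma$ factors through $\mathbb E(L,B)=0$.

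\emph{Part (2').} This is the dual of (2).
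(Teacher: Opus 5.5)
Your Part (1) does not prove (ii)$\Rightarrow$(iii), which is the nontrivial direction. None of your three attempts gets past the central step.

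\emph{The first attempt is circular.} ET4$^{\mathrm{op}}$ does not apply: it needs two composable deflations, whereas $B\to C$ lands on the \emph{first} term of the triangle $C\to E\to L\dashrightarrow$ realizing $\gamma$. An object $E'$ with $\mathbb E$-triangles $A\to E'\to E\dashrightarrow$ and $B\to E'\to L\dashrightarrow$ compatible with $\gamma$ exists precisely when $\gamma$ lies in the image of $\mathbb E(L,B)\to\mathbb E(L,C)$. So it presupposes the conclusion; for modules the obstruction is the Yoneda class of $A\to B\to E\to L$ in $\mathrm{Ext}^2(L,A)$.

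\emph{The appeal to \cite[Proposition 3.15]{NP19} fails.} The ``middle object $M$'' you want from it is the same circular object. That proposition needs two $\mathbb E$-triangles with a common third (dually, first) term, and $A\to B\to C\dashrightarrow$ and $C\to E\to L\dashrightarrow$ do not have one.

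\emph{The connecting map does not exist.} There is no natural map $\mathbb E(L,C)\to\mathbb E(K,A)$ attached to a triangle $K\to D'\to L\dashrightarrow$ with $D'$ merely in $\mathcal C$. Pulling $\gamma$ back along $D'\to L$ only produces another extension of an object of $\mathcal C$ by $C$, so that reduction makes no progress.

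The missing idea is to apply completeness to the middle term $E$, not to $L$.
\begin{itemize}
\item Take $X^E\to C^E\xrightarrow{h}E\dashrightarrow$ with $C^E\in\mathcal C$, and apply ET4$^{\mathrm{op}}$ to it and to $C\to E\xrightarrow{q}L\dashrightarrow$.
\item This gives $\mathbb E$-triangles $X^E\to K\xrightarrow{g}C\dashrightarrow$ and $K\to C^E\xrightarrow{qh}L\dashrightarrow$, the latter with extension $\varepsilon$ satisfying $g_*\varepsilon=\gamma$.
\item Since $C^E,L\in\mathcal C$, (1)(ii) gives $K\in\mathcal C$. Hence $\mathbb E(K,A)=0$, and $g=b\circ g'$ for some $g'\colon K\to B$, where $b\colon B\to C$.
\item Therefore $\gamma=b_*(g'_*\varepsilon)$ with $g'_*\varepsilon\in\mathbb E(L,B)=0$, so $\gamma=0$.
\end{itemize}
This is exactly the lifting mechanism of your Part (2), with $K\to C^E\to L$ in place of $M\to P\to L$. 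Alternatively, you can simply cite (1), as the paper does.

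Your Part (2) is correct but takes a different route from the paper.
\begin{itemize}
\item The paper shows (2)(ii)$\Rightarrow$(1)(ii). It builds the diagram of \cite[Proposition 3.15]{NP19} from $A\to B\to C$ and $M\to P\to C$, and splits the column $A\to D\to P$ because $P$ is projective.
\item You show (2)(ii)$\Rightarrow$(1)(iii) via the surjection $\mathrm{Hom}(M,C)\to\mathbb E(L,C)$. You therefore need the (iii)$\Rightarrow$(ii) half of (1) to close your cycle, and that half currently rests on the flawed argument above.
\item Combined, the two arguments give both (1)(ii) and (1)(iii) from (2)(ii). So with enough projectives, (2) becomes independent of (1) altogether.
\end{itemize}
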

\begin{proof} \ (1) \ This is well-known in \cite[Theorem 2.16]{Gil25} for exact categories, see also \cite[Lemma 6.17]{S}; and in \cite[Proposition 2.18]{HZZZ} for extriangulated categories.

  \vskip5pt

  (2) \  This is well-known for the module category of a ring, see \cite[Theorem 2.10]{GR99}.

  \vskip5pt

  Assume that $\mathcal{A}$ has enough projective objects.
  The implications $\textup{(1-ii)} \Longrightarrow \textup{(i)} \Longrightarrow \textup{(ii)}$ are clear.
  It remains to show the implication $\textup{(ii)} \Longrightarrow \textup{(1-ii)}$. Consider any $\mathbb E$-triangle
  $A \longrightarrow B \longrightarrow C \xdashrightarrow{\delta}$
  with $B,C\in\mathcal C$.
  By $\textup{(ii)}$ there exists an $\mathbb E$-triangle
  $M\longrightarrow P\longrightarrow C\xdashrightarrow{\xi}$
  with $P$ projective and $M\in\mathcal C$.
  Applying \cite[Proposition 3.15]{NP19} one obtains a commutative diagram of $\mathbb E$-triangles:
  \begin{equation*}
    \xymatrix@R=15pt{
    {} & A \ar@{=}[r] \ar[d] & A \ar[d] & {} \\
    M \ar@{=}[d] \ar[r] & D \ar[r] \ar[d] & B \ar[d] \ar@{-->}[r]^{\zeta} & {} \\
    M \ar[r] & P \ar[r] \ar@{-->}[d]_{\lambda} & C \ar@{-->}[r]^{\xi} \ar@{-->}[d]^{\delta} & {} \\
    {} & {} & {} & {}
    }
  \end{equation*}
  Since $M, B\in\mathcal C$, it follows that $D\in\mathcal C$.
  Since $P$ is projective, one has $\lambda=0$. Thus the middle column splits and hence $A\in\mathcal C$.
  This proves $\textup{(1-ii)}$.

  \vskip5pt

  (2') is the dual of (2). \end{proof}

Recall that a subcategory $\mathcal{W}\subseteq \mathcal{A}$ is \emph{thick} if it is closed under direct summands and satisfies the \emph{2-out-of-3 property} for $\mathbb E$-triangles, i.e.,
for any $\mathbb E$-triangle $A \longrightarrow B \longrightarrow C \dashrightarrow$ in $\mathcal A$, if two of $A,B,C$ are in $\mathcal W$, then the third one is also in $\mathcal W$.

\vskip5pt

\begin{definition-proposition} \label{def_inj_ctp} \ {\rm(\cite{Gil25})} \ A complete cotorsion pair $(\mathcal{W}, \mathcal{B})$ in an extriangulated category $(\mathcal A,\mathbb E,\mathfrak s)$ with enough injective objects
is called an \emph{injective cotorsion pair} if one of the following equivalent conditions holds$:$
\vskip5pt
$(1)$ \ $\mathcal{W}$ is thick, and $\mathcal{W} \cap \mathcal{B} = \mathcal{I}(\mathcal{A});$
\vskip5pt
$(2)$ \ $\mathcal{W}$ is thick, and $\mathcal{I}(\mathcal{A}) \subseteq \mathcal{W};$
\vskip5pt
$(3)$ \ $(\mathcal{W}, \mathcal{B})$ is hereditary, and $\mathcal{W} \cap \mathcal{B} = \mathcal{I}(\mathcal{A})$.
\end{definition-proposition}

\begin{proof} \ This has been proved for exact categories in  \cite[2.20, 2.21]{Gil25}. The proof for  extriangulated categories is similar.
  The implications $(1)\Longrightarrow (2)$ and $(1)\Longrightarrow (3)$ are clear.

  \vskip5pt

  $(2)\Longrightarrow (1)$: \ It remains to show $\mathcal W\cap\mathcal B\subseteq\mathcal I(\mathcal A)$.
  For any $X\in\mathcal W\cap\mathcal B$, by assumption there is an $\mathbb E$-triangle $X\longrightarrow I\longrightarrow Y\xdashrightarrow{\delta}$ with $I$ injective.
  By $(2)$ one has $I\in\mathcal W$, and  hence $Y\in\mathcal W$.
  Since $X\in\mathcal B=\mathcal W^\perp$, one has $\mathbb E(Y,X)=0$.
  Thus $I\cong X\oplus Y$, and $X$ is injective.

  \vskip5pt

  $(3)\Longrightarrow (1)$: \ Since $\mathcal W={}^\perp\mathcal B$ and $(\mathcal{W}, \mathcal{B})$ is a hereditary cotorsion pair, it remains to show that for every $\mathbb E$-triangle $W'\longrightarrow W\longrightarrow M\xdashrightarrow{\eta}$ with $W',W\in\mathcal W$, one has $M\in\mathcal W$.
  Since $(\mathcal{W}, \mathcal{B})$ is a complete cotorsion pair, there is an $\mathbb E$-triangle $B^M\longrightarrow W^M\longrightarrow M\xdashrightarrow{\delta}$ with $W^M\in\mathcal W$ and $B^M\in\mathcal B$.
  By \cite[Proposition 3.15]{NP19}, there is a commutative diagram of $\mathbb E$-triangles:
  \begin{equation*}
    \xymatrix@R=15pt{
    {} & B^M \ar@{=}[r] \ar@{..>}[d] & B^M \ar[d] & {} \\
    W' \ar@{=}[d] \ar@{..>}[r] & P \ar@{..>}[r] \ar@{..>}[d] & W^M \ar[d] \ar@{-->}[r]^{\theta} & {} \\
    W' \ar[r] & W \ar[r] \ar@{-->}[d]_{\zeta} & M \ar@{-->}[r]^{\eta} \ar@{-->}[d]^{\delta} & {} \\
    {} & {} & {} & {}
    }
  \end{equation*}
  The middle row gives $P\in\mathcal W$.
  Since $(\mathcal W,\mathcal B)$ is hereditary, the middle column then gives $B^M\in\mathcal W$.
  Thus $B^M\in\mathcal W\cap\mathcal B=\mathcal I(\mathcal A)$, and hence
  the $\mathbb E$-triangle $B^M\longrightarrow W^M\longrightarrow M\xdashrightarrow{\delta}{}$ splits.
  Therefore  $M$ is a direct summand of $W^M$, and $M\in\mathcal W$.
\end{proof}

\vskip5pt

\begin{remark} \ A hereditary cotorsion pair $(\mathcal{W}, \mathcal{B})$ with $\mathcal{I}(\mathcal{A}) \subseteq \mathcal{W}$ is not necessarily an injective cotorsion pair, as the following example shows.

  \vskip5pt

  Let $\mathrm{Ch}(k)$ be the category of complexes of vector spaces over field $k$.
  It is a Frobenius abelian category whose projective-injective objects are precisely the exact complexes.
  Put
  \begin{equation*}
    \mathcal{W}=\{X\mid \mathrm H^n(X)=0 \text{ for } n<0\}, \quad \mathcal{B}=\{Y\mid \mathrm H^n(Y)=0 \text{ for } n>0\}.
  \end{equation*}
  Let $K(k)$ denote the homotopy category of $\mathrm{Ch}(k)$.
  Then
  $\mathrm{Ext}_{\mathrm{Ch}(k)}^1(X,Y) \cong \mathrm{Hom}_{K(k)}(X,Y[1])$.
  Hence $\mathrm{Ext}_{\mathrm{Ch}(k)}^1(X,Y)=0$ if and only if, for every $n\in\mathbb Z$, at least one of $\mathrm H^n(X)$ and $\mathrm H^{n+1}(Y)$ is zero.
  It follows that $(\mathcal W,\mathcal B)$ is a complete hereditary cotorsion pair.
  Moreover, $\mathcal I(\mathrm{Ch}(k))\subseteq\mathcal W$, but $\mathcal W$ is not thick. Thus this cotorsion pair is not an injective cotorsion pair.
\end{remark}

\vskip5pt

We need the following result concerning the intersection of two cotorsion pairs. It is first stated for weakly idempotent complete exact categories in \cite{WZ25}.

\vskip5pt

\begin{lemma}  {\rm (\cite[Theorem 1.1]{WZ25})} \label{lem_ctp_3} \ Let $(\mathcal{C}, \mathcal{X})$ and $(\mathcal{W}, \mathcal{B})$ be cotorsion pairs in an extriangulated category $(\mathcal A,\mathbb E,\mathfrak s)$.
  Then  $(\mathcal{C} \cap \mathcal{W}, \ (\mathcal{C} \cap \mathcal{W})^\perp)$ is also a cotorsion pair. Moreover, if $\mathcal{X} \subseteq \mathcal{W}$, and if
  $(\mathcal{C}, \mathcal{X})$ and $(\mathcal{W}, \mathcal{B})$ are complete $($respectively, complete and hereditary$)$,
  then $(\mathcal{C} \cap \mathcal{W}, \ (\mathcal{C} \cap \mathcal{W})^\perp)$ is also a complete $($respectively, complete and hereditary$)$.
\end{lemma}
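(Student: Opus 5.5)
The argument has three parts: show that $(\mathcal C\cap\mathcal W,(\mathcal C\cap\mathcal W)^\perp)$ is a cotorsion pair, then build the two approximation triangles, then deduce heredity.

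\textbf{Cotorsion pair.} Put $\mathcal D=\mathcal C\cap\mathcal W$ and $\mathcal Y=\mathcal D^\perp$. It suffices to show ${}^\perp\mathcal Y=\mathcal D$. The inclusion $\mathcal D\subseteq{}^\perp(\mathcal D^\perp)$ is automatic. For the converse, note that $\mathcal X=\mathcal C^\perp\subseteq\mathcal D^\perp$ and $\mathcal B=\mathcal W^\perp\subseteq\mathcal D^\perp$. Hence
\[
{}^\perp(\mathcal D^\perp)\subseteq{}^\perp\mathcal X\cap{}^\perp\mathcal B=\mathcal C\cap\mathcal W=\mathcal D.
\]
This gives the first claim and needs no completeness.

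\textbf{Completeness.} Now assume $\mathcal X\subseteq\mathcal W$ and that both pairs are complete. I construct both approximation triangles for an arbitrary $M$, without invoking Lemma~\ref{lem_ctp_1}, since $\mathcal A$ need not have enough projectives or injectives.

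For the left triangle, take $B^M\to W^M\to M\dashrightarrow$ with $W^M\in\mathcal W$, $B^M\in\mathcal B$. Then approximate $W^M$ by $X'\to C'\to W^M\dashrightarrow$ with $C'\in\mathcal C$, $X'\in\mathcal X$. From $X'\in\mathcal X\subseteq\mathcal W$, $W^M\in\mathcal W$, and the fact that a left class of a complete cotorsion pair is closed under extensions, we want $C'\in\mathcal W$. Here is the subtlety: extension-closure of $\mathcal W$ gives middle terms in $\mathcal W$, but $C'$ is the middle term only if we had $X'\to C'\to W^M$ with both ends in $\mathcal W$, which is exactly our situation, so $C'\in\mathcal W$ and hence $C'\in\mathcal D$.

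Compose the deflations $C'\to W^M\to M$ using ET4$^{\mathrm{op}}$. This gives an $\mathbb E$-triangle $K\to C'\to M\dashrightarrow$ together with an $\mathbb E$-triangle $X'\to K\to B^M\dashrightarrow$. Both $X'$ and $B^M$ lie in $\mathcal Y$, and $\mathcal Y$, being a right perpendicular class, is closed under extensions. So $K\in\mathcal Y$, which gives the left approximation.

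For the right triangle, take $M\to X_M\to C_M\dashrightarrow$ with $C_M\in\mathcal C$, $X_M\in\mathcal X\subseteq\mathcal W$. Then take $X_M\to B\to W\dashrightarrow$ from $(\mathcal W,\mathcal B)$. Compose the inflations $M\to X_M\to B$ using ET4. This gives $M\to B\to F\dashrightarrow$ together with $C_M\to F\to W\dashrightarrow$. We have $B\in\mathcal B\subseteq\mathcal Y$. For $F$: it is an extension of $C_M\in\mathcal C$ and $W\in\mathcal W$, which does not directly put it in $\mathcal D$.

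\textbf{Main obstacle.} The right approximation needs $F\in\mathcal D$, and this is where I expect the real difficulty. First, $W\in\mathcal W$ and $X_M,B\in\mathcal W$; note $B\in\mathcal W$ would be needed, which is not given. So instead I would try to prove $F\in\mathcal C$ and $F\in\mathcal W$ separately.

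Getting $F\in\mathcal W$ is still nontrivial. If that approach fails, I would reorder the construction: first approximate $M\to B_M\to W_M$, then approximate $B_M$ via $(\mathcal C,\mathcal X)$ as $B_M\to X''\to C''$. This gives $X''\in\mathcal X\cap\mathcal W^\perp$? No. Instead I would follow Wang–Zhang's argument, which uses a pushout and pullback diagram (\cite[Proposition 3.15]{NP19}) to replace $B$ by something in $\mathcal X\cap\mathcal B$. The key input there is that $\mathcal X\subseteq\mathcal W$ gives $\mathcal X\cap\mathcal W^\perp$-approximations.

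\textbf{Heredity.} If both pairs are hereditary, take an $\mathbb E$-triangle whose last two terms lie in $\mathcal D$. Its first term lies in $\mathcal C$ by heredity of $(\mathcal C,\mathcal X)$, and in $\mathcal W$ by heredity of $(\mathcal W,\mathcal B)$. Lemma~\ref{lem_ctp_2}(1) then gives heredity of $(\mathcal D,\mathcal Y)$.
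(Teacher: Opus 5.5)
\paragraph{Gap: the second approximation triangle is not constructed.} Your proof that $(\mathcal D,\mathcal Y)$ is a cotorsion pair, your left approximation via ET4$^{\mathrm{op}}$, and your heredity step all match the paper's argument. The gap is the other approximation triangle $M\to Y_M\to D_M\dashrightarrow$ with $Y_M\in\mathcal Y$ and $D_M\in\mathcal D$. You do not construct it. The fallback you describe (replacing $B$ by something in $\mathcal X\cap\mathcal B$, ``$\mathcal X\cap\mathcal W^\perp$-approximations'') does not correspond to a working argument; neither ingredient plays any role.

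Your first route fails for the reason you suspect. In fact $B\in\mathcal W\cap\mathcal B$ does hold, since $B$ is an extension of $X_M\in\mathcal W$ and $W\in\mathcal W$. But the cone $F$ sits in $C_M\to F\to W\dashrightarrow$ with $C_M$ only in $\mathcal C$ and $W$ only in $\mathcal W$. Since $\mathcal W$ is not assumed thick, nothing places $F$ in $\mathcal C\cap\mathcal W$. Your reordered attempt applies $(\mathcal C,\mathcal X)$ to $B_M$, which is the wrong object.

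\paragraph{The missing idea.} Apply $(\mathcal C,\mathcal X)$ to the \emph{cone} of the $(\mathcal W,\mathcal B)$-triangle, exactly as you already did in the left approximation.
\begin{enumerate}
\item Take $M\to B_M\to W_M\dashrightarrow$ with $B_M\in\mathcal B$ and $W_M\in\mathcal W$.
\item Take $X\to C\to W_M\dashrightarrow$ with $C\in\mathcal C$ and $X\in\mathcal X$. As before, $X\in\mathcal X\subseteq\mathcal W$ and extension-closure of $\mathcal W$ force $C\in\mathcal C\cap\mathcal W=\mathcal D$.
\item Apply \cite[Proposition 3.15]{NP19} to the two deflations $B_M\to W_M$ and $C\to W_M$ with common target. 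It produces an object $E$ and $\mathbb E$-triangles $M\to E\to C\dashrightarrow$ and $X\to E\to B_M\dashrightarrow$.
\item Since $X\in\mathcal C^\perp\subseteq\mathcal Y$, $B_M\in\mathcal W^\perp\subseteq\mathcal Y$, and $\mathcal Y$ is extension-closed, $E\in\mathcal Y$. So $M\to E\to C\dashrightarrow$ is the required triangle.
\end{enumerate}
This is the paper's construction. It mirrors your left approximation, using the diagram of \cite[Proposition 3.15]{NP19} in place of ET4$^{\mathrm{op}}$.

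Your heredity step uses Lemma~\ref{lem_ctp_2}(1), which presupposes completeness of $(\mathcal D,\mathcal Y)$. So it too depends on closing this gap.
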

\begin{proof} \ It is clear that $^\perp ((\mathcal{C} \cap \mathcal{W})^\perp) \subseteq {}^\perp (\mathcal{C}^\perp) = \mathcal{C}$ and $^\perp ((\mathcal{C} \cap \mathcal{W})^\perp) \subseteq \mathcal{W}$.
  Thus $^\perp ((\mathcal{C} \cap \mathcal{W})^\perp) \subseteq \mathcal{C} \cap \mathcal{W}$.
  While $\mathcal{C} \cap \mathcal{W} \subseteq {}^\perp ((\mathcal{C} \cap \mathcal{W})^\perp)$ is clear.  It follows that  $(\mathcal{C} \cap \mathcal{W}, \ (\mathcal{C} \cap \mathcal{W})^\perp)$ is a cotorsion pair.

  \vskip5pt

  Suppose that $(\mathcal{C}, \mathcal{X})$ and $(\mathcal{W}, \mathcal{B})$ are complete with $\mathcal{X} \subseteq \mathcal{W}$.
  Let $M\in\mathcal{A}$. Then there are $\mathbb E$-triangles $M \longrightarrow B_M \longrightarrow W_M \xdashrightarrow{\delta}$ and $X^{W_M} \longrightarrow C^{W_M} \longrightarrow W_M \xdashrightarrow{\eta}$ with $B_M\in\mathcal{B}$, $W_M\in\mathcal{W}$, $C^{W_M}\in\mathcal{C}$ and $X^{W_M}\in\mathcal{X}$.
  Using \cite[Proposition 3.15]{NP19} one obtains a commutative diagram of $\mathbb E$-triangles:
  \[
    \xymatrix@R=15pt{
    & M \ar@{.>}[d] & M \ar@{->}[d] \ar@{=}[l] &  \\
    X^{W_M} \ar@{=}[d] \ar@{.>}[r] & E \ar@{.>}[d] \ar@{.>}[r] & B_M \ar@{->}[d] \ar@{-->}[r]^{\mu} & {} \\
    X^{W_M} \ar@{->}[r] & C^{W_M} \ar@{->}[r] \ar@{-->}[d]_{\xi} & W_M \ar@{-->}[r]^{\eta} \ar@{-->}[d]^{\delta} & {} \\
    & {} & {} &
    }
  \]
  Since $X^{W_M}$ and $B_M$ are in $(\mathcal{C} \cap \mathcal{W})^\perp$, one has $E \in (\mathcal{C} \cap \mathcal{W})^\perp$.
  By assumption $\mathcal{X} \subseteq \mathcal{W}$, one has $X^{W_M} \in \mathcal{W}$, and hence $C^{W_M} \in \mathcal{C} \cap \mathcal{W}$.
  Thus  $M \longrightarrow E \longrightarrow C^{W_M} \xdashrightarrow{\xi}$ is a required $\mathbb E$-triangle.

  \vskip5pt

  Again since $(\mathcal{C}, \mathcal{X})$ and $(\mathcal{W}, \mathcal{B})$ are complete, there are $\mathbb E$-triangles $B^M \longrightarrow W^M \longrightarrow M \xdashrightarrow{\zeta}$ and $X^{W^M} \longrightarrow C^{W^M} \longrightarrow W^M \xdashrightarrow{\theta}$ with $W^M\in\mathcal{W}$, $B^M\in\mathcal{B}$,  $C^{W^M}\in\mathcal{C}$ and $X^{W^M}\in\mathcal{X}$.
  By \textup{ET4}$^{\mathrm{op}}$ there is a commutative diagram of $\mathbb E$-triangles:
  \[
    \xymatrix@R=15pt{
    X^{W^M} \ar@{=}[d] \ar@{.>}[r] & F \ar@{.>}[r] \ar@{.>}[d] & B^M \ar@{->}[d] \ar@{-->}[r]^{\rho} & {} \\
    X^{W^M} \ar@{->}[r] & C^{W^M} \ar@{->}[r] \ar@{.>}[d] & W^M \ar@{->}[d] \ar@{-->}[r]^{\theta} & {} \\
    & M \ar@{=}[r] \ar@{-->}[d]_{\nu} & M \ar@{-->}[d]^{\zeta} &  \\
    & {} & {} &
    }
  \]
  Since $B^M$ and $X^{W^M}$ are in $(\mathcal{C} \cap \mathcal{W})^\perp$, one has $F \in (\mathcal{C} \cap \mathcal{W})^\perp$.
  By $\mathcal{X} \subseteq \mathcal{W}$, one has $X^{W^M} \in \mathcal{W}$ and $C^{W^M} \in \mathcal{C} \cap \mathcal{W}$.
  Thus the $\mathbb E$-triangle in the middle column shows that $(\mathcal{C} \cap \mathcal{W}, \ (\mathcal{C} \cap \mathcal{W})^\perp)$ is complete.

  \vskip5pt

  Assume that  $(\mathcal{C}, \mathcal{X})$ and $(\mathcal{W}, \mathcal{B})$ are complete and hereditary with $\mathcal{X} \subseteq \mathcal{W}$.
  By \Cref{lem_ctp_2} any $\mathbb E$-triangle $A\longrightarrow B\longrightarrow C\dashrightarrow$ with $B,C\in\mathcal{C}\cap\mathcal{W}$ yields $A\in\mathcal{C}\cap\mathcal{W}$.
  So $(\mathcal{C} \cap \mathcal{W}, \ (\mathcal{C} \cap \mathcal{W})^\perp)$ is hereditary.
\end{proof}

\subsection{Model structures  and homotopy categories} $\,$

\vskip5pt

The notion of a model structure was introduced by D. Quillen \cite{Q1}. Let $\mathcal A$ be an arbitrary category. A model structure on $\mathcal A$ is a triple $(\mathsf{Cofib}, \mathsf{Fib}, \mathsf{Weq})$ of classes of morphisms satisfying the Two-out-of-three Axiom, the Retract Axiom, the Lifting Axiom, and the Factorization Axiom.
By definition, the homotopy category $\mathsf{Ho}(\mathcal A)$ of the model structure $(\mathsf{Cofib}, \mathsf{Fib}, \mathsf{Weq})$ on $\mathcal A$ is the localization $\mathcal A[\mathsf{Weq}^{-1}]$ of $\mathcal A$ with respect to $\mathsf{Weq}$.
For details, see \cite{Q1}, \cite[p.~233]{Qui69}, or \cite[Definition 1.2.3]{H1}.

\vskip5pt

Let $(\mathsf{Cofib},  \mathsf{Fib}, \mathsf{Weq})$ be a model structure on a category $\mathcal A$.
The morphisms in $\mathsf{Cofib}$ (respectively, $\mathsf{Fib}, \mathsf{Weq})$ are called \emph{cofibrations} (respectively, \emph{fibrations}, \emph{weak equivalences}).
Set $\mathsf{TCofib}: = \mathsf{Cofib}\cap \mathsf{Weq}$ and $\mathsf{TFib}: = \mathsf{Fib}\cap \mathsf{Weq}$.
The morphisms in $\mathsf{TCofib}$ (respectively, $\mathsf{TFib}$) are called \emph{trivial cofibrations} (respectively, \emph{trivial fibrations}).
Assume that  $\mathcal A$ has a zero object $0$. An object $C$ is \emph{cofibrant} if
the morphism $0\longrightarrow C$ is a cofibration; an object $F$ is \emph{fibrant} if $F\longrightarrow 0$ is a fibration;
and an object $W$ is {\it trivial} if $0\longrightarrow W$ is a weak equivalence, or equivalently, $W \longrightarrow 0$ is a weak equivalence. Denote by $\mathcal C$ (respectively, $\mathcal F, \mathcal W$) the class of
cofibrant objects (respectively, fibrant objects, trivial objects) of $\mathcal A$. The objects in $\mathcal C\cap\mathcal W$ (respectively, $\mathcal F\cap\mathcal W$) are said to be
  {\it trivially cofibrant} (respectively, {\it trivially fibrant}).

\subsection{Exact model structures  and \textup{Hovey} triples} $\,$

\begin{definition} \label{def_exact_model_structure} \ {\rm (\cite[Definition 5.5]{NP19})} \   A model structure $(\mathsf{Cofib}, \mathsf{Fib}, \mathsf{Weq})$ on an extriangulated category $(\mathcal A,\mathbb E,\mathfrak s)$ is \emph{exact}, provided that the following conditions are satisfied:
  \vskip5pt
  $(1)$ \ $i\in\mathsf{Cofib}$ if and only if there is an $\mathbb E$-triangle
  $A\xlongrightarrow{i}B\longrightarrow C\dashrightarrow$ with $C\in\mathcal C$;
  \vskip5pt
  $(2)$ \ $p\in\mathsf{Fib}$ if and only if there is an $\mathbb E$-triangle
  $F\longrightarrow A\xlongrightarrow{p}B\dashrightarrow$ with $F\in\mathcal F$;
  \vskip5pt
  $(3)$ \ $j\in\mathsf{TCofib}$ if and only if there is an $\mathbb E$-triangle
  $A\xlongrightarrow{j}B\longrightarrow S\dashrightarrow$ with $S\in\mathcal C\cap\mathcal W$;
  \vskip5pt
  $(4)$ \ $q\in\mathsf{TFib}$ if and only if there is an $\mathbb E$-triangle
  $V\longrightarrow A\xlongrightarrow{q}B\dashrightarrow$ with $V\in\mathcal F\cap\mathcal W.$
\end{definition}

\vskip5pt

\begin{remark} \ The notion of exact model structure on abelian categories has been introduced by Hovey \cite{Hov02}, which is usually called {\it abelian model structure};
  it is extended to exact categories by Gillespie \cite{Gil11} under the name of {\it exact model structure};
  Nakaoka and Palu \cite{NP19} call this \emph{admissible model structure} for extriangulated categories.
\end{remark}

\vskip5pt

\begin{definition} \label{def_Hovey_triple} \ {\rm (\cite{Hov02}, \cite{Gil11}, \cite{Gil16})} \ A \emph{Hovey triple} in an extriangulated category $(\mathcal A,\mathbb E,\mathfrak s)$ is a triple $(\mathcal C,\mathcal F,\mathcal W)$ of classes of objects such that
  $(\mathcal C\cap\mathcal W,\mathcal F)$ and $(\mathcal C,\mathcal F\cap\mathcal W)$ are complete cotorsion pairs in $\mathcal A$, and $\mathcal W$ is a thick subcategory of $\mathcal A$.
  A Hovey triple $(\mathcal C,\mathcal F,\mathcal W)$ is {\it hereditary}, if the cotorsion pairs $(\mathcal C\cap\mathcal W,\mathcal F)$ and $(\mathcal C,\mathcal F\cap\mathcal W)$ are hereditary.
\end{definition}

For classes $\mathcal X$ and $\mathcal Y$ of objects in an extriangulated category $(\mathcal A,\mathbb E,\mathfrak s)$, put
\begin{align*}
  \operatorname{Cone}(\mathcal X,\mathcal Y)
   & :=\{M\mid \text{there is an $\mathbb E$-triangle
    $X\longrightarrow Y\longrightarrow M\dashrightarrow$
  with $X\in\mathcal X$ and $Y\in\mathcal Y$}\},      \\
  \operatorname{CoCone}(\mathcal X,\mathcal Y)
   & :=\{M\mid \text{there is an $\mathbb E$-triangle
    $M\longrightarrow X\longrightarrow Y\dashrightarrow$
    with $X\in\mathcal X$ and $Y\in\mathcal Y$}\}.
\end{align*}

\begin{definition} {\rm (\cite[Definition 5.1]{NP19})} \label{def_Hovey_twin} \ A pair $((\mathcal S, \mathcal T), \ (\mathcal U, \mathcal V))$ of complete cotorsion pairs in an extriangulated category $(\mathcal A,\mathbb E,\mathfrak s)$
  is a \emph{Hovey twin cotorsion pair},  provided that  $\mathbb E(\mathcal S,\mathcal V)=0$ and $\operatorname{Cone}(\mathcal V,\mathcal S) =\operatorname{CoCone}(\mathcal V,\mathcal S)$.
\end{definition}

The following observation identifies Hovey triples with Hovey twin cotorsion pairs.

\vskip5pt

\begin{lemma} \label{lem_Hovey_corr_1} \ {\rm (\cite{NP19})} \ Let $(\mathcal A,\mathbb E,\mathfrak s)$ be an extriangulated category.
  Then a Hovey triple $(\mathcal C,\mathcal F,\mathcal W)$ yields a Hovey twin cotorsion pair $((\mathcal C\cap\mathcal W,\mathcal F),(\mathcal C,\mathcal F\cap\mathcal W));$
  conversely, a Hovey twin cotorsion pair $((\mathcal S,\mathcal T),(\mathcal U,\mathcal V))$ yields a Hovey triple $(\mathcal U,\mathcal T, \mathrm{Cone}(\mathcal{V},\mathcal{S}))$.
\end{lemma}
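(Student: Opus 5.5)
The forward direction is a direct verification from the definitions. The converse reduces to showing that $\mathrm{Cone}(\mathcal V,\mathcal S)$ is thick, which I would take from \cite[Section 5]{NP19} rather than reprove in full.

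\emph{Forward direction.} Let $(\mathcal C,\mathcal F,\mathcal W)$ be a Hovey triple. Put $\mathcal S=\mathcal C\cap\mathcal W$, $\mathcal T=\mathcal F$, $\mathcal U=\mathcal C$ and $\mathcal V=\mathcal F\cap\mathcal W$.

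\begin{enumerate}[label=(\alph*)]
\item Both pairs are complete cotorsion pairs by \Cref{def_Hovey_triple}.
\item Since $\mathcal V\subseteq\mathcal F=(\mathcal C\cap\mathcal W)^\perp$, one has $\mathbb E(\mathcal S,\mathcal V)=0$.
\item I will show that $\operatorname{Cone}(\mathcal V,\mathcal S)=\mathcal W=\operatorname{CoCone}(\mathcal V,\mathcal S)$, which gives the required equality.
\end{enumerate}

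For $\operatorname{Cone}(\mathcal V,\mathcal S)\subseteq\mathcal W$: in an $\mathbb E$-triangle $V\to S\to M\dashrightarrow$ with $V\in\mathcal V$ and $S\in\mathcal S$, both $V$ and $S$ lie in $\mathcal W$. The 2-out-of-3 property then gives $M\in\mathcal W$.

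For $\mathcal W\subseteq\operatorname{Cone}(\mathcal V,\mathcal S)$: take $M\in\mathcal W$. Completeness of $(\mathcal C,\mathcal F\cap\mathcal W)$ gives an $\mathbb E$-triangle $V\to C\to M\dashrightarrow$ with $C\in\mathcal C$ and $V\in\mathcal F\cap\mathcal W$. By 2-out-of-3, $C\in\mathcal W$, so $C\in\mathcal S$ and $M\in\operatorname{Cone}(\mathcal V,\mathcal S)$.

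The CoCone case is dual. Inclusion $\operatorname{CoCone}(\mathcal V,\mathcal S)\subseteq\mathcal W$ follows again from thickness. For the reverse inclusion, use completeness of $(\mathcal C\cap\mathcal W,\mathcal F)$ to get $M\to F\to S\dashrightarrow$ with $F\in\mathcal F$ and $S\in\mathcal C\cap\mathcal W$. Then $F\in\mathcal W$ by 2-out-of-3, so $F\in\mathcal V$.

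\emph{Converse direction.} Let $((\mathcal S,\mathcal T),(\mathcal U,\mathcal V))$ be a Hovey twin cotorsion pair and put $\mathcal W:=\operatorname{Cone}(\mathcal V,\mathcal S)=\operatorname{CoCone}(\mathcal V,\mathcal S)$.

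\begin{enumerate}[label=(\alph*)]
\item From $\mathbb E(\mathcal S,\mathcal V)=0$ we get $\mathcal S\subseteq{}^\perp\mathcal V=\mathcal U$ and $\mathcal V\subseteq\mathcal S^\perp=\mathcal T$.
\item Using the split triangles $0\to S\to S\dashrightarrow$ and $V\to V\to 0\dashrightarrow$ (via \textup{ET2}), we get $\mathcal S,\mathcal V\subseteq\mathcal W$.
\item For $\mathcal U\cap\mathcal W=\mathcal S$: take $M\in\mathcal U\cap\mathcal W$ and a triangle $V\to S\to M\dashrightarrow^{\delta}$. Since $\mathbb E(\mathcal U,\mathcal V)=0$, $\delta=0$, so $M$ is a direct summand of $S$. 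Because $\mathcal S={}^\perp\mathcal T$ is closed under summands, $M\in\mathcal S$.
\item Dually, for $\mathcal T\cap\mathcal W=\mathcal V$: take $M\in\mathcal T\cap\mathcal W$ and a triangle $M\to V\to S\dashrightarrow$. It splits because $\mathbb E(\mathcal S,\mathcal T)=0$, so $M$ is a summand of $V$ and hence $M\in\mathcal V$.
\end{enumerate}

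It follows that $(\mathcal U\cap\mathcal W,\mathcal T)=(\mathcal S,\mathcal T)$ and $(\mathcal U,\mathcal T\cap\mathcal W)=(\mathcal U,\mathcal V)$, and both are complete cotorsion pairs.

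\emph{Main obstacle: thickness of $\mathcal W$.} This is the content of Nakaoka–Palu's analysis in \cite[Section 5]{NP19}, and I would invoke it. If I had to sketch it, I would proceed as follows.

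\begin{itemize}
\item \emph{Closure under summands.} Given $M\oplus M'\in\mathcal W$, take approximation triangles for $M$ and for $M'$ with respect to $(\mathcal U,\mathcal V)$. Then use the description $\mathcal W=\operatorname{Cone}(\mathcal V,\mathcal S)$ together with the step $\mathcal U\cap\mathcal W=\mathcal S$ above to push the summand into $\mathcal S$.
\item \emph{2-out-of-3.} Given an $\mathbb E$-triangle $A\to B\to C\dashrightarrow$ with two terms in $\mathcal W$, present those terms as cones and cocones of $\mathcal V\to\mathcal S$. Glue these presentations with the octahedral axioms \textup{ET4}/\textup{ET4}$^{\mathrm{op}}$ and \cite[Proposition 3.15]{NP19}. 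This should produce a presentation of the third term as an object of $\operatorname{Cone}(\mathcal V,\mathcal S)$ or of $\operatorname{CoCone}(\mathcal V,\mathcal S)$.
\end{itemize}

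The delicate point is the mixed case (for example $A,C\in\mathcal W$, deducing $B\in\mathcal W$). There one must use the equality $\operatorname{Cone}=\operatorname{CoCone}$ together with the extension-closedness of $\mathcal S$ and $\mathcal V$.
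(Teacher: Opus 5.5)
Your proposal is correct and follows the paper's route. The paper also treats the forward direction as clear, and your verification that $\operatorname{Cone}(\mathcal V,\mathcal S)=\mathcal W=\operatorname{CoCone}(\mathcal V,\mathcal S)$, via completeness and 2-out-of-3, supplies exactly the omitted check; for the converse, the paper likewise cites \cite[Proposition 5.3]{NP19}, so your sound verification of $\mathcal U\cap\mathcal W=\mathcal S$ and $\mathcal T\cap\mathcal W=\mathcal V$, together with deferring the thickness of $\mathcal W$ to Nakaoka--Palu, matches what the paper does.
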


\begin{proof} \ Suppose that $(\mathcal C,\mathcal F,\mathcal W)$ is a Hovey triple. Then it is clear that $((\mathcal C\cap\mathcal W,\mathcal F),(\mathcal C,\mathcal F\cap\mathcal W))$ is a Hovey twin cotorsion pair.
  The converse statement is proved in \cite[Proposition 5.3]{NP19}.
\end{proof}

The Hovey correspondence establishes a one-to-one correspondence between exact model structures and \textup{Hovey} triples.
It was established by Hovey \cite[Theorem 2.2]{Hov02} for abelian categories and extended by Gillespie \cite{Gil11} to weakly idempotent complete exact categories; see also J. Šťovíček \cite[6.9]{S}.
X. Y. Yang \cite{Yan15} gave its triangulated version.
Nakaoka and Palu \cite[Section 5]{NP19} unified these results in weakly idempotent complete extriangulated categories.
Surprisingly, they proved that the homotopy category
$\mathsf{Ho}(\mathcal A) \cong (\mathcal{C}\cap \mathcal{F})/(\mathcal{C}\cap \mathcal{F}\cap\mathcal W)$
is triangulated \cite[Theorem 6.20]{NP19}, even when the Hovey triple $(\mathcal{C}, \mathcal{F}, \mathcal W)$ is not hereditary.

\vskip5pt

\begin{theorem} \label{lem_Hovey_corr_2} \ {\rm (Hovey correspondence)} \
  Let $(\mathcal A,\mathbb E,\mathfrak s)$ be a weakly idempotent complete extriangulated category.
  Then there is a one-to-one correspondence between exact model structures and Hovey triples in $\mathcal A$, given by
  \ $(\mathsf{Cofib},\mathsf{Fib},\mathsf{Weq})\longmapsto (\mathcal{C}, \ \mathcal{F}, \ \mathcal W)$,  where  $\mathcal C$, $\mathcal F$ and $\mathcal W$ are respectively the classes of cofibrant objects, fibrant objects and trivial objects$;$
  the inverse is given by \ $(\mathcal{C}, \ \mathcal{F}, \ \mathcal W) \longmapsto (\mathsf{Cofib},\mathsf{Fib},\mathsf{Weq}),$  where $\mathsf{Cofib}$  and $\mathsf{Fib}$ are given as in Definition {\rm \ref{def_exact_model_structure}}, and
  $$\mathsf{Weq} = \{q\circ j \ | \ j \ \mbox{is an} \  \mathbb E\mbox{-inflation}, \  \operatorname{Cone}j\in\mathcal C\cap\mathcal W;  \ q \ \mbox{is an} \  \mathbb E\mbox{-deflation}, \ \operatorname{CoCone}q\in\mathcal F\cap\mathcal W\}.$$
  Moreover, in this case, the homotopy category $\mathsf{Ho}(\mathcal A) \cong (\mathcal{C}\cap \mathcal{F})/(\mathcal{C}\cap \mathcal{F}\cap\mathcal W)$ is a triangulated category.\end{theorem}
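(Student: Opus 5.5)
This theorem is a synthesis of known results, so the proof is an assembly of references plus one verification. The bijection comes from combining \Cref{lem_Hovey_corr_1} with Nakaoka--Palu's correspondence \cite[Section 5]{NP19}, which relates admissible (exact) model structures to Hovey twin cotorsion pairs. This correspondence requires $\mathcal A$ to be weakly idempotent complete.

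The argument proceeds in two directions.

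\emph{From a model structure to a Hovey triple.} Start with an exact model structure and take $\mathcal C,\mathcal F,\mathcal W$ to be its cofibrant, fibrant and trivial objects. Nakaoka--Palu show that $((\mathcal C\cap\mathcal W,\mathcal F),(\mathcal C,\mathcal F\cap\mathcal W))$ is a Hovey twin cotorsion pair:
\begin{itemize}
\item the orthogonality relations come from the Lifting Axiom, translated through Definition \ref{def_exact_model_structure};
\item completeness comes from the Factorization Axiom applied to $0\to M$ and $M\to 0$;
\item thickness of $\mathcal W$ comes from two-out-of-three applied to the maps in an $\mathbb E$-triangle, using that inflations with trivially cofibrant cone are weak equivalences.
\end{itemize}
\Cref{lem_Hovey_corr_1} then shows that the Hovey triple $(\mathcal C,\mathcal F,\mathrm{Cone}(\mathcal F\cap\mathcal W,\mathcal C\cap\mathcal W))$ is exactly $(\mathcal C,\mathcal F,\mathcal W)$.

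\emph{From a Hovey triple to a model structure.} Define $\mathsf{Cofib}$ and $\mathsf{Fib}$ as in Definition \ref{def_exact_model_structure}, and define $\mathsf{Weq}$ by the displayed formula. \cite[Theorem 5.17]{NP19}, building on Hovey and Gillespie, shows that this is a model structure. Its trivial cofibrations are precisely the inflations with cone in $\mathcal C\cap\mathcal W$, and similarly for trivial fibrations. This identification uses thickness of $\mathcal W$ together with weak idempotent completeness, which is needed so that retracts of inflations are again inflations.

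\emph{The two maps are mutually inverse.} An exact model structure is determined by $\mathsf{Cofib}$ and $\mathsf{TFib}$ (or by $\mathsf{Fib}$ and $\mathsf{TCofib}$), because weak equivalences are exactly the composites $q\circ j$ with $j\in\mathsf{TCofib}$ and $q\in\mathsf{TFib}$. To see this, factor any $w\in\mathsf{Weq}$ as a trivial cofibration followed by a fibration; two-out-of-three then forces the fibration to be trivial. In the other direction, the cofibrant, fibrant and trivial objects of the model structure built from $(\mathcal C,\mathcal F,\mathcal W)$ recover $\mathcal C$, $\mathcal F$ and $\mathcal W$. The only part needing an argument is that an object $W$ is trivial exactly when $W\in\mathcal W$. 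If $W\in\mathcal W$, factor $0\to W$ using the complete cotorsion pairs and apply thickness. If $0\to W$ is a weak equivalence, it has the form $q\circ j$, whose cocone and cone lie in $\mathcal W$, so thickness gives $W\in\mathcal W$.

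\emph{The homotopy category.} The statement $\mathsf{Ho}(\mathcal A)\cong(\mathcal C\cap\mathcal F)/(\mathcal C\cap\mathcal F\cap\mathcal W)$ combines Quillen's fundamental theorem $\mathsf{Ho}(\mathcal A)\simeq(\mathcal C\cap\mathcal F)/_\sim$ with the description of $\sim$ as factoring through $\mathcal C\cap\mathcal F\cap\mathcal W$ (as in Theorem C). Triangulatedness is \cite[Theorem 6.20]{NP19}.

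The main obstacle is the recovery $\mathcal W=\{\text{trivial objects}\}$ together with closure of the defined $\mathsf{Weq}$ under composition, which two-out-of-three requires. Closure under composition needs the octahedral axioms ET4 and ET4$^{\mathrm{op}}$ to rewrite $j'\circ q$ as $q''\circ j''$; this is precisely where the equality $\operatorname{Cone}(\mathcal V,\mathcal S)=\operatorname{CoCone}(\mathcal V,\mathcal S)$ and weak idempotent completeness enter. I would take this step directly from \cite[Section 5]{NP19} rather than reprove it.
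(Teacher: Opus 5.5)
Your proposal is correct and takes essentially the paper's route: the paper gives no independent proof of this theorem. It obtains the bijection from \cite[Section 5]{NP19}, with \Cref{lem_Hovey_corr_1} translating Hovey twin cotorsion pairs into Hovey triples, and triangulatedness from \cite[Theorem 6.20]{NP19}, which is exactly what you assemble. One slip to fix: an exact model structure is not determined by $\mathsf{Cofib}$ and $\mathsf{TFib}$ alone, since each determines the other by lifting (for a non-semisimple selfinjective algebra $A$, the Hovey triples $(A\text{-}\mathrm{mod}, A\text{-}\mathrm{mod}, \mathcal P(A))$ and $(A\text{-}\mathrm{mod}, \mathcal P(A), A\text{-}\mathrm{mod})$ share them but have different weak equivalences); what your injectivity argument actually and correctly uses is that all four classes $\mathsf{Cofib},\mathsf{Fib},\mathsf{TCofib},\mathsf{TFib}$ are read off from $\mathcal C,\mathcal F,\mathcal C\cap\mathcal W,\mathcal F\cap\mathcal W$ by Definition \ref{def_exact_model_structure}, together with $\mathsf{Weq}=\mathsf{TFib}\circ\mathsf{TCofib}$.
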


\subsection{Exceptional \textup{Hovey} triples} $\,$

\vskip5pt

\begin{definition} \ {\rm(\cite[Definition 7.1]{NP19})} \label{frobextri} \  An extriangulated category $(\mathcal A, \mathbb E, \mathfrak s)$ is \emph{Frobenius},
  if it has enough projective objects and enough injective objects, and $\mathcal P(\mathcal A)=\mathcal I(\mathcal A)$.
\end{definition}

\vskip5pt

\begin{definition} \label{frobpair} \ A pair $(\mathcal A, \mathcal P)$ is {\it a Frobenius pair}, if $\mathcal A$ is a Frobenius  extriangulated category, and $\mathcal P = \mathcal P(\mathcal A).$
\end{definition}

\vskip5pt

Let $(\mathcal{C}, \mathcal{F}, \mathcal{W})$ be a \textup{Hovey} triple in an extriangulated category $(\mathcal A,\mathbb E,\mathfrak s)$.
The extension-closed subcategory $\mathcal{C} \cap \mathcal{F}$ inherits an extriangulated structure from $\mathcal{A}$.
In particular, its extension bifunctor is the restriction of $\mathbb E$ to $(\mathcal C\cap\mathcal F)^{\mathrm{op}}\times(\mathcal C\cap\mathcal F)$.

\vskip5pt

\begin{definition} \label{exceptional} \ $(1)$ \ A \textup{Hovey} triple $(\mathcal{C}, \mathcal{F}, \mathcal{W})$ in an extriangulated category $(\mathcal A,\mathbb E,\mathfrak s)$ will be said to be \emph{exceptional},
  if the pair $(\mathcal{C} \cap \mathcal{F}, \ \mathcal{C} \cap \mathcal{F} \cap \mathcal{W})$ is not a Frobenius pair. Moreover,

  $\textup{(i)}$ \ An exceptional  \textup{Hovey} triple $(\mathcal{C}, \mathcal{F}, \mathcal{W})$ is of \emph{Type} I,  if $\mathcal{C} \cap \mathcal{F}$ is a Frobenius extriangulated category,
  but $\mathcal P(\mathcal{C} \cap \mathcal{F})\ne \mathcal{C} \cap \mathcal{F} \cap \mathcal{W}$ (or equivalently, $\mathcal{C} \cap \mathcal{F} \cap \mathcal{W} \subsetneqq \mathcal P(\mathcal{C} \cap \mathcal{F})$).

  $\textup{(ii)}$ \ An exceptional  \textup{Hovey} triple $(\mathcal{C}, \mathcal{F}, \mathcal{W})$ is of \emph{Type} II,  if $\mathcal{C} \cap \mathcal{F}$ is not a Frobenius extriangulated category.

  \vskip5pt

  $(2)$ \ An exact model structure on an extriangulated category is \emph{an exceptional exact model structure}, or simply, \emph{exceptional}, provided that the corresponding Hovey  triple under the Hovey correspondence is exceptional.
\end{definition}

\vskip5pt

\begin{fact} \label{lem_Hovey_hereditary} \label{prop_hereditary_Hovey_triple} \label{Wcontainsp} \ Let $(\mathcal{C}, \mathcal{F}, \mathcal{W})$ be a \textup{Hovey} triple in an extriangulated category $(\mathcal A,\mathbb E,\mathfrak s)$.
  Then
  \vskip5pt
  $(1)$ \ $\mathcal P(\mathcal A)\cup \mathcal I(\mathcal A) \subseteq \mathcal{W}.$

  \vskip5pt
  $(2)$ \  $\mathcal{C} \cap \mathcal{F} \cap \mathcal{W}\subseteq \mathcal P(\mathcal{C} \cap \mathcal{F})\cap \mathcal I(\mathcal{C} \cap \mathcal{F})$.
  \vskip5pt
  $(3)$ \ The cotorsion pair $(\mathcal C,\mathcal F\cap\mathcal W)$ is hereditary if and only if $(\mathcal C\cap\mathcal W,\mathcal F)$ is hereditary.
  \vskip5pt
  $(4)$ \ A hereditary Hovey triple $(\mathcal{C}, \mathcal{F}, \mathcal{W})$ is not exceptional, i.e.,
  the pair $(\mathcal{C} \cap \mathcal{F}, \ \mathcal{C} \cap \mathcal{F} \cap \mathcal{W})$ is a Frobenius pair. In other words, no exceptional Hovey triple is hereditary.
\end{fact}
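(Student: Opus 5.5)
We prove the four parts in order, using only the cotorsion-pair axioms of a Hovey triple together with \Cref{lem_ctp_2}.

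For (1), let $P$ be projective in $\mathcal A$. The cotorsion pair $(\mathcal C\cap\mathcal W,\mathcal F)$ is complete, so there is an $\mathbb E$-triangle $F\longrightarrow S\longrightarrow P\dashrightarrow$ with $S\in\mathcal C\cap\mathcal W$ and $F\in\mathcal F$. Since $\mathbb E(P,F)=0$, this triangle splits, so $P$ is a direct summand of $S\in\mathcal W$. The class $\mathcal W$ is thick, hence closed under summands, and therefore $P\in\mathcal W$. The injective case is dual: take a triangle $I\longrightarrow V\longrightarrow C\dashrightarrow$ from the complete pair $(\mathcal C,\mathcal F\cap\mathcal W)$ with $V\in\mathcal F\cap\mathcal W$; it splits because $\mathbb E(C,I)=0$, so $I$ is a summand of $V$.

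For (2), take $X\in\mathcal C\cap\mathcal F\cap\mathcal W$. Since $X\in\mathcal C\cap\mathcal W={}^\perp\mathcal F$, we get $\mathbb E(X,\mathcal C\cap\mathcal F)=0$, so $X$ is projective in the extriangulated subcategory $\mathcal C\cap\mathcal F$. Since $X\in\mathcal F\cap\mathcal W=\mathcal C^\perp$, we get $\mathbb E(\mathcal C\cap\mathcal F,X)=0$, so $X$ is also injective there.

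For (3), assume $(\mathcal C,\mathcal F\cap\mathcal W)$ is hereditary. By \Cref{lem_ctp_2}(1), it suffices to show that for every $\mathbb E$-triangle $A\longrightarrow B\longrightarrow C\dashrightarrow$ with $B,C\in\mathcal C\cap\mathcal W$ we have $A\in\mathcal C\cap\mathcal W$. Heredity gives $A\in\mathcal C$, and the 2-out-of-3 property of $\mathcal W$ gives $A\in\mathcal W$. Conversely, assume $(\mathcal C\cap\mathcal W,\mathcal F)$ is hereditary. By the criterion (iii) of \Cref{lem_ctp_2}(1), take $A\longrightarrow B\longrightarrow C\dashrightarrow$ with $A,B\in\mathcal F\cap\mathcal W$. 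Heredity of this pair gives $C\in\mathcal F$, and thickness gives $C\in\mathcal W$, so $C\in\mathcal F\cap\mathcal W$. Hence $(\mathcal C,\mathcal F\cap\mathcal W)$ is hereditary.

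For (4), which is the only part with real content, we must show that $\mathcal C\cap\mathcal F$ has enough projectives and enough injectives, and that both classes equal $\mathcal C\cap\mathcal F\cap\mathcal W$. Let $M\in\mathcal C\cap\mathcal F$. By completeness of $(\mathcal C\cap\mathcal W,\mathcal F)$ there is an $\mathbb E$-triangle $F\longrightarrow S\longrightarrow M\dashrightarrow$ with $S\in\mathcal C\cap\mathcal W$ and $F\in\mathcal F$. Heredity of $(\mathcal C,\mathcal F\cap\mathcal W)$, available by (3), gives $F\in\mathcal C$ since $S,M\in\mathcal C$. Also $S\in\mathcal F$ because $\mathcal F$ is extension-closed. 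So this triangle lies in $\mathcal C\cap\mathcal F$ and has $S\in\mathcal C\cap\mathcal F\cap\mathcal W$, which by (2) is projective in $\mathcal C\cap\mathcal F$; hence $\mathcal C\cap\mathcal F$ has enough projectives. Dually, completeness of $(\mathcal C,\mathcal F\cap\mathcal W)$ gives $M\longrightarrow V\longrightarrow C'\dashrightarrow$ with $V\in\mathcal F\cap\mathcal W$ and $C'\in\mathcal C$. Heredity of $(\mathcal C\cap\mathcal W,\mathcal F)$ gives $C'\in\mathcal F$, and $V\in\mathcal C$ by extension-closure, so $\mathcal C\cap\mathcal F$ has enough injectives.

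It remains to identify the projectives and injectives. If $M$ is projective in $\mathcal C\cap\mathcal F$, the triangle $F\longrightarrow S\longrightarrow M\dashrightarrow$ above lies in $\mathcal C\cap\mathcal F$ and therefore splits. Then $M$ is a summand of $S\in\mathcal W$, so $M\in\mathcal C\cap\mathcal F\cap\mathcal W$ by thickness. Dually, every injective of $\mathcal C\cap\mathcal F$ is a summand of some $V\in\mathcal W$ and so lies in $\mathcal C\cap\mathcal F\cap\mathcal W$. Combined with (2), this gives $\mathcal P(\mathcal C\cap\mathcal F)=\mathcal I(\mathcal C\cap\mathcal F)=\mathcal C\cap\mathcal F\cap\mathcal W$. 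Therefore $\mathcal C\cap\mathcal F$ is Frobenius and $(\mathcal C\cap\mathcal F,\ \mathcal C\cap\mathcal F\cap\mathcal W)$ is a Frobenius pair, so a hereditary Hovey triple is not exceptional.
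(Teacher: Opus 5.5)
Your proof is correct and complete. The paper records this statement as a Fact and gives no proof, so your argument fills in what the paper leaves implicit. It uses only the cotorsion-pair axioms, the thickness of $\mathcal W$, and \Cref{lem_ctp_2}, and none of it needs weak idempotent completeness.

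Your argument for (4) starts from the same triangle $F\longrightarrow S\longrightarrow M\dashrightarrow$ that the paper uses in the proof of \Cref{prop_D_enough_PI}. There, without heredity, $F$ need not lie in $\mathcal C$, and the paper has to repair this with a minimal left approximation and Wakamatsu's Lemma. In the hereditary case your observation that $F\in\mathcal C$ makes that detour unnecessary.

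A small simplification is available in (1). $\mathbb E(P,-)=0$ already gives $P\in{}^\perp\mathcal F=\mathcal C\cap\mathcal W$, and $\mathbb E(-,I)=0$ gives $I\in\mathcal C^\perp=\mathcal F\cap\mathcal W$. So the splitting argument is not needed, and you get the sharper inclusions directly.
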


\vskip5pt

We will see in Example \ref{nonherednonexcep} that a Hovey triple which is not hereditary is not necessarily exceptional. That is, there exist Hovey triples which are neither hereditary  nor exceptional.

\vskip5pt

\begin{proposition}\label{prop_D_enough_PI}\label{corollary_D_enough_PI} \ Let $A$ be a finite-dimensional algebra, and $(\mathcal{C} , \mathcal{F}, \mathcal{W})$ a \textup{Hovey} triple in $A\text{-}\mathrm{mod}$.

  If $\mathcal{C} \cap \mathcal{F}$ is covariantly finite in $A\text{-}\mathrm{mod}$, then $\mathcal{C} \cap \mathcal{F}$ has enough projective objects.

  If $\mathcal{C} \cap \mathcal{F}$ is contravariantly finite in $A\text{-}\mathrm{mod}$, then $\mathcal{C} \cap \mathcal{F}$ has  enough injective objects.

  In particular, if $A$ is representation-finite, then  $\mathcal{C} \cap \mathcal{F}$ has enough projective objects and enough injective objects.
\end{proposition}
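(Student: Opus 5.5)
The plan is to prove the first assertion directly and obtain the second by duality. Set $\mathcal X=\mathcal C\cap\mathcal F$. The last assertion then follows at once: for representation-finite $A$, $\mathcal X=\operatorname{add}N$ for some module $N$, so $\mathcal X$ is both covariantly and contravariantly finite. Throughout I use the following facts.
\begin{itemize}
\item $\mathcal X$ is extension-closed, being an intersection of two extension-closed classes.
\item $\mathcal C\cap\mathcal F\cap\mathcal W\subseteq\mathcal P(\mathcal X)$, by Fact \ref{Wcontainsp}(2).
\item Every cotorsion pair in $A\text{-}\mathrm{mod}$ involved is complete.
\end{itemize}
Fix $M\in\mathcal X$. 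The goal is a conflation $K\to Y\to M$ with $Y\in\mathcal P(\mathcal X)$ and $K\in\mathcal X$.

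\textbf{Step 1 (supply of Ext-projectives of $\mathcal X$).} Let $P\to M$ be a projective cover. Take a left minimal $\mathcal X$-approximation $P\to X^P$, which exists because $\mathcal X$ is covariantly finite. Since $M\in\mathcal X$, the cover $P\to M$ factors through $X^P$, so we get an epimorphism $X^P\to M$. I would then use the dual of Wakamatsu's Lemma together with the projectivity of $P$ to show that $\mathbb E(X^P,\mathcal X)=0$, i.e.\ $X^P\in\mathcal P(\mathcal X)$; this is the Auslander--Smal\o\ argument. Some care is needed with the injectivity of $P\to X^P$. If that fails, I would replace $P$ by its image in $X^P$ and argue with the long exact $\operatorname{Ext}$-sequence instead. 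I expect this to be manageable.

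\textbf{Step 2 (kernel in $\mathcal F$).} Completeness of $(\mathcal C\cap\mathcal W,\mathcal F)$ gives a conflation $K'\to Q\to M$ with $Q\in\mathcal C\cap\mathcal W$ and $K'\in\mathcal F$. Since $M,K'\in\mathcal F$, also $Q\in\mathcal F$, so $Q\in\mathcal C\cap\mathcal F\cap\mathcal W\subseteq\mathcal P(\mathcal X)$. Set $Y=X^P\oplus Q$; this lies in $\mathcal P(\mathcal X)$ and maps epimorphically onto $M$. Let $K''$ be the kernel of $Y\to M$. A pullback (\cite[Proposition 3.15]{NP19}) yields a conflation $K'\to K''\to X^P$, and $K'\in\mathcal F$, $X^P\in\mathcal F$ give $K''\in\mathcal F$.

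\textbf{Step 3 (kernel in $\mathcal C$: the main obstacle).} Without heredity, $Y,M\in\mathcal C$ does not force $K''\in\mathcal C$. My first attempt is to use $(\mathcal C,\mathcal F\cap\mathcal W)$ to take a conflation $V\to C''\to K''$ with $C''\in\mathcal C$, $V\in\mathcal F\cap\mathcal W$; then $C''\in\mathcal F$, hence $C''\in\mathcal X$. Next I would splice this with $K''\to Y\to M$ via ET4$^{\mathrm{op}}$, or via a pushout along $V$. The key input is that $\mathbb E(Y,V)=0$, because $Y\in\mathcal C$ and $V\in\mathcal F\cap\mathcal W=\mathcal C^\perp$. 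The hope is that this produces a conflation $C''\to Y'\to M$ with $Y'$ differing from $Y$ only by a summand in $\mathcal C\cap\mathcal F\cap\mathcal W$, hence still in $\mathcal P(\mathcal X)$. If this bookkeeping fails, the fallback is to enlarge $Y$ further by the $\mathcal C$-cover of $M$ and iterate the same correction.

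\textbf{Step 4 (duality).} The statement for enough injective objects under contravariant finiteness is the dual argument. It uses right minimal approximations of injective envelopes, Wakamatsu's Lemma, and the cotorsion pair $(\mathcal C,\mathcal F\cap\mathcal W)$ in place of $(\mathcal C\cap\mathcal W,\mathcal F)$.
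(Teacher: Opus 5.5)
\textbf{The gap is in Step 3, and the proposed repair cannot work.} Steps 1 and 2 are sound. Step 1 is in fact easier than you fear. Given $0\to X\to E\xrightarrow{\pi}X^P\to 0$ with $X\in\mathcal X$, lift $P\to X^P$ through $\pi$, then factor the lift through the approximation $P\to X^P$ as $g\circ(P\to X^P)$. Left minimality makes $\pi g$ an automorphism, so the sequence splits. No injectivity or Wakamatsu is needed, whereas replacing $P$ by its image would destroy the projectivity you rely on.

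Now Step 3. Let $\epsilon\in\operatorname{Ext}^1_A(M,K'')$ be the class of $K''\to Y\to M$ and $\xi\in\operatorname{Ext}^1_A(K'',V)$ the class of $V\to C''\to K''$.
\begin{itemize}
\item The conflation you hope for, $C''\to Y'\to M$ with $Y'\cong Y\oplus V$, is exactly what a splice produces. It comes with a morphism to $K''\to Y\to M$ that is $C''\to K''$ on the left and $\mathrm{id}_M$ on the right. In other words, it is a lift of $\epsilon$ along $\operatorname{Ext}^1_A(M,C'')\to\operatorname{Ext}^1_A(M,K'')$.
\item The obstruction to this lift is, up to sign, the Yoneda product $\xi\cdot\epsilon\in\operatorname{Ext}^2_A(M,V)$. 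This is the image of $\xi$ under the connecting map $\operatorname{Ext}^1_A(K'',V)\to\operatorname{Ext}^2_A(M,V)$ of $K''\to Y\to M$.
\item Precisely because $\operatorname{Ext}^1_A(Y,V)=0$, the fact you wanted to exploit, this connecting map is injective. So the lift exists iff $\xi=0$, i.e.\ iff $K''$ is a direct summand of $C''$, i.e.\ iff $K''\in\mathcal C$ already.
\end{itemize}
So the splice succeeds exactly when Step 3 is unnecessary. Non-heredity is precisely what allows $\operatorname{Ext}^2_A(\mathcal C,\mathcal F\cap\mathcal W)\neq 0$.

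The bad case does occur. In Example \ref{exmple_ideal} ($A=kC_2/J^4$), take $M=M_{1,1}$. Then $X^P=M_{1,4}$, and you may take $Q=M_{1,3}$ and $K'=M_{2,2}$. This gives $K''\cong M_{2,2}\oplus M_{1,4}$, and $M_{2,2}\notin\mathcal C$. The ``enlarge $Y$ and iterate'' fallback has no mechanism and no termination argument.

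\textbf{The missing idea.} Do not try to force the kernel into $\mathcal C$; instead, replace the kernel by its left $\mathcal X$-approximation. The paper proceeds as follows.
\begin{itemize}
\item It starts from $0\to F\xrightarrow{i}W\to X\to 0$ with $W\in\mathcal C\cap\mathcal W$ and $F\in\mathcal F$. Then $W\in\mathcal C\cap\mathcal F\cap\mathcal W\subseteq{}^\perp\mathcal X$.
\item It takes a minimal left $\mathcal X$-approximation $i'\colon F\to D$. This is mono because $i$ factors through it.
\item Wakamatsu's Lemma gives $\operatorname{Coker}i'\in{}^\perp\mathcal X$.
\item Pushing out along $i'$ gives a conflation $D\to E\to X$. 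Here $D\in\mathcal X$ by construction, $E\in\mathcal X$ by extension-closure, and $E\cong W\oplus\operatorname{Coker}i'\in{}^\perp\mathcal X$. Hence $E\in\mathcal P(\mathcal X)$, with no heredity needed.
\end{itemize}

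The same device would also rescue your own route, and would make Step 2 superfluous. The kernel $K$ of $X^P\to M$ embeds in $X^P\in\mathcal X$, so its minimal left $\mathcal X$-approximation $K\to D$ is mono, and Wakamatsu puts its cokernel in ${}^\perp\mathcal X$. The pushout $D\to E\to M$ then has $E\in\mathcal X$ and $0\to X^P\to E\to\operatorname{Coker}\to 0$, so $E$ is Ext-projective in $\mathcal X$.
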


\begin{proof} \ We only prove the first assertion. Assume that $\mathcal{C} \cap \mathcal{F}$ is covariantly finite in $A\text{-}\mathrm{mod}$.
  For $X\in\mathcal{C} \cap \mathcal{F}$, by the completeness of the cotorsion pair there is an exact sequence $0 \longrightarrow F \xlongrightarrow{i} W \longrightarrow X \longrightarrow 0$ with $W\in \mathcal{C} \cap \mathcal{W}$  and $F\in\mathcal{F}$.
  Clearly $W\in\mathcal{F}$, hence $W\in \mathcal{C} \cap \mathcal{F} \cap \mathcal{W}$, and $W\in {}^\perp (\mathcal{C} \cap \mathcal{F})$.
  By \cite[Proposition 1.1]{AR91}, $F$ admits a minimal left $(\mathcal{C} \cap \mathcal{F})$-approximation $i': F\longrightarrow D$.
  Since $W\in\mathcal{C} \cap \mathcal{F}$, the morphism $i:F \longrightarrow W$ factors through the left $\mathcal{C} \cap \mathcal{F}$-approximation $i^\prime$.
  Since $i$ is a monomorphism, so is $i^\prime$.
  By Wakamatsu's Lemma (cf. \cite[Proposition 1]{Wak90}), $\operatorname{Coker}i^\prime \in {}^\perp (\mathcal{C} \cap \mathcal{F})$.
  Consider the pushout diagram
  \[
    \xymatrix@R=.5cm @C=.5cm{& 0\ar[d] & 0\ar[d] \\
    0\ar[r] & F \ar@{->}[r]^-{i} \ar[d]_-{i^\prime } & W \ar@{->}[r] \ar[d] & X\ar[r] \ar@{=}[d]&0 \\
    0\ar[r] & D \ar@{->}[d] \ar[r] & E \ar[d] \ar[r] & X \ar[r] & 0 \\
    &    \operatorname{Coker}i^\prime \ar@{=}[r]\ar[d] & \operatorname{Coker}i^\prime \ar[d]&
    \\ & 0 & 0  }
  \]
  Since $\operatorname{Coker}i^\prime \in {}^\perp (\mathcal{C} \cap \mathcal{F})$, the middle column splits. Thus $E \cong W \oplus \operatorname{Coker}i^\prime$.
  Since $W$ and $\operatorname{Coker}i^\prime$ are in ${}^\perp (\mathcal{C} \cap \mathcal{F})$, one has $E \in {}^\perp (\mathcal{C} \cap \mathcal{F})$.
  Since $X, D \in \mathcal{C} \cap \mathcal{F}$, one has $E \in \mathcal{C} \cap \mathcal{F}$.
  It follows that $E$ is a projective object of $\mathcal{C} \cap \mathcal{F}$.
  Then the middle row shows  that $\mathcal{C} \cap \mathcal{F}$ has enough projective objects.
\end{proof}

\subsection{Gillespie's new construction of Hovey triples} $\,$

\vskip5pt

It has been a problem to construct non-hereditary \textup{Hovey} triples whose corresponding homotopy categories are nonzero.
Recently, Gillespie \cite{Gil26} introduced a new method for constructing non-hereditary \textup{Hovey} triples by merging a non-hereditary cotorsion pair with an injective cotorsion pair.
This result will play an important role in this paper.

\vskip5pt

\begin{theorem} {\rm (\cite[Theorems 3.2 and 3.4]{Gil26})} \label{theorem_merge} \ Let $\mathcal A$ be an extriangulated category with enough injective objects.
  Suppose that $(\mathcal{C}, \mathcal{X})$ is a complete cotorsion pair in $\mathcal A$, and $(\mathcal{W}, \mathcal{B})$ is an injective cotorsion pair in $\mathcal A$ with $\mathcal{X} \subseteq \mathcal{W}$.
  Then $(\mathcal{C}, \ (\mathcal{C} \cap \mathcal{W})^\perp, \ \mathcal{W})$ is a  \textup{Hovey} triple, and it is
  hereditary if and only if $(\mathcal{C}, \mathcal{X})$ is hereditary.
\end{theorem}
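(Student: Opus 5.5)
The plan is to check the three requirements in Definition \ref{def_Hovey_triple} directly for the triple $(\mathcal C,\mathcal F,\mathcal W)$ with $\mathcal F:=(\mathcal C\cap\mathcal W)^\perp$. The key identity to establish is $\mathcal F\cap\mathcal W=\mathcal X$.

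First, thickness of $\mathcal W$ is part of the hypothesis, since $(\mathcal W,\mathcal B)$ is an injective cotorsion pair (Definition-Proposition \ref{def_inj_ctp}). Second, I claim $(\mathcal C\cap\mathcal W,\mathcal F)$ is a complete cotorsion pair. This is exactly \Cref{lem_ctp_3}: both $(\mathcal C,\mathcal X)$ and $(\mathcal W,\mathcal B)$ are complete and $\mathcal X\subseteq\mathcal W$, so $(\mathcal C\cap\mathcal W,(\mathcal C\cap\mathcal W)^\perp)$ is a complete cotorsion pair.

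Third, I show that $(\mathcal C,\mathcal F\cap\mathcal W)=(\mathcal C,\mathcal X)$, which is complete by hypothesis. This identification is the only step with real content.

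For the inclusion $\mathcal X\subseteq\mathcal F\cap\mathcal W$: we have $\mathcal X\subseteq\mathcal W$ by assumption. Also $\mathcal X=\mathcal C^\perp\subseteq(\mathcal C\cap\mathcal W)^\perp=\mathcal F$, because enlarging the class on the left of $\perp$ only shrinks the right perpendicular.

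For the reverse inclusion, let $Y\in\mathcal F\cap\mathcal W$. Completeness of $(\mathcal C,\mathcal X)$ gives an $\mathbb E$-triangle $Y\longrightarrow X_Y\longrightarrow C_Y\xdashrightarrow{\delta}$ with $X_Y\in\mathcal X$ and $C_Y\in\mathcal C$. Since $Y\in\mathcal W$ and $X_Y\in\mathcal X\subseteq\mathcal W$, the 2-out-of-3 property of the thick class $\mathcal W$ gives $C_Y\in\mathcal W$, so $C_Y\in\mathcal C\cap\mathcal W$. Because $Y\in\mathcal F=(\mathcal C\cap\mathcal W)^\perp$, we get $\mathbb E(C_Y,Y)=0$. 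Hence $\delta=0$, the triangle splits, and $X_Y\cong Y\oplus C_Y$. Finally $\mathcal X=\mathcal C^\perp$ is closed under direct summands, since $\mathbb E$ is additive, so $Y\in\mathcal X$.

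Together these three points show that $(\mathcal C,\mathcal F,\mathcal W)$ is a Hovey triple.

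For the hereditary statement, I use Fact \ref{lem_Hovey_hereditary}(3). The Hovey triple is hereditary exactly when both cotorsion pairs are hereditary, and by that Fact this happens if and only if $(\mathcal C,\mathcal F\cap\mathcal W)$ is hereditary. By the identification above, $(\mathcal C,\mathcal F\cap\mathcal W)=(\mathcal C,\mathcal X)$, which gives the equivalence.

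The main obstacle I expect is the inclusion $\mathcal F\cap\mathcal W\subseteq\mathcal X$. It is the one place that uses the thickness of $\mathcal W$ and the hypothesis $\mathcal X\subseteq\mathcal W$ in an essential way, through the splitting argument. The other steps are direct citations of earlier results.
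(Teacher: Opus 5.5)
Your proof is correct, but it takes a different route from the paper's for the Hovey-triple part. Both arguments start from Lemma~\ref{lem_ctp_3} to get completeness of $(\mathcal C\cap\mathcal W,(\mathcal C\cap\mathcal W)^\perp)$, and both treat the hereditary part the same way, through Fact~\ref{lem_Hovey_hereditary}(3).

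\textbf{The paper's route.} It proves the twin-cotorsion-pair condition
\[\operatorname{Cone}(\mathcal X,\mathcal C\cap\mathcal W)=\mathcal W=\operatorname{CoCone}(\mathcal X,\mathcal C\cap\mathcal W).\]
Each inclusion uses an approximation triangle from $(\mathcal C,\mathcal X)$, together with thickness of $\mathcal W$ and $\mathcal X\subseteq\mathcal W$. It concludes that $((\mathcal C\cap\mathcal W,(\mathcal C\cap\mathcal W)^\perp),(\mathcal C,\mathcal X))$ is a Hovey twin cotorsion pair. It then invokes the Nakaoka--Palu correspondence (Lemma~\ref{lem_Hovey_corr_1}) to obtain the Hovey triple $(\mathcal C,(\mathcal C\cap\mathcal W)^\perp,\operatorname{Cone}(\mathcal X,\mathcal C\cap\mathcal W))=(\mathcal C,(\mathcal C\cap\mathcal W)^\perp,\mathcal W)$.

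\textbf{Your route.} You check the definition directly. The only real input is your splitting argument showing $(\mathcal C\cap\mathcal W)^\perp\cap\mathcal W\subseteq\mathcal X$.

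\textbf{What each buys.} Your argument is more self-contained. Thickness of $\mathcal W$ is already a hypothesis, so you do not need the Nakaoka--Palu result, whose main content is producing a thick class from a twin pair. You also make the identity $\mathcal F\cap\mathcal W=\mathcal X$ explicit. The paper's hereditary step uses this identity only implicitly: there it follows because $(\mathcal C,\mathcal F\cap\mathcal W)$ is a cotorsion pair, which forces $\mathcal F\cap\mathcal W=\mathcal C^\perp=\mathcal X$. In exchange, the paper's route exhibits $\mathcal W$ as $\operatorname{Cone}(\mathcal X,\mathcal C\cap\mathcal W)$. This shows the trivial class is determined by the two cotorsion pairs, and places the construction within the twin-cotorsion-pair framework.
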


\begin{proof} \ Here is an alternative proof. By \Cref{lem_ctp_3},  $(\mathcal{C} \cap \mathcal{W}, \ (\mathcal{C} \cap \mathcal{W})^\perp)$ is a complete cotorsion pair.
  We claim that
  \[
    \operatorname{Cone}(\mathcal X,\mathcal C\cap\mathcal W)
    =\mathcal W=\operatorname{CoCone}(\mathcal X,\mathcal C\cap\mathcal W).
  \]
  The inclusion $\operatorname{Cone}(\mathcal X,\mathcal C\cap\mathcal W)\subseteq \mathcal W$ follows from the thickness of $\mathcal{W}$.
  Conversely, for any $W \in \mathcal{W}$, there is an $\mathbb E$-triangle $X\longrightarrow C\longrightarrow W\dashrightarrow$ with $X \in \mathcal{X}$ and $C \in \mathcal{C}$.
  Since $\mathcal{W}$ is thick and $\mathcal{X} \subseteq \mathcal{W}$, one has $C\in \mathcal{C} \cap \mathcal{W}$.
  Hence $W \in \operatorname{Cone}(\mathcal X,\mathcal C\cap\mathcal W)$.
  This proves  $\operatorname{Cone}(\mathcal X,\mathcal C\cap\mathcal W)=\mathcal W$.
  Similarly,  $\operatorname{CoCone}(\mathcal X,\mathcal C\cap\mathcal W)=\mathcal W$.  By  \Cref{def_Hovey_twin},  $((\mathcal{C} \cap \mathcal{W}, (\mathcal{C} \cap \mathcal{W})^\perp),(\mathcal{C}, \mathcal{X}))$ is a Hovey twin cotorsion pair, and hence $(\mathcal{C}, (\mathcal{C} \cap \mathcal{W})^\perp, \mathcal{W})$ is a Hovey triple, by Lemma \ref{lem_Hovey_corr_1}.

  \vskip5pt  By Fact \ref{lem_Hovey_hereditary}(3),  $(\mathcal{C}, (\mathcal{C} \cap \mathcal{W})^\perp, \mathcal{W})$ is
  hereditary if and only if so is $(\mathcal{C}, \mathcal{X})$.
\end{proof}

Let $R$ be a ring, $\mathrm{Ch}(R)$ the category of complexes of $R$-modules, and $\mathrm K(R)$ the homotopy category of $\mathrm{Ch}(R)$.
Denote by $\mathcal{E}$ the class of exact complexes in $\mathrm{Ch}(R)$.
For any class of $R$-modules $\mathcal{L}$, let $\mathcal{E}_{\mathcal{L}}$ be the class of exact complexes with cycles (i.e. kernels of differentials) in $\mathcal{L}$.
For a cotorsion pair $(\mathcal{L}, \mathcal{R})$ in $R\text{-}\mathrm{Mod}$, put
\begin{align*}
  \mathrm{dg}\mathcal L  & = \{R\text{-complex} \ X \mid X^n\in \mathcal{L}, \ \mathrm{Hom}^\bullet(X, \mathcal E_\mathcal{R}) \ \text{is exact} \}= \{X \mid \mathrm{Hom}_{\mathrm K(R)}(X, \mathcal E_\mathcal R)= 0 \}  \\
  \mathrm{dg}\mathcal{R} & = \{R\text{-complex} \ X \mid X^n\in \mathcal{R}, \ \mathrm{Hom}^\bullet(\mathcal E_\mathcal{L}, X) \ \text{is exact} \}= \{X \mid \mathrm{Hom}_{\mathrm K(R)}(\mathcal E_\mathcal L, X)= 0 \}.
\end{align*}
It follows from \cite{Gil04} that $\mathrm{dg}{\mathcal{L}} = {}^\perp(\mathcal{E}_{\mathcal{R}})$ and $\mathrm{dg}{\mathcal{R}} = (\mathcal{E}_{\mathcal{L}})^\perp$.

\vskip5pt

As an application of \Cref{theorem_merge}, Gillespie \cite[Theorem 4.1]{Gil26} gives a new construction of Hovey triples for derived categories.
This construction also yields non-hereditary Hovey triples; see \cite[Example 4.2]{Gil26}.
The following proposition shows that all the Hovey triples constructed in \cite[Theorem 4.1]{Gil26} are not exceptional,
	in other words, there exist non-hereditary and non-exceptional Hovey triples.

\vskip5pt

\begin{proposition} \label{prop_Gillespie} \ Let $\mathcal{X}$ be a set of $R$-modules.
  Put $(\mathcal{L}, \mathcal{R}) = ({}^\perp (\mathcal{X}^\perp), \mathcal{X}^\perp)$.
  Then $(\mathrm{dg}{\mathcal{L}}, (\mathrm{dg}{\mathcal{L}} \cap \mathcal{E})^\perp, \mathcal{E})$ is a \textup{Hovey} triple in $\mathrm{Ch}(R)$,
  and it is not exceptional, namely,
  $(\mathrm{dg}{\mathcal{L}}\cap (\mathrm{dg}{\mathcal{L}} \cap \mathcal{E})^\perp, \mathrm{dg}{\mathcal{L}}\cap (\mathrm{dg}{\mathcal{L}} \cap \mathcal{E})^\perp\cap \mathcal{E})$ is a Frobenius pair.
\end{proposition}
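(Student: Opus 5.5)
Write $\mathcal D=\mathrm{dg}\mathcal L$ and $\mathcal F'=(\mathrm{dg}\mathcal L\cap\mathcal E)^\perp$. The plan is to get the Hovey triple from \Cref{theorem_merge}, identify the core $\mathcal D\cap\mathcal F'$ explicitly, and then verify the Frobenius pair directly by producing, for every object of the core, $\mathbb E$-triangles with middle term in the core's trivial part.

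\textbf{Step 1: the Hovey triple.} In $\mathrm{Ch}(R)$ we use two cotorsion pairs.
\begin{itemize}
\item The first is $(\mathrm{dg}\mathcal L,\mathcal E_{\mathcal R})$. It is complete, since $(\mathcal L,\mathcal R)$ is cogenerated by a set; this is Gillespie's result \cite{Gil04}, via Eklof--Trlifaj.
\item The second is $(\mathcal E,\mathcal E^\perp)$. Here $\mathcal E^\perp$ is the class of K-injective complexes with injective components, and the pair is the standard injective cotorsion pair, so $\mathcal E$ is thick and contains the injectives.
\end{itemize}
Clearly $\mathcal E_{\mathcal R}\subseteq\mathcal E$, so \Cref{theorem_merge} applies and gives that $(\mathcal D,\mathcal F',\mathcal E)$ is a Hovey triple.

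\textbf{Step 2: identify the trivial part of the core.} By \Cref{lem_Hovey_hereditary}(2) we already have $\mathcal D\cap\mathcal F'\cap\mathcal E\subseteq\mathcal P(\mathcal D\cap\mathcal F')\cap\mathcal I(\mathcal D\cap\mathcal F')$. Since $(\mathcal D\cap\mathcal E,\mathcal F')$ is a cotorsion pair, $\mathcal D\cap\mathcal F'\cap\mathcal E$ is exactly the class of projective-injective objects of $\mathcal D\cap\mathcal F'$ that lie in $\mathcal E$. The key claim is:
\begin{quote}
$\mathcal D\cap\mathcal F'\cap\mathcal E$ consists of the contractible complexes with components in $\mathcal L\cap\mathcal R$.
\end{quote}
For $\supseteq$: a contractible complex on such components is $\bigoplus D^n(M)$ with $M\in\mathcal L\cap\mathcal R$. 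These disk complexes lie in $\mathrm{dg}\mathcal L$ and in $\mathcal E$. They also lie in $\mathcal F'$, because $\operatorname{Ext}^1(C,D^n(M))\cong\operatorname{Ext}^1_R(C^{n-1},M)$, hence vanishes whenever $C$ has components in $\mathcal L$.
For $\subseteq$: take $X$ in the class. Then $X\in\mathcal D\cap\mathcal E$ and $X\in\mathcal F'$, so $\operatorname{Ext}^1(X,X)=0$. I expect to deduce the identity is null-homotopic from this; the subtlety is that $\operatorname{Ext}^1(X,X)\cong \mathrm{Hom}_{\mathrm K}(X,X[1])$ holds only on degreewise-split extensions, which is exactly the point to check.

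\textbf{Step 3: enough projectives.} Take $Y\in\mathcal D\cap\mathcal F'$. Then $Y^n\in\mathcal L$, and $\mathcal F'$-membership forces $Y^n\in\mathcal R$ (test against disks $D^{n}(L)\in\mathcal D\cap\mathcal E$). Now use the canonical degreewise split sequence
\[0\to Y[-1]\to \mathrm{Cone}(\mathrm{Id}_{Y[-1]})\to Y\to 0.\]
Its middle term is contractible with components $Y^{n}\oplus Y^{n-1}\in\mathcal L\cap\mathcal R$, so by Step 2 it lies in $\mathcal D\cap\mathcal F'\cap\mathcal E$. It remains to see that $Y[-1]\in\mathcal D\cap\mathcal F'$. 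For $\mathcal D$: the condition $\mathrm{Hom}_{\mathrm K}(-,\mathcal E_{\mathcal R})=0$ is shift-stable, since $\mathcal E_{\mathcal R}$ is shift-closed. For $\mathcal F'$: use the characterisation $\mathcal F'=\{Z\mid Z^n\in\mathcal R,\ \mathrm{Hom}_{\mathrm K}(\mathcal D\cap\mathcal E,Z)=0\}$, which should follow as in \cite{Gil04}, together with the fact that $\mathcal D\cap\mathcal E$ is shift-closed.

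\textbf{Step 4: enough injectives and the Frobenius pair.} The dual sequence $Y\to\mathrm{Cone}(\mathrm{Id}_Y)\to Y[1]$ gives enough injectives in the same way. Then every projective object $P$ of the core is a summand of a contractible middle term $Q$. This holds because the sequence ending at $P$ splits: by Step 3, $P[-1]$ is in the core and $P$ is projective. Similarly every injective object of the core is a summand of such a middle term. So $\mathcal P=\mathcal I=\mathcal D\cap\mathcal F'\cap\mathcal E$, using that this class is closed under summands. This gives the Frobenius pair.

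\textbf{Main obstacle.} The hard part is the shift-stability and the explicit description of $\mathcal F'$ used in Step 3, namely that $(\mathrm{dg}\mathcal L\cap\mathcal E)^\perp$ is closed under $[\pm1]$. I would attack it by showing $\mathrm{dg}\mathcal L\cap\mathcal E$ is closed under $[\pm1]$. The natural route is $\mathrm{dg}\mathcal L\cap\mathcal E=\mathcal E_{\mathcal L}$, proved as in Gillespie's standard argument for complete hereditary pairs. Since $(\mathcal L,\mathcal R)$ need not be hereditary, I would instead argue directly with Hom in $\mathrm K(R)$, using degreewise-split extensions between complexes whose components lie in $\mathcal L$ and $\mathcal R$.
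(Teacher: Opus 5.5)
Your argument is the paper's. The degreewise split sequences $Y[-1]\to\mathrm{Cone}(\mathrm{Id}_{Y[-1]})\to Y$ and $Y\to\mathrm{Cone}(\mathrm{Id}_Y)\to Y[1]$ have a contractible middle term in $\mathcal C\cap\mathcal F\cap\mathcal W$, and they split when the end term is projective, resp.\ injective, in $\mathcal C\cap\mathcal F$. The paper simply asserts that these sequences lie in $\mathcal C\cap\mathcal F$. You are right that this requires $\mathcal C\cap\mathcal F$ to be closed under $[\pm1]$.

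However, what you call the main obstacle follows immediately from what you already proved in Step 3. You showed $\mathrm{dg}\mathcal L$ is closed under $[\pm1]$, and $\mathcal E$ obviously is, so $\mathrm{dg}\mathcal L\cap\mathcal E$ is closed under $[\pm1]$. Now take $X\in\mathrm{dg}\mathcal L\cap\mathcal E$ and $Z\in\mathcal F'$. Their components lie in $\mathcal L$ and $\mathcal R$ respectively, so every extension between them or their shifts is degreewise split. Hence $\mathrm{Ext}^1(X,Z[\mp1])\cong\mathrm{Hom}_{\mathrm K(R)}(X[-1],Z[\mp1])\cong\mathrm{Ext}^1(X[\pm1],Z)=0$, and $\mathcal F'$ is shift-closed.

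Do not route this through $\mathrm{dg}\mathcal L\cap\mathcal E=\mathcal E_{\mathcal L}$. That identity is established under heredity of $(\mathcal L,\mathcal R)$, which is not assumed here, and you do not need it. Once $\mathcal C\cap\mathcal F$ is shift-closed, the cone lies in $\mathcal C\cap\mathcal F$ by extension-closure and in $\mathcal E$ by contractibility. So the disk description in Step 2 is unnecessary as well.

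Separately, your sketch of the inclusion $\subseteq$ in Step 2 is off by a shift. For $X$ with components in $\mathcal L\cap\mathcal R$, one has $\mathrm{Ext}^1(X,X)\cong\mathrm{Hom}_{\mathrm K(R)}(X,X[1])$, and its vanishing says nothing about $\mathrm{Id}_X$. What you would need is $\mathrm{Ext}^1(X[1],X)=0$, which again comes from shift-closure. That inclusion is unused anyway: Fact~\ref{Wcontainsp}(2) gives $\mathcal C\cap\mathcal F\cap\mathcal W\subseteq\mathcal P(\mathcal C\cap\mathcal F)\cap\mathcal I(\mathcal C\cap\mathcal F)$, and your Step 4 gives the reverse. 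You can drop it.
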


\begin{proof} \ By \cite[Theorem 4.1]{Gil26},  $(\mathrm{dg}{\mathcal{L}}, (\mathrm{dg}{\mathcal{L}} \cap \mathcal{E})^\perp, \mathcal{E})$ is a Hovey triple, which is denoted by $(\mathcal{C}, \mathcal{F}, \mathcal{W})$.
  Let $N$ be an object of $\mathcal{C} \cap \mathcal{F}$.
  Then there is a short exact sequence $0 \longrightarrow N \longrightarrow \mathrm{cone}(\mathrm{Id}_N) \longrightarrow N[1] \longrightarrow 0$
  in $\mathcal C\cap\mathcal F$, where $\mathrm{cone}(\mathrm{Id}_N)$ is the mapping cone of the identity map $\mathrm{Id}_N$.
  Since $\mathrm{cone}(\mathrm{Id}_N)$ is contractible, it lies in $\mathcal{E}$.
  It follows that $\mathrm{cone}(\mathrm{Id}_N) \in \mathcal{C} \cap \mathcal{F} \cap \mathcal{W}$.
  In particular, if $N$ is an injective object of $\mathcal{C} \cap \mathcal{F}$, then the short exact sequence splits. Hence $N \in \mathcal{C} \cap \mathcal{F} \cap \mathcal{W}$.
  Since $(\mathcal{C} \cap \mathcal{F} \cap \mathcal{W}) \subseteq \mathcal{I}(\mathcal{C} \cap \mathcal{F})$, one sees that $\mathcal{C} \cap \mathcal{F} \cap \mathcal{W} = \mathcal{I}(\mathcal{C} \cap \mathcal{F})$.
  The above construction shows that $\mathcal{C} \cap \mathcal{F}$ has enough injective objects, which is $\mathcal{C} \cap \mathcal{F} \cap \mathcal{W}$.
  Similarly, $\mathcal{C} \cap \mathcal{F}$ has enough projective objects,  and $\mathcal{C} \cap \mathcal{F} \cap \mathcal{W} = \mathcal{P}(\mathcal{C} \cap \mathcal{F})$.
  Thus $(\mathcal{C} \cap \mathcal{F}, \mathcal{C} \cap \mathcal{F} \cap \mathcal{W})$ is a Frobenius pair.
\end{proof}

The following example indeed gives Hovey triples that are neither hereditary nor exceptional.

\begin{example} \label{nonherednonexcep} {\rm (\cite[Example 4.2]{Gil26})} \
  For a ring $R$, let $\mathcal S$ be the class of finitely presented left $R$-modules, which is essentially a set.
  Let $(\mathcal{FP}, \mathcal{FI}) = ({}^\perp(\mathcal S^\perp), \mathcal S^\perp)$ denote the complete cotorsion pair,
  where $\mathcal{FP}$ and $\mathcal{FI}$ are FP-projective modules and FP-injective modules, respectively (see, e.g., \cite[Remark 2.8]{MD05}).
  By \Cref{prop_Gillespie}, $(\mathrm{dg}\mathcal{FP}, (\mathrm{dg}\mathcal{FP} \cap \mathcal{E})^\perp, \mathcal{E})$ is a non-exceptional Hovey triple.

  \vskip5pt

  By \cite[Proposition~B.3]{Sto14}, $R$ is left coherent if and only if $(\mathcal{FP},\mathcal{FI})$ is hereditary, and if and only if $(\mathrm{dg}\mathcal{FP},\mathcal{E}_{\mathcal{FI}})$ is hereditary, by \cite[Corollary~3.13]{Gil04}.
  Hence for a non-left-coherent ring $R$, $(\mathrm{dg}\mathcal{FP}, (\mathrm{dg}\mathcal{FP} \cap \mathcal{E})^\perp, \mathcal{E})$ is a non-hereditary \textup{Hovey} triple.
\end{example}

\section{\bf Exceptional \textup{Hovey} triples by selfinjective Nakayama algebras}\label{sec_more_examples}

We will give a combinatorial description of selfinjective Nakayama algebras $A$ such that $A$-mod admits an exceptional Hovey triple.
A main tool  is Theorem \ref{theorem_merge}, Gillespie's  new construction  of  Hovey triples.

\subsection{Nakayama algebras and the main result}\label{sec_Nak_alg} Let $A$ be a finite-dimensional algebra over an algebraically closed field $k$.
An $A$-module $M$ is \emph{uniserial} if the set of submodules is totally ordered by inclusion.
The algebra $A$ is \emph{Nakayama} if all the indecomposable projective left $A$-modules and all the indecomposable injective left $A$-modules are uniserial, or equivalently, every indecomposable
left $A$-module is uniserial. The algebra $A$ is Nakayama if and only if $A^{\rm op}$ is Nakayama.
A basic connected Nakayama algebra $A$ is isomorphic to $kQ/I$, where the quiver $Q$ is either $\mathbb{A}_n \ (n\ge 1)$ with linear order :
$1 \longrightarrow \cdots \longrightarrow n$,
or a cycle $C_n \ (n\ge 1)$ with cyclic orientation:
\begin{equation*}
  \xymatrix{
  1 \ar@{->}[r] & 2 \ar@{->}[r] & \cdots \ar@{->}[r] & n-1 \ar@{->}[r] & n \ar@/^1.2pc/@{->}[llll]}
\end{equation*}

\vskip10pt
\noindent $I$ is an ideal of the path algebra $kQ$ generated by
some paths with $J^m \subseteq I \subseteq J^2$, and $J$ is the ideal generated by arrows of $Q$.
A Nakayama  algebra $A$ is representation-finite, and it is selfinjective if and only if $Q = C_n$ and $I = J^t$ for some $t\ge 2$.
We refer to \cite[IV.2]{ARS} and \cite [V.3]{ASS06} for more information.

\vskip5pt

In this paper we only consider  Nakayama algebras $A$ without simple projective modules. This is precisely the case $Q = C_n$.  Let $S_1, \dots, S_n$ be the pairwise non-isomorphic simple $A$-modules,
$P_i$ (respectively, $I_i$)
the indecomposable projective (respectively, injective) module with top (respectively, socle) $S_i$, and $(c_1, \cdots, c_n)$ the Kupisch series of $A$, i.e., $c_i$ is the length of $P_i$.
Denote by $M_{i,l}$ the indecomposable $A$-module with top $S_i$ and length $l$. Then $M_{i,l}$ \ ($1\le l\le c_i$, $i\in\mathbb Z/n\mathbb Z$) give all the indecomposable $A$-modules.
We will often use the formula ${\rm soc}(M_{i,l}) = S_{i+l-1}$.

\vskip5pt

Throughout this section, $A$ is assumed to be a selfinjective Nakayama algebra. Thus $A = kC_n/J^t$ with $t\ge 2$. We will not repeat this assumption each time.
In this case  $P_i = M_{i,t} = I_{i+t-1}$ and
$\mathcal{P}(A) = \mathcal{I}(A) = \operatorname{add} (P)$, where $P = \bigoplus\limits_{1\le i\le n}P_i =  \bigoplus\limits_{1\le i\le n}M_{i, t}.$
Let $P(M_{i, l})$ and $I(M_{i, l})$ be respectively the projective cover and the injective envelope of $M_{i, l}$, $\Omega(M)$ and $\mho(M)$ the syzygy and cosyzygy of $M$, respectively.
Then for $M_{i, l} \notin \mathcal{P}(A)$ one has
\begin{equation*}
  P(M_{i, l}) =  M_{i, t}, \ \ \Omega(M_{i, l}) = M_{i+l, t-l},  \ \ I(M_{i, l}) =  M_{i-t+l, t}, \ \  \mho(M_{i,l}) = M_{i-t+l, t-l}.
\end{equation*}

\vskip5pt

\begin{theorem}\label{thm_classification} \ Let $A= kC_n/J^t \ (t\ge 2)$ be a selfinjective Nakayama algebra. Then $A\mbox{-}{\rm mod}$ admits an exceptional \textup{Hovey} triple if and only if $\gcd(n, t) > 1$ and $t \geq 3$.
\end{theorem}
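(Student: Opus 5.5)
Both directions reduce to the combinatorics of $\operatorname{Ext}^1$ between indecomposables, read off the Auslander--Reiten quiver (a cylinder $\mathbb ZA_{t-1}/\langle\tau^n\rangle$ with $\tau M_{i,l}=M_{i+1,l}$). For $M_{i,l},M_{j,m}$ non-projective we use $\operatorname{Ext}^1(M_{i,l},M_{j,m})\cong\underline{\operatorname{Hom}}(\Omega M_{i,l},M_{j,m})$ with $\Omega M_{i,l}=M_{i+l,t-l}$. In the stable category every nonzero stable Hom between indecomposables is one-dimensional and given by an explicit rectangle condition in the coordinates $(i,l)$. Every cotorsion pair in $A\mbox{-}{\rm mod}$ is complete by Proposition~\ref{prop_ctp_1}, so a Hovey triple is simply a pair of cotorsion pairs $(\mathcal C\cap\mathcal W,\mathcal F)$, $(\mathcal C,\mathcal F\cap\mathcal W)$ with $\mathcal W$ thick.

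\emph{Sufficiency.} Assume $d=\gcd(n,t)\ge2$ and $t\ge3$, and put $R=d\mathbb Z/n\mathbb Z$. We use Theorem~\ref{theorem_merge}. Define
\[
\mathcal W=\operatorname{add}\Bigl(P\oplus\bigoplus_{i\in R}(M_{i,1}\oplus M_{i+1,t-1})\Bigr).
\]
Since $\Omega M_{i,1}=M_{i+1,t-1}$ and $\Omega M_{i+1,t-1}=M_{i+t,1}$ with $i+t\in R$ (because $d\mid t$), $\mathcal W$ is $\Omega$-stable. We show that $\underline{\mathcal W}$ is a thick triangulated subcategory of $\underline{A\mbox{-}{\rm mod}}$. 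The main point is that an $\mathbb E$-triangle between two objects of $\mathcal W$ has third term in $\mathcal W$, which follows from the fact that extensions among these simples and length-$(t-1)$ modules are very restricted. Then $(\mathcal W,\mathcal W^\perp)$ is a cotorsion pair with $\mathcal I(A)\subseteq\mathcal W$, hence injective by Definition-Proposition~\ref{def_inj_ctp}(2).

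Next take $\mathcal C=\operatorname{add}(P\oplus\bigoplus_{i\notin R,\,l<t}M_{i,l})$ and compute $\mathcal X=\mathcal C^\perp$ via the rectangle criterion. We check that ${}^\perp\mathcal X=\mathcal C$ and $\mathcal X\subseteq\mathcal W$. Theorem~\ref{theorem_merge} then yields the Hovey triple $(\mathcal C,(\mathcal C\cap\mathcal W)^\perp,\mathcal W)$, and we identify its middle term with the stated $\mathcal F$.

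Exceptionality is checked directly. $\mathcal C\cap\mathcal F$ consists of the $M_{i,l}$ with $i\notin R$ and $i+l-1\notin R$, together with $P$. We exhibit an object of $\mathcal C\cap\mathcal F$ that is projective (or injective) in $\mathcal C\cap\mathcal F$ but not in $\mathcal W$; for instance, a module whose relevant Ext vanishes only because of the holes at $R$. The condition $t\ge3$ is needed so that such modules of length between $1$ and $t-1$ exist. Distinguishing whether $\mathcal C\cap\mathcal F$ is Frobenius belongs to Theorem~\ref{type} and is not needed here.

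\emph{Necessity.} If $t=2$, then $A\mbox{-}{\rm mod}$ has only simples and projectives, $\underline{\operatorname{mod}}A$ is semisimple-like, and every cotorsion pair is hereditary. Indeed, $\Omega$ permutes the simples, and $\mathcal C={}^\perp\mathcal X$ is automatically $\Omega$-closed because $\operatorname{Ext}^1(S_i,S_j)\ne0$ only for $j=i+1$; we would verify this via Lemma~\ref{lem_ctp_2}(2). So every Hovey triple is hereditary and hence not exceptional by Fact~\ref{lem_Hovey_hereditary}(4).

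If $\gcd(n,t)=1$, we argue that every thick $\mathcal W$ containing $P$ is $0$ or everything modulo projectives. The $\Omega$-orbit of a simple under $\Omega^2$, which is $i\mapsto i+t$, is transitive when $\gcd(n,t)=1$, and thick subcategories generated by any non-projective object then contain all simples, hence everything. In the case $\mathcal W=\operatorname{add}P$ we get $\mathcal C=\mathcal F=A\mbox{-}{\rm mod}$, and in the case $\mathcal W$ is everything we get $\mathcal C\cap\mathcal F=\operatorname{add}P$; both are Frobenius pairs. The main obstacle is this thick-subcategory classification: showing that any nonprojective indecomposable generates, through extensions and 2-out-of-3, a simple, and then all simples. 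I would first use the AR sequences and the Ext rectangle to reduce any $M_{i,l}$ to simples via the sequences $M_{i+1,l-1}\to M_{i,l}\to S_i$, and then handle the combinatorial case analysis (with the appendix-style lemmas) for the sufficiency computations of $\mathcal X$ and $\mathcal F$.
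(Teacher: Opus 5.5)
Your sufficiency half follows the paper: Gillespie's merging theorem (Theorem~\ref{theorem_merge}) with the same $\mathcal C$ and $\mathcal W$. The necessity half, however, contains a false step.

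\textbf{The case $t=2$.} You claim every cotorsion pair in $A\mbox{-}{\rm mod}$ is hereditary, so that Fact~\ref{lem_Hovey_hereditary}(4) applies. This fails as soon as $n\ge2$. Your own computation ($\operatorname{Ext}^1_A(S_i,S_j)\neq0$ iff $j=i+1$) shows that for \emph{every} subset $I\subseteq\mathbb Z/n\mathbb Z$ the class $\operatorname{add}(P\oplus\bigoplus_{i\in I}S_i)$ is the left half of a complete cotorsion pair. Since $\Omega S_i=S_{i+1}$, that class is closed under syzygies only if $I+1\subseteq I$. For instance, with $n=t=2$, the pair $(\operatorname{add}(P\oplus S_1),\operatorname{add}(P\oplus S_1))$ is complete but not hereditary: look at $0\to S_2\to P_1\to S_1\to0$. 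Then $(\operatorname{add}(P\oplus S_1),\operatorname{add}(P\oplus S_1),A\mbox{-}{\rm mod})$ is a non-hereditary Hovey triple, so heredity is not what makes these triples non-exceptional. The fix is the paper's Lemma~\ref{lemgcd}: for $t=2$, $\Omega$ permutes the simples transitively, so a thick $\mathcal W\supsetneqq\mathcal P(A)$ contains all simples and equals $A\mbox{-}{\rm mod}$. Then argue exactly as in the coprime case.

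\textbf{The sub-case $\mathcal W=A\mbox{-}{\rm mod}$.} This is also misargued. Here $(\mathcal C,\mathcal F)$ is an arbitrary complete cotorsion pair, and $\mathcal C\cap\mathcal F\neq\operatorname{add}P$ in general. For example, take $n=3$, $t=2$, $\mathcal C=\operatorname{add}(P\oplus S_1)$, $\mathcal F=\operatorname{add}(P\oplus S_1\oplus S_3)$. The correct reason is that $\mathcal C\cap\mathcal F\cap\mathcal W=\mathcal C\cap\mathcal F$ and $\operatorname{Ext}^1_A(\mathcal C,\mathcal F)=0$. Hence every object of $\mathcal C\cap\mathcal F$ is projective-injective in $\mathcal C\cap\mathcal F$, and the pair is trivially Frobenius.

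\textbf{The coprime reduction to simples.} The sequence $M_{i+1,l-1}\to M_{i,l}\to S_i$ does not help, since only its middle term is known to lie in $\mathcal W$. Once all $M_{j,l}$ of a fixed length lie in $\mathcal W$, use the AR sequence $0\to M_{i,l}\to M_{i,l-1}\oplus M_{i-1,l+1}\to M_{i-1,l}\to0$ to descend to length $l-1$.

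\textbf{Sufficiency.} Two steps are only asserted.

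First, thickness of $\mathcal W$ does not by itself give ${}^\perp(\mathcal W^\perp)\subseteq\mathcal W$. Supply it via Wakamatsu's lemma: a minimal right $\mathcal W$-approximation $W\to M$ is surjective (as $P\in\mathcal W$) with kernel in $\mathcal W^\perp$ (as $\mathcal W$ is extension-closed). It therefore splits when $M\in{}^\perp(\mathcal W^\perp)$. Justify thickness itself by noting that every stable map inside $\underline{\mathcal W}$ is a sum of isomorphisms and zero maps: stable Homs between non-isomorphic indecomposables of $\underline{\mathcal W}$ vanish because $d\ge2$. With both in place, your route through Definition-Proposition~\ref{def_inj_ctp}(2) is a valid and shorter alternative to Lemma~\ref{lem_sinj_wb}. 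That lemma instead computes $\mathcal B=\mathcal W^\perp$ explicitly, proves ${}^\perp\mathcal B\subseteq\mathcal W$ in four cases, and checks that $\mathcal B$ is closed under cosyzygies.

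Second, and more importantly, you never name the exceptionality witness, which is the heart of this direction. Take $M_{i+1,1}$ with $i\in R$.
\begin{itemize}
\item It lies in $\mathcal C\cap\mathcal F$ because $i+1\notin R$.
\item It is injective in $\mathcal C\cap\mathcal F$: every non-projective $M_{j,l}\in\mathcal F$ has $j+l-1\notin R$, hence $j+l\neq i+1$, so $\operatorname{Ext}^1_A(M_{j,l},M_{i+1,1})\cong\underline{\operatorname{Hom}}_A(M_{j+l,t-l},M_{i+1,1})=0$ by Lemma~\ref{simphom}.
\item It is not in $\mathcal W$ exactly because $t\ge3$ makes it different from $M_{i+1,t-1}$.
\end{itemize}
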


\vskip5pt

When $\gcd(n, t) > 1$ and $t \geq 3$, we will explicitly construct an exceptional \textup{Hovey} triple in  $A\mbox{-}{\rm mod}$, and determine its type.

\subsection{Thick subcategories of $A$-mod}  We will first determine all the thick subcategories $\mathcal{W}$ of $A$-mod satisfying $\mathcal{W}\supseteq \mathcal{P}(A)$. In this way all the possible classes $\mathcal{W}$ of trivial objects in
Hovey triples $(\mathcal C, \ \mathcal F, \ \mathcal W)$ in $A$-mod will be determined.

\vskip5pt

\begin{lemma}\label{lemgcd} \ If $\gcd(n, t) = 1$ or $t = 2$, then any thick subcategory $\mathcal{W}$ of $A\mbox{-}{\rm mod}$ containing all the projective-injective modules is either $A\mbox{-}{\rm mod}$ or $\mathcal{P}(A)$.
\end{lemma}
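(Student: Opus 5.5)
Suppose $\mathcal W\supsetneqq\mathcal P(A)$ is thick. We will show that $\mathcal W=A\mbox{-}{\rm mod}$. The tool is that $\mathcal W$ is stable under $\Omega$ and $\mho$. Take the exact sequences $0\to\Omega M\to P(M)\to M\to 0$ and $0\to M\to I(M)\to \mho M\to 0$; the middle terms are projective-injective, hence lie in $\mathcal W$. The 2-out-of-3 property then gives $M\in\mathcal W\iff \Omega M\in\mathcal W\iff\mho M\in\mathcal W$. From the formulas $\Omega(M_{i,l})=M_{i+l,t-l}$ and $\Omega^2(M_{i,l})=M_{i+t,l}$ we conclude that for a non-projective $M_{i,l}\in\mathcal W$, all $M_{i+kt,l}$ lie in $\mathcal W$, and so do all $M_{i+l+kt,t-l}$.

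\textbf{Case $\gcd(n,t)=1$.} Here $t$ generates $\mathbb Z/n\mathbb Z$. So $M_{i,l}\in\mathcal W$ forces $M_{j,l}\in\mathcal W$ for every $j$, and also $M_{j,t-l}\in\mathcal W$ for every $j$. Now use the short exact sequences of uniserial modules: for $1\le a<l$ there is $0\to M_{i+a,l-a}\to M_{i,l}\to M_{i,a}\to0$. Along with $M_{i,l}$ we also have $M_{i-a,l}$, since all tops are allowed. The 2-out-of-3 property then needs two terms of a sequence already in $\mathcal W$. A cleaner route is the following. Consider
\[0\to M_{i+l,\,t-l}\to M_{i,t}\to M_{i,l}\to0,\]
and more generally $0\to M_{i+l,m}\to M_{i,l+m}\to M_{i,l}\to 0$ for $l+m\le t$. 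If every $M_{j,l}$ and every $M_{j,m}$ is in $\mathcal W$, then so is every $M_{j,l+m}$. Likewise, from $0\to M_{i+m,l-m}\to M_{i,l}\to M_{i,m}\to0$ we get all $M_{j,l-m}$ when $m<l$. Hence the set $L=\{l: M_{j,l}\in\mathcal W\ \forall j\}\cup\{0,t\}$ is closed under sums and differences within $[0,t]$. Therefore $L$ contains $\gcd(l,t)\cdot\mathbb Z\cap[0,t]$.

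It remains to get length $1$. We use the finer $\Omega$-orbit instead of ``all tops''. I expect this to be the main obstacle: ensuring that the gcd argument actually reaches $1$ rather than a divisor of $t$. The fix is to combine tops and lengths. Suppose $\delta=\gcd(l,t)>1$ and all $M_{j,\delta}\in\mathcal W$. Then consider $0\to M_{j+\delta,\delta}\to M_{j,2\delta}\to M_{j,\delta}\to0$; this only returns multiples of $\delta$. So instead we should use non-split sequences with middle terms decomposable, namely the Auslander–Reiten sequences
\[0\to M_{j+1,l-1}\to M_{j,l}\oplus M_{j+1,l-2}\to M_{j,l-1}\to 0.\]
Since $\mathcal W$ is closed under summands and under extensions, knowing the end terms gives the middle, hence $M_{j+1,l-2}$. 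Iterating this from level $\delta$ descends to lengths $\delta-2,\delta-4,\dots$ and, via the differences already obtained, to $1$. Once all simples are in $\mathcal W$, extension closure gives $\mathcal W=A\mbox{-}{\rm mod}$. I will carefully check this descent, including parity, by also using the AR sequences the other way, with end terms $M_{j,l}$ and $M_{j+1,l}$, which gives the middle term $M_{j,l+1}\oplus M_{j+1,l-1}$.

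\textbf{Case $t=2$.} Every non-projective indecomposable is simple, and $\Omega S_i=S_{i+1}$. So one simple in $\mathcal W$ gives all simples by $\Omega$-stability, and then extension closure gives all modules. Hence $\mathcal W$ is $\mathcal P(A)$ or $A\mbox{-}{\rm mod}$.
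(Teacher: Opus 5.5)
Your argument ends up being the paper's proof, but the descent paragraph, which is the crux, is stated incorrectly as written. What matches: you use $\Omega^2(M_{i,l})=M_{i+t,l}$ and $\gcd(n,t)=1$ to put every $M_{j,l}$ into $\mathcal W$, then descend in length via almost split sequences and closure under extensions and direct summands. The case $t=2$ is handled identically.

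The problem is the step you describe. The sequence you display,
\[0\to M_{j+1,l-1}\to M_{j,l}\oplus M_{j+1,l-2}\to M_{j,l-1}\to 0,\]
has both end terms of length $l-1$.
\begin{itemize}
\item If you start ``from level $\delta$'' with $l=\delta$, you need the end terms $M_{*,\delta-1}$, which you do not yet know lie in $\mathcal W$. Knowing the summand $M_{j,\delta}$ of the middle term says nothing about the other summand.
\item If you instead take $l=\delta+1$, the sequence gives $M_{*,\delta-1}$, not $M_{*,\delta-2}$.
\end{itemize}
So the claimed step $\delta\to\delta-2$ is a misreading, and the parity worry it creates is spurious.

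The sequence in your last sentence is exactly the paper's key step:
\[0\to M_{j+1,l}\to M_{j,l+1}\oplus M_{j+1,l-1}\to M_{j,l}\to 0 .\]
Both end terms have length $l$, so from all $M_{*,l}\in\mathcal W$ you get all $M_{*,l-1}\in\mathcal W$ directly. Iterating reaches the simples, and extension closure finishes. With this step in place, the sums-and-differences detour through $\gcd(l,t)$ is unnecessary.
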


\begin{proof} \ First, assume that $\gcd(n, t) = 1$. Suppose that $\mathcal{W} \neq \mathcal{P}(A)$.
  It suffices to prove that every simple module $M_{j,1}$ belongs to $\mathcal{W}$ for all $j\in \mathbb Z/n\mathbb Z$.
  Since $\mathcal{P}(A)\subsetneqq \mathcal{W}$, there is a non-projective indecomposable module $M_{i, l} \in \mathcal{W}$. Thus  $l < t$.
  Since $\mathcal{W}$ is thick, $\Omega^2 (M_{i,l})  = M_{i + t, l} \in \mathcal{W}$. Repeating this process, by $\gcd(n,t) = 1$, one sees that $M_{j, l} \in \mathcal{W}$ for all $j\in \mathbb Z/n\mathbb Z$.
  If $l = 1$, then we are done.
  Otherwise, $l> 1$. By the exact sequence (cf. Fact \ref{pp}) and the thickness of $\mathcal{W}$
  $$0 \longrightarrow M_{i, l} \longrightarrow M_{i, l-1} \oplus M_{i-1, l+1} \longrightarrow M_{i-1, l} \longrightarrow 0$$
  one sees that  $M_{i, l-1} \in \mathcal{W}$.
  By induction $M_{i, 1} \in \mathcal{W}$. Thus $\Omega^2 (M_{i,1})  = M_{i + t, 1} \in \mathcal{W}$.
  Since $\gcd(n,t) = 1$,  one sees that $M_{j, 1}\in \mathcal{W}$ for all $j\in \mathbb Z/n\mathbb Z$.\vskip5pt

  Now, assume that $t = 2$. Suppose that $\mathcal{W} \neq \mathcal{P}(A)$. Then there is a non-projective indecomposable module $M_{i, 1} \in \mathcal{W}$ for some $i\in \mathbb Z/n\mathbb Z$.
  We need to prove $M_{j, 1}\in \mathcal{W}$ for any $j\in \mathbb Z/n\mathbb Z$.
  By the assumption one sees that  $\Omega(M_{i,1}) = M_{i+1, 1} \in \mathcal{W}$.
  Repeating this process we are done. \end{proof}

\vskip5pt

By Fact \ref{Wcontainsp} and Lemma \ref{lemgcd} one gets

\vskip5pt

\begin{corollary}\label{lem_hovey_coprime_or_rad_square_zero} \ If $\gcd(n, t) = 1$ or $t = 2$, then any \textup{Hovey} triple $(\mathcal{C}, \mathcal{F}, \mathcal{W})$ in $A\mbox{-}{\rm mod}$
  is either $(A\mbox{-}{\rm mod}, \  A\mbox{-}{\rm mod}, \ \mathcal{P}(A))$, or of the form $(\mathcal{C}, \ \mathcal{F}, \ A\mbox{-}{\rm mod})$ for some complete cotorsion pair $(\mathcal{C}, \mathcal{F})$ in $A\mbox{-}{\rm mod}$.
\end{corollary}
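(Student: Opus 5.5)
By Fact \ref{Wcontainsp}(1), the class $\mathcal W$ of any Hovey triple $(\mathcal C,\mathcal F,\mathcal W)$ in $A\mbox{-}{\rm mod}$ is a thick subcategory containing $\mathcal P(A)=\mathcal I(A)$. Lemma \ref{lemgcd} then leaves only two cases, $\mathcal W=\mathcal P(A)$ or $\mathcal W=A\mbox{-}{\rm mod}$, and I will treat them separately. In each case I determine $\mathcal C$ and $\mathcal F$ from the two complete cotorsion pairs $(\mathcal C\cap\mathcal W,\mathcal F)$ and $(\mathcal C,\mathcal F\cap\mathcal W)$.

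\emph{Case $\mathcal W=\mathcal P(A)$.}
\begin{enumerate}
\item Since $\mathcal C\cap\mathcal W\subseteq\mathcal P(A)$, the cotorsion pair $(\mathcal C\cap\mathcal W,\mathcal F)$ gives $\mathcal F=(\mathcal C\cap\mathcal W)^\perp\supseteq\mathcal P(A)^\perp=A\mbox{-}{\rm mod}$, so $\mathcal F=A\mbox{-}{\rm mod}$.
\item Dually, $\mathcal F\cap\mathcal W\subseteq\mathcal I(A)$, so $\mathcal C={}^\perp(\mathcal F\cap\mathcal W)\supseteq{}^\perp\mathcal I(A)=A\mbox{-}{\rm mod}$, hence $\mathcal C=A\mbox{-}{\rm mod}$.
\item This gives the triple $(A\mbox{-}{\rm mod},A\mbox{-}{\rm mod},\mathcal P(A))$. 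It is indeed a Hovey triple, since $(\mathcal P(A),A\mbox{-}{\rm mod})$ and $(A\mbox{-}{\rm mod},\mathcal I(A))$ are complete cotorsion pairs in a Frobenius category. The statement only lists possible forms, so this check is not strictly required.
\end{enumerate}

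\emph{Case $\mathcal W=A\mbox{-}{\rm mod}$.}
\begin{enumerate}
\item Here $\mathcal C\cap\mathcal W=\mathcal C$ and $\mathcal F\cap\mathcal W=\mathcal F$.
\item Both cotorsion pairs in the Hovey triple therefore collapse to the single pair $(\mathcal C,\mathcal F)$, which must be complete.
\item So the triple has the form $(\mathcal C,\mathcal F,A\mbox{-}{\rm mod})$ with $(\mathcal C,\mathcal F)$ a complete cotorsion pair. Proposition \ref{prop_ctp_1} even says that every cotorsion pair here is complete.
\end{enumerate}

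There is no real obstacle: the substance is Lemma \ref{lemgcd}. The only point needing care is the first case, where one uses $\mathcal P(A)=\mathcal I(A)$ together with the standard facts $\mathcal P(A)^\perp=A\mbox{-}{\rm mod}={}^\perp\mathcal I(A)$.
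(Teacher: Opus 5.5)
Your proposal is correct and takes the paper's route: the paper obtains the corollary in one line from Fact \ref{Wcontainsp}(1) and Lemma \ref{lemgcd}. Your two cases simply fill in what the paper leaves implicit. If $\mathcal W=\mathcal P(A)$, then $\mathcal C=\mathcal F=A\mbox{-}{\rm mod}$ because $\mathcal P(A)^\perp=A\mbox{-}{\rm mod}={}^\perp\mathcal I(A)$. If $\mathcal W=A\mbox{-}{\rm mod}$, both cotorsion pairs collapse to the single complete pair $(\mathcal C,\mathcal F)$.
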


\subsection{Complete cotorsion pairs in $A\mbox{-}{\rm mod}$} $\,$ \vskip5pt

Throughout this subsection, let $A = kC_n/J^t$ be a selfinjective algebra with $\gcd(n,t)=d\neq 1$ and $t\geq 3$, and set
$R=d\mathbb Z/n\mathbb Z\subseteq\mathbb Z/n\mathbb Z$. Set $P=\bigoplus\limits_{1\le i\le n}P_i=\bigoplus\limits_{1\le i\le n}M_{i,t}$.

\vskip10pt

\begin{lemma}\label{lem_sinj_cx} \ Put \ $\mathcal{C} =\operatorname{add}(P \oplus \bigoplus\limits_{i \notin R, 1\le l< t} M_{i, l}) \quad \text{and} \quad \mathcal{X} = \operatorname{add}(P \oplus \bigoplus\limits_{i \in R} M_{i+1, t-1}).
$
  Then $(\mathcal{C},\mathcal{X})$ is a non-hereditary complete cotorsion pair in $A\mbox{-}{\rm mod}$.
\end{lemma}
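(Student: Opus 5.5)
The plan is to compute everything through the formula $\operatorname{Ext}^1_A(X,Y)\cong\underline{\operatorname{Hom}}_A(\Omega X,Y)$, which holds because $A$ is selfinjective. Then I verify directly that $\mathcal X=\mathcal C^\perp$ and $\mathcal C={}^\perp\mathcal X$. Completeness is automatic by \Cref{prop_ctp_1}, since a Nakayama algebra is representation-finite. Non-heredity is checked with \Cref{lem_ctp_2}$(2)$. Throughout I use $\Omega(M_{i,l})=M_{i+l,t-l}$, and the fact that $\Omega$ permutes the non-projective indecomposables. All additive closures are generated by indecomposables, so it suffices to test indecomposable modules.

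\emph{Step 1: ${}^\perp\mathcal X=\mathcal C$.} For $j\in R$ we have $M_{j+1,t-1}=\Omega(S_j)=\Omega(M_{j,1})$. Hence for a non-projective indecomposable $X=M_{i,l}$,
\[
\operatorname{Ext}^1(M_{i,l},M_{j+1,t-1})\cong\underline{\operatorname{Hom}}(\Omega M_{i,l},\Omega S_j)\cong\underline{\operatorname{Hom}}(M_{i,l},S_j).
\]
A nonzero map $M_{i,l}\to S_j$ factors through a projective only if it factors through the projective cover $P_j=M_{j,t}\to S_j$. This happens only when $M_{i,l}$ is projective, since a top-preserving map $M_{i,l}\to M_{j,t}$ with $l<t$ cannot exist. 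So the Ext group is nonzero exactly when $i=j$ and $l<t$. Therefore ${}^\perp\mathcal X$ consists of $P$ together with the $M_{i,l}$, $l<t$, whose top index satisfies $i\notin R$, which is $\mathcal C$.

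\emph{Step 2: $\mathcal C^\perp=\mathcal X$.} The inclusion $\mathcal X\subseteq\mathcal C^\perp$ follows from Step 1. For the converse, let $Y=M_{j,m}$ with $m<t$ lie in $\mathcal C^\perp$. Then $\underline{\operatorname{Hom}}(M_{a,b},Y)=0$ for all $M_{a,b}\in\Omega(\mathcal C)$. Since $d\mid t$, the non-projective part of $\Omega(\mathcal C)$ is $\{M_{a,b}: 1\le b<t,\ a+b\notin R\}$. I test $Y$ against well-chosen members of this set:
\begin{itemize}
\item Testing with $Y$ itself (the identity is stably nonzero) forces $j+m\in R$.
\item Testing with the surjection $M_{j,t-1}\twoheadrightarrow M_{j,m}$, which is stably nonzero when $m<t-1$, forces $j-1\in R$ unless $m=t-1$.
\end{itemize}
If $m=t-1$, then $j+m\in R$ gives $j-1\in R$, so $Y=M_{(j-1)+1,t-1}\in\mathcal X$, as required. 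If $m<t-1$, both $j-1$ and $j+m$ lie in $R$, so $d\mid m+1$. I then look for a further test module that is stably nonzero into $M_{j,m}$ and has $a+b\notin R$. Natural candidates are the maps $M_{j,m'}\twoheadrightarrow M_{j,m}$ for $m<m'<t$ with $j+m'\notin R$, and the inclusions of submodules $M_{j+m-b',b'}\hookrightarrow M_{j,m}$. Such an $m'$ exists because $d\ge2$ and there are $t-1-m\ge1$ values of $m'$ available; presumably this is where $t\ge3$ is needed. This yields a contradiction.

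The main obstacle is this case analysis in Step 2: I must verify that the chosen test maps are not stably zero, i.e. do not factor through the relevant projective or injective envelope, and that the index constraints genuinely exclude every $Y\notin\mathcal X$.

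\emph{Step 3: non-hereditary.} Take $i=-1\notin R$ and $l=1$, so $M_{-1,1}\in\mathcal C$ and $\Omega(M_{-1,1})=M_{0,t-1}$. This module is non-projective and has top index $0\in R$, so it is not in $\mathcal C$. Any $\mathbb E$-triangle $M\to P'\to M_{-1,1}\dashrightarrow$ with $P'$ projective has $M\cong M_{0,t-1}\oplus Q$ with $Q$ projective, so $M\notin\mathcal C$. This violates condition (ii) of \Cref{lem_ctp_2}$(2)$; $A$-mod has enough projectives. Hence $(\mathcal C,\mathcal X)$ is not hereditary.
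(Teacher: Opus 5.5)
Your proof is correct and is essentially the paper's argument read through $\Omega$ instead of $\mho$. Step~1 matches the paper's computation of ${}^\perp\mathcal X$, and Step~3 is the syzygy-side version of its check $\mho\mathcal X\not\subseteq\mathcal X$. The case you left open in Step~2 closes with $m'=m+1$: since $j+m\in R$ and $d\ge 2$, one has $j+m+1\notin R$, and the epimorphism $M_{j,m+1}\twoheadrightarrow M_{j,m}$ is stably nonzero by \Cref{radical}$(1')$. This is exactly the $\Omega$-translate of the paper's radical inclusion $\operatorname{rad}(\mho Y)\hookrightarrow\mho Y$, since $\Omega\operatorname{rad}(\mho M_{j,m})=M_{j,m+1}$.

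Two small corrections: your test with $M_{j,t-1}$ is redundant, and contrary to your guess, $t\ge 3$ is not used anywhere in this lemma.
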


\begin{proof} \ Put $\mho \mathcal{X} = \{\mho X \mid X \in \mathcal{X}\}$. Since $\mho(M_{i+1, t-1}) = M_{i, 1}$, one has $\mho \mathcal{X} = \operatorname{add}(\bigoplus\limits_{i \in R} M_{i, 1})$.
  By the formula $\mathrm{Ext}^1_A(M,N)\cong \underline{\mathrm{Hom}}_A(M, \mho N)$ (cf. Lemma \ref{extinfrobenius}), a non-projective indecomposable module $M_{j,l}$ belongs to ${}^\perp\mathcal{X}$ if and only if
  $\underline{\mathrm{Hom}}_A(M_{j,l}, \bigoplus\limits_{i\in R}M_{i,1})=0$, and if and only if $j\notin R$ and $1 \leq l < t$ by \Cref{simphom}.
  Thus $\mathcal{C} = {}^\perp \mathcal{X}$. To see that $\mathcal{C}^\perp \subseteq\mathcal{X}$, it suffices to prove

  \vskip5pt

  {\bf Claim 1} \ For any non-projective  module $M_{j, l}\notin \mathcal{X}$,  there is $L \in \mathcal{C}$ such that
  $\mathrm{Ext}^1_A(L, M_{j, l})\ne 0$.

  \vskip5pt

  Using the formula $\mathrm{Ext}^1_A(M,N)\cong \underline{\mathrm{Hom}}_A(M, \mho N)$ and $\mho (M_{j, l})= M_{j-t+l, t-l}$,  {\bf Claim 1} is equivalent to

  \vskip5pt

  {\bf Claim 2} \ For any non-projective  module $M_{j, l}\notin \mathcal{X}$,  there is $L \in \mathcal{C}$ such that
  $\underline{\mathrm{Hom}}_A(L, M_{j-t+l, t-l}) \neq 0$.

  \vskip5pt

  Since $M_{j, l}$ is not projective, by construction  $M_{j, l}\notin \mathcal{X}$ if and only if
  $M_{j, l}\notin \{M_{i+1, t-1} \ | \ i\in R\}$, or equivalently, $\mho (M_{j, l})= M_{j-t+l, t-l}\notin \{M_{i, 1} \mid i \in R\}$.

  \vskip5pt

  Assume that {\bf Claim 2} is not true, i.e., there is a non-projective module $M_{j, l}\notin \mathcal{X}$,  such that
  $\underline{\mathrm{Hom}}_A(L, M_{j-t+l, t-l})= 0$ for any $L \in \mathcal{C}$. Then there are two cases:

  \vskip5pt

  \textup{Case 1.} \ If $(j+l) \notin R$, then by construction $\mho(M_{j,l}) = M_{j-t+l, t-l} \in \mathcal{C}$.
  Since $M_{j-t+l,t-l}$ is not projective, one has
  $\underline{\mathrm{End}}_A(M_{j-t+l,t-l})\neq0$, contradicting the assumption.
  \vskip5pt
  \textup{Case 2.} \ Assume that $(j+l) \in R$. Then $j-t+l\in R$ and hence $j-t+l+1\notin R$. Since $M_{j-t+l,t-l}\notin\{M_{i,1}\mid i\in R\}$, one has $t-l\ne1$.
  Thus $\mho(M_{j,l}) = M_{j-t+l, t-l}$ is not a simple module.
  Put $L = \mathrm{rad}(M_{j-t+l, t-l}) = M_{j-t+l+1, t-l-1}$. Since $j-t+l+1\notin R$, by construction $L= M_{j-t+l+1, t-l-1}\in \mathcal{C}$.
  However, the radical inclusion $L\longrightarrow M_{j-t+l,t-l}$ does not factor through an injective module (cf. Lemma \ref{radical}). Hence
  $\underline{\mathrm{Hom}}_A(L,M_{j-t+l,t-l})\neq0$, contradicting the assumption.
  \vskip5pt
  This proves {\bf Claim 2}. Thus $(\mathcal{C},\mathcal{X})$ is a cotorsion pair in $A\mbox{-}{\rm mod}$.
  It is complete by \Cref{prop_ctp_1}.
  Since $\mho \mathcal{X} \not\subseteq \mathcal{X}$, it is not hereditary by Lemma \ref{lem_ctp_2}.
\end{proof}

\begin{lemma}\label{lem_sinj_wb} \  Put
  \begin{equation*}
    \mathcal{W} = \operatorname{add}(P \oplus \bigoplus_{i \in R} (M_{i, 1} \oplus M_{i+1, t-1})) \quad \text{and} \quad \mathcal{B} = \operatorname{add}(P \oplus \bigoplus_{\substack{j-1\notin R,\\ j + l - 1 \notin R,\\ 1 \leq l < t}} M_{j,l}).
  \end{equation*}
  Then $(\mathcal{W},\mathcal{B})$ is an injective cotorsion pair in $A\mbox{-}{\rm mod}$.
\end{lemma}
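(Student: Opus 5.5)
The plan is to verify the hypotheses of Definition-Proposition \ref{def_inj_ctp}(2). That is, we show that $(\mathcal W,\mathcal B)$ is a cotorsion pair, that $\mathcal W$ is thick, and that $\mathcal I(A)=\operatorname{add}P\subseteq\mathcal W$. The last point is immediate from the definition of $\mathcal W$, and completeness is automatic by \Cref{prop_ctp_1}, since $A$ is representation-finite. Throughout we work in the stable category $A\mbox{-}\underline{\rm mod}$ and use $\mathrm{Ext}^1_A(M,N)\cong\underline{\mathrm{Hom}}_A(\Omega M,N)\cong \underline{\mathrm{Hom}}_A(M,\mho N)$ (Lemma \ref{extinfrobenius}). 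The formulas $\Omega(M_{i,1})=M_{i+1,t-1}$ and $\Omega(M_{i+1,t-1})=M_{i+t,1}$ hold, and $i+t\in R$ whenever $i\in R$ because $d\mid t$. Hence the non-projective indecomposables of $\mathcal W$, namely $M_{i,1}$ and $M_{i+1,t-1}$ for $i\in R$, form a set that is stable under $\Omega$ and $\mho$.

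\textbf{Step 1: $\mathcal W^\perp=\mathcal B$.} A non-projective $M_{j,l}$ lies in $\mathcal W^\perp$ if and only if $\underline{\mathrm{Hom}}_A(\Omega W,M_{j,l})=0$ for all $W\in\mathcal W$. By the $\Omega$-stability above, this means $\underline{\mathrm{Hom}}_A(M_{i,1},M_{j,l})=0$ and $\underline{\mathrm{Hom}}_A(M_{i+1,t-1},M_{j,l})=0$ for all $i\in R$.
\begin{itemize}
\item By \Cref{simphom}, the first vanishing fails exactly when $\operatorname{soc}M_{j,l}=S_{j+l-1}$ has $j+l-1\in R$.
\item Since $\underline{\mathrm{Hom}}_A(M_{i+1,t-1},M_{j,l})\cong \underline{\mathrm{Hom}}_A(M_{i,1},\mho M_{j,l})=\underline{\mathrm{Hom}}_A(M_{i,1},M_{j-t+l,t-l})$, whose socle index is $j-1$, the second vanishing fails exactly when $j-1\in R$.
\end{itemize}
This yields precisely the indecomposables listed in $\mathcal B$.

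\textbf{Step 2: ${}^\perp\mathcal B=\mathcal W$.} The inclusion $\mathcal W\subseteq{}^\perp\mathcal B$ follows from Step 1. For the reverse inclusion, take a non-projective $M_{j,l}\notin\mathcal W$; we must exhibit $L\in\mathcal B$ with $\underline{\mathrm{Hom}}_A(\Omega M_{j,l},L)\ne0$. We split into cases as in the proof of Lemma \ref{lem_sinj_cx}.
\begin{itemize}
\item If $\Omega M_{j,l}$ itself lies in $\mathcal B$, take $L=\Omega M_{j,l}$; its stable endomorphism ring is nonzero.
\item Otherwise, take $L$ to be a suitable factor module $M_{j+l,l'}$ of $\Omega M_{j,l}=M_{j+l,t-l}$. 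Its top index $j+l$ is fixed, and we choose its length so that the socle index avoids $R$. The canonical epimorphism $\Omega M_{j,l}\to L$ is then stably nonzero by the dual of Lemma \ref{radical}.
\end{itemize}
The hypotheses $d\ge2$ and $t\ge3$ should guarantee that such a length exists, because $j+l-1\notin R$ or $j+l\notin R$ must hold when $M_{j,l}\notin\mathcal W$. This case analysis is fiddly but routine.

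\textbf{Step 3: thickness of $\mathcal W$.} This is the main obstacle. The plan is to pass to the triangulated stable category $A\mbox{-}\underline{\rm mod}$ with suspension $\mho$. A full subcategory of $A$-mod that contains $\operatorname{add}P$ and is closed under summands is thick if and only if its image is a thick (triangulated) subcategory of $A\mbox{-}\underline{\rm mod}$. This holds because short exact sequences give triangles, and conversely triangles lift to short exact sequences up to projective summands. The image $\underline{\mathcal W}$ is already closed under $\mho^{\pm1}$, so it remains to show closure under cones.

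For the cones, we compute $\underline{\mathrm{Hom}}_A$ between the indecomposables $X\in\{M_{i,1},M_{i+1,t-1}: i\in R\}$, again using \Cref{simphom} and Lemma \ref{radical}. The expected outcome is that stable morphisms between two such indecomposables are nonzero only when the objects are isomorphic, and then they are scalar multiples of the identity. Here $d\ge2$ prevents $S_i\to\mho S_{i'}$-type maps with $i,i'\in R$, and $t\ge3$ keeps $M_{i,1}\not\cong M_{i+1,t-1}$. Consequently every morphism in $\underline{\mathcal W}$ is, up to isomorphism, of the form $\mathrm{id}_U\oplus 0_{V\to V'}$. Its cone is $\mho V\oplus V'$, which lies in $\underline{\mathcal W}$, so $\underline{\mathcal W}$ is triangulated and closed under summands.

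Finally, Definition-Proposition \ref{def_inj_ctp}(2) yields that $(\mathcal W,\mathcal B)$ is an injective cotorsion pair. If the stable Hom computation in Step 3 turns out to have extra nonzero maps, the fallback is to check 2-out-of-3 directly on the finitely many Auslander–Reiten-type short exact sequences with indecomposable end terms (Fact \ref{pp}).
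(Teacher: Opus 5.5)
Your Step 2 has a genuine gap. A non-projective $M_{j',l'}$ lies in $\mathcal B$ only if \emph{both} $j'-1\notin R$ (a condition on the top) and $j'+l'-1\notin R$ (a condition on the socle). Every factor module of $\Omega M_{j,l}=M_{j+l,t-l}$ has top $S_{j+l}$. So such a factor module can belong to $\mathcal B$ only when $j+l-1\notin R$; varying the length only controls the socle. Consequently, when $j+l-1\in R$, neither of your two bullets applies: $\Omega M_{j,l}\notin\mathcal B$, and no factor module of it is in $\mathcal B$. Your remark that ``$j+l-1\notin R$ or $j+l\notin R$'' does not rescue this, since $j+l\notin R$ only takes care of the socle condition. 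The case does occur. For $n=2$, $t=4$ (\Cref{exm1}) one has $\mathcal B=\operatorname{add}(P\oplus M_{2,2})$, while $M_{1,2}\notin\mathcal W$ has $\Omega M_{1,2}=M_{1,2}$, whose factor modules $M_{1,2},M_{1,1}$ are both outside $\mathcal B$. As written, you therefore cannot conclude ${}^\perp\mathcal B\subseteq\mathcal W$. You need this to have a cotorsion pair at all, and hence completeness via \Cref{prop_ctp_1} and the application of Definition-Proposition \ref{def_inj_ctp}.

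The missing idea is a witness of a different shape, in which the top of $\Omega M_{j,l}$ becomes the \emph{socle} of the test module. Take $B=M_{j+l-1,2}$; it lies in $\mathcal B$ because $j+l-2,\ j+l\notin R$, as $j+l-1\in R$ and $d\ge2$. Consider the composite $f\colon M_{j+l,t-l}\twoheadrightarrow S_{j+l}\hookrightarrow M_{j+l-1,2}$. This $f$ is neither mono nor epi, so \Cref{radical} does not apply. Instead, the inclusion $\sigma\colon M_{j+l,t-l}=\operatorname{rad}^lM_{j,t}\hookrightarrow M_{j,t}$ is an injective envelope. If $f$ factored through a projective (equivalently, injective) module, then $f=g\circ\sigma$ for some $g\colon M_{j,t}\to B$, and $f(M_{j+l,t-l})=g(\operatorname{rad}^lM_{j,t})\subseteq\operatorname{rad}^lB=0$, a contradiction. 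This requires $l\ge2$, which holds because $l=1$ would mean $M_{j,1}\in\mathcal W$ with $j\in R$. This is Case 1 of the paper's proof; your two bullets correspond to its Cases 2--4. Once this case is added, the rest of your plan works. Step 1 is the paper's computation, and Step 3 is a correct alternative to the paper's route. The paper never checks thickness of $\mathcal W$ directly. It shows that $\mathcal B$ is closed under cosyzygies, hence the pair is hereditary by \Cref{cor_ctp_2}, and that $\mathcal W\cap\mathcal B=\mathcal P(A)$; it then applies Definition-Proposition \ref{def_inj_ctp}(3), which yields thickness for free. Your stable-category argument proves thickness directly and lets you use condition (2) instead. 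The indecomposables $M_{i,1},M_{i+1,t-1}$ with $i\in R$ have stable endomorphism ring $k$ and no nonzero stable maps between non-isomorphic ones. Hence $\underline{\mathcal W}$ is semisimple, $\mho$-stable and closed under cones. Your route is a bit more conceptual, whereas the paper's check of $\mho\mathcal B\subseteq\mathcal B$ is a one-line index computation.
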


\begin{proof} \ We first show that $\mathcal{W}^\perp = \mathcal{B}$. By definition
  \begin{equation}\label{eqn_wperp}
    \mathcal{W}^\perp = (\bigoplus_{i \in R} (M_{i, 1} \oplus M_{i+1, t-1}))^\perp =\bigcap_{i \in R} (M_{i, 1})^\perp \ \cap \ \bigcap_{i \in R} (M_{i+1, t-1})^\perp.
  \end{equation}
  Let $M_{j,l}$ be a non-projective module.  By \Cref{extinfrobenius}, $M_{j,l}\in (M_{i, 1})^\perp$ if and only if
  $$\underline{\mathrm{Hom}}_A(M_{i,1}, \mho M_{j,l}) = \underline{\mathrm{Hom}}_A(M_{i,1}, M_{j+l-t, t-l}) = 0.$$
  Thus, by Lemma \ref{simphom} one has
  \begin{equation}\label{eqn1wperp} M_{j,l}\in (M_{i, 1})^\perp \ \ \ \text{if and only if} \ \ \ j\ne i + 1.\end{equation}

  \vskip5pt

  \noindent Similarly,    by \Cref{extinfrobenius}, $M_{j,l}\in (M_{i+1, t-1})^\perp$ if and only if
  $$\underline{\mathrm{Hom}}_A(\Omega M_{i+1, t-1}, M_{j,l}) = \underline{\mathrm{Hom}}_A(M_{i+t, 1}, M_{j, l}) = 0$$
  and by Lemma \ref{simphom}  one has
  \begin{equation}\label{eqn2wperp} M_{j,l}\in (M_{i+1, t-1})^\perp \ \ \ \text{if and only if} \ \ \ j\ne i+t-l+1.\end{equation}
  By \Cref{eqn1wperp}  and   \Cref{eqn2wperp} one has
  \begin{equation*}\label{eqn3wperp} M_{j,l}\in \mathcal{W}^\perp \ \ \ \text{if and only if}  \ \ \ j\ne i+1, \  \ \ j\ne i + t - l + 1, \ \ \ \forall \ i\in R.\end{equation*}
  Since $R = d\mathbb Z/n\mathbb Z \subseteq \mathbb Z/n\mathbb Z$ and $d = \gcd(n, t )$, one has $R+t=R$, i.e.,
  $$j\ne i+1, \  \ \ j\ne i + t - l + 1,  \  \forall \ i\in R,  \  \ \text{if and only if}  \ \ \ j-1\notin R, \  \ \ j+l-1\notin R.$$
  Thus  $\mathcal{W}^\perp = \mathcal{B}$, by \Cref{eqn_wperp}.

  \vskip5pt

  To see that $(\mathcal{W}, \mathcal{B})$ is a cotorsion pair, it remains to prove  ${}^\perp \mathcal{B} \subseteq \mathcal{W}$, or equivalently, to prove
  \vskip5pt
  {\bf Claim} \ For any non-projective module $M_{j,l}\notin \mathcal{W}$ with $j\in \mathbb Z/n\mathbb Z$ and $1 \leq l < t$, there is $B \in \mathcal{B}$ such that $\mathrm{Ext}^1_A(M_{j,l}, B)\ne 0$.

  \vskip5pt
  We divide the proof into the following four cases.

  \vskip5pt

  {\bf Case 1} \ Suppose that $j+l-1\in R$.  Put $B=M_{j+l-1,2}$.  Since $d>1$, one has $(j+l-1)-1\notin R$ and $(j+l-1)+2-1\notin R$.  Hence $B\in\mathcal B$ by construction.
  Moreover, $l\geq2$; otherwise, $l=1$, so $j=j+l-1\in R$ and hence $M_{j,l}\in\mathcal W$, contradicting the assumption. Since $\mathrm{top}(M_{j+l,t-l})=S_{j+l}=\mathrm{soc}(B)$, the composition
  \begin{equation*}
    f:M_{j+l,t-l}\twoheadrightarrow S_{j+l}\hookrightarrow B=M_{j+l-1,2}
  \end{equation*}
  is not zero.  We claim that $f$ does not factor through a projective module.  Otherwise, $f$ factors through the injective envelope $\sigma:M_{j+l,t-l}\hookrightarrow M_{j,t}$.  Thus $f=g\circ\sigma$ for some $g:M_{j,t}\longrightarrow M_{j+l-1,2}$.  Since $M_{j+l,t-l}=\operatorname{rad}^lM_{j,t}$ with $l\geq2$ and the length of $B$ is $2$, we obtain
  $$f(M_{j+l,t-l})=g(M_{j+l,t-l})=g(\operatorname{rad}^lM_{j,t})
  \subseteq\operatorname{rad}^lg(M_{j,t})
  \subseteq\operatorname{rad}^lM_{j+l-1,2}=0,$$
  contradicting $f\neq0$.
  Thus $\mathrm{Ext}^1_A(M_{j,l},B)\cong \underline{\mathrm{Hom}}_A(M_{j+l,t-l}, M_{j+l-1,2})\neq0$.

  \vskip5pt

  {\bf Case 2} \ Suppose that $j+l-1\notin R$ and $j-1\notin R$.  Put $B=\Omega M_{j,l}=M_{j+l,t-l}$.
  Since $(j+l)-1 \notin R$ and $(j+l)+(t-l)-1=j+t-1 \notin R$, by construction $B\in\mathcal B$.  Since $B$ is not a projective $A$-module, by \Cref{extinfrobenius} one sees
  \begin{align*}
    \mathrm{Ext}^1_A(M_{j,l},B)
    \cong\underline{\mathrm{Hom}}_A(\Omega M_{j,l},B)\cong\underline{\mathrm{End}}_A(B)\neq0.
  \end{align*}

  \vskip5pt

  {\bf Case 3} \ Suppose that $j+l-1\notin R$, $j-1\in R$, and $j+l\in R$.  Put $B=M_{j+l,2}$.  Since $d>1$, one has $(j+l)-1\notin R$ and $(j+l)+2-1\notin R$. Thus  $B\in\mathcal B$.
  Moreover, $t-l\geq2$; otherwise, $l=t-1$, and $j-1\in R$ implies
  $M_{j,l}=M_{j,t-1}\in\mathcal W$, contradicting the assumption.
  Now since $t-l \geq 2$, there is a canonical epimorphism
  \begin{equation*}
    M_{j+l,t-l}\twoheadrightarrow M_{j+l,2}=B.
  \end{equation*}
  By \Cref{radical}$(2)$, this epimorphism between non-projective indecomposable modules does not factor through a projective module.  Hence
  \begin{equation*}
    \mathrm{Ext}^1_A(M_{j,l},B)
    \cong\underline{\mathrm{Hom}}_A(\Omega M_{j,l},B)\cong\underline{\mathrm{Hom}}_A(M_{j+l,t-l},M_{j+l,2})\neq0.
  \end{equation*}

  \vskip5pt

  {\bf Case 4} \ Suppose that $j+l-1\notin R$, $j-1\in R$, and $j+l\notin R$.  Put $B=M_{j+l,1}$.  Since $(j+l)-1\notin R$ and $(j+l)+1-1\notin R$, one has $B\in\mathcal B$.  By \Cref{simphom} one has
  \begin{equation*}
    \mathrm{Ext}^1_A(M_{j,l},B)
    \cong\underline{\mathrm{Hom}}_A(\Omega M_{j,l},B)\cong\underline{\mathrm{Hom}}_A(M_{j+l,t-l},M_{j+l,1})\neq0.
  \end{equation*}

  \vskip5pt

  These cases exhaust all the possibilities. Thus {\bf Claim} is proved. So $(\mathcal{W}, \mathcal{B})$ is a cotorsion pair. By \Cref{prop_ctp_1}, it is complete. By construction  $\mathcal{W} \cap \mathcal{B} = \mathcal{P}(A)$.
  We claim that $\mathcal B$ is closed under cosyzygies. In fact, for any non-injective $M_{j,l}\in \mathcal B$, one has $j-1 \notin R$, $j+l-1 \notin R$, and $1 \leq l < t$. Then $\mho M_{j,l}= M_{j+l-t, t-l}$.
  Since $(j+l-t)-1 \notin R$ and $(j+l-t)+(t-l)-1=j-1 \notin R$,  $\mho M_{j,l}= M_{j+l-t, t-l}\in\mathcal B$.
  By \Cref{cor_ctp_2},  $(\mathcal{W}, \mathcal{B})$ is hereditary.
  By \Cref{def_inj_ctp}, $(\mathcal{W}, \mathcal{B})$ is an injective cotorsion pair.
\end{proof}

\subsection{Exceptional \textup{Hovey} triple in $A\mbox{-}{\rm mod}$}
$\,$

\vskip5pt

\begin{lemma}\label{lem_sinj_hovey_exceptional} \  Put   $\mathcal{C} =\operatorname{add}(P\oplus\bigoplus\limits_{\substack{i\notin R,\\ 1\leq l<t}}M_{i,l}), \quad
    \mathcal{F} =\operatorname{add}(P\oplus\bigoplus\limits_{\substack{i+l-1\notin R,\\ 1\leq l<t}}M_{i,l})$ and
  \vskip5pt
  \noindent  $\mathcal{W} =\operatorname{add}(P\oplus\bigoplus\limits_{i\in R}(M_{i,1}\oplus M_{i+1,t-1})).$
  Then \begin{equation*}
    \mathcal{C}\cap\mathcal{F}
    =\operatorname{add}(P\oplus\bigoplus_{\substack{i\notin R,\\ i+l-1\notin R, \\ 1\le l< t}}M_{i,l}), \quad \ \ \mathcal{C}\cap\mathcal{F}\cap\mathcal{W}
    =\operatorname{add}(P\oplus\bigoplus_{i\in R}M_{i+1,t-1});
  \end{equation*}
  and $(\mathcal{C}, \mathcal{F}, \mathcal{W})$ is a \textup{Hovey} triple in $A\mbox{-}{\rm mod}$, and it is exceptional,
  i.e., $(\mathcal{C} \cap \mathcal{F}, \mathcal{C} \cap \mathcal{F} \cap \mathcal{W})$ is not a Frobenius pair.
\end{lemma}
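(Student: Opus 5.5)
The plan is to obtain the Hovey triple from Gillespie's merging construction, Theorem \ref{theorem_merge}, and then compute intersections and check exceptionality directly. Take $(\mathcal C,\mathcal X)$ from Lemma \ref{lem_sinj_cx} and $(\mathcal W,\mathcal B)$ from Lemma \ref{lem_sinj_wb}. Since $\mathcal X=\operatorname{add}(P\oplus\bigoplus_{i\in R}M_{i+1,t-1})\subseteq\mathcal W$ by inspection, and $A\mbox{-}{\rm mod}$ has enough injectives, Theorem \ref{theorem_merge} yields the Hovey triple $(\mathcal C,(\mathcal C\cap\mathcal W)^\perp,\mathcal W)$. It remains to identify $(\mathcal C\cap\mathcal W)^\perp$ with the stated $\mathcal F$.

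\textbf{Step 1: compute $\mathcal C\cap\mathcal W$.} An indecomposable non-projective $M_{i,1}$ with $i\in R$ lies outside $\mathcal C$. The module $M_{i+1,t-1}$ with $i\in R$ has $i+1\notin R$ since $d\ge 2$, so it lies in $\mathcal C$. Hence
\[\mathcal C\cap\mathcal W=\operatorname{add}\Bigl(P\oplus\bigoplus_{i\in R}M_{i+1,t-1}\Bigr)=\mathcal X.\]

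\textbf{Step 2: compute $\mathcal F=\mathcal X^\perp$.} This was essentially done in the proof of Lemma \ref{lem_sinj_wb}, in \eqref{eqn2wperp}: a non-projective $M_{j,l}$ lies in $(M_{i+1,t-1})^\perp$ if and only if $j\ne i+t-l+1$. Using $R+t=R$, this holds for all $i\in R$ if and only if $j+l-1\notin R$. So $\mathcal F$ is as stated.

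\textbf{Step 3: the intersections.} The formula for $\mathcal C\cap\mathcal F$ is immediate from the definitions. For $\mathcal C\cap\mathcal F\cap\mathcal W$, the summand $M_{i,1}$ with $i\in R$ is not in $\mathcal C$. The summand $M_{i+1,t-1}$ with $i\in R$ satisfies $(i+1)+(t-1)-1=i+t-1$, and $i+t-1\notin R$ because $i+t\in R$ and $d\ge 2$. So $M_{i+1,t-1}\in\mathcal F$, which gives the claimed formula.

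\textbf{Step 4: exceptionality.} I must show that $(\mathcal C\cap\mathcal F,\ \mathcal C\cap\mathcal F\cap\mathcal W)$ is not a Frobenius pair. It suffices to exhibit an object of $\mathcal C\cap\mathcal F$ not in $\mathcal C\cap\mathcal F\cap\mathcal W$ that is projective, or injective, in $\mathcal C\cap\mathcal F$. If $\mathcal C\cap\mathcal F$ is not Frobenius we are done. Otherwise a projective-injective object outside $\mathcal W$ works, since a Frobenius pair would force $\mathcal P(\mathcal C\cap\mathcal F)=\mathcal C\cap\mathcal F\cap\mathcal W$.

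Candidate: $M_{i+1,t-2}$ with $i\in R$. This is nonzero because $t\ge 3$. Check membership:
\begin{itemize}
\item $i+1\notin R$, so it lies in $\mathcal C$;
\item its socle index is $i+t-2$, which is not in $R$ when $d\ge 3$.
\end{itemize}
When $d=2$, the socle index $i+t-2$ does lie in $R$, so one uses instead the neighbour $M_{i+2,t-2}$, whose socle index is $i+t-1\notin R$. This neighbour is allowed only if $i+2\notin R$, which fails for $d=2$. So for $d=2$ one must choose among the modules $M_{j,l}$ with $j$ and $j+l-1$ both odd, that is, $j\notin R$ and $l$ odd.

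The test is then $\operatorname{Ext}^1_A(M_{j,l},Y)\cong\underline{\operatorname{Hom}}_A(M_{j+l,t-l},Y)=0$ for all $Y\in\mathcal C\cap\mathcal F$, evaluated via Lemma \ref{simphom} and the stable Hom description for uniserials. A natural choice is $M_{j,t-1}$ with $j\notin R$ and $j+t-2\notin R$, whose syzygy is simple, $\Omega M_{j,t-1}=M_{j+t-1,1}$. Then $\underline{\operatorname{Hom}}_A(S_{j+t-1},Y)\neq0$ only if $\operatorname{soc}Y=S_{j+t-1}$, that is, only if $Y$ has socle index $j+t-1$. So $M_{j,t-1}$ is projective in $\mathcal C\cap\mathcal F$ precisely when $j+t-1\in R$. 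Choosing $j+t-1\in R$ forces $j+t-2\notin R$ automatically, and gives $j=i-t+1\equiv i+1\pmod d$. So $j\notin R$ holds, but then $M_{j,t-1}$ is one of the $M_{i+1,t-1}$ already in $\mathcal W$.

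This shows the main obstacle is finding the right witness. I would therefore look for an injective witness: a module $M_{j,l}$ in $\mathcal C\cap\mathcal F\setminus\mathcal W$ with $\mho$ simple, namely $M_{j,t-1}$ with $\mho M_{j,t-1}=M_{j-1,1}$, or dually with $\Omega$ of length $t-1$. I would check $\operatorname{Ext}^1_A(\mathcal C\cap\mathcal F,-)=0$ through Lemma \ref{simphom}, splitting into the cases $d=2$ and $d\ge3$. The remaining fallback is to show that some object of $\mathcal C\cap\mathcal F$ has no inflation into a projective-injective of $\mathcal C\cap\mathcal F$, so that $\mathcal C\cap\mathcal F$ is not Frobenius at all.
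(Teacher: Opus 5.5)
Your Steps 1--3 are correct and follow the paper's proof. You merge via Theorem~\ref{theorem_merge}, note $\mathcal C\cap\mathcal W=\mathcal X$, and read off $(\mathcal C\cap\mathcal W)^\perp=\mathcal F$ from \eqref{eqn2wperp} together with $R+t=R$.

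The gap is Step~4. You never exhibit a verified object of $\mathcal C\cap\mathcal F$ outside $\mathcal W$ that is projective or injective in $\mathcal C\cap\mathcal F$, so exceptionality, which is the real content of the lemma, remains unproved. Moreover, the candidates you try do not work.
\begin{itemize}
\item You showed yourself that $M_{j,t-1}$ is projective in $\mathcal C\cap\mathcal F$ only when it is some $M_{i+1,t-1}\in\mathcal W$.
\item The injective version fails the same way. Here $\operatorname{Ext}^1_A(S_{j-1},M_{j,t-1})\cong\underline{\operatorname{Hom}}_A(S_{j-1},S_{j-1})\neq0$, and $S_{j-1}\in\mathcal C\cap\mathcal F$ whenever $j-1\notin R$. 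So $M_{j,t-1}$ is injective in $\mathcal C\cap\mathcal F$ only if $j-1\in R$, i.e.\ only if $M_{j,t-1}\in\mathcal W$ again.
\item For $t\ge 4$ (and $d\ge3$, where it lies in $\mathcal C\cap\mathcal F$), $M_{i+1,t-2}$ is neither projective nor injective in $\mathcal C\cap\mathcal F$. Indeed $\operatorname{Ext}^1_A(M_{i+1,t-2},S_{i+t-1})\cong\underline{\operatorname{Hom}}_A(M_{i+t-1,2},S_{i+t-1})\neq0$ and $\operatorname{Ext}^1_A(M_{i-1,3},M_{i+1,t-2})\cong\underline{\operatorname{Hom}}_A(M_{i-1,3},M_{i-1,2})\neq0$, with $S_{i+t-1},M_{i-1,3}\in\mathcal C\cap\mathcal F$.
\item Your fallback of proving that $\mathcal C\cap\mathcal F$ is not Frobenius cannot succeed when $d=2$, since then $\mathcal C\cap\mathcal F$ is Frobenius (Theorem~\ref{type}).
\end{itemize}

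The missing idea is to make the witness itself simple, so that Lemma~\ref{simphom} applies directly to the target of $\underline{\operatorname{Hom}}_A(\Omega X,-)$. For $t=3$ your $M_{i+1,t-2}$ is exactly this module. For $i\in R$, the simple $M_{i+1,1}$ lies in $\mathcal C\cap\mathcal F$ because $i+1\notin R$; this uses $d\ge2$. For every non-projective $M_{j,l}\in\mathcal C\cap\mathcal F$,
\[
\operatorname{Ext}^1_A(M_{j,l},M_{i+1,1})\cong\underline{\operatorname{Hom}}_A(M_{j+l,t-l},M_{i+1,1}).
\]
By Lemma~\ref{simphom} this is zero unless $j+l=i+1$. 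That is impossible, since $j+l-1\notin R$ by the definition of $\mathcal F$ while $i\in R$. Hence $M_{i+1,1}\in\mathcal I(\mathcal C\cap\mathcal F)$. Because $t\ge3$ it differs from every $M_{i'+1,t-1}$, so it is not in $\mathcal C\cap\mathcal F\cap\mathcal W$. A Frobenius pair would force $\mathcal I(\mathcal C\cap\mathcal F)=\mathcal C\cap\mathcal F\cap\mathcal W$, so the pair is not Frobenius; this is the paper's argument. Dually, $M_{i-1,1}$ with $i\in R$ is projective in $\mathcal C\cap\mathcal F$, now using the top condition $j\notin R$ defining $\mathcal C$.
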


\begin{proof} \  Put
  \begin{equation*}
    \mathcal{X}=\operatorname{add}(P\oplus\bigoplus_{i\in R}M_{i+1,t-1}),\quad
    \mathcal{B}=\operatorname{add}(P\oplus\bigoplus_{\substack{i-1\notin R,\\ i+l-1\notin R, \\ 1\le l< t}}M_{i,l}).
  \end{equation*}
  By \Cref{lem_sinj_cx}, $(\mathcal{C},\mathcal{X})$ is a non-hereditary complete cotorsion pair; and by \Cref{lem_sinj_wb}, $(\mathcal{W},\mathcal{B})$ is an injective cotorsion pair;
  moreover, $\mathcal{X}\subseteq\mathcal{W}$.
  Hence, by \Cref{theorem_merge}, $(\mathcal{C}, \ (\mathcal{C}\cap\mathcal{W})^\perp, \ \mathcal{W})$ is a \textup{Hovey} triple.
  We claim that $\mathcal F = (\mathcal C\cap\mathcal W)^\perp$. In fact, by definition $
    \mathcal{C}\cap\mathcal{W}
    =\operatorname{add}(P\oplus\bigoplus\limits_{i\in R} M_{i+1,t-1}).$
  Thus $( \mathcal{C}\cap\mathcal{W})^\perp =  \bigcap\limits_{i \in R} (M_{i+1, t-1})^\perp.$
  Let $M_{j,l}$ be a non-projective module.  As we have already seen in (\ref{eqn2wperp}), $M_{j,l}\in (M_{i+1, t-1})^\perp$ with $i\in R$ if and only if
  $j\ne i+t-l+1.$ Thus
  \begin{equation*}
    (\mathcal C\cap\mathcal W)^\perp
    =\operatorname{add}(P\oplus\bigoplus_{\substack {i+l-1\notin R, \\ 1\le l< t}}M_{i,l})
    =\mathcal F.
  \end{equation*}
  Hence  $(\mathcal C,\mathcal F,\mathcal W)$ is a \textup{Hovey} triple.
  By construction
  \begin{equation*}
    \mathcal{C}\cap\mathcal{F}
    =\operatorname{add}(P\oplus\bigoplus_{\substack{i\notin R,\\ i+l-1\notin R, \\ 1\le l< t}}M_{i,l}), \quad \mathcal{C}\cap\mathcal{F}\cap\mathcal{W}
    =\operatorname{add}(P\oplus\bigoplus_{i\in R}M_{i+1,t-1}).
  \end{equation*}

  We claim that $M_{i+1,1}$ is an injective object in the exact category $\mathcal C\cap\mathcal F$ for any $i\in R$.
  In fact, since $i+1\notin R$, one has $M_{i+1,1}\in\mathcal C\cap \mathcal F$.
  It suffices to show that $\mathrm{Ext}^1_A(M_{j,l},M_{i+1,1})=0$ for every non-projective module $M_{j,l}\in\mathcal C\cap\mathcal F$.
  Using \begin{align*}
    \mathrm{Ext}^1_A(M_{j,l},M_{i+1,1})
    \cong \underline{\mathrm{Hom}}_A(\Omega M_{j,l},M_{i+1,1}) \cong \underline{\mathrm{Hom}}_A(M_{j+l,t-l},M_{i+1,1}),
  \end{align*}
  it suffices to see $\underline{\mathrm{Hom}}_A(M_{j+l,t-l},M_{i+1,1})=0$.
  By the construction of $\mathcal{F}$, one has $j+l-1\notin R$. Since $i\in R$, one has $j+l \neq i+1$. It follows from \Cref{simphom} that $\underline{\mathrm{Hom}}_A(M_{j+l,t-l},M_{i+1,1})=0$.
  This proves the claim.

  \vskip5pt

  However, since $t\ge 3$, $M_{i+1,1}$ with $i\in R$ is not in $\mathcal C\cap\mathcal F\cap\mathcal W$.
  It follows that $(\mathcal{C}\cap\mathcal{F}, \ \mathcal{C}\cap\mathcal{F}\cap\mathcal{W})$ is not a Frobenius pair. This completes the proof.
\end{proof}

\subsection{Proof of Theorem \ref{thm_classification}}  If $\gcd(n, t) \ge 2$ and $t \geq 3$,  then by \Cref{lem_sinj_hovey_exceptional} one sees that $A\mbox{-}{\rm mod}$ admits an exceptional \textup{Hovey} triple.

\vskip5pt

Conversely, if $A\mbox{-}{\rm mod}$ admits an exceptional \textup{Hovey} triple  $(\mathcal{C}, \mathcal{F}, \mathcal{W})$ (in particular, $(\mathcal{C}\cap\mathcal{F}, \ \mathcal{C}\cap\mathcal{F}\cap\mathcal{W})$ is not a Frobenius pair), then  $\gcd(n, t) > 1$ and $t \geq 3$. Otherwise,
$\gcd(n, t) = 1$ or $t = 2$. By \Cref{lem_hovey_coprime_or_rad_square_zero},
$(\mathcal{C}, \mathcal{F}, \mathcal{W})$ is either $(A\mbox{-}{\rm mod}, \  A\mbox{-}{\rm mod}, \ \mathcal{P}(A))$, or of the form $(\mathcal{C}, \ \mathcal{F}, \ A\mbox{-}{\rm mod})$ for some complete cotorsion pair $(\mathcal{C}, \mathcal{F})$.

\vskip5pt

In the first case,  $(\mathcal{C}\cap\mathcal{F}, \ \mathcal{C}\cap\mathcal{F}\cap\mathcal{W}) = (A\mbox{-}{\rm mod}, \mathcal{P}(A))$ is a Frobenius pair, which contradicts the assumption.
In the second case,  $(\mathcal{C}\cap\mathcal{F}, \ \mathcal{C}\cap\mathcal{F}\cap\mathcal{W}) = (\mathcal{C}\cap \mathcal{F}, \mathcal{C}\cap \mathcal{F})$ is also a Frobenius pair, which contradicts the assumption. \hfill $\square$

\subsection{The Type of the exceptional \textup{Hovey} triple in \Cref{lem_sinj_hovey_exceptional}}

$\,$

\vskip5pt

\begin{theorem}\label{type} \ Let $(\mathcal{C}, \mathcal{F}, \mathcal{W})$ be the exceptional \textup{Hovey} triple
  in \Cref{lem_sinj_hovey_exceptional}. Then
  \vskip5pt
  $(1)$ \ $(\mathcal C,\mathcal F,\mathcal W)$ is of \textup{Type I} if and only if $d=\gcd(n,t)=2$,
  \vskip5pt
  $(2)$ \ $(\mathcal C,\mathcal F,\mathcal W)$ is of \textup{Type II} if and only if $d=\gcd(n,t)\ge 3$.
\end{theorem}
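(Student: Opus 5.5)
Since the triple of \Cref{lem_sinj_hovey_exceptional} is already known to be exceptional, Type I and Type II are complementary. So it suffices to prove one equivalence: $\mathcal E:=\mathcal C\cap\mathcal F$ is a Frobenius exact category if and only if $d=2$. By \Cref{prop_D_enough_PI}, $\mathcal E$ has enough projective and enough injective objects, since $A$ is representation-finite. Thus the whole question is whether $\mathcal P(\mathcal E)=\mathcal I(\mathcal E)$.

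Throughout, the non-projective indecomposables of $\mathcal E$ are the $M_{i,l}$ with $i\notin R$, $i+l-1\notin R$ and $1\le l<t$. For such $M$ and $N$ I will use \Cref{extinfrobenius}, which gives
$$\mathrm{Ext}^1_A(M,N)\cong\underline{\mathrm{Hom}}_A(\Omega M,N)\cong\underline{\mathrm{Hom}}_A(M,\mho N),$$
together with \Cref{simphom} and \Cref{radical} to decide whether a stable Hom space vanishes.

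\emph{Case $d\ge 3$.} In the proof of \Cref{lem_sinj_hovey_exceptional} it was shown that $M_{i+1,1}$ with $i\in R$ is injective in $\mathcal E$. I will show that it is not projective in $\mathcal E$, which gives Type II. Take $N=M_{i+2,1}$. Since $d\ge3$ we have $i+2\notin R$, so $N\in\mathcal E$. Then
$$\mathrm{Ext}^1_A(M_{i+1,1},N)\cong\underline{\mathrm{Hom}}_A(M_{i+2,t-1},M_{i+2,1}),$$
and this is nonzero by \Cref{simphom}, because the tops agree and $M_{i+2,t-1}$ is not projective. Hence $M_{i+1,1}\in\mathcal I(\mathcal E)\setminus\mathcal P(\mathcal E)$, so $\mathcal E$ is not Frobenius.

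\emph{Case $d=2$.} Now $R$ consists of the even residues and $t$ is even. The non-projective indecomposables of $\mathcal E$ are exactly the $M_{i,l}$ with $i$ odd and $l$ odd, $l<t$. I will prove that
$$\mathcal P(\mathcal E)=\mathcal I(\mathcal E)=\operatorname{add}\bigl(P\oplus\textstyle\bigoplus_{i\notin R}(M_{i,1}\oplus M_{i,t-1})\bigr),$$
which gives Type I. The proof has three parts.

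\begin{enumerate}
\item \emph{The simples $M_{i,1}$ ($i\notin R$) are projective and injective in $\mathcal E$.} For $M_{j,l}\in\mathcal E$ we have $\Omega M_{j,l}=M_{j+l,t-l}$ with top $j+l\in R$, and $\mho M_{j,l}=M_{j+l-t,t-l}$ with socle $j-1\in R$. Since $M_{i,1}$ has top and socle $i\notin R$, \Cref{simphom} kills both $\underline{\mathrm{Hom}}_A(\Omega M_{j,l},M_{i,1})$ and $\underline{\mathrm{Hom}}_A(M_{i,1},\mho M_{j,l})$. This is the same parity argument as for $M_{i+1,1}$ in \Cref{lem_sinj_hovey_exceptional}.
\item \emph{The modules $M_{i,t-1}$ ($i\notin R$) are projective and injective in $\mathcal E$.} These are exactly the objects of $\mathcal C\cap\mathcal F\cap\mathcal W$ apart from $P$, so this follows from Fact \ref{Wcontainsp}(2).
\item \emph{No other $M_{i,l}$ (with $l$ odd, $3\le l\le t-3$) is projective or injective in $\mathcal E$.}
\begin{itemize}
\item Non-projectivity: choose $N=M_{i+l-1,3}$, which has top $i+l-1\notin R$ and socle $i+l+1\notin R$, so $N\in\mathcal E$. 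Use the composite
$$M_{i+l,t-l}\twoheadrightarrow M_{i+l,2}\hookrightarrow M_{i+l-1,3}.$$
It is nonzero, and the radical-power argument of Case 1 in \Cref{lem_sinj_wb} shows it does not factor through the projective cover $M_{i,t}$, since the image would lie in $\operatorname{rad}^l$ of a module of length $3\le l$. Hence $\mathrm{Ext}^1_A(M_{i,l},N)\ne0$.
\item Non-injectivity: apply the dual construction to $\mho M_{i,l}$, for instance via the standard duality $D$, which in the case $d=2$ preserves the defining conditions of $\mathcal E$ up to the reversal of the cyclic orientation.
\end{itemize}
\end{enumerate}

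The main obstacle is part (iii): producing, uniformly in $l$, a witness $N\in\mathcal E$ with the parity constraints and checking that the relevant map is stably nonzero. I expect this to go as in the four-case analysis of \Cref{lem_sinj_wb}. The edge values of $l$, where lengths $2$ or $3$ would collide with $t-l$, need separate care.
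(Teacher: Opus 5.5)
Your proposal is correct, but it is organized differently from the paper's proof.

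\textbf{The paper's route.} The paper computes, for every $d\ge 2$,
$\mathcal P(\mathcal C\cap\mathcal F)=\operatorname{add}\bigl(P\oplus\bigoplus_{i\in R}(M_{i+1,t-1}\oplus M_{i-1,1})\bigr)$.
This takes two cases: for $j+l\notin R$ the witness is $M_{j+l,1}$, and for $j+l\in R$ the witness is $M_{j+l-1,3}$. It then gets $\mathcal I(\mathcal C\cap\mathcal F)$ dually, with $M_{i+1,1}$ in place of $M_{i-1,1}$. Finally it reads off Type I $\iff\{i-1\mid i\in R\}=\{i+1\mid i\in R\}\iff d=2$.

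\textbf{Your $d\ge 3$ branch.} You replace that classification by a single witness. $M_{i+1,1}$ with $i\in R$ is injective in $\mathcal C\cap\mathcal F$, but $\operatorname{Ext}^1_A(M_{i+1,1},M_{i+2,1})\neq 0$ with $M_{i+2,1}\in\mathcal C\cap\mathcal F$. This is shorter, and it does not even need enough projectives or injectives.

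\textbf{Your $d=2$ branch.} This is the paper's computation specialized. Parity forces $j+l\in R$ for every non-projective $M_{j,l}\in\mathcal C\cap\mathcal F$, so only the paper's second case occurs, with the same witness $M_{j+l-1,3}$.

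\textbf{Two small points on part (iii).}
\begin{itemize}
\item The edge-case worry in your last paragraph is unfounded. For $3\le l\le t-3$ you have $t-l\ge 3$, and $\operatorname{rad}^l$ of a length-$3$ module is $0$, so the witness works uniformly. Strictly, the factorization to rule out is through the inclusion $\Omega M_{i,l}\hookrightarrow M_{i,t}$. This is the injective envelope of $\Omega M_{i,l}$, using $\mathcal P(A)=\mathcal I(A)$, not ``the projective cover''.
\item Your non-injectivity step via $D$ is legitimate and is a clean replacement for the paper's ``dually''. Identify $A^{\rm op}\cong A$ by $i\mapsto -i$. Then $D M_{i,l}\cong M_{-(i+l-1),\,l}$. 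Since $-R=R$ and $\mathcal C\cap\mathcal F$ is cut out by ``top and socle outside $R$'', this exact duality preserves $\mathcal C\cap\mathcal F$ for every $d$, not only $d=2$. It also maps your list $\{M_{i,l}: i,l \text{ odd},\ 3\le l\le t-3\}$ to itself.
\end{itemize}

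\textbf{What each route buys.} The paper's uniform computation produces explicit formulas for $\mathcal P(\mathcal C\cap\mathcal F)$ and $\mathcal I(\mathcal C\cap\mathcal F)$ for all $d$. These are reused later, for instance in the $kC_3/J^3$ example with $d=3$ and the remark on $\overline{\mathcal I}$ there. Your split is more economical for the Type I/Type II dichotomy itself, but does not yield those formulas when $d\ge 3$.
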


\begin{proof} \  By Corollary \ref{corollary_D_enough_PI}, $\mathcal C\cap\mathcal F$ has enough projective objects and  enough injective objects.
  By \Cref{lem_sinj_hovey_exceptional} one has
  \begin{equation*}
    \mathcal{C}\cap\mathcal{F}
    =\operatorname{add}(P\oplus\bigoplus_{\substack{i\notin R,\\ i+l-1\notin R,\\ 1\le l<t}}M_{i,l}), \quad \mathcal{C}\cap\mathcal{F}\cap\mathcal{W}
    =\operatorname{add}(P\oplus\bigoplus_{i\in R}M_{i+1,t-1}).
  \end{equation*}
  We will prove $\mathcal P(\mathcal{C} \cap \mathcal{F})
    =\operatorname{add}\left(P\oplus
    \bigoplus\limits_{i\in R}\big(M_{i+1,t-1}\oplus M_{i-1,1}\big)\right)$.   Since $\mathcal{C} \cap \mathcal{F}$ is extension-closed in $A\mbox{-}{\rm mod}$, one has $\mathrm{Ext}^1_{\mathcal{C} \cap \mathcal{F}}(M,N)=\mathrm{Ext}^1_A(M,N)$ for all $M,N\in\mathcal{C}\cap\mathcal{F}$.
  Hence
  $\mathcal{P}(\mathcal{C} \cap \mathcal{F})
    ={}^\perp(\mathcal{C}\cap\mathcal{F})\cap(\mathcal{C}\cap\mathcal{F})$.
  \vskip5pt
  We first prove that
  \begin{equation*}
    \operatorname{add}\left(P\oplus\bigoplus_{i\in R}
    \big(M_{i+1,t-1}\oplus M_{i-1,1}\big)\right)\subseteq\mathcal P(\mathcal{C} \cap \mathcal{F}).
  \end{equation*}
  Since
  \begin{equation*}
    \operatorname{add}\left(P\oplus\bigoplus_{i\in R}M_{i+1,t-1}\right) = \mathcal{C}\cap\mathcal{F}\cap\mathcal{W}
    \subseteq\mathcal P(\mathcal C\cap\mathcal F),
  \end{equation*}

  \vskip5pt
  \noindent it remains to show that $M_{i-1,1}\in\mathcal P(\mathcal C\cap\mathcal F)$ for  $i\in R$.
  Since $i\in R$ and $d\ge 2$, one has $i-1\notin R$, and hence $M_{i-1,1}\in\mathcal C\cap\mathcal F$.  Let $M_{j,l}$ be a non-projective  module in $\mathcal C\cap\mathcal F$.  Then $j\notin R$. By \Cref{extinfrobenius} one has
  \begin{align*}
    \mathrm{Ext}^1_A(M_{i-1,1},M_{j,l})
    \cong\underline{\mathrm{Hom}}_A(M_{i-1,1},\mho M_{j,l})\cong\underline{\mathrm{Hom}}_A(M_{i-1,1},M_{j+l-t,t-l})
  \end{align*}
  and it is zero if and only if $j+l-t \neq (i-1)-(t-l)+1$ (cf. \Cref{simphom}), i.e., $i \neq j$.  This is true since $i\in R$ and $j\notin R$. Thus, $M_{i-1,1}\in\mathcal P(\mathcal C\cap\mathcal F)$ for  $i\in R$.

  \vskip5pt

  To prove $\mathcal{P}(\mathcal{C} \cap \mathcal{F}) \subseteq \operatorname{add}\left(P\oplus\bigoplus\limits_{i\in R}
    \big(M_{i+1,t-1}\oplus M_{i-1,1}\big)\right)$, it suffices to prove

  \vskip5pt

  {\bf Claim} \ Let $M_{j,l}\in\mathcal C\cap\mathcal F$ be a non-projective $A$-module.  If
  \begin{equation}\label{eqn_sinj_exceptional_type}
    M_{j,l}\notin\{M_{i+1,t-1},M_{i-1,1}\mid i\in R\},
  \end{equation}
  then there exists $B\in\mathcal C\cap\mathcal F$ such that $\mathrm{Ext}^1_A(M_{j,l},B)\neq0$.

  \vskip5pt

  We divide the proof of {\bf Claim} into the following two cases.

  \vskip5pt

  {\bf Case 1} \ Suppose that $j+l\notin R$.  Put $B=M_{j+l,1}$.  Since $j+l\notin R$ and $(j+l)+1-1 \notin R$, one has $B\in\mathcal C\cap\mathcal F$.  There is a canonical epimorphism
  \begin{equation*}
    p:\Omega M_{j,l}=M_{j+l,t-l}\twoheadrightarrow M_{j+l,1}=B
  \end{equation*}
  between non-projective indecomposable modules.
  Thus, by \Cref{radical}, $p$ does not factor through a projective module.  It follows from \Cref{extinfrobenius} that
  \begin{equation*}
    \mathrm{Ext}^1_A(M_{j,l},B)
    \cong\underline{\mathrm{Hom}}_A(M_{j+l,t-l},B)\neq0.
  \end{equation*}

  \vskip5pt

  {\bf Case 2} \ Suppose that $j+l\in R$.  We first claim that $2\leq l\leq t-2$.  Indeed, if $l=1$, then $j+1=j+l\in R$,
  and hence $M_{j,l}=M_{(j+1)-1,1}$ with  $(j+1)\in R$, which contradicts the assumption \Cref{eqn_sinj_exceptional_type};
  if $l=t-1$, then $j+l\in R$ implies $j-1 = j+t-1\in R \pmod n$, and hence $M_{j,l}=M_{(j-1)+1,t-1}$ with  $(j-1)\in R$, which again contradicts \Cref{eqn_sinj_exceptional_type}.
  \vskip5pt
  Put $B=M_{j+l-1,3}$.  Since $j+l\in R$ and $d>1$, one has $(j+l-1)\notin R$ and $(j+l-1)+3-1\notin R$, and hence $B\in\mathcal C\cap\mathcal F$.  Since $t-l\geq2$, the composition
  \begin{equation*}
    f:M_{j+l,t-l}\twoheadrightarrow M_{j+l,2}=\operatorname{rad}B \hookrightarrow B
  \end{equation*}
  is non-zero.
  We claim that $f$ does not factor through a projective module.  Otherwise, $f$ factors through the injective envelope $\sigma:M_{j+l,t-l}\hookrightarrow M_{j,t}$.
  Thus $f=g\circ\sigma$ for some $g:M_{j,t}\longrightarrow B$.  Since $M_{j+l,t-l}=\operatorname{rad}^lM_{j,t}$ with $l\geq2$, it follows that
  \begin{equation*}
    f(M_{j+l,t-l}) = g(M_{j+l,t-l}) =g(\operatorname{rad}^lM_{j,t})
    \subseteq\operatorname{rad}^lg(M_{j,t})
    \subseteq\operatorname{rad}^2B,
  \end{equation*}
  which contradicts $f(M_{j+l,t-l})=\operatorname{rad}B$.  Therefore $f$ does not factor through a projective module.  By \Cref{extinfrobenius},
  \begin{equation*}
    \mathrm{Ext}^1_A(M_{j,l}, B)\cong \underline{\mathrm{Hom}}_A(\Omega M_{j, l},B)
    \cong\underline{\mathrm{Hom}}_A(M_{j+l,t-l},B)\neq0.
  \end{equation*}

  \vskip5pt

  Thus {\bf Claim } is proved.  Therefore
  \begin{equation}\label{eq_projectives_sinj_exceptional}
    \mathcal P(\mathcal C\cap\mathcal F)
    =\operatorname{add}\left(P\oplus\bigoplus_{i\in R}
    \big(M_{i+1,t-1}\oplus M_{i-1,1}\big)\right).
  \end{equation}

  \vskip5pt

  Dually, one can prove
  \begin{equation}\label{eq_injectives_sinj_exceptional}
    \mathcal I(\mathcal C\cap\mathcal F)
    =\operatorname{add}\left(P\oplus\bigoplus\limits_{i\in R}
    \big(M_{i+1,t-1}\oplus M_{i+1,1}\big)\right).
  \end{equation}

  \vskip5pt

  By definition,  $(\mathcal C,\mathcal F,\mathcal W)$ is of \textup{Type I} if and only if these two sets are equal,  if and only if
  $\{i-1 \ | \ i\in R= d\mathbb Z/n\mathbb Z\} = \{i+1 \ | \ i\in R\}$,  i.e.,  $d =2$.
  This completes the proof. \end{proof}

\subsection{Examples} \ We present two simple examples of exceptional exact model structures, via Theorem \ref{type}.

\vskip5pt

\begin{example} \label{exm1} \ Take $n =2$ and $t=4$ in Theorem \ref{type}. Then $d = 2$ and $R = 2\mathbb Z/2\mathbb Z = \{0\} \subseteq \mathbb Z/2\mathbb Z$. Thus $A$ is the selfinjective Nakayama algebra $kC_2/J^4$.
  The Auslander-Reiten quiver of $A$ is

  $$\xymatrix @R= 0.3cm @C=0.4cm {&&& M_{1,4}\ar[dr] && M_{2,4}\ar[dr] & &\\
    &&M_{2,3}\ar[ur]\ar[dr] & & M_{1,3}\ar[dr]\ar[ur]\ar@{.}[ll] & & M_{2,3}\ar[dr]\ar@{.}[ll] & \\
    & M_{1,2}\ar[dr]\ar[ur] & & M_{2,2}\ar[ur]\ar[dr]\ar@{.}[ll] & & M_{1,2}\ar[ur]\ar[dr]\ar@{.}[ll] & & M_{2,2}\ar[dr]\ar@{.}[ll]\\
    M_{2,1}\ar[ur] & &M_{1,1}\ar[ur]\ar@{.}[ll] & &M_{2,1}\ar[ur]\ar@{.}[ll] & & M_{1,1}\ar[ur]\ar@{.}[ll] && M_{2,1}\ar@{.}[ll]}$$

  \vskip5pt

  \noindent By Theorem \ref{type}(1),  $(\mathcal{C}, \mathcal{F}, \mathcal{W})$ is an exceptional \textup{Hovey} triple of \textup{Type I} in $A$-mod, namely,
  $(\mathcal{C}, \mathcal{F}, \mathcal{W})$ is a \textup{Hovey} triple such that $\mathcal{C} \cap \mathcal{F}$ is a Frobenius category, but
  $\mathcal{C} \cap \mathcal{F} \cap \mathcal{W} \subsetneqq \mathcal{P}(\mathcal{C} \cap \mathcal{F}) = \mathcal{C} \cap \mathcal{F}$, where
  \begin{align*}
    \mathcal{C} & = \operatorname{add}(M_{1,1}\oplus M_{1,2}\oplus M_{1,3}\oplus M_{1,4}\oplus M_{2,4}), \\
    \mathcal{F} & = \operatorname{add}(M_{1,1}\oplus M_{1,3}\oplus M_{2,2}\oplus M_{1,4}\oplus M_{2,4}), \\
    \mathcal{W} & = \operatorname{add}(M_{2,1}\oplus M_{1,3}\oplus M_{1,4}\oplus M_{2,4}).
  \end{align*}
\end{example}

\vskip5pt

\begin{example} \label{exm2} \ Take $n =3$ and $t=3$ in Theorem \ref{type}. Then $d  = 3$ and $R = 3\mathbb Z/3\mathbb Z = \{0\} \subseteq \mathbb Z/3\mathbb Z$. Thus $A$ is the selfinjective Nakayama algebra $kC_3/J^3$.
  The Auslander-Reiten quiver of $A$ is

  $$\xymatrix @R= 0.3cm @C=0.4cm {&&M_{3,3}\ar[dr] & & M_{2,3}\ar[dr]& & M_{1,3}\ar[dr] & \\
    & M_{1,2}\ar[dr]\ar[ur] & & M_{3,2}\ar[ur]\ar[dr]\ar@{.}[ll] & & M_{2,2}\ar[ur]\ar[dr]\ar@{.}[ll] & & M_{1,2}\ar[dr]\ar@{.}[ll]\\
    M_{2,1}\ar[ur] & &M_{1,1}\ar[ur]\ar@{.}[ll] & &M_{3,1}\ar[ur]\ar@{.}[ll] & & M_{2,1}\ar[ur]\ar@{.}[ll] && M_{1,1}\ar@{.}[ll]}$$

  \vskip5pt

  \noindent By Theorem \ref{type}(2),  $(\mathcal{C}, \mathcal{F}, \mathcal{W})$ is an exceptional \textup{Hovey} triple of \textup{Type II} in $A$-mod, namely,
  $(\mathcal{C}, \mathcal{F}, \mathcal{W})$ is a \textup{Hovey} triple such that $\mathcal{C} \cap \mathcal{F}$ is not a Frobenius category $($with respect to the canonical exact structure$)$, where

  \begin{align*}
    \mathcal{C} & = \operatorname{add}(M_{1,1}\oplus M_{1,2}\oplus M_{2,1}\oplus M_{2,2}\oplus M_{1,3}\oplus M_{2,3}\oplus M_{3,3}), \\
    \mathcal{F} & = \operatorname{add}(M_{1,1}\oplus M_{1,2}\oplus M_{2,1}\oplus M_{3,2}\oplus M_{1,3}\oplus M_{2,3}\oplus M_{3,3}), \\
    \mathcal{W} & = \operatorname{add}(M_{3,1}\oplus M_{1,2}\oplus M_{1,3}\oplus M_{2,3}\oplus M_{3,3}).
  \end{align*}

\end{example}

\section{\bf Exceptional \textup{Hovey} triples by Nakayama algebras with a unique maximal module}\label{sec_m_nak_hovey}

In this section, we deal with a class of non-selfinjective Nakayama algebras $A$ such that $A$ admits a unique maximal module.
Let $A$ be a Nakayama algebra without a simple projective module. An indecomposable $A$-module $M$ is {\it maximal} if its length is maximal, i.e.,  $l(M)\ge l(N)$ for any indecomposable $A$-module $N$.
Then the condition that $A$ admits a unique maximal module $M$ is equivalent to each of the following conditions:

(1) \ $M$ is indecomposable and projective, and each indecomposable projective $A$-module is a submodule of $M$;

(1') \ $M$ is indecomposable and injective, and each indecomposable injective $A$-module is a quotient of $M$;

(2) \ $M$ is unique indecomposable projective-injective;

(3) \ there is a unique {\it minimal projective} $A$-module $P$ (an indecomposable projective $A$-module $P$ is minimal if any non-zero proper submodule of $P$ is not projective);

(3') \ there is a unique {\it minimal injective} $A$-module $I$ (an indecomposable injective $A$-module $I$ is minimal if any non-zero proper quotient of $I$ is not injective);

(4) \ $A$ has a unique zero relation;

(5) \ The Kupisch series of $A$  has a unique maximal component.

\vskip5pt

Throughout this section, let $C_n$ be the cyclic quiver
\begin{equation*}
  C_n=\xymatrix{
  1 \ar@{->}[r]^{a_2} & 2 \ar@{->}[r]^{a_3} & 3 \ar@{->}[r]^{a_4} & \cdots \ar@{->}[r]^{a_n} & n \ar@/^1pc/@{->}[llll]^{a_1}
  }
\end{equation*}
where $n\ge2$, and the indices are taken in $\mathbb Z/n\mathbb Z$. Set
$A=kC_n/\langle a_t\cdots a_2a_1\rangle$, where $t$ may be greater than $n$.
Then $A$ is a Nakayama algebra without simple projective modules and admits a unique maximal module.
The Kupisch series of $A$ is $(c_1, \cdots, c_n)$ with $c_i=t+n-i$ for $1\le i\le n$.
The indecomposable $A$-modules are $M_{i,l}$, where $i\in \mathbb Z/n\mathbb Z$ and $1 \leq l \leq c_i$.
The indecomposable projective modules are $P_i=M_{i,c_i}$ for $1\le i\le n$,  and the indecomposable injective modules are $I_i=M_{1,t+i}$ for $1\leq i\leq n-1$ and $I_n=M_{1,t}$.
The unique indecomposable projective-injective module is $P_1 = I_{n-1} = M_{1,t+n-1}$,  which is the unique maximal module.
The Auslander-Reiten quiver of $A$ looks like (the first lower indices are taken in $\mathbb Z / n \mathbb Z$)
\begin{equation*}
  \resizebox{1\linewidth}{!}{$
    \xymatrix@!0@C=3.8em@R=2.2em{
    &  &  &  &  & M_{1,t+n-1} \ar@{->}[rd] &  &  &  &  &  \\
    &  &  &  & M_{2,t+n-2} \ar@{->}[ru] \ar@{->}[rd] &  & M_{1,t+n-2} \ar@{->}[rd] \ar@{.}[ll] &  &  &  &  \\
    &  &  & M_{3,t+n-3} \ar@{->}[ru] \ar@{->}[rd] &  & M_{2,t+n-3} \ar@{->}[ru] \ar@{->}[rd] \ar@{.}[ll] &  & M_{1,t+n-3} \ar@{.}[ll] \ar@{->}[rd] &  &  &  \\
    &  & \iddots \ar@{->}[ru] \ar@{->}[rd] &  & \cdots \ar@{->}[ru] \ar@{.}[ll] \ar@{->}[rd] &  & \cdots \ar@{->}[ru] \ar@{.}[ll] \ar@{->}[rd] &  & \ddots \ar@{.}[ll] \ar@{->}[rd] &  &  \\
    & M_{t+n-2,2} \ar@{->}[rd] \ar@{->}[ru] &  & M_{t+n-3,2} \ar@{->}[ru] \ar@{.}[ll] \ar@{->}[rd] &  & \cdots \ar@{->}[ru] \ar@{->}[rd] \ar@{.}[ll] &  & M_{2,2} \ar@{->}[rd] \ar@{->}[ru] \ar@{.}[ll] &  & M_{1,2} \ar@{->}[rd] \ar@{.}[ll] &  \\
    M_{t+n-1,1} \ar@{->}[ru] &  & M_{t+n-2,1} \ar@{.}[ll] \ar@{->}[ru] &  & \cdots \ar@{.}[ll] \ar@{->}[ru] &  & \cdots \ar@{->}[ru] \ar@{.}[ll] &  & M_{2,1} \ar@{->}[ru] \ar@{.}[ll] &  & M_{1,1} \ar@{.}[ll]
    }
  $}
\end{equation*}

\vskip10pt

\begin{theorem}\label{thm_m_nak_classification} \ Let $A$ be a Nakayama algebra of the form $A=kC_n/\langle a_t\cdots a_2a_1\rangle$.
  Then $A\mbox{-}{\rm mod}$ admits an exceptional \textup{Hovey} triple if and only if $t=dn$ with $d\geq2$.
\end{theorem}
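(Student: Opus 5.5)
The argument splits into two directions, following the pattern used for Theorem \ref{thm_classification}.

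\emph{Sufficiency.} Assume $t=dn$ with $d\ge 2$. I would exhibit the triple $(\mathcal C,\mathcal F,\mathcal W)$ of Theorem B(2) and verify it through Gillespie's merging, Theorem \ref{theorem_merge}. First I would take $\mathcal X:=\mathcal C^\perp$ and show that $(\mathcal C,\mathcal X)$ is a cotorsion pair, i.e.\ that $\mathcal C={}^\perp\mathcal X$. Completeness is then automatic by Proposition \ref{prop_ctp_1}. Next I would exhibit $\mathcal B$ with $(\mathcal W,\mathcal B)$ an injective cotorsion pair: compute $\mathcal B=\mathcal W^\perp$, check ${}^\perp\mathcal B\subseteq\mathcal W$, verify that $\mathcal W$ is thick and contains $\mathcal I(A)$, and conclude with Definition-Proposition \ref{def_inj_ctp}(2). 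Finally I would check $\mathcal X\subseteq\mathcal W$ and $(\mathcal C\cap\mathcal W)^\perp=\mathcal F$.

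All these $\mathrm{Ext}$-computations go through $\mathrm{Ext}^1_A(M,N)\cong D\overline{\mathrm{Hom}}_A(N,\tau M)$, using $\tau M_{i,l}=M_{i+1,l}$ for non-projective modules, together with the explicit projective covers and injective envelopes in the uniserial setting. As with Lemma \ref{lem_sinj_wb}, this requires a case analysis on the top index and length. The condition $n\mid t$ enters naturally here, because $\mathrm{soc}\,M_{i,l}=S_{i+l-1}$ and the lengths $t-i+1,\dots,t+n-i$ in $\mathcal W$ are periodic modulo $n$ exactly when $t\equiv 0$. Exceptionality would then be shown as in Lemma \ref{lem_sinj_hovey_exceptional}. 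I would exhibit an object of $\mathcal C\cap\mathcal F$ that is projective (and injective) in $\mathcal C\cap\mathcal F$ but not in $\mathcal W$. A natural candidate is $M_{1,rn}$ or $M_{n,rn+1}$ with $1\le r\le d-1$, since $d\ge2$ guarantees that such modules exist outside $\mathcal W$. For this step I would compute $\mathcal C\cap\mathcal F$ explicitly; it should be $\operatorname{add}(P_1\oplus\bigoplus_r M_{1,rn}\oplus\bigoplus_r M_{n,rn+1})$ plus the parts of $\mathcal W$.

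\emph{Necessity.} Suppose $A\mbox{-}{\rm mod}$ admits an exceptional triple. The key step is classifying the thick subcategories $\mathcal W\supseteq\mathcal P(A)\cup\mathcal I(A)$, which every triple must satisfy by Fact \ref{Wcontainsp}. Since $\mathcal W$ contains all projectives and injectives, two-out-of-three makes it closed under $\Omega$ and $\mho$, and by Fact \ref{pp}-type exact sequences it is also closed under certain neighbours in the AR quiver. The claim I would aim for is this: unless $n\mid t$, the only such $\mathcal W$ are $A\mbox{-}{\rm mod}$ and one or two degenerate ones for which $\mathcal C\cap\mathcal F\cap\mathcal W$ is forced to equal $\mathcal P(\mathcal C\cap\mathcal F)=\mathcal I(\mathcal C\cap\mathcal F)$. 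If $\mathcal W=A\mbox{-}{\rm mod}$, the pair is $(\mathcal C\cap\mathcal F,\mathcal C\cap\mathcal F)$, which is Frobenius, exactly as in the proof of Theorem \ref{thm_classification}. For the remaining candidates I would show $\mathcal C\cap\mathcal F=\mathcal P(A)$ or a similar trivial case. The case $t=n$, i.e.\ $d=1$, must also be excluded; there I expect $\mathcal W$ to be forced large enough that $\mathcal C\cap\mathcal F\cap\mathcal W$ exhausts the projectives of $\mathcal C\cap\mathcal F$.

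The main obstacle is this classification of thick subcategories for non-selfinjective $A$. The syzygy orbits here are not as clean as the $\Omega^2$-shift by $t$ used in Lemma \ref{lemgcd}. I would first track $\Omega$ and $\mho$ on each $M_{i,l}$ explicitly, including where they terminate at projectives or injectives. I would then show that any non-projective module in $\mathcal W$ generates, via these operations and AR sequences, either everything or precisely the configuration listed in Theorem B(2), with the latter possible only when $n\mid t$.
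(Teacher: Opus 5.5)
\paragraph{Sufficiency.} This follows the paper's route: Lemmas \ref{lem_m_nak_injective_cx} and \ref{lem_m_nak_injective_wb}, then Theorem \ref{theorem_merge} with $\mathcal X=\operatorname{add}(M_{n,t-n+1}\oplus I)$ and $\mathcal B=\operatorname{add}(\bigoplus_{1\le r\le d-1}M_{1,rn}\oplus I)$. However, every exceptionality witness you propose fails. In fact $\mathcal C\cap\mathcal F=\operatorname{add}(I\oplus\bigoplus_{0\le r\le d-1}M_{n,rn+1})$ and $\mathcal C\cap\mathcal F\cap\mathcal W=\operatorname{add}(I\oplus M_{n,t-n+1})$. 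Your candidates go wrong as follows:
\begin{itemize}
\item The modules $M_{1,rn}$ lie in $\mathcal F$ but not in $\mathcal C$, since for $i=1$ the class $\mathcal C$ only contains $M_{1,l}$ with $l\le n-2$ or $l\ge t$. So your guess for $\mathcal C\cap\mathcal F$ is wrong.
\item For $1\le r\le d-2$, the sequence $0\to M_{n,rn+1}\to M_{n,(r-1)n+1}\oplus M_{n,(r+1)n+1}\to M_{n,rn+1}\to 0$ shows that $M_{n,rn+1}$ is neither projective nor injective in $\mathcal C\cap\mathcal F$.
\item $M_{n,(d-1)n+1}=M_{n,t-n+1}$ lies in $\mathcal W$.
\end{itemize}
The actual witness is the case $r=0$ that you excluded. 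We have $S_n=M_{n,1}\notin\mathcal W$, and $\mathrm{Ext}^1_A(M_{n,1},M_{n,rn+1})\cong D\underline{\mathrm{Hom}}_A(M_{n-1,rn+1},S_n)=0$, so $S_n\in\mathcal P(\mathcal C\cap\mathcal F)$. Separately, you do not need to check thickness of $\mathcal W$ by hand: hereditariness together with $\mathcal W\cap\mathcal B=\mathcal I(A)$ gives it via Definition-Proposition \ref{def_inj_ctp}(3).

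\paragraph{Necessity: the genuine gap.} You correctly reduce to showing that a thick $\mathcal W\supseteq\mathcal P(A)\cup\mathcal I(A)$ must be all of $A\mbox{-}{\rm mod}$ when $n\nmid t$ or $t=n$. But that is the entire content of this direction, and you leave it open (``one or two degenerate ones'', ``I expect''). The route you sketch, $\Omega/\mho$-orbits starting from some non-projective object of $\mathcal W$ as in Lemma \ref{lemgcd}, imitates the selfinjective case. It misses the mechanism here: $\mathcal P(A)\neq\mathcal I(A)$ by itself forces simples into $\mathcal W$.
\begin{itemize}
\item Since $\operatorname{rad}P_i=P_{i+1}$ for $1\le i\le n-1$, the sequences $0\to P_{i+1}\to P_i\to S_i\to 0$ put $S_1,\dots,S_{n-1}$ in $\mathcal W$.
\item Since $M_{1,t+i}$ and $M_{1,t+i-1}$ are injective for $1\le i\le n-1$, the sequences $0\to S_{t+i}\to M_{1,t+i}\to M_{1,t+i-1}\to 0$ put $S_{t+1},\dots,S_{t+n-1}$ in $\mathcal W$.
\item If $n\nmid t$, these exhaust all simples, so $\mathcal W=A\mbox{-}{\rm mod}$ by extension closure.
\item If $t=n$, only $S_n$ is missing. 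But $M_{1,n-1}\in\mathcal W$, since its composition factors are $S_1,\dots,S_{n-1}$. Then $0\to M_{1,n-1}\to P_n=M_{n,n}\to S_n\to 0$ gives $S_n\in\mathcal W$.
\end{itemize}
So no degenerate alternatives occur, and $\mathcal W=A\mbox{-}{\rm mod}$ makes $(\mathcal C\cap\mathcal F,\mathcal C\cap\mathcal F)$ a Frobenius pair. This also explains the role of $d$: for $t=dn$ with $d\ge 2$, the cokernel of $M_{1,n-1}\hookrightarrow P_n=M_{n,t}$ is $M_{n,t-n+1}$ rather than $S_n$, which is exactly where the exceptional construction escapes.
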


\vskip5pt

When $t = dn$ for $d\geq2$, we will explicitly construct an exceptional \textup{Hovey} triple of \textup{Type I} in $A\mbox{-}{\rm mod}$.

\subsection{Thick subcategories of $A\mbox{-}{\rm mod}$}

\begin{proposition}\label{thm_m_nak_thick} \ Suppose that $n \nmid t$ or $t = n$. Then a thick subcategory $\mathcal W$ of $A\mbox{-}{\rm mod}$ containing $\mathcal P(A)\cup\mathcal I(A)$ is unique, i.e., $\mathcal W = A\mbox{-}{\rm mod}$.
\end{proposition}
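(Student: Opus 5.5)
The plan is to show that every simple module $S_1,\dots,S_n$ lies in $\mathcal W$. Since $\mathcal W$ is thick, it satisfies 2-out-of-3, so in particular it is closed under extensions. Every $A$-module has a finite composition series, so induction on the length then gives $\mathcal W=A\mbox{-}{\rm mod}$.

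\textbf{Step 1: simples from the projectives.} We use $P_i=M_{i,c_i}$ with $c_i=t+n-i$. For $1\le i\le n-1$ one has $c_{i+1}=c_i-1$, so
\[
\operatorname{rad}P_i=M_{i+1,c_i-1}=P_{i+1},
\]
and there is an exact sequence $0\to P_{i+1}\to P_i\to S_i\to 0$. Both $P_i$ and $P_{i+1}$ lie in $\mathcal W$, so 2-out-of-3 gives $S_1,\dots,S_{n-1}\in\mathcal W$.

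\textbf{Step 2: simples from the injectives.} We use $I_i=M_{1,t+i}$ for $1\le i\le n-1$ and $I_n=M_{1,t}$. The socle of $M_{1,l}$ is $S_l$, and $M_{1,l}/\operatorname{soc}=M_{1,l-1}$. This gives exact sequences
\[
0\to S_{t+i}\to I_i\to I_{i-1}\to 0 \quad (2\le i\le n-1),\qquad 0\to S_{t+1}\to I_1\to I_n\to 0 .
\]
Both outer injective terms lie in $\mathcal W$, so $S_{t+1},\dots,S_{t+n-1}\in\mathcal W$. With indices in $\mathbb Z/n\mathbb Z$, this is every simple except $S_t$.

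\textbf{Step 3: combining the two lists.} Step 1 misses only $S_n$, and Step 2 misses only $S_t$. If $n\nmid t$, then $S_t\neq S_n$, so the two lists together cover all simples.

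If $t=n$, both lists miss exactly $S_n$, and this is the one genuinely delicate case. Here I would argue directly. We have $P_n=M_{n,t}=M_{n,n}$ with $\operatorname{rad}P_n=M_{1,n-1}$. The composition factors of $M_{1,n-1}$ are $S_1,\dots,S_{n-1}$, all of which lie in $\mathcal W$. Extension-closure along the radical filtration of $M_{1,n-1}$ therefore gives $M_{1,n-1}\in\mathcal W$. The exact sequence $0\to M_{1,n-1}\to P_n\to S_n\to 0$ then yields $S_n\in\mathcal W$ by 2-out-of-3.

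\textbf{Step 4: conclusion.} All simples lie in $\mathcal W$. Every module is an iterated extension of simples, so extension-closure gives $\mathcal W=A\mbox{-}{\rm mod}$.

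The main obstacle is correct index bookkeeping (the socle and top formulas modulo $n$) and the special case $t=n$. This matches the hypothesis: when $t=dn$ with $d\ge2$, the argument of Step 3 fails, because $\operatorname{rad}P_n$ then involves $S_n$ among its composition factors.
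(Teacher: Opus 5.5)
Your proposal is correct and follows the paper's proof essentially step for step. Both arguments obtain $S_1,\dots,S_{n-1}$ from the sequences $0\to P_{i+1}\to P_i\to S_i\to 0$ and $S_{t+1},\dots,S_{t+n-1}$ from the socle sequences of the injectives $M_{1,t+i}$. Both then handle the case $t=n$ by putting $M_{1,n-1}=\operatorname{rad}P_n$ into $\mathcal W$ and applying 2-out-of-3 to $0\to M_{1,n-1}\to P_n\to S_n\to 0$.
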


\begin{proof} \ For each $1\leq i\leq n-1$, the  short exact sequence
  \[
    0\longrightarrow P_{i+1}\longrightarrow P_i\longrightarrow S_i\longrightarrow0.
  \]
  implies that $S_i\in \mathcal W$.
  On the other hand, since ${\rm soc}(M_{1, t+i}) = S_{t+i}$, there is a short exact sequence
  \[
    0\longrightarrow S_{t+i}\longrightarrow M_{1, t+i}\longrightarrow M_{1, t+i-1}\longrightarrow 0.\]
  For the Nakayama algebra $A$ under consideration, $M_{1,t+i}$ and $M_{1,t+i-1}$ are injective modules for $1\le i\le n-1$ and hence belong to $\mathcal W$.
  Since $\mathcal W$ is thick, $S_{t+i}\in\mathcal W$ for $1\le i\le n-1$, where the indices are taken in $\mathbb Z/n\mathbb Z$.

  \vskip5pt

  Suppose that $n \nmid t$.
  Then $\{S_i \ \mid  1\le i\le n-1\} \cup \{S_{t+i} \ \mid \ 1\le i \le n-1\}$ contains all the simple modules,
  and hence all the simple modules are in $\mathcal{W}$.
  Thus $\mathcal{W} = A\mbox{-}{\rm mod}$.
  \vskip5pt

  Suppose that $t=n$. By the argument above $S_1,\ldots,S_{n-1}$ belong to $\mathcal W$. Since $M_{1,n-1}$ has composition factors $S_1, \cdots, S_{n-1}$,
  $M_{1,n-1}\in\mathcal W$. Then the short exact sequence
  \[
    0\longrightarrow M_{1,n-1}\longrightarrow P_n=M_{n,n}\longrightarrow S_n\longrightarrow0
  \]
  implies that $S_n\in\mathcal W$. Therefore $\mathcal{W} = A\mbox{-}{\rm mod}$.
\end{proof}

By Fact \ref{Wcontainsp} and Proposition \ref{thm_m_nak_thick}, one gets

\vskip5pt

\begin{corollary}\label{cor_m_nak_hovey_triple} \ If $n \nmid t$ or $t = n$, then any \textup{Hovey} triple in $A\mbox{-}{\rm mod}$ is of the form $(\mathcal{C}, \mathcal{F}, A\mbox{-}{\rm mod})$,
  where  $(\mathcal{C}, \mathcal{F})$ is a complete cotorsion pair in $A\mbox{-}{\rm mod}$.
\end{corollary}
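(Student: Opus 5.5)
The corollary follows from Fact \ref{Wcontainsp}(1) and Proposition \ref{thm_m_nak_thick}; the only extra work is to identify the two cotorsion pairs when $\mathcal W = A\mbox{-}{\rm mod}$.

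First, let $(\mathcal C,\mathcal F,\mathcal W)$ be any Hovey triple in $A\mbox{-}{\rm mod}$. By definition $\mathcal W$ is thick, and by Fact \ref{Wcontainsp}(1) it contains $\mathcal P(A)\cup\mathcal I(A)$. Since $n\nmid t$ or $t=n$, Proposition \ref{thm_m_nak_thick} gives $\mathcal W=A\mbox{-}{\rm mod}$.

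Second, with $\mathcal W=A\mbox{-}{\rm mod}$ we have $\mathcal C\cap\mathcal W=\mathcal C$ and $\mathcal F\cap\mathcal W=\mathcal F$. The two complete cotorsion pairs $(\mathcal C\cap\mathcal W,\mathcal F)$ and $(\mathcal C,\mathcal F\cap\mathcal W)$ in the definition of a Hovey triple therefore both become $(\mathcal C,\mathcal F)$. So $(\mathcal C,\mathcal F)$ is a complete cotorsion pair, and the triple has the stated form $(\mathcal C,\mathcal F,A\mbox{-}{\rm mod})$.

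The statement as written only claims that every Hovey triple has this form. If one also wants the converse, that every complete cotorsion pair $(\mathcal C,\mathcal F)$ yields a Hovey triple $(\mathcal C,\mathcal F,A\mbox{-}{\rm mod})$, it is immediate: $A\mbox{-}{\rm mod}$ is trivially thick, and both required pairs equal $(\mathcal C,\mathcal F)$.

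There is no real obstacle here. All the substantive work, namely the classification of thick subcategories containing $\mathcal P(A)\cup\mathcal I(A)$, was done in Proposition \ref{thm_m_nak_thick}.
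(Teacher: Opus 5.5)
Your proposal is correct and follows the paper's own route: the paper obtains this corollary directly from Fact \ref{Wcontainsp} and Proposition \ref{thm_m_nak_thick}. Your observation that $\mathcal W=A\mbox{-}{\rm mod}$ makes both cotorsion pairs of the Hovey triple equal to $(\mathcal C,\mathcal F)$ is exactly the step the paper leaves implicit.
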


\subsection{Complete cotorsion pairs in $A\mbox{-}{\rm mod}$}
$\,$

\vskip5pt

For convenience,  denote by $I$ the direct sum of all the indecomposable injective $A$-modules, i.e., $I = \bigoplus\limits_{1\le i\le n} I_i = M_{1, t+1}\oplus \cdots \oplus M_{1, t+n-1}\oplus M_{1, t}$.

\vskip5pt

\begin{lemma}\label{lem_m_nak_injective_cx}
  Suppose $t = d n$ for some $d \geq 2$.
  Put
  $$\mathcal{C} = \operatorname{add}\left(\bigoplus_{1 \leq l \leq t}M_{n,l}\oplus\bigoplus_{\substack {1 \leq i \leq n-1 \\ l \notin [n-i, t-i]}} M_{i,l}
    \right), \ \ \ \ \ \  \mathcal{X} = \operatorname{add}(
    M_{n,t-n+1}\oplus I).$$

  \vskip5pt
  \noindent
  Then $(\mathcal{C}, \mathcal{X})$ is a non-hereditary complete cotorsion pair.
\end{lemma}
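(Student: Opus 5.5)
The plan is to verify the two equalities ${}^\perp\mathcal X=\mathcal C$ and $\mathcal C^\perp=\mathcal X$ directly, by computing $\mathrm{Ext}^1_A$ between indecomposable uniserial modules. Completeness then comes for free from \Cref{prop_ctp_1}, and non-hereditariness from \Cref{lem_ctp_2}$(2')$.

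\emph{Step 1: ${}^\perp\mathcal X=\mathcal C$.} Since $\mathrm{Ext}^1_A(-,I)=0$, we have ${}^\perp\mathcal X={}^\perp M_{n,t-n+1}$. For a non-projective $M_{i,l}$ (so $l<c_i$) there is the projective cover $0\to M_{i+l,c_i-l}\to P_i\to M_{i,l}\to 0$. Hence $\mathrm{Ext}^1_A(M_{i,l},N)$ is the cokernel of $\mathrm{Hom}_A(P_i,N)\to\mathrm{Hom}_A(M_{i+l,c_i-l},N)$. Equivalently, by the Auslander--Reiten formula, $\mathrm{Ext}^1_A(M_{i,l},N)\cong D\overline{\mathrm{Hom}}_A(N,\tau M_{i,l})$ with $\tau M_{i,l}=M_{i+1,l}$.

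For $N=M_{n,t-n+1}$ the homomorphisms between uniserials are explicit: images are quotients of $N$ that are submodules of the target. One checks for which $(i,l)$ there is a nonzero map $M_{n,t-n+1}\to M_{i+1,l}$ not factoring through the injective envelope $M_{n,t-n+1}\hookrightarrow M_{1,t}$. Here $t=dn$ is used, since it gives $\mathrm{soc}\,M_{n,t-n+1}=S_t=S_n$. The answer should be exactly the window $1\le i\le n-1$, $n-i\le l\le t-i$. This reproduces the description of $\mathcal C$, together with the projectives $P_i=M_{i,t+n-i}$ and all $M_{n,l}$, $l\le t$.

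\emph{Step 2: $\mathcal C^\perp\subseteq\mathcal X$.} This is the main obstacle. For every indecomposable $N=M_{j,m}$ that is neither injective nor isomorphic to $M_{n,t-n+1}$, we must produce $L\in\mathcal C$ with $\mathrm{Ext}^1_A(L,N)\ne0$, i.e.\ with $\overline{\mathrm{Hom}}_A(N,\tau L)\neq0$. Following the pattern of \Cref{lem_sinj_cx}, we first try $L=\tau^{-1}N=M_{j-1,m}$, whenever $N$ is not projective and this $L$ lies in $\mathcal C$; then the identity of $N$ is nonzero in $\overline{\mathrm{End}}_A(N)$. When $\tau^{-1}N\notin\mathcal C$, i.e.\ $m$ falls into the forbidden window for index $j-1$, we instead choose $L$ with $\tau L$ of length $1$ or $2$ adjacent to $N$. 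The candidates are $\tau L=\mathrm{top}\,N$ or a length-two module containing $\mathrm{top}\,N$ in its socle. We then verify, by the radical-length argument used in \Cref{lem_sinj_wb}, that the map $N\to\tau L$ does not factor through $I(N)$. Projective non-injective $N=P_j$ ($j\neq1$) are handled in the same way, using that $P_j\subseteq P_1$ is not injective. The bookkeeping splits according to whether $j=n$, $j=1$, or $2\le j\le n-1$, and according to the position of $m$ relative to the window; this is where the care is needed.

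\emph{Step 3: completeness and non-heredity.} Completeness follows from \Cref{prop_ctp_1}. For non-heredity, the injective envelope of $M_{n,t-n+1}$ is $I_n=M_{1,t}$, since $\mathrm{soc}\,M_{n,t-n+1}=S_n$, and the cokernel is $M_{1,n-1}$. This module is neither injective (as $n-1<t$) nor isomorphic to $M_{n,t-n+1}$, so $M_{1,n-1}\notin\mathcal X$. By \Cref{lem_ctp_2}$(2')$ the pair $(\mathcal C,\mathcal X)$ is not hereditary.
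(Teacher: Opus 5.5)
Your Steps 1 and 3 are fine in outline. Step 1 is the same Auslander--Reiten computation as the paper's Claim 1. You leave the case check as ``one checks'', but it does go through: for $l$ in the window the canonical map does not factor (pullback plus Nakayama's lemma), and outside the window every map factors through $I_n=M_{1,t}$ or $P_n$. Step 3 is a valid variant of the paper's argument. The paper shows $M_{n,1}\in\mathcal C$ but $\Omega M_{n,1}=M_{1,t-1}\notin\mathcal C$. You show that the cosyzygy $M_{1,n-1}$ of $M_{n,t-n+1}$ is not in $\mathcal X$. Either one refutes the hereditary criterion.

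The genuine gap is in Step 2, exactly at the case you flagged as the main obstacle. Take $N=M_{n,m}$ with $1\le m\le t-n$ and $n\ge 3$.
\begin{itemize}
\item First, $\tau^{-1}N=M_{n-1,m}\notin\mathcal C$, because the forbidden window for index $n-1$ is $[1,t-n+1]$.
\item Your fallback is an $L$ whose $\tau L$ has length at most $2$ and receives a nonzero map from $N$. The only such modules are $\tau L=S_n$, $M_{n-1,2}$, $M_{n,2}$, i.e.\ $L=S_{n-1}$, $M_{n-2,2}$, $M_{n-1,2}$.
\item All three lie in forbidden windows: $1\in[1,t-n+1]$, $2\in[2,t-n+2]$ and $2\in[1,t-n+1]$. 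So none of them is in $\mathcal C$.
\end{itemize}
Concretely, for $n=3$, $t=6$, the simple module $N=S_3\notin\mathcal X$ admits no test object of your type. Only for $n=2$ does $M_{n-2,2}=M_{2,2}$ lie in $\mathcal C$.

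The missing idea is a test object of length $n$. Take $L=M_{n,n}\in\mathcal C$; its translate $\tau L=M_{1,n}$ has length $n$ and socle $S_n=\mathrm{top}\,N$. Equivalently, there is a non-split sequence $0\to M_{n,m}\to M_{n,m+n}\to M_{n,n}\to 0$, which exists since $m+n\le t=c_n$. It gives $\mathrm{Ext}^1_A(M_{n,n},M_{n,m})\neq 0$, and this is what the paper does.

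With that case settled, the rest of your Step 2 works:
\begin{itemize}
\item For $j\neq n$, use $L=S_{j-1}$. The paper uses this uniformly for all $j\neq n$, which is simpler than first trying $\tau^{-1}N$.
\item For $j=n$ and $m\ge t-n+2$, use $L=\tau^{-1}N=M_{n-1,m}$.
\end{itemize}

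A minor point: $\tau^{-1}N$ is defined for every non-injective indecomposable $N$, with $\tau\tau^{-1}N\cong N$. So projective non-injective $N$ need no separate treatment.
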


\begin{proof} \ Since $M_{i,l}$ satisfies $l\le c_i = t+n-i$,  one can explicitly rewrite

  $$\mathcal{C} = \operatorname{add}\left(\bigoplus_{1 \leq l \leq t}M_{n,l}\oplus\bigoplus_{\substack {1 \leq i \leq n-1 \\ 1\leq l \leq n-i-1}} M_{i,l}
    \oplus
    \bigoplus_{\substack {1 \leq i \leq n-1 \\ t+1-i\leq l\leq t+n-i}} M_{i,l}
    \right)$$

  \vskip10pt
  \noindent
  where if $i=n-1$, then $\bigoplus\limits_{1\leq l \leq n-i-1} M_{i,l}$ above is understood to be $0$.

  \vskip5pt

  Using the Auslander--Reiten formula, one has
  \begin{align*}
    {}^\perp\mathcal{X} = {}^\perp (M_{n,t-n+1}) = \{Y \mid \underline{\operatorname{Hom}}_A(\tau^{-1}M_{n,t-n+1},Y)=0 \}= \{Y \mid \underline{\operatorname{Hom}}_A(M_{n-1,t-n+1},Y)=0 \}.
  \end{align*}
  To show $\mathcal{C} = {}^\perp \mathcal{X}$, it suffices to show
  \vskip5pt

  \textbf{Claim 1} \
  For any non-projective indecomposable module $M_{i,l}$, one has
  \[
    \underline{\operatorname{Hom}}_A(M_{n-1,t-n+1},M_{i,l})=0
  \]
  if and only if either $i=n$, or $1\leq i\leq n-1$ with $l \notin [n-i, t-i]$.

  \vskip5pt

  \textup{Case 1.} \ Suppose that $i=n$.
  Let $f: M_{n-1,t-n+1}\longrightarrow M_{n,l}$ be a homomorphism.  If $l<n$, then $S_{n-1}$ is not a composition factor of $M_{n,l}$, and hence $f=0$.
  \vskip5pt
  Suppose that $n\leq l\leq t$ and $f\neq0$.
  Since $M_{n-1,t-n+1}$ is uniserial,  ${\rm Im} f$ is of the form $M_{n-1, j}$. Since ${\rm soc}({\rm Im} f) = {\rm soc}(M_{n, l}) = S_{l-1}$, one has $j-2 = l-1 \ ({\rm mod} \ n)$, i.e., $j = l+1 \ ({\rm mod} \ n)$.
  Thus there is a positive integer $s$ such that $f= e\circ p$, where
  $p: M_{n-1,t-n+1}\twoheadrightarrow {\rm Im} f = M_{n-1, l+1-sn}$ and  $e:  M_{n-1, l+1-sn}\hookrightarrow M_{n, l}$.
  Consider the following commutative square (cf. Fact \ref{pp}),  where $\pi$ is the projective cover
  \[
    \xymatrix@R=.7cm{
    & M_{n-1,t+1-sn} \ar@{->>}[d]_-{\pi'} \ar@{^{(}->}[r]^-{e'} & M_{n,t} \ar@{=}[r] \ar@{->>}[d]^-{\pi} & P_n \\
    M_{n-1,t-n+1} \ar@{->>}[r]^-{p} & M_{n-1,l+1-sn} \ar@{^{(}->}[r]^-{e} & M_{n,l} &
    }
  \]
  Since $t-n+1 \geq t+1-sn$, there is a canonical homomorphism $p' : M_{n-1,t-n+1} \twoheadrightarrow M_{n-1,t+1-sn}$ such that $p = \pi' \circ p'$ (cf. Lemma \ref{factorthrough}).  Hence $f = e \circ p = e \circ \pi'\circ p'$ factors through the projective module $P_n$.
  This proves $\underline{\operatorname{Hom}}_A(M_{n-1,t-n+1},M_{n,l})=0$.

  \vskip5pt

  \textup{Case 2.} \ Suppose that $1\leq i\leq n-1$ and $1\leq l\leq n-i-1$.
  Since $S_{n-1}$ is not a composition factor of $M_{i,l}$, one has $\operatorname{Hom}_A(M_{n-1,t-n+1},M_{i,l})=0$.

  \vskip5pt

  \textup{Case 3.} \ Suppose that $1\leq i\leq n-1$ and $t+1-i\leq l\leq t+n-i$.
  By the Auslander-Reiten formula
  \begin{align*}
    D\underline{\operatorname{Hom}}_A(M_{n-1,t-n+1},M_{i,l})
    \cong \operatorname{Ext}^1_A(M_{i,l},M_{n,t-n+1})\cong D\overline{\operatorname{Hom}}_A(M_{n,t-n+1},M_{i+1,l}),
  \end{align*}
  it suffices to show that every homomorphism $0\ne f: M_{n,t-n+1}\longrightarrow M_{i+1,l}$ factors through an injective module.
  Since ${\rm Im} f = M_{n, j}$ and ${\rm soc}({\rm Im} f) = {\rm soc}(M_{i+1, l})$, one has $j = l+i+1 \ ({\rm mod} \ n)$.
  Thus there is an integer $s\ge 0$ such that $f= e\circ p$,  where
  \[
    p:M_{n,t-n+1}\twoheadrightarrow
    M_{n,l+i+1-n-sn}, \ \ \ e: M_{n,l+i+1-n-sn}\hookrightarrow
    M_{i+1,l}.
  \]
  The case $s=0$ is impossible, since  $t-n+1 < l+i+1-n$.
  Hence $s\geq1$.
  Let $\sigma:M_{n,t-n+1}\hookrightarrow M_{1,t}$ be the injective envelope. Consider the following canonical commutative diagram (cf. Fact \ref{pp}):
  \[
    \xymatrix@R=.7cm{
    &M_{n,t-n+1} \ar@{->>}[r]^-{p} \ar@{^{(}->}[d]_{\sigma}
    & M_{n,l+i+1-n-sn} \ar@{^{(}->}[d]^{\sigma'} \ar@{^{(}->}[r]^-{e}
    & M_{i+1,l} \\
    I_n\ar@{=}[r]&M_{1,t}\ar@{>>}[r]^-{p'}
    & M_{1,l+i-sn}
    &
    }
  \]
  Since $\mathrm{soc}(M_{1,l+i-sn})=S_{i+l}=\mathrm{soc}(M_{i+1,l})$ and $l+i-sn\leq l$, there is $e':M_{1,l+i-sn}\longrightarrow M_{i+1,l}$ such that $e=e'\circ\sigma'$ (cf. Lemma \ref{factorthrough}).
  Thus $f = e \circ p = e' \circ \sigma' \circ p$ factors through the injective module $I_n$.
  Hence
  \[
    \underline{\operatorname{Hom}}_A(M_{n-1,t-n+1},M_{i,l})
    \cong \overline{\operatorname{Hom}}_A(M_{n,t-n+1},M_{i+1,l}) = 0.
  \]

  \vskip5pt

  \textup{Case 4.} \ Suppose that $1\leq i\leq n-1$ and $n-i\leq l\leq t-i$.
  Since $1\leq l+i-n+1\leq t-n+1$, one has the canonical homomorphisms
  \[M_{n-1,t-n+1}\stackrel p \twoheadrightarrow
    M_{n-1,l+i-n+1}
    =\operatorname{rad}^{n-i-1}(M_{i,l})\stackrel e
    \hookrightarrow M_{i,l}.
  \]
  We claim that $f = e\circ p$ does not factor through a projective module.  Otherwise, $f$ factors through the projective cover $q:P_i=M_{i,t+n-i}\twoheadrightarrow M_{i,l}$.
  Then there is $h:M_{n-1,t-n+1}\longrightarrow P_i$ such that $f=q\circ h$. Consider the following pullback square of $q$ and $e$ (cf. Fact \ref{pp}):
  \begin{equation*}
    \xymatrix@R=.7cm{
    M_{n-1,t-n+1} \ar@/_/@{->>}[rdd]_-{p} \ar@/^/@{.>}[rrd]^{h} \ar@{.>}[rd]^{\varphi} &  &  &  \\
    & M_{n-1,t+1} \ar@{.>>}[d]^{\pi'} \ar@{^{(}.>}[r]^{e'} & M_{i,t+n-i} \ar@{=}[r] \ar@{->>}[d]^{q} & P_i \\
    & M_{n-1,l+i-n+1} \ar@{^{(}->}[r]^-{e} & M_{i,l} &
    }
  \end{equation*}
  By the universal property of the pullback, there is a morphism $\varphi$ such that $e'\circ\varphi=h$ and $\pi'\circ\varphi=p$.
  Since  $\mathrm{top}(M_{n-1,t-n+1}) =\mathrm{top}(M_{n-1,t+1})$ and since $p$ and $\pi'$ are epimorphisms, it follows from Nakayama's lemma that $\varphi$ is an epimorphism, which contradicts $t-n+1 < t+1$.

  \vskip5pt

  This proves \textbf{Claim 1}. It remains to prove $\mathcal X=\mathcal C^\perp$, which is equivalent to

  \vskip5pt

  \textbf{Claim 2} \ For any indecomposable non-injective module $M_{i,l} \neq M_{n, t-n+1}$, there exists $C \in \mathcal{C}$ such that $\operatorname{Ext}^1_A(C, M_{i,l}) \neq 0$.

  \vskip5pt

  \textup{Case 1.} \ Suppose that $i\neq n$. Consider $S_{i-1}=M_{i-1,1}$, where $S_0=S_n$. Since $i-1\neq n-1$, one has $S_{i-1}\in\mathcal C$.
  We claim that $M_{i-1,l+1}$ is a well-defined indecomposable $A$-module, namely, $l+1 \le c_{i-1},$ where $c_{i-1} = t+n-(i-1)$ is the length of $P_{i-1}$. Indeed, if $2\leq i\leq n-1$, then $l+1\leq c_i+1=c_{i-1}.$
  If $i=1$, then by assumption $M_{1,l}$ is not injective. Since the indecomposable injective modules are $\{M_{1,t-1+i}\}_{1 \leq i\leq n}$, one has $l\leq t-1$, and hence $l+1\leq t=c_n$.
  Altogether, $M_{i-1,l+1}$ is well-defined. Then the canonical non-splitting short exact sequence
  \[
    0\longrightarrow M_{i,l}\longrightarrow M_{i-1,l+1}
    \longrightarrow S_{i-1}=M_{i-1,1}\longrightarrow0
  \]
  implies that $\operatorname{Ext}^1_A(S_{i-1},M_{i,l})\neq0$.

  \vskip5pt

  \textup{Case 2.} \ Suppose that $i=n$ and $l<t-n+1$.
  By construction  $M_{n,n}\in\mathcal{C}$.
  Since $l+n \leq t = c_n$, $M_{n,l+n}$ is a well-defined $A$-module.
  By the exact sequence
  \[
    0\longrightarrow M_{i,l}\longrightarrow M_{n,l+n}
    \longrightarrow M_{n,n}\longrightarrow0
  \]
  one sees that $\operatorname{Ext}^1_A(M_{n,n}, M_{i,l})\neq0$.

  \vskip5pt

  \textup{Case 3.} \ Suppose that $i=n$ and $t-n+2 \leq l \leq c_n = t$.
  Put $C = M_{n-1,l}$. Since $t-n+2 \leq l \leq t+1$, one has $C \in \mathcal{C}$ by construction.
  Since $l+1 \leq c_{n-1} = t+1$, $M_{n-1, l+1}$ is defined.
  By the non-splitting  short exact sequence (cf. Fact \ref{pp})
  \[
    0\longrightarrow M_{i,l}
    \longrightarrow M_{n,l-1}\oplus M_{n-1,l+1}
    \longrightarrow M_{n-1,l}\longrightarrow0
  \]
  one sees that $\operatorname{Ext}^1_A(M_{n-1,l},M_{i,l})\neq0$.
  \vskip5pt
  This proves {\bf Claim 2}. Thus $(\mathcal C,\mathcal X)$ is a cotorsion pair, and it is complete by \Cref{prop_ctp_1}. Finally, this cotorsion pair is not hereditary.  Indeed, $M_{n,1}\in\mathcal C$, but $\Omega M_{n,1}=M_{1,t-1}\notin\mathcal C$ by the construction of $\mathcal{C}$.  Thus $\mathcal C$ is not closed under syzygies, and hence $(\mathcal C,\mathcal X)$ is not hereditary by \Cref{lem_ctp_2}.
\end{proof}

\begin{lemma}\label{lem_m_nak_injective_wb} \ Suppose $t = d n$ for some $d \geq 2$.
  Put
  $$\mathcal W = \operatorname{add}\left(\bigoplus_{\substack {1 \leq i \leq n \\l \notin [n-i+1, t-i]}}M_{i,l} \right), \ \ \ \ \ \mathcal B = \operatorname{add}\left(
    \bigoplus_{1 \leq r \leq d-1}M_{1,rn}\oplus I \right).$$
  Then $(\mathcal W,\mathcal B)$ is an injective cotorsion pair.
  \vskip5pt
\end{lemma}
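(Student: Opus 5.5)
The plan is to verify condition $(3)$ of \Cref{def_inj_ctp}: $(\mathcal W,\mathcal B)$ should be a complete hereditary cotorsion pair with $\mathcal W\cap\mathcal B=\mathcal I(A)$. Hereditariness is easier to check through the explicit list of $\mathcal B$ than thickness of $\mathcal W$ is directly. First I rewrite $\mathcal W$ explicitly. Its indecomposables are the $M_{i,l}$ with $1\le i\le n-1$ and either $l\le n-i$ or $t-i+1\le l\le t+n-i$, together with the $M_{n,l}$ with $t-n+1\le l\le t$. In particular every $I_i=M_{1,t+i}$ ($1\le i\le n-1$) and $I_n=M_{1,t}$ lies in $\mathcal W$.

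\emph{Step 1: $\mathcal W^\perp=\mathcal B$.} For a non-injective indecomposable $N$ I use the Auslander--Reiten formula
$$\operatorname{Ext}^1_A(M_{i,l},N)\cong D\overline{\operatorname{Hom}}_A(N,\tau M_{i,l}),$$
with $\tau M_{i,l}=M_{i+1,l}$ for non-projective $M_{i,l}$, as in Case~3 of \Cref{lem_m_nak_injective_cx}.

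For $N=M_{1,rn}$ I must show that every map $M_{1,rn}\to M_{i+1,l}$ with $M_{i,l}\in\mathcal W$ factors through the injective envelope $M_{1,rn}\hookrightarrow I_n=M_{1,t}$. Note that $\operatorname{soc}M_{1,rn}=S_n$. Writing such a map as a surjection onto $M_{1,j}$ followed by an inclusion, the socle of $M_{1,j}$ must be $S_{i+l}$. I would then use Fact~\ref{pp} and Lemma~\ref{factorthrough} exactly as in Case~3 of \Cref{lem_m_nak_injective_cx}. The length ranges defining $\mathcal W$, namely $l\le n-i$ or $l\ge t-i+1$, should be precisely the ones that force the image either to vanish or to lift through $M_{1,t}$.

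Conversely, for each non-injective indecomposable $N\notin\mathcal B$ I would exhibit some $W\in\mathcal W$ with $\operatorname{Ext}^1_A(W,N)\ne0$. I intend to take $W$ simple whenever possible: $S_i=M_{i,1}$ lies in $\mathcal W$ for $i\le n-1$, and the canonical sequence $0\to M_{i+1,l}\to M_{i,l+1}\to S_i\to0$ does not split, as in Claim~2 of \Cref{lem_m_nak_injective_cx}. The remaining cases are modules with top $S_1$, i.e.\ $M_{1,l}$ with $l$ not a multiple of $n$ and $l<t$. For these I would use non-split sequences $0\to M_{1,l}\to M_{n,l+1}\oplus\cdots$ from Fact~\ref{pp}, ending in some $M_{n,l'}$ with $l'\ge t-n+1$. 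This case analysis on $l\bmod n$ is where I expect the main bookkeeping obstacle, since one must keep the lengths within the Kupisch bounds $c_i=t+n-i$.

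\emph{Step 2: ${}^\perp\mathcal B\subseteq\mathcal W$.} For each indecomposable $M_{i,l}\notin\mathcal W$, i.e.\ $n-i+1\le l\le t-i$, I would produce $r$ with $\operatorname{Ext}^1_A(M_{i,l},M_{1,rn})\ne0$. Via the same AR formula this means a map $M_{1,rn}\to M_{i+1,l}$ not factoring through $M_{1,t}$. The natural choice is $r$ with $rn\approx i+l$, modulo $n$, such that $M_{1,rn}$ maps onto a submodule of $M_{i+1,l}$ with socle $S_{i+l}$. Non-factorization should follow by a radical-length argument like those in \Cref{lem_sinj_wb}, using $l\le t-i$. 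Together with Step~1 this shows $(\mathcal W,\mathcal B)$ is a cotorsion pair, which is complete by \Cref{prop_ctp_1}.

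\emph{Step 3: the remaining conditions of \Cref{def_inj_ctp}$(3)$.} Since $n\le rn\le t-n$, the module $M_{1,rn}$ has neither $l\le n-1$ nor $l\ge t$, so it is not in $\mathcal W$. Hence $\mathcal W\cap\mathcal B=\operatorname{add}I=\mathcal I(A)$.

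For hereditariness I use \Cref{cor_ctp_2}$(2')$. The injective envelope of $M_{1,rn}$ is $I_n=M_{1,t}$, because $\operatorname{soc}M_{1,rn}=S_{rn}=S_n$. The cokernel is
$$\mho M_{1,rn}=M_{1+rn,\,t-rn}=M_{1,(d-r)n}\in\mathcal B,$$
with $1\le d-r\le d-1$. So $\mathcal B$ is closed under cosyzygies and $(\mathcal W,\mathcal B)$ is hereditary. \Cref{def_inj_ctp} then gives that it is an injective cotorsion pair.
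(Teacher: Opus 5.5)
\textbf{The gap is in the converse half of Step~1, for the modules with top $S_1$}, i.e.\ $N=M_{1,l}$ with $l<t$ and $n\nmid l$. Your recipe cannot work there.

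In Fact~\ref{pp}, a sequence that starts at $M_{1,l}$, ends at some $M_{n,l'}$, and has $M_{n,l+1}$ as a middle summand is forced to have shift $r=1$. Its middle term is then $M_{1,l'-1}\oplus M_{n,l+1}$, so $l'\le l+1$. But $M_{n,l'}\in\mathcal W$ requires $l'\ge t-n+1$. So your sequences only produce a witness when $l\ge t-n$. For example, for $l=1$ you get $0\to S_1\to M_{n,2}\to S_n\to 0$, and $S_n\notin\mathcal W$ because $1\in[1,t-n]$.

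The fix is to use a large shift. Write $l=qn+s$ with $0\le q\le d-1$ and $1\le s\le n-1$ (with $M_{1,0}=0$). Take $W=M_{n,t-n+1}\in\mathcal W$ and apply Fact~\ref{pp} with shift $t-n+1-qn$. This gives
\[0\longrightarrow M_{1,qn+s}\longrightarrow M_{1,qn}\oplus M_{n,t-n+s+1}\longrightarrow M_{n,t-n+1}\longrightarrow 0,\]
which is non-split by Krull--Schmidt, since the summand $M_{n,t-n+s+1}$ is isomorphic to neither end term. This is exactly the paper's argument (Case~2 of its Claim~2). Alternatively, $W=I_n=M_{1,t}$ works via $0\to M_{1,qn+s}\to M_{1,qn}\oplus M_{1,t+s}\to M_{1,t}\to 0$.

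\textbf{The rest of your plan is sound, and for ${}^\perp\mathcal B=\mathcal W$ it takes a different route from the paper.}
\begin{itemize}
\item The paper first reduces ${}^\perp\mathcal B$ to ${}^\perp M_{1,n}$ using $0\to M_{1,rn}\to M_{1,(r+1)n}\to M_{1,n}\to 0$. It then tests against $\underline{\operatorname{Hom}}_A(M_{n,n},-)$, proving non-vanishing by a pullback against the projective cover $P_i$ and Nakayama's lemma.
\item You treat every $M_{1,rn}$ directly through $\overline{\operatorname{Hom}}_A(M_{1,rn},\tau-)$ and the injective envelope $M_{1,t}$. This avoids the reduction, but makes the vanishing step depend on $r$.
\end{itemize}

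The vanishing step does go through. For $l\ge t-i+1$, the image of $M_{1,rn}\to M_{i+1,l}$ is some $M_{1,j}$ with $j=l+i-t+mn\le rn$. This forces $m\le r-1$, hence $i+mn\le rn$, i.e.\ $t-rn+j\le l$. So by \Cref{factorthrough} the map extends to $M_{1,t}$.

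In Step~2 you need not choose $r\approx (i+l)/n$; $r=1$ always suffices. Write $i+l-n=qn+s$ with $0\le q\le d-2$ and $1\le s\le n$. If the map $M_{1,n}\twoheadrightarrow M_{1,s}\hookrightarrow M_{i+1,l}$ extended to $M_{1,t}$, your radical-length argument would give $t-n+s\le l$. But $t-n+s-l=(d-2-q)n+i>0$, so it does not extend.
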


\begin{proof}  Since $M_{i,l}$ satisfies $l\le c_i = t+n-i$,  one can explicitly rewrite
  $$\mathcal W
    = \operatorname{add}\left(
    \bigoplus_{1\le i\le n}
    \left(
      \bigoplus_{1\le l\le n-i}M_{i,l}
      \oplus
      \bigoplus_{t-i+1\le l\le t+n-i}M_{i,l}
      \right)
    \right)$$

  \vskip5pt
  \noindent
  where if $i = n$ then $\bigoplus\limits_{1\le l\le n-i}M_{i,l}$ is understood to be $0$.

  \vskip5pt
  We claim \  ${}^\perp \mathcal{B} = {}^\perp M_{1,n}$. In fact, if $d = 2$ then we are done.
  Suppose that $d \geq 3$. For each $1 \leq i \leq d-2$ one has the exact sequence
  \[
    0 \longrightarrow M_{1,in} \longrightarrow M_{1,(i+1)n} \longrightarrow M_{1,n} \longrightarrow 0.
  \]
  Taking $i = 1$, one has ${}^\perp M_{1,n} \subseteq {}^\perp M_{1,2n}$.
  By induction one has \ ${}^\perp M_{1,n} \subseteq {}^\perp M_{1,2n} \subseteq \cdots \subseteq {}^\perp M_{1,(d-1)n},$
  and hence  ${}^\perp\mathcal B = \bigcap\limits_{1 \leq r \leq d-1}{}^\perp M_{1,rn} = {}^\perp M_{1,n}.$

  \vskip10pt

  Using the Auslander--Reiten formula, one has
  \[
    {}^\perp\mathcal B  = {}^\perp M_{1,n} = \{Y \mid \underline{\operatorname{Hom}}_A(\tau ^{-1} M_{1,n},Y)=0\}= \{Y \mid \underline{\operatorname{Hom}}_A(M_{n,n},Y)=0\}.
  \]
  We first prove ${}^\perp\mathcal B=\mathcal W$.  It suffices to prove

  \vskip5pt

  \textbf{Claim 1} \
  For an indecomposable module $M_{i,l}$ with $1\le i\le n$, $\underline{\operatorname{Hom}}_A(M_{n,n},M_{i,l})=0$
  if and only if
  $1\leq l\leq n-i$,  or \ \ $t-i+1\leq l\leq t+n-i.$

  \vskip5pt

  \textup{Case 1.} \ Suppose that $1\leq l\leq n-i$. Since  $S_n$ is not a composition factor of $M_{i,l}$,  $\operatorname{Hom}_A(M_{n,n},M_{i,l})=0$.

  \vskip5pt

  \textup{Case 2.} \ Suppose that $t-i+1\leq l\leq t+n-i$. By the Auslander--Reiten formula
  \[
    D\underline{\operatorname{Hom}}_A(M_{n,n},M_{i,l})
    \cong \operatorname{Ext}^1_A(M_{i,l},M_{1,n})
    \cong D\overline{\operatorname{Hom}}_A(M_{1,n},M_{i+1,l})
  \]
  it suffices to show that every morphism $0\ne f: M_{1,n}\longrightarrow M_{i+1,l}$ factors through an injective module.
  Since $1 \leq (l+i-t) \leq n$, it follows that $f = e\circ p$,  where
  \[M_{1,n}\stackrel p \twoheadrightarrow M_{1,l+i-t} \stackrel e
    \hookrightarrow M_{i+1,l}.
  \]
  Let $\sigma:M_{1,n}\hookrightarrow I_n=M_{1,t}$ be the injective envelope. Consider the following canonical commutative diagram (cf. Fact \ref{pp}):
  \[
    \xymatrix@R=.7cm{
    & M_{1,n} \ar@{->>}[r]^-{p} \ar@{^{(}->}[d]_{\sigma}
    & M_{1,l+i-t} \ar@{^{(}.>}[d]^{\sigma'}
    \ar@{^{(}->}[r]^-{e}
    & M_{i+1,l} \\
    I_n \ar@{=}[r]
    & M_{1,t} \ar@{.>>}[r]^-{p'}
    & M_{1, l+i-n}
    &
    }
  \]
  Since $\mathrm{soc}(M_{1,l+i-n})=S_{i+l}=\mathrm{soc}(M_{i+1,l})$ and $l+i-n\leq l$, there exists $e':M_{1,l+i-n}\longrightarrow M_{i+1,l}$ such that $e'\circ\sigma'=e$.
  Hence $f = e \circ p = e' \circ \sigma' \circ p$ factors through the injective module $I_n$.

  \vskip5pt

  \textup{Case 3.} \ Suppose that $n-i+1\leq l\leq t-i$, i.e., $1 \leq (l+i-n) \leq (d-1)n$.  Write
  $l+i-n=qn+s$, where $0\le q \le d-2$ and $1\leq s\leq n$.
  Consider the canonical homomorphisms
  \[M_{n,n}\stackrel p \twoheadrightarrow M_{n,s}
    =\operatorname{rad}^{n-i+qn}(M_{i,l})
    \stackrel e \hookrightarrow M_{i,l}
  \]
  We claim that $f = e\circ p$ does not factor through a projective module.
  Otherwise, $f$ factors through the projective cover $\pi:P_i=M_{i,t+n-i}\twoheadrightarrow M_{i,l}$,
  say,  $f=\pi \circ h$ for $h:M_{n,n}\longrightarrow P_i$.
  Consider the following pullback square (cf. Fact \ref{pp}):
  \[
    \xymatrix@R=.7cm{
    M_{n,n} \ar@/_/@{->>}[rdd]_{p} \ar@/^/@{.>}[rrd]^{h}
    \ar@{.>}[rd]^{\varphi} & & & \\
    & M_{n,t-qn} \ar@{.>>}[d]^{\pi'}
    \ar@{^{(}.>}[r]^{e'}
    & M_{i,t+n-i} \ar@{=}[r] \ar@{->>}[d]^{\pi}
    & P_i \\
    & M_{n,s} \ar@{^{(}->}[r]^{e}
    & M_{i,l} &
    }
  \]
  By the universal property of the pullback, there is a morphism $\varphi$ such that $e'\circ\varphi=h$ and $\pi'\circ\varphi=p$.
  By Nakayama's lemma, $\varphi$ is an epimorphism, and then  $n \geq t-qn$. However, by construction $0 \leq q \leq d-2$ and $t = dn$,
  one has  $t-qn \geq 2n > n$. This contradiction implies that
  $\underline{\operatorname{Hom}}_A(M_{n,n},M_{i,l})\neq0$.

  \vskip5pt
  This proves  ${}^\perp\mathcal B=\mathcal W$. To show that $(\mathcal W, \mathcal B)$ is a cotorsion pair, it remains to show  $\mathcal W^\perp \subseteq \mathcal B$, or equivalently, the following claim:

  \vskip5pt

  \textbf{Claim 2} \
  For any indecomposable non-injective module $M_{i,l} \notin \{M_{1,rn}\}_{1 \leq r \leq d-1}$, there exists some $W\in\mathcal W$ such that $\operatorname{Ext}^1_A(W,M_{i,l})\neq0$.

  \vskip5pt

  \textup{Case 1.} \ Suppose that $i\neq1$.
  By construction  $W = S_{i-1} = M_{i-1,1}\in \mathcal{W}$.
  Then the exact sequence
  \[
    0 \longrightarrow M_{i,l}\longrightarrow M_{i-1,l+1}
    \longrightarrow M_{i-1,1}\longrightarrow0.
  \]
  shows that $\operatorname{Ext}^1_A(M_{i-1,1},M_{i,l})\neq0$.

  \vskip5pt

  \textup{Case 2.} \ Suppose that $i=1$. By the assumption $M_{i,l} \notin \{M_{1,rn}\}_{1 \leq r \leq d-1}$ one knows that $l$ is not a multiple of $n$; and since $M_{1,l}$ is not injective,
  $l < t$. Thus one can write $l = qn + s$ with $0 \leq q \leq d-1$ and $1 \leq s \leq n-1$.
  Then  $W = M_{n,t-n+1}\in \mathcal W$.
  Since $t-n+s+1 \leq t \le c_n = t$,  $M_{n, t-n+s+1}$ is well-defined.
  By the canonical non-splitting short exact sequence (cf. Fact \ref{pp})
  \[
    0 \longrightarrow M_{1,l}\longrightarrow  M_{1,qn} \oplus M_{n, t-n+s+1}\longrightarrow M_{n, t-n+1}\longrightarrow0
  \]
  one sees that $\operatorname{Ext}^1_A(M_{n,t-n+1}, M_{1,l})\neq0$.

  \vskip5pt

  Up to now we have proved that  $(\mathcal W,\mathcal B)$ is a cotorsion pair. It is complete by \Cref{prop_ctp_1}.
  For each $1 \leq r \leq d-1$,  the exact sequence
  \[
    0 \longrightarrow M_{1,rn} \longrightarrow I_n = M_{1,t} \longrightarrow M_{1,(d-r)n} \longrightarrow 0.
  \]
  implies $\mathcal{B}$ is closed under cosyzygies. Thus $(\mathcal W,\mathcal B)$ is hereditary by \Cref{cor_ctp_2}.

  \vskip5pt

  To see that $(\mathcal W,\mathcal B)$ is an injective cotorsion pair,  by \Cref{def_inj_ctp} it remains to show $\mathcal W\cap\mathcal B=\mathcal I(A)$. In fact,
  \begin{align*}\mathcal I(A) = \operatorname{add}(I) & = \operatorname{add}\left(\bigoplus_{t\le l\le t+n-1}M_{1,l}\right) \\ &\subseteq
              \mathcal W
              = \operatorname{add}\left(
              \bigoplus_{1\le i\le n}
              \left(
                \bigoplus_{1\le l\le n-i}M_{i,l}
                \oplus
                \bigoplus_{t-i+1\le l\le t+n-i}M_{i,l}
                \right)
              \right).\end{align*}
  \vskip10pt
  \noindent Thus $\mathcal I(A)\subseteq\mathcal W\cap\mathcal B$. For any non-injective indecomposable object  $M_{1,rn} \in \mathcal B$, where $1\leq r\leq d-1$,
  since $rn\notin [dn, t+n-1]$,  $M_{1,rn}\notin\mathcal W$. Thus $\mathcal W\cap\mathcal B\subseteq \mathcal I(A)$.
  This completes the proof.
\end{proof}

\subsection{Exceptional \textup{Hovey} triples in $A\mbox{-}{\rm mod}$}

$\,$

\vskip5pt

Recall that $I = \bigoplus\limits_{1\le i\le n} I_i = M_{1, t+1}\oplus \cdots \oplus M_{1, t+n-1}\oplus M_{1, t}$.

\vskip10pt

\begin{lemma}\label{lem_m_nak_injective_hovey} \ Suppose $t = d n$ for some $d \geq 2$.
  Put
  \begin{align*}
    \mathcal C
    = & \operatorname{add}\left(\bigoplus_{1 \leq l \leq t}M_{n,l}\oplus\bigoplus_{\substack {1 \leq i \leq n-1 \\ 1\leq l \leq n-i-1}} M_{i,l}
    \oplus
    \bigoplus_{\substack {1 \leq i \leq n-1                                                                     \\ t+1-i\leq l\leq t+n-i}} M_{i,l}
    \right)                                                                                                     \\
    \mathcal F
    = & \operatorname{add}\left(
    I
    \oplus
    \bigoplus_{1\leq r\leq d-1}M_{1,rn}
    \oplus
    \bigoplus_{0\leq r\leq d-1}M_{n,rn+1}
    \right)                                                                                                     \\
    \mathcal W
    = & \operatorname{add}\left(
    \bigoplus_{1\le i\le n}
    \left(
      \bigoplus_{1\le l\le n-i}M_{i,l}
      \oplus
      \bigoplus_{t-i+1\le l\le t+n-i}M_{i,l}
      \right)
    \right).
  \end{align*}
  Then $(\mathcal C,\mathcal F,\mathcal W)$ is an  exceptional \textup{Hovey} triple in $A\mbox{-}{\rm mod}$.
\end{lemma}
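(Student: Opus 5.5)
The plan mirrors the proof of \Cref{lem_sinj_hovey_exceptional}. \Cref{lem_m_nak_injective_cx} gives a complete cotorsion pair $(\mathcal C,\mathcal X)$ with $\mathcal X=\operatorname{add}(M_{n,t-n+1}\oplus I)$. \Cref{lem_m_nak_injective_wb} gives an injective cotorsion pair $(\mathcal W,\mathcal B)$, and the class $\mathcal W$ there is exactly the $\mathcal W$ in the statement. Both $I$ and $M_{n,t-n+1}$ lie in $\mathcal W$: for $i=n$ the allowed lengths are $t-n+1\le l\le t$. Hence $\mathcal X\subseteq\mathcal W$, and \Cref{theorem_merge} shows that $(\mathcal C,(\mathcal C\cap\mathcal W)^\perp,\mathcal W)$ is a Hovey triple. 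It remains to check that $(\mathcal C\cap\mathcal W)^\perp=\mathcal F$, and then to prove exceptionality.

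\textbf{Identifying $\mathcal F$.} First I read off $\mathcal C\cap\mathcal W$ from the explicit lists. For $1\le i\le n-1$ it contains $M_{i,l}$ with $1\le l\le n-i-1$ and with $t+1-i\le l\le t+n-i$. For $i=n$ it contains $M_{n,l}$ with $t-n+1\le l\le t$. Next I describe $(\mathcal C\cap\mathcal W)^\perp$ through the Auslander--Reiten formula $\operatorname{Ext}^1_A(X,Y)\cong D\overline{\operatorname{Hom}}_A(Y,\tau X)$. Many summands of $\mathcal C\cap\mathcal W$ are projective or injective, and projectives impose no condition.

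I show that each module listed in $\mathcal F$ lies in the perp. For $M_{1,rn}$ this already follows from $\mathcal C\cap\mathcal W\subseteq\mathcal W$ and $\mathcal W^\perp=\mathcal B$. For $M_{n,rn+1}$ I check directly that $\overline{\operatorname{Hom}}_A(M_{n,rn+1},\tau X)=0$ for every non-projective indecomposable $X\in\mathcal C\cap\mathcal W$. This uses the same factorisation-through-$I_n$ argument as in Case 3 of Claim 1 of \Cref{lem_m_nak_injective_cx}, driven by socle and length comparisons modulo $n$.

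Conversely, for each non-injective indecomposable $M_{i,l}\notin\mathcal F$ I produce some $W\in\mathcal C\cap\mathcal W$ with $\operatorname{Ext}^1_A(W,M_{i,l})\neq0$. For $i\neq 1,n$, the simple $S_{i-1}$ is in $\mathcal C\cap\mathcal W$ whenever $n-(i-1)-1\ge1$, and the sequence $0\to M_{i,l}\to M_{i-1,l+1}\to S_{i-1}\to0$ works. The boundary cases $i=1$ and $i=n$ need middle terms of the form $M_{n,n}$ or $M_{n,t-n+1}$, using the almost split/pullback sequences of Fact \ref{pp}. The index bookkeeping here, with $l=qn+s$ and $t=dn$, is where I expect the main effort: one has to pin down exactly why $M_{n,l}$ with $l\equiv1 \pmod n$ survive and the others do not.

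\textbf{Exceptionality.} I compute $\mathcal C\cap\mathcal F$, which is $\operatorname{add}(I\oplus M_{n,1}\oplus\cdots)$ intersected with $\mathcal C$. It contains every $M_{n,rn+1}$, and also $M_{1,rn}$ exactly when $rn\ge t$, i.e.\ only injectives. Next I compute $\mathcal C\cap\mathcal F\cap\mathcal W$: it contains $M_{n,l}$ only for $l\ge t-n+1$, so $M_{n,1}$ is not in it (since $d\ge2$). The key step is to show that $M_{n,1}$, or more generally some $M_{n,rn+1}$ with $rn+1<t-n+1$, is projective, or injective, in $\mathcal C\cap\mathcal F$. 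I would test $\operatorname{Ext}^1_A(M_{n,rn+1},-)$ and $\operatorname{Ext}^1_A(-,M_{n,rn+1})$ against the finitely many indecomposables of $\mathcal C\cap\mathcal F$ using the Auslander--Reiten formula, first trying $M_{n,1}=S_n$ as a candidate injective of $\mathcal C\cap\mathcal F$. Once such an object is found outside $\mathcal C\cap\mathcal F\cap\mathcal W$, Fact \ref{Wcontainsp}(2) shows that $\mathcal C\cap\mathcal F\cap\mathcal W$ is a proper subclass of $\mathcal P(\mathcal C\cap\mathcal F)$ or of $\mathcal I(\mathcal C\cap\mathcal F)$. So the pair is not Frobenius, and the triple is exceptional.
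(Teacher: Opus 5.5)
Your architecture matches the paper's proof. You merge $(\mathcal C,\mathcal X)$ with $(\mathcal W,\mathcal B)$ via Theorem \ref{theorem_merge}, identify $(\mathcal C\cap\mathcal W)^\perp$ with $\mathcal F$, and use $M_{n,1}$ to break the Frobenius pair. Two of your sketched steps do go through, by routes slightly different from the paper's:
\begin{itemize}
\item For the inclusion $\mathcal F\subseteq(\mathcal C\cap\mathcal W)^\perp$, your direct Auslander--Reiten check for $M_{n,rn+1}$ works. For short $X$, $S_n$ is not a composition factor of $\tau X$. For the other $X$, the largest image $M_{n,j}$ of a map $M_{n,rn+1}\to\tau X$ is small enough that the pushout along $M_{n,rn+1}\hookrightarrow I_n$ embeds in $\tau X$. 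The paper instead uses $\mathcal B\cup\mathcal X\subseteq(\mathcal C\cap\mathcal W)^\perp$ and extension-closure.
\item For exceptionality, $S_n$ is indeed injective in $\mathcal C\cap\mathcal F$, since $\operatorname{Ext}^1_A(M_{n,rn+1},S_n)\cong D\overline{\operatorname{Hom}}_A(S_n,M_{1,rn+1})=0$. This is Claim 2 of Theorem \ref{prop_m_nak_injective_type}; the paper uses projectivity instead.
\end{itemize}

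The genuine gap is the reverse inclusion at $i=n$, which you flag as the main effort. The test objects you name for it do not suffice.
\begin{itemize}
\item $M_{n,n}$ is not in $\mathcal C\cap\mathcal W$. For $i=n$, $\mathcal W$ contains $M_{n,l}$ only for $t-n+1\le l\le t$, and $n<t-n+1$ because $d\ge2$.
\item $M_{n,t-n+1}$ (which does lie in $\mathcal C\cap\mathcal W$) detects $M_{n,l}$ with $l\not\equiv1\pmod n$ only when $2\le l\le t-n$. For $t-n+2\le l\le t$, any map $M_{n,l}\to\tau M_{n,t-n+1}=M_{1,t-n+1}$ has image $M_{n,j}$ with $j\equiv2\pmod n$ and $j\le t-2n+2$. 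Hence it factors through the injective envelope $M_{n,l}\hookrightarrow M_{1,l+n-1}$, and $\operatorname{Ext}^1_A(M_{n,t-n+1},M_{n,l})=0$.
\end{itemize}
Concretely, take $n=d=2$. The projective non-injective module $P_2=M_{2,4}$ is not in $\mathcal F$, yet $\operatorname{Ext}^1_A(M_{2,3},M_{2,4})=0$ and $M_{2,2}\notin\mathcal C\cap\mathcal W$.

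The missing idea: for $l=qn+r$ with $2\le r\le n$, take $W=M_{r-1,t-r+2}$. This is the lower endpoint $l=t+1-i$ of the second range of $\mathcal C\cap\mathcal W$ for $i=r-1$, so it lies in $\mathcal C\cap\mathcal W$. Fact \ref{pp} then gives the non-split sequence
\[
0\longrightarrow M_{n,qn+r}\longrightarrow M_{n,qn+1}\oplus M_{r-1,t+1}\longrightarrow M_{r-1,t-r+2}\longrightarrow0,
\]
which excludes every $M_{n,l}$ with $l\not\equiv1\pmod n$ uniformly; this is Claim 5 in the paper. For $i=1$ your choice $W=M_{n,t-n+1}$ is fine, via the sequence in Claim 2 of Lemma \ref{lem_m_nak_injective_wb}; the paper uses $W=I_n=M_{1,t}$ there instead.
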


\begin{proof} \ Put
  $$\mathcal{X} = \operatorname{add}(
    M_{n,t-n+1}\oplus I), \quad \ \ \ \mathcal B = \operatorname{add}\left(
    \bigoplus_{1 \leq r \leq d-1}M_{1,rn}\oplus I \right).$$
  By \Cref{lem_m_nak_injective_cx}, $(\mathcal{C}, \mathcal{X})$ is a non-hereditary complete cotorsion pair; and by \Cref{lem_m_nak_injective_wb}, $(\mathcal{W}, \mathcal{B})$ is an injective cotorsion pair.
  Moreover $\mathcal{X} \subseteq \mathcal{W}$.
  Hence, by \Cref{theorem_merge}, $(\mathcal{C}, \ (\mathcal{C} \cap \mathcal{W})^\perp, \ \mathcal{W})$ is a Hovey triple.  We will show  $(\mathcal{C} \cap \mathcal{W})^\perp = \mathcal{F}$.
  By construction
  \[
    \mathcal C\cap\mathcal W
    = \operatorname{add}\left(
    \bigoplus_{1\le i\le n}
    \left(
      \bigoplus_{1\le l\le n-i-1}M_{i,l}
      \oplus
      \bigoplus_{t-i+1\le l\le t+n-i}M_{i,l}
      \right)
    \right)
  \]

  \vskip10pt
  \noindent where $\bigoplus\limits_{1\le l\le n-i-1}M_{i,l}$ is understood to be $0$ when $(n-1) \leq i \leq n$.
  We claim that $(\mathcal C\cap\mathcal W)^\perp=\mathcal F$. Since every injective module belongs to $(\mathcal{C} \cap \mathcal{W})^\perp \cap \mathcal F$,
  we will only consider non-injective indecomposable modules.

  \vskip5pt

  \textbf{Claim 1} \ If $i\notin \{1,n\}$, then each non-injective object $M_{i,l} \notin (\mathcal{C} \cap \mathcal{W})^\perp$.
  \vskip5pt

  In fact, consider $S_{i-1} = M_{i-1,1}$. Since $1 \leq 1 \leq n-(i-1)-1$, one sees that $M_{i-1,1} \in \mathcal{C} \cap \mathcal{W}$. Then the exact sequence
  \[
    0\longrightarrow M_{i,l}\longrightarrow M_{i-1,l+1}\longrightarrow S_{i-1}\longrightarrow 0
  \]
  implies that  $\operatorname{Ext}^1_A(S_{i-1},M_{i,l})\neq0$, and thus $M_{i,l}\notin(\mathcal C\cap\mathcal W)^\perp$.

  \vskip5pt

  \textbf{Claim 2} \ If $i = 1$ and $l$ is a multiple of $n$, then $M_{1,l} \in (\mathcal{C} \cap \mathcal{W})^\perp$.

  \vskip5pt

  In fact, $M_{1,l} \in \mathcal{B} = \mathcal{W}^\perp \subseteq (\mathcal{C} \cap \mathcal{W})^\perp$.

  \vskip5pt

  \textbf{Claim 3} \ If $i = 1$, $l$ is not a multiple of $n$,  and $M_{i,l}$ is not injective, then $M_{1,l} \notin (\mathcal{C} \cap \mathcal{W})^\perp$.

  \vskip5pt

  In fact, in this case one can write $l=qn+r$, where $0\leq q\leq d-1$ and $1\leq r\leq n-1$.
  Note that $M_{1,t}=I_n\in\mathcal C\cap\mathcal W$.  The exact sequence (cf. Fact \ref{pp})
  \[
    0\longrightarrow M_{1,qn+r}
    \longrightarrow M_{1,qn}\oplus  M_{1,t+r}
    \longrightarrow M_{1,t}
    \longrightarrow 0
  \]
  (where $M_{1,0}=0$ if $q=0$) implies that $\operatorname{Ext}^1_A(M_{1,t},M_{1,l})\neq0$, and thus $M_{1,l}\notin(\mathcal C\cap\mathcal W)^\perp$.

  \vskip5pt

  \textbf{Claim 4} \ If $i = n$ and $l = 1 \pmod n$, then $M_{n,l} \in (\mathcal{C} \cap \mathcal{W})^\perp$.

  \vskip5pt

  In fact, it is clear $\mathcal{B} \cup \mathcal{X} \subseteq (\mathcal{C} \cap \mathcal{W})^\perp$.
  Write $l=qn+1$ with $0\leq q\leq d-1$.

  \vskip5pt

  If $1\leq q\leq d-1$, then there is a short exact sequence (cf. Fact \ref{pp})
  \[
    0\longrightarrow M_{n,t-n+1}
    \longrightarrow M_{n,qn+1}\oplus M_{1,t-n}
    \longrightarrow  M_{1,qn}
    \longrightarrow0
  \]
  where $M_{1,qn}\in\mathcal B$ and $M_{n,t-n+1}\in\mathcal X$ by construction.
  Thus $M_{n,qn+1}\in(\mathcal C\cap\mathcal W)^\perp$.

  \vskip5pt

  If $q=0$, then one has a short exact sequence (cf. Fact \ref{pp})
  \[
    0\longrightarrow M_{n,t-n+1}
    \longrightarrow M_{n,1}\oplus M_{1,t}
    \longrightarrow M_{1,n}
    \longrightarrow0
  \]
  where $M_{1,n}\in\mathcal B$ and $M_{n,t-n+1}\in\mathcal X$. The same argument shows $M_{n,1}\in(\mathcal C\cap\mathcal W)^\perp$.

  \vskip5pt

  \textbf{Claim 5} \ If $i = n$ and $l \neq 1 \pmod n$, then $M_{n,l} \notin (\mathcal{C} \cap \mathcal{W})^\perp$.

  \vskip5pt

  In fact, write $l = qn + r$ for $0 \leq q \leq d-1$ and $2 \leq r \leq n$. Consider $M_{r-1,t-r+2}$.
  Since $t-(r-1)+1 \leq t-r+2\leq t+n-(r-1)$,   $M_{r-1,t-r+2}\in\mathcal C\cap\mathcal W$.
  The exact sequence (cf. Fact \ref{pp})
  \[
    0\longrightarrow M_{n,qn+r}
    \longrightarrow  M_{n,qn+1}\oplus M_{r-1,t+1}
    \longrightarrow M_{r-1,t-r+2}
    \longrightarrow0
  \]
  implies that $\operatorname{Ext}^1_A(M_{r-1,t-r+2},M_{n,l}) \neq 0$, and thus $M_{n,l}\notin(\mathcal C\cap\mathcal W)^\perp$.

  \vskip5pt

  By \textbf{Claims 1--5} one sees that $(\mathcal C\cap\mathcal W)^\perp=\mathcal F$, and hence  $(\mathcal C,\mathcal F,\mathcal W)$ is a \textup{Hovey} triple in $A\mbox{-}{\rm mod}$.
  It remains to show that  $(\mathcal{C} \cap \mathcal{F}, \mathcal{C} \cap \mathcal{F} \cap \mathcal{W})$ is not a Frobenius pair.
  By construction
  $$\mathcal{C} \cap \mathcal{F} =\operatorname{add}\left(I\oplus \bigoplus_{0\leq r\leq d-1}M_{n,rn+1}\right),
    \ \ \ \ \mathcal{C} \cap \mathcal{F} \cap \mathcal{W} =\operatorname{add}\left(I\oplus M_{n,t-n+1}\right). $$
  We claim that $M_{n,1}$ is a projective object in $\mathcal{C} \cap \mathcal{F}$. For any non-injective indecomposable object $M_{n,rn+1}$ with $0\leq r\leq d-1$ in $\mathcal{C} \cap \mathcal{F}$,
  by the Auslander--Reiten formula, one has
  $$\mathrm{Ext}_A^1(M_{n,1}, M_{n,rn+1}) \cong D\underline{\mathrm{Hom}}_A(\tau^{-1}M_{n,rn+1}, M_{n,1}) = D\underline{\mathrm{Hom}}_A(M_{n-1,rn+1}, M_{n,1}).$$
  Since $M_{n,1} = S_n$ and  $\operatorname{top}(M_{n-1,rn+1})=S_{n-1}$, $\mathrm{Hom}_A(M_{n-1,rn+1}, M_{n,1}) = 0$, and hence

  \noindent $\operatorname{Ext}^1_A(M_{n,1},M_{n,rn+1})=0$. This proves the claim.

  \vskip5pt

  Since $M_{n,1} \notin \mathcal{C} \cap \mathcal{F} \cap \mathcal{W}$,  $\mathcal{P}(\mathcal{C} \cap \mathcal{F}) \neq \mathcal{C} \cap \mathcal{F} \cap \mathcal{W}$.
  By definition $(\mathcal{C} \cap \mathcal{F}, \mathcal{C} \cap \mathcal{F} \cap \mathcal{W})$ is not a Frobenius pair. Thus $(\mathcal C,\mathcal F,\mathcal W)$ is an exceptional Hovey triple.
\end{proof}

\subsection{Proof of \Cref{thm_m_nak_classification}}

If $t = dn$ with $d \geq 2$, then by \Cref{lem_m_nak_injective_hovey} one sees that $A\mbox{-}{\rm mod}$ admits an exceptional Hovey triple.

\vskip5pt

Conversely, suppose that $A\mbox{-}{\rm mod}$ admits an exceptional Hovey triple $(\mathcal C,\mathcal F,\mathcal W)$. Then
$(\mathcal C\cap\mathcal F,\mathcal C\cap\mathcal F\cap\mathcal W)$ is not a Frobenius pair. We claim that $t=dn$ for some $d\ge2$.
Otherwise, $t=n$ or $n\nmid t$. By \Cref{cor_m_nak_hovey_triple},
$(\mathcal C,\mathcal F,\mathcal W)=(\mathcal C,\mathcal F,A\mbox{-}{\rm mod})$
for some complete cotorsion pair $(\mathcal C,\mathcal F)$.
In this case, the pair
\[
(\mathcal C\cap\mathcal F,\mathcal C\cap\mathcal F\cap\mathcal W)
=(\mathcal C\cap\mathcal F,\mathcal C\cap\mathcal F)
\]
is a Frobenius pair, contradicting the assumption. \hfill$\square$

\subsection{The Type of the exceptional \textup{Hovey} triple in \Cref{lem_m_nak_injective_hovey}}

$\,$

\vskip5pt

\begin{theorem} \label{prop_m_nak_injective_type} \
  The exceptional \textup{Hovey} triple $(\mathcal C, \mathcal F, \mathcal W)$ in \Cref{lem_m_nak_injective_hovey} is of \textup{Type I}, i.e., $\mathcal{C} \cap \mathcal{F}$ is a Frobenius category, but $\mathcal{C} \cap \mathcal{F} \cap \mathcal{W} \ne \mathcal{P}(\mathcal{C} \cap \mathcal{F})$.
\end{theorem}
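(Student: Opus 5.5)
The plan is to compute $\mathcal P(\mathcal C\cap\mathcal F)$ and $\mathcal I(\mathcal C\cap\mathcal F)$ explicitly and check that they coincide. The other two ingredients are already available. By Corollary \ref{corollary_D_enough_PI}, $\mathcal C\cap\mathcal F$ has enough projective and enough injective objects, since $A$ is representation-finite. The proof of \Cref{lem_m_nak_injective_hovey} already shows that $M_{n,1}\in\mathcal P(\mathcal C\cap\mathcal F)\setminus(\mathcal C\cap\mathcal F\cap\mathcal W)$. So once $\mathcal P(\mathcal C\cap\mathcal F)=\mathcal I(\mathcal C\cap\mathcal F)$ is established, $\mathcal C\cap\mathcal F$ is Frobenius and $\mathcal C\cap\mathcal F\cap\mathcal W\neq\mathcal P(\mathcal C\cap\mathcal F)$, which is Type I. Since $\mathcal C\cap\mathcal F$ is extension-closed, $\mathcal P(\mathcal C\cap\mathcal F)={}^\perp(\mathcal C\cap\mathcal F)\cap(\mathcal C\cap\mathcal F)$ and $\mathcal I(\mathcal C\cap\mathcal F)=(\mathcal C\cap\mathcal F)^\perp\cap(\mathcal C\cap\mathcal F)$, where $\perp$ refers to $\mathrm{Ext}^1_A$. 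Recall
$$\mathcal C\cap\mathcal F=\operatorname{add}\Bigl(I\oplus\bigoplus_{0\le r\le d-1}M_{n,rn+1}\Bigr).$$
The whole computation therefore reduces to a finite table of groups $\mathrm{Ext}^1_A(X,Y)$ for $X,Y$ in the list $\{I_1,\dots,I_n,\ M_{n,rn+1}\ (0\le r\le d-1)\}$.

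First I will record the trivial entries. Every $I_j$ satisfies $\mathrm{Ext}^1_A(-,I_j)=0$, so all of $I$ lies in $\mathcal I(\mathcal C\cap\mathcal F)$. Also $M_{n,t-n+1}\in\mathcal C\cap\mathcal F\cap\mathcal W$ lies in both $\mathcal P(\mathcal C\cap\mathcal F)$ and $\mathcal I(\mathcal C\cap\mathcal F)$ by Fact \ref{lem_Hovey_hereditary}(2). The remaining entries are computed with the Auslander--Reiten formula
$$\mathrm{Ext}^1_A(X,M_{n,bn+1})\cong D\underline{\operatorname{Hom}}_A(M_{n-1,bn+1},X),$$
using $\tau^{-1}M_{n,bn+1}=M_{n-1,bn+1}$, together with the same composition-factor, pullback and Nakayama-lemma arguments used in Claim 1 of \Cref{lem_m_nak_injective_cx} and Claim 1 of \Cref{lem_m_nak_injective_wb}. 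For $X=M_{n,an+1}$, a nonzero map $M_{n-1,bn+1}\to M_{n,an+1}$ needs $S_{n-1}$ as a composition factor, so $a\ge1$. I then have to decide whether such maps factor through $P_n=M_{n,t}$, exactly as in Case 1 of Claim 1 of \Cref{lem_m_nak_injective_cx}. For $X=I_j=M_{1,t+j-1}$ (indices as in the paper), I will show that every map $M_{n-1,bn+1}\to I_j$ factors through a projective. The expected argument is that its image is a quotient of $M_{n-1,bn+1}$ sitting inside $M_{1,\cdot}$, and this lifts through the projective cover of $I_j$; this is the analogue of Case 4 of Claim 1 in \Cref{lem_m_nak_injective_cx}, run in reverse. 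This would make each $I_j$ projective in $\mathcal C\cap\mathcal F$ as well.

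The main obstacle is the middle modules $M_{n,rn+1}$ with $1\le r\le d-2$. For each of them I must exhibit a nonzero class in $\mathrm{Ext}^1_A(M_{n,rn+1},Y)$ and one in $\mathrm{Ext}^1_A(Z,M_{n,rn+1})$, with $Y,Z\in\mathcal C\cap\mathcal F$. Alternatively, if the table shows they are projective, I must show they are injective too. The target symmetry is
$$\mathcal P(\mathcal C\cap\mathcal F)=\mathcal I(\mathcal C\cap\mathcal F)=\operatorname{add}\bigl(I\oplus M_{n,1}\oplus M_{n,t-n+1}\oplus(\text{possibly some }M_{n,rn+1})\bigr).$$
To produce the required extensions I would first try the canonical non-split sequences of Fact \ref{pp} with end terms $M_{n,an+1}$ and $M_{n,bn+1}$, for instance
$$0\to M_{n,(r-1)n+1}\to M_{n,rn+1}\oplus M_{\ast}\to M_{n,\ast}\to 0,$$
and then check non-vanishing of the stable Hom via the Nakayama-lemma argument of Case 3 in Claim 1 of \Cref{lem_m_nak_injective_wb}. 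Whatever the final list turns out to be, I must verify that the projective and injective lists agree. I will also double-check that $M_{n,1}$ is injective in $\mathcal C\cap\mathcal F$, i.e.\ $\mathrm{Ext}^1_A(X,M_{n,1})\cong D\underline{\operatorname{Hom}}_A(M_{n-1,1},X)=0$ for all $X\in\mathcal C\cap\mathcal F$, since this is the decisive asymmetry check that separates Type I from Type II.
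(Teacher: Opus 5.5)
Your strategy (compute $\mathcal P(\mathcal C\cap\mathcal F)$ and $\mathcal I(\mathcal C\cap\mathcal F)$ explicitly and compare them) is the paper's. However, the proposal stops at exactly the step that decides the theorem. For the modules $M_{n,rn+1}$ with $1\le r\le d-2$ you leave open whether they are projective and/or injective in $\mathcal C\cap\mathcal F$ (``possibly some $M_{n,rn+1}$'', ``whatever the final list turns out to be''). If even one of them were projective but not injective, $\mathcal C\cap\mathcal F$ would fail to be Frobenius. So as written, $\mathcal P(\mathcal C\cap\mathcal F)=\mathcal I(\mathcal C\cap\mathcal F)$ is never established.

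The missing idea is a single self-extension. Apply Fact~\ref{pp} with $i=n$, $l=rn+1$, and both the shift and $s$ equal to $n$. This gives the exact sequence
\[
0\longrightarrow M_{n,rn+1}\longrightarrow M_{n,(r-1)n+1}\oplus M_{n,(r+1)n+1}\longrightarrow M_{n,rn+1}\longrightarrow 0 .
\]
It is admissible precisely because $(r+1)n+1\le c_n=t=dn$ if and only if $r\le d-2$. It is non-split by Krull--Schmidt, since the middle term is not $M_{n,rn+1}^{\oplus 2}$. Hence $\mathrm{Ext}^1_A(M_{n,rn+1},M_{n,rn+1})\neq 0$. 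Because the same object sits at both ends and lies in $\mathcal C\cap\mathcal F$, this one sequence shows each such module is neither projective nor injective in $\mathcal C\cap\mathcal F$. No stable-Hom or Nakayama-lemma computation (as in Case 3 of Claim 1 of Lemma~\ref{lem_m_nak_injective_wb}) is needed: a non-split sequence gives a nonzero $\mathrm{Ext}^1$ directly. Together with $M_{n,1}\in\mathcal P(\mathcal C\cap\mathcal F)$ and your injectivity check for $M_{n,1}$, this yields
\[
\mathcal P(\mathcal C\cap\mathcal F)=\mathcal I(\mathcal C\cap\mathcal F)=\operatorname{add}(I\oplus M_{n,1}\oplus M_{n,t-n+1}),
\]
which is what is needed. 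Your injectivity check via $\tau^{-1}M_{n,1}=M_{n-1,1}$ is fine: $\mathrm{soc}\,M_{n,an+1}=S_n$, and $I_{n-1}=P_1$ is projective. It is a valid alternative to the paper's $\tau$-version.

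Two smaller remarks. Your planned stable-Hom argument that each $I_j$ is projective in $\mathcal C\cap\mathcal F$ is unnecessary. Since $\mathcal I(A)\subseteq\mathcal W$ by Fact~\ref{lem_Hovey_hereditary}(1) and $I\in\mathcal C\cap\mathcal F$, you get $I\subseteq\mathcal C\cap\mathcal F\cap\mathcal W\subseteq\mathcal P(\mathcal C\cap\mathcal F)\cap\mathcal I(\mathcal C\cap\mathcal F)$ by Fact~\ref{lem_Hovey_hereditary}(2). Also, imitating Case~1 of Claim~1 of Lemma~\ref{lem_m_nak_injective_cx} does not transfer verbatim to sources $M_{n-1,bn+1}$ with small $b$. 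That factorization through $P_n$ used the large length $t-n+1$ of the source, and here factorization genuinely fails in some cases, as the self-extension above shows.
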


\begin{proof} \  By Corollary \ref{corollary_D_enough_PI},  $\mathcal C\cap\mathcal F$ has enough projective and enough injective objects.
  As seen in the proof of \Cref{lem_m_nak_injective_hovey},

  $$\mathcal{C} \cap \mathcal{F} =\operatorname{add}\left(I\oplus \bigoplus_{0\leq r\leq d-1}M_{n,rn+1}\right),
    \ \ \ \ \mathcal{C} \cap \mathcal{F} \cap \mathcal{W} =\operatorname{add}\left(I\oplus M_{n,t-n+1}\right). $$

  \vskip5pt

  {\bf Claim 1.}  \  $\mathcal{P}(\mathcal{C} \cap \mathcal{F})= \operatorname{add}\left(I \oplus M_{n,1} \oplus M_{n,t-n+1}\right)$.

  \vskip5pt

  Since $\mathcal{C} \cap \mathcal{F} \cap \mathcal{W} \subseteq \mathcal{P}(\mathcal{C} \cap \mathcal{F})$, it follows that
  $\operatorname{add}\left(I\oplus M_{n,t-n+1}\right)\subseteq \mathcal{P}(\mathcal{C} \cap \mathcal{F})$.
  In the proof of \Cref{lem_m_nak_injective_hovey} we have seen that $M_{n, 1}\in \mathcal{P}(\mathcal{C} \cap \mathcal{F})$.
  Thus $\operatorname{add}\left(I \oplus M_{n,1} \oplus M_{n,t-n+1}\right) \subseteq \mathcal{P}(\mathcal{C} \cap \mathcal{F})$.
  It remains to prove that if $1 \leq r \leq d-2$ then $M_{n, rn + 1} \notin \mathcal{P}(\mathcal{C} \cap \mathcal{F})$.
  Indeed, the exact sequence (cf. Fact \ref{pp})
  $$0 \longrightarrow M_{n, rn + 1} \longrightarrow M_{n, (r-1)n + 1} \oplus M_{n, (r+1)n + 1} \longrightarrow M_{n, rn + 1} \longrightarrow 0$$
  shows that $\mathrm{Ext}^1_A(M_{n,rn+1}, M_{n,rn+1}) \neq 0$, and hence $M_{n,rn+1} \notin \mathcal{P}(\mathcal{C} \cap \mathcal{F})$.

  \vskip5pt

  {\bf Claim 2.}  \  $M_{n,1} \in \mathcal{I}(\mathcal{C} \cap \mathcal{F})$.

  \vskip5pt

  For any non-injective indecomposable object $M_{n,rn+1}$ with $0\leq r\leq d-1$ in $\mathcal{C} \cap \mathcal{F}$,
  by the Auslander--Reiten formula, one has
  $$\mathrm{Ext}_A^1(M_{n,rn+1}, M_{n,1}) \cong D\overline{\mathrm{Hom}}_A(M_{n,1}, \tau M_{n,rn+1}) = D\overline{\mathrm{Hom}}_A(M_{n,1}, M_{1,rn+1}).$$
  Since $M_{n,1} = S_n$ and  $\operatorname{soc}(M_{1,rn+1})=S_1$, it follows that $\mathrm{Hom}_A(M_{n,1}, M_{1,rn+1})=0$, and hence
  $\mathrm{Ext}_A^1(M_{n,rn+1}, M_{n,1})=0$. Thus $M_{n,1} \in \mathcal{I}(\mathcal{C} \cap \mathcal{F})$.

  \vskip5pt

  {\bf Claim 3.}  \  $\mathcal{I}(\mathcal{C} \cap \mathcal{F}) = \operatorname{add}\left(\mathcal{I}(A) \oplus M_{n,1} \oplus M_{n,t-n+1}\right)$.

  \vskip5pt
  Since $\mathcal{C} \cap \mathcal{F} \cap \mathcal{W} \subseteq \mathcal{I}(\mathcal{C} \cap \mathcal{F})$, it follows that
  $\operatorname{add}\left(I\oplus M_{n,t-n+1}\right)\subseteq \mathcal{I}(\mathcal{C} \cap \mathcal{F})$.
  By {\bf Claim 2}, $M_{n, 1}\in \mathcal{I}(\mathcal{C} \cap \mathcal{F})$.
  Thus $\operatorname{add}\left(I \oplus M_{n,1} \oplus M_{n,t-n+1}\right) \subseteq \mathcal{I}(\mathcal{C} \cap \mathcal{F})$.
  It remains to prove that if $1\leq r\leq d-2$, then
  $M_{n,rn+1}\notin\mathcal I(\mathcal C\cap\mathcal F)$.
  This follows from the proof of {\bf Claim 1}, where we showed that
  $\mathrm{Ext}^1_A(M_{n,rn+1},M_{n,rn+1})\neq0$.

  \vskip5pt

  Thus,  $\mathcal{C} \cap \mathcal{F}$ is a Frobenius category, and $\mathcal{C} \cap \mathcal{F} \cap \mathcal{W} \ne \mathcal{P}(\mathcal{C} \cap \mathcal{F})$.
\end{proof}

\subsection{An example} \  Take $n=2 =d$ and $t=4$  in \Cref{lem_m_nak_injective_hovey}.  Then $A=kC_2/\langle a_2 a_1a_2 a_1\rangle$ is a Nakayama algebra with Kupisch series $(5, 4)$,
and $A$ is not selfinjective, with a unique maximal module $M_{1, 5}$. The Auslander-Reiten quiver of $A$ is
\begin{equation*}
  \xymatrix@R= 0.3cm @C=0.4cm{
  &  &  &  & M_{1,5} \ar@{->}[rd] &  &  &  &  \\
  &  &  & M_{2,4} \ar@{->}[ru] \ar@{->}[rd] &  & M_{1,4} \ar@{->}[rd] \ar@{.}[ll] &  &  &  \\
  &  & M_{1,3} \ar@{->}[ru] \ar@{->}[rd] &  & M_{2,3} \ar@{->}[ru] \ar@{->}[rd] \ar@{.}[ll] &  & M_{1,3} \ar@{->}[rd] \ar@{.}[ll] &  &  \\
  & M_{2,2} \ar@{->}[ru] \ar@{->}[rd] &  & M_{1,2} \ar@{->}[ru] \ar@{->}[rd] \ar@{.}[ll] &  & M_{2,2} \ar@{->}[rd] \ar@{->}[ru] \ar@{.}[ll] &  & M_{1,2} \ar@{->}[rd] \ar@{.}[ll] &  \\
  M_{1,1} \ar@{->}[ru] &  & M_{2,1} \ar@{->}[ru] \ar@{.}[ll] &  & M_{1,1} \ar@{->}[ru] \ar@{.}[ll] &  & M_{2,1} \ar@{->}[ru] \ar@{.}[ll] &  & M_{1,1} \ar@{.}[ll]
  }
\end{equation*}

\vskip10pt

\noindent
By \Cref{lem_m_nak_injective_hovey}, $(\mathcal{C}, \mathcal{F}, \mathcal{W})$ is an exceptional Hovey triple in $A\mbox{-}{\rm mod}$ of \textup{Type I},
where
\begin{align*}
  \mathcal{C} & =\operatorname{add}(M_{2,1}\oplus M_{2,2}\oplus M_{2,3}\oplus M_{2,4}\oplus M_{1,4}\oplus M_{1,5}) \\
  \mathcal{F} & =\operatorname{add}(M_{1,5}\oplus M_{1,4}\oplus M_{1,2} \oplus M_{2,1}\oplus M_{2,3})              \\
  \mathcal{W} & =\operatorname{add}(M_{1,1}\oplus M_{1,4}\oplus M_{1,5}\oplus M_{2,3}\oplus M_{2,4}).
\end{align*}

\vskip5pt

\section{\bf Homotopies in additive categories}

For a category $\mathcal A$ equipped with a model structure, let $\mathcal C$,  $\mathcal F$ and $\mathcal W$ denote the class of cofibrant objects, the class of fibrant objects, and the class of trivial objects, respectively.

\begin{defnlem} \label{lhtp} \ {\rm(\cite[Definition 3, page 1.4; Lemma 2, page 1.6]{Q1})} \ Let $(\mathsf{Cofib}, \mathsf{Fib}, \mathsf{Weq})$ be a model structure on a category $\mathcal A$, $f$ and $g$  in $\Hom_\mathcal A(A, B)$. Then the following are equivalent.

  \vskip5pt

  (1) \ \ There is a commutative square with $\sigma$  a weak equivalence:
  $$\xymatrix@R=0.5cm @C=1.5cm{A\oplus A\ar[d]_-{(1,1)}\ar[rd]^-{(\partial_0,\partial_1)} \ar[r]^-{(f,g)} & B \\
    A & \widetilde{A}\ar[l]_-\sigma\ar[u]_-h }$$

  \vskip5pt

  (2) \ There is a commutative square with $\sigma$  a weak equivalence and $(\partial_0, \partial_1)$ a cofibration:
  $$\xymatrix@R=0.5cm @C=1.5cm{A\oplus A\ar[d]_-{(1,1)}\ar[rd]^-{(\partial_0,\partial_1)} \ar[r]^-{(f,g)} & B \\
    A & \widetilde{A}\ar[l]_-\sigma\ar[u]_-h }$$

  \vskip10pt

  If the equivalent conditions above are satisfied, then $f$ is said to be {\it left-homotopic to} $g$, denoted by $f\stackrel{l}\sim g$.
  Moreover, if $A$ is a cofibrant object, then  $\partial_0$ and  $\partial_1$ in {\rm (2)} are trivial cofibrations.
\end{defnlem}

\vskip5pt

Dually, one has the right homotopy relation $\overset{r}{\sim}$.
It is known that $\overset{l}{\sim}$ and $\overset{r}{\sim}$ coincide on $\mathcal C\cap\mathcal F$, the full subcategory of cofibrant-fibrant objects.
Denote this equivalence relation on $\mathcal C\cap\mathcal F$ by $\sim$, and the corresponding quotient category by $(\mathcal C\cap\mathcal F)/_\sim$ (see \cite[Lemmas 4, 5, and their duals]{Q1}).

\vskip5pt

The following important theorem is known as the {\it Fundamental Theorem of Model Structures}.
See Hovey \cite[Theorem 1.2.10]{H1}, Gillespie \cite{Gil25}, and W. G. Dwyer and J. Spaliński \cite{DS}.
It was first proved by Quillen \cite[Theorem~1', p.~1.13]{Q1} for a model category. The version below is due to J. M. Egger \cite[Theorem 3.2]{Egg06}, who assumes only that
the category has a zero object, finite coproducts, finite products, and a model structure.

\vskip5pt

\begin{theorem}{\rm(Fundamental Theorem of Model Structures)}\label{thm} \ Let $(\mathsf{Cofib}, \mathsf{Fib}, \mathsf{Weq})$ be a model structure on a category $\mathcal A$ with a zero object, finite coproducts, and finite products. Then
  the composition of
  the embedding $(\mathcal{C}\cap \mathcal F)\hookrightarrow \mathcal{A}$  and the localization functor $\mathcal{A}\rightarrow \mathsf{Ho}(\mathcal{A})$ induces an equivalence
  $(\mathcal{C}\cap \mathcal F)/_\sim\cong\mathsf{Ho}(\mathcal{A})$ as categories.
\end{theorem}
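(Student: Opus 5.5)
The plan is to follow Quillen's original argument. The only change is to check that each construction uses nothing beyond a zero object, binary coproducts (for cylinders $A\oplus A$) and binary products (for path objects $B\times B$). Write $\gamma:\mathcal A\to\mathsf{Ho}(\mathcal A)$ for the localization. I will construct a functor $\Phi:\mathcal A\to(\mathcal C\cap\mathcal F)/_\sim$ and show two things: $\Phi$ inverts $\mathsf{Weq}$, and it is the universal such functor up to the inclusion.

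\textbf{Step 1 (replacements).} For each $X$, apply the Factorization axiom to $0\to X$ to get $0\to QX\xrightarrow{p_X}X$ with $QX$ cofibrant and $p_X\in\mathsf{TFib}$. Dually, factor $X\to 0$ as $X\xrightarrow{i_X}RX\to 0$ with $RX$ fibrant and $i_X\in\mathsf{TCofib}$. Since $\mathsf{Cofib}$ is closed under composition, $R$ preserves cofibrancy, so $RQX\in\mathcal C\cap\mathcal F$. A morphism $f:X\to Y$ lifts to $Qf:QX\to QY$ via the lifting of the cofibration $0\to QX$ against the trivial fibration $p_Y$. This lift is unique up to left homotopy: given two lifts, lift the cylinder $QX\oplus QX\to\widetilde{QX}$ against $p_Y$, using Definition-Lemma \ref{lhtp}(2). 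Dually, $Rf$ is unique up to right homotopy. Next I will recall Quillen's lemmas.
\begin{itemize}
\item $\overset{l}{\sim}$ is an equivalence relation on $\Hom(A,B)$ when $A$ is cofibrant.
\item $\overset{l}{\sim}$ and $\overset{r}{\sim}$ agree when $A$ is cofibrant and $B$ is fibrant.
\item $\sim$ is compatible with composition on $\mathcal C\cap\mathcal F$.
\end{itemize}
The third item is the key one: postcomposition preserves $\overset{l}{\sim}$, and precomposition preserves $\overset{r}{\sim}$. Together these give a well-defined functor $\Phi=RQ:\mathcal A\to(\mathcal C\cap\mathcal F)/_\sim$.

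\textbf{Step 2 (Whitehead).} This is the step I expect to be the main obstacle. The claim is that a weak equivalence $f:A\to B$ between objects of $\mathcal C\cap\mathcal F$ is a homotopy equivalence. First factor $f$ as a trivial cofibration followed by a trivial fibration; by 2-out-of-3 this second factor is indeed in $\mathsf{TFib}$. It then suffices to treat two cases, a trivial fibration and a trivial cofibration between cofibrant-fibrant objects. For a trivial fibration $p:A\to B$, lift $\mathrm{Id}_B$ through $p$ to get a section $s$, so $ps=\mathrm{Id}_B$. To show $sp\sim\mathrm{Id}_A$, lift in the square formed by the cylinder of $A$ against $p$. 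The trivial cofibration case is dual. Combined with Step 1 and 2-out-of-3 applied to $p_X$ and $i_X$, this shows that $\Phi$ sends weak equivalences to isomorphisms.

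\textbf{Step 3 (universality).} Let $F:\mathcal A\to\mathcal D$ be any functor inverting $\mathsf{Weq}$. Then $F$ identifies left-homotopic maps. Indeed, in a cylinder $(\partial_0,\partial_1)$ with $\sigma\partial_0=\sigma\partial_1=\mathrm{Id}$ and $\sigma\in\mathsf{Weq}$, inverting $F\sigma$ forces $F\partial_0=F\partial_1$, hence $Ff=Fh\,F\partial_0=Fh\,F\partial_1=Fg$. Therefore $F|_{\mathcal C\cap\mathcal F}$ factors uniquely through $(\mathcal C\cap\mathcal F)/_\sim$. The isomorphisms $FX\cong FQX\cong FRQX$ induced by $p_X$ and $i_{QX}$ then give a natural isomorphism $F\cong\bar F\circ\Phi$. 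Consequently $(\mathcal C\cap\mathcal F)/_\sim$ together with $\Phi$ satisfies the universal property of $\mathcal A[\mathsf{Weq}^{-1}]$, up to equivalence. The canonical comparison functor $(\mathcal C\cap\mathcal F)/_\sim\to\mathsf{Ho}(\mathcal A)$ induced by the inclusion followed by $\gamma$ is therefore an equivalence. It is essentially surjective because each $X$ is isomorphic in $\mathsf{Ho}(\mathcal A)$ to $RQX$, and it is fully faithful because its inverse is induced by $\Phi$.
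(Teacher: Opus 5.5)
The paper gives no proof of this theorem. It quotes it from Quillen and, under exactly these weakened hypotheses, from Egger (Theorem 3.2 there). Your skeleton (replacements, Whitehead, universal property) is Quillen's and is the right one. The gap is your opening claim that ``the only change'' is to check that nothing beyond a zero object and finite (co)products is used. That check is exactly where Quillen's argument breaks, and you never carry it out.

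The failure is transitivity of $\overset{l}{\sim}$ (your first bullet). Quillen proves it by gluing two cylinders along the pushout $\widetilde A\sqcup_A\widetilde A'$, and pushouts are not assumed to exist here. Doing without such constructions is what distinguishes Egger's version from Quillen's. Without transitivity, $(\mathcal C\cap\mathcal F)/_\sim$ is not yet a category. The composition of homotopy equivalences in Step 2 and the well-definedness of $\Phi$ in Step 1 are then unsupported.

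Here is a pushout-free replacement, valid whenever the target $B$ is fibrant.
\begin{itemize}
\item Let $H:\widetilde A\to B$ be a left homotopy from $f$ to $g$ with $(\partial_0,\partial_1)$ a cofibration.
\item Let $K:A\to B^I$ be a right homotopy from $g$ to $h$ through a path object $B\xrightarrow{r}B^I\xrightarrow{(d_0,d_1)}B\times B$ with $(d_0,d_1)$ a fibration.
\item Fibrations are closed under finite products and $B$ is fibrant, so the projection $B\times B\to B$ is a fibration. Hence $d_0$ is a fibration, and $d_0r=\mathrm{Id}_B$ makes it a trivial fibration.
\item Lift in the square with top $(rf,K):A\sqcup A\to B^I$, left $(\partial_0,\partial_1)$, right $d_0$ and bottom $H$. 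This gives $J:\widetilde A\to B^I$, and $d_1J$ is a left homotopy from $f$ to $h$.
\end{itemize}
Combine this with $\overset{l}{\sim}=\overset{r}{\sim}$ for cofibrant source and fibrant target. That equality needs no pushouts: it only uses that $\partial_0$ is a trivial cofibration, which holds because $A\to A\sqcup A$ is a coproduct of cofibrations. Together they give transitivity on $\mathcal C\cap\mathcal F$.

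Two further adjustments are needed.
\begin{itemize}
\item Restrict your first bullet to fibrant $B$. For $B=QY$ you do not need it, since any two lifts $Qf$ are directly left homotopic.
\item The three bullets alone do not make $\Phi=RQ$ well defined. You also need that precomposition with the trivial cofibration $i_{QX}$ is injective on homotopy classes of maps into the fibrant object $RQY$. To see this, lift a right homotopy, taken through a good path object, against $(d_0,d_1)$. This ensures that left homotopic choices of $Qf$ yield homotopic $RQf$.
\end{itemize}
With these additions, Steps 2 and 3 go through as you wrote them.
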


\vskip5pt

\begin{lemma}\label{factor through CFW} \ Let $(\mathsf{Cofib}, \mathsf{Fib}, \mathsf{Weq})$ be a model structure on a category $\mathcal A$ with a zero object, and let $A$ and $B$ be objects in $\mathcal C\cap\mathcal F$.
  If $f:A\longrightarrow B$ factors through an object in $\mathcal W$, then $f$ factors through an object in $\mathcal C\cap\mathcal F\cap\mathcal W$.
\end{lemma}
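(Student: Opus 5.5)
Suppose $f = v\circ u$ with $u: A\to W$, $v: W\to B$ and $W\in\mathcal W$. The plan is to replace $W$ in two steps, first by a cofibrant trivial object and then by a fibrant one, using the Factorization and Lifting axioms each time. The only facts needed are: $A$ is cofibrant, $B$ is fibrant, and $0\to W$ and $W\to 0$ are weak equivalences.

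\emph{Step 1 (make the middle object cofibrant).} Factor $0\to W$ as a cofibration $0\to W'$ followed by a trivial fibration $p: W'\to W$. Then $W'$ is cofibrant, and $W'\in\mathcal W$ by two-out-of-three, since $0\to W$ and $p$ are weak equivalences. Now consider the lifting square with left edge $0\to A$ (a cofibration, since $A$ is cofibrant), right edge the trivial fibration $p$, top edge $0\to W'$, and bottom edge $u$. The Lifting axiom gives $u': A\to W'$ with $p\circ u' = u$. Hence $f = (v\circ p)\circ u'$ factors through $W'\in\mathcal C\cap\mathcal W$.

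\emph{Step 2 (make it fibrant as well).} Write $v' = v\circ p: W'\to B$. Factor $W'\to 0$ as a trivial cofibration $j: W'\to W''$ followed by a fibration $W''\to 0$. Then $W''$ is fibrant. It is also cofibrant, because $0\to W'\to W''$ is a composite of cofibrations. And $W''\in\mathcal W$ by two-out-of-three, since $W'\to 0$ and $j$ are weak equivalences. Next consider the square with left edge the trivial cofibration $j$, right edge $B\to 0$ (a fibration, since $B$ is fibrant), top edge $v'$, and bottom edge $W''\to 0$. The Lifting axiom gives $v'': W''\to B$ with $v''\circ j = v'$. Therefore
\[
f = v''\circ (j\circ u'),
\]
which factors through $W''\in\mathcal C\cap\mathcal F\cap\mathcal W$.

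The only point needing care is the precise form of the two-out-of-three statements for maps into and out of the zero object. Triviality of an object $X$ is defined by $0\to X$ being a weak equivalence, and the paper notes this is equivalent to $X\to 0$ being one. With that equivalence in hand, two-out-of-three applies directly to the composites $0\to W'\to W$ (giving $0\to W'$ a weak equivalence) and $W'\to W''\to 0$ (giving $W''\to 0$ a weak equivalence). No additivity of $\mathcal A$ is used anywhere, consistent with the hypothesis of only a zero object.
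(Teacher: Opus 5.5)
Your proof is correct, but it follows a different route from the paper's. You take a cofibrant replacement and then a fibrant replacement of the middle object, by factoring $0\to W$ and then $W'\to 0$. You then use the Lifting axiom twice, against $0\to A$ and against $B\to 0$, to carry $u$ and $v$ over to the new object.

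The paper never invokes lifting. It factors the morphism $v$ itself as $W\xrightarrow{i}W'\xrightarrow{p}B$ with $i\in\mathsf{TCofib}$ and $p\in\mathsf{Fib}$. This gives $W'\in\mathcal F\cap\mathcal W$, fibrant because $B$ is. It then factors $i\circ u$ as $A\xrightarrow{m}W''\xrightarrow{n}W'$ with $m\in\mathsf{Cofib}$ and $n\in\mathsf{TFib}$. Here $W''$ is cofibrant because $A$ is, and $(W''\to 0)=(W'\to 0)\circ n$ is a trivial fibration, so $f=p\circ n\circ m$ factors through $W''\in\mathcal C\cap\mathcal F\cap\mathcal W$.

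So the paper's argument uses only the Factorization axiom, together with closure under composition and two-out-of-three. This is why the introduction calls it a direct application of Factorization; the factorizations themselves supply the maps $A\to W''\to B$.

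Your version needs the Lifting axiom as well, but it gains something in exchange. Your $W''$ depends only on $W$, not on $u$ and $v$. Hence every morphism from a cofibrant object to a fibrant object that factors through $W$ factors through one fixed object of $\mathcal C\cap\mathcal F\cap\mathcal W$. Both arguments use exactly the hypotheses that $A$ is cofibrant and $B$ is fibrant, and neither uses additivity.
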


\begin{proof} \ By assumption, there is a commutative diagram of morphisms
  \[\xymatrix@R=0.5cm{
    A\ar[rr]^-{f}\ar[dr]_-{u} & {} & B \\
    {} & W\ar[ur]_-{v} & {}
    }\]
  with $W\in \mathcal W$. Factorize $v$ as $v=p\circ i$ with $i\in \mathsf{TCofib}$ and $p\in \mathsf{Fib}$:
  \[\xymatrix@R=0.5cm{
    W\ar[rr]^-{v}\ar[dr]_-{i} & {} & B \\
    {} & W'\ar[ur]_-{p} & {}
    }\]
  Since $W\in \mathcal{W}$, $0: 0\longrightarrow W$ is in $\mathsf{Weq}$. Since  $i\in \mathsf{TCofib}$, it follows that
  $(0\longrightarrow W') = i\circ (0\longrightarrow W)$ is in $\mathsf{Weq}$, and hence $W'\in \mathcal{W}$. Similarly,
  since $p\in \mathsf{Fib}$ and $B\in \mathcal{F}$, $W'\in \mathcal{F}$. It follows that $W'\in \mathcal{F}\cap \mathcal{W}$.
  Factorize $i\circ u$ as $i\circ u=n\circ m$ with  $m\in \mathsf{Cofib}$ and $n\in \mathsf{TFib}$:
  \[\xymatrix@R=0.5cm{
    A\ar[rr]^-{iu}\ar[dr]_-{m} & {} & W' \\
    {} & W''\ar[ur]_-{n} & {}
    }\]
  Since $m\in \mathsf{Cofib}$ and $A\in \mathcal{C}$,  $W''\in \mathcal{C}$. Since $W'\in \mathcal{F}\cap \mathcal{W}$, $0: W'\longrightarrow 0$ is in $\mathsf{TFib}$. Since $n\in \mathsf{TFib}$,
  $(W''\longrightarrow 0) = (W'\longrightarrow 0)\circ n$ is in $\mathsf{TFib}.$
  It follows that  $W''\in \mathcal{F}\cap \mathcal{W}$, and hence $W''\in \mathcal{C}\cap \mathcal{F}\cap \mathcal{W}$. Thus $f$ factors through $W''\in \mathcal{C}\cap \mathcal{F}\cap \mathcal{W}$.
\end{proof}

\vskip5pt

The following result has been obtained for exact model structures on weakly idempotent complete exact categories in \cite[Proposition 4.4]{Gil11}.

\vskip5pt

\begin{theorem} \label{htcatisaddq}  \ Let $(\mathsf{Cofib}, \mathsf{Fib}, \mathsf{Weq})$ be an arbitrary model structure on a weakly idempotent complete additive category $\mathcal{A}$,
  and let $f,g:A\longrightarrow B$ be morphisms with $A,B\in\mathcal{C}\cap\mathcal{F}$. Then $f\sim g$ if and only if $f-g$ factors through an object in $\mathcal{C}\cap\mathcal{F}\cap\mathcal{W}$.
\end{theorem}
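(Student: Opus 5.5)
We prove the two implications separately. Both use the Factorization axiom and the 2-out-of-3 axiom, and in the converse direction we replace an arbitrary factorization by one through an object of $\mathcal C\cap\mathcal F\cap\mathcal W$ using Lemma \ref{factor through CFW}. Since $\mathcal A$ is additive, finite products and coproducts coincide, so Definition-Lemma \ref{lhtp} applies with $A\oplus A$.

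\emph{($\Rightarrow$).} Suppose $f\sim g$, so in particular $f\stackrel{l}\sim g$. By Definition-Lemma \ref{lhtp}(2) there is a cylinder object $\widetilde A$ with a cofibration $(\partial_0,\partial_1):A\oplus A\to\widetilde A$, a weak equivalence $\sigma$ with $\sigma\partial_0=\sigma\partial_1=1_A$, and a morphism $h$ with $h\partial_0=f$ and $h\partial_1=g$.
\begin{enumerate}
\item Since $A$ is cofibrant, $\partial_0$ is a trivial cofibration.
\item Then $f-g=h(\partial_0-\partial_1)$, and $\sigma(\partial_0-\partial_1)=0$.
\item Use weak idempotent completeness: $\sigma\partial_0=1_A$ makes $\sigma$ a split epimorphism, so it has a kernel $k:K\to\widetilde A$ with $\widetilde A\cong A\oplus K$.
\item Since $\sigma(\partial_0-\partial_1)=0$, the map $\partial_0-\partial_1$ factors through $k$. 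Hence $f-g$ factors through $K$.
\item To see $K\in\mathcal W$: the zero map $K\to 0$ is a retract of $\sigma:\widetilde A\cong A\oplus K\to A$, since projecting onto $K$ is compatible with $\sigma=(1,0)$ up to the evident maps. By the Retract axiom it is a weak equivalence, so $K\in\mathcal W$.
\item Lemma \ref{factor through CFW} then upgrades the factorization to one through an object of $\mathcal C\cap\mathcal F\cap\mathcal W$.
\end{enumerate}
The retract argument in step 5 needs care: one must write down the retract diagram between $K\to0$ and $\sigma$ explicitly in the additive setting.

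\emph{($\Leftarrow$).} Suppose $f-g=vu$ with $u:A\to W$, $v:W\to B$ and $W\in\mathcal C\cap\mathcal F\cap\mathcal W$. Take $\widetilde A=A\oplus W$ with $\sigma=(1,0):A\oplus W\to A$.
\begin{enumerate}
\item $\sigma$ is a weak equivalence. It is the projection, and it is a retract-type situation built from $W\to 0$ being a weak equivalence; alternatively, the inclusion $A\to A\oplus W$ is a pushout of $0\to W$, which is a trivial cofibration since $W\in\mathcal C\cap\mathcal W$. Pushouts of trivial cofibrations are trivial cofibrations by lifting, and 2-out-of-3 with $\sigma\circ\mathrm{incl}=1$ then gives $\sigma\in\mathsf{Weq}$.
\item Set $\partial_0=\binom{1}{u}$ and $\partial_1=\binom{1}{0}$, so that $\sigma\partial_0=\sigma\partial_1=1$.
\item Set $h=(g,v)$. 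Then $h\partial_0=g+vu=f$ and $h\partial_1=g$.
\end{enumerate}
Definition-Lemma \ref{lhtp}(1) gives $f\stackrel{l}\sim g$, hence $f\sim g$ on $\mathcal C\cap\mathcal F$.

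The main obstacle is the weak-equivalence bookkeeping for the split maps. In the forward direction this is showing $K\in\mathcal W$ from $\sigma\in\mathsf{Weq}$; in the converse it is showing $\sigma\in\mathsf{Weq}$. I would handle both uniformly with a lemma: for any $X$, the projection $X\oplus W\to X$ is a weak equivalence if and only if $W\in\mathcal W$. This follows because $W\to0$ and $X\oplus W\to X$ are retracts of each other (via the inclusions and projections of $W$ and $X$), together with the Retract axiom.
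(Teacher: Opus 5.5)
Your proof is correct and is essentially the paper's own argument. In ($\Rightarrow$) the paper likewise splits $\sigma$ using weak idempotent completeness and factors $f-g=h\,t\,s\,(\partial_0-\partial_1)$ through the kernel. It shows the kernel is trivial because $W\to 0$ is a retract of $\sigma$, and concludes with Lemma \ref{factor through CFW}. In ($\Leftarrow$) it uses exactly your cylinder $A\oplus W$ with $\partial_0=\binom{1}{u}$, $\partial_1=\binom{1}{0}$ and $h=(g,v)$.

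One correction concerns the uniform lemma in your last paragraph. $W\to 0$ is a retract of $X\oplus W\to X$, but not conversely: that would require $\mathrm{Id}_X$ to factor through $0$. So the Retract axiom only gives the implication ``$X\oplus W\to X\in\mathsf{Weq}\Rightarrow W\in\mathcal W$''.

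For ($\Leftarrow$) you should therefore rely on the argument in your step 1. The inclusion $A\to A\oplus W$ is a pushout of the trivial cofibration $0\to W$, which is valid since $W$ is cofibrant. Alternatively, $(1,0)\colon A\oplus W\to A$ is the pullback of the trivial fibration $W\to 0$ along $A\to 0$, which is valid since $W$ is fibrant. Either justification is more careful than the paper's bare appeal to ``a coproduct of weak equivalences''.
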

\begin{proof} \ Let $f, \ g: A \longrightarrow B$ with $A, B\in \mathcal{C}\cap \mathcal{F}$. Assume that $f-g$ factors through an object $W\in \mathcal{C}\cap \mathcal{F}\cap \mathcal{W}$, say with commutative diagram
  \[\xymatrix@R=0.5cm{
    A\ar[rr]^-{f-g}\ar[dr]_-{u} & {} & B \\
    {} & W\ar[ur]_-{v} & {}
    }\]
  Then one has a commutative diagram
  \[\xymatrix@R=0.7cm{
    A\oplus A\ar[rr]^-{(f,g)}\ar[d]_-{(1,1)}\ar[drr]^-{\left(\begin{smallmatrix} 1 & 1\\ u &0 \end{smallmatrix}\right) } & {} & B \\
    A & {} & A\oplus W\ar[u]_-{(g,v)}\ar[ll]_-{(1,0)}
    }\]
  where $(1, 0): A\oplus W \longrightarrow A$ is a coproduct of weak equivalences $1 = \mathrm{Id}_A : A\longrightarrow A$ and $0: W\longrightarrow 0$. Thus $(1, 0): A\oplus W \longrightarrow A$ is a weak equivalence.
  By definition $f \sim g$.

  \vskip5pt

  Conversely, if $f \sim g$, then one has a commutative diagram
  $$\xymatrix@R=0.5cm{A\oplus A \ar[rr]^-{(f,g)} \ar[d]_{(1,1)} \ar[rrd]^-{(\partial_0,\partial_1)} & & B\\ A & & C\ar[u]_{h}\ar[ll]_{\sigma}}
  $$
  with $\sigma$ a weak equivalence. Since $\sigma$ is a splitting epimorphism and $\mathcal{A}$ is a weakly idempotent complete additive category, it follows that $\sigma$ has a kernel
  $W$ and that there are morphisms $\mu: A \longrightarrow C$, $t: W\longrightarrow C$, $s:C\longrightarrow W$, such that $$\sigma\circ t = 0, \quad s\circ t=\mathrm{Id}_W, \quad \sigma \circ \mu=\mathrm{Id}_A, \quad \mu \circ \sigma + t\circ s=\mathrm{Id}_C.$$
  Since $f-g=h\circ (\partial_0-\partial_1)=h\circ (\mu \circ \sigma+t\circ s)\circ (\partial_0-\partial_1)=h\circ t\circ s\circ (\partial_0-\partial_1)$, one gets a commutative diagram

  \[\xymatrix @R=0.5cm{
    A\ar[rr]^-{f-g}\ar[dr]_-{s\circ (\partial_0-\partial_1)} & {} & B \\
    {} & W\ar[ur]_-{ht} & {}
    }\]
  By the commutative diagram
  \[\xymatrix@R=0.5cm{
    W\ar[r]^-{t}\ar[d] & C\ar[r]^-{s}\ar[d]^-{\sigma} & W\ar[d] \\
    0\ar[r] & A\ar[r] & 0
    }\]
  one sees that $W\longrightarrow 0$ is a retraction of the weak equivalence $\sigma: C\longrightarrow A$. Thus $W\longrightarrow 0$ is a weak equivalence, and hence  $W\in \mathcal{W}$.
  By Lemma \ref{factor through CFW},  $f-g$ factors through an object in $\mathcal{C}\cap \mathcal{F}\cap \mathcal{W}$.
\end{proof}

\vskip5pt

The following result has been proved in \cite[Theorem 1.1]{LZ}. Here we provide a different and direct proof using Theorem \ref{htcatisaddq}.

\vskip5pt

\begin{corollary} \label {hcatonadditve} \ Let $(\mathsf{Cofib}, \mathsf{Fib}, \mathsf{Weq})$ be a model structure on a weakly idempotent complete additive category  $\mathcal{A}$.
  Then the homotopy category $\mathsf{Ho}(\mathcal{A})$ is an additive category, and $\mathsf{Ho}(\mathcal{A}) \cong \frac{\mathcal{C}\cap \mathcal{F}}{\mathcal{C}\cap \mathcal{F}\cap \mathcal{W}}$ as additive categories.
\end{corollary}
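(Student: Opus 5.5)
The plan is to combine the Fundamental Theorem of Model Structures (Theorem \ref{thm}) with Theorem \ref{htcatisaddq}. A weakly idempotent complete additive category has a zero object, finite products and finite coproducts (biproducts), so Theorem \ref{thm} applies. It says that the composite $\mathcal C\cap\mathcal F\hookrightarrow\mathcal A\to\mathsf{Ho}(\mathcal A)$ induces an equivalence of categories $(\mathcal C\cap\mathcal F)/_\sim\cong\mathsf{Ho}(\mathcal A)$.

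By Theorem \ref{htcatisaddq}, for $f,g:A\to B$ in $\mathcal C\cap\mathcal F$ we have $f\sim g$ if and only if $f-g$ lies in the ideal of morphisms factoring through an object of $\mathcal C\cap\mathcal F\cap\mathcal W$. This subclass is an ideal: it is closed under composition on either side, and it is closed under sums, since sums of morphisms factoring through $W_1$ and $W_2$ factor through $W_1\oplus W_2$. For that last point I will check that $\mathcal C\cap\mathcal F\cap\mathcal W$ is closed under finite direct sums. This holds because $0\to W_1\oplus W_2$ is a coproduct of the cofibrations $0\to W_i$, and cofibrations are stable under coproducts by the lifting characterisation. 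The same argument works for fibrations via products and for weak equivalences, the latter by the argument already used in the proof of Theorem \ref{htcatisaddq}. Hence $(\mathcal C\cap\mathcal F)/_\sim$ is, as a category, exactly the additive quotient $\frac{\mathcal C\cap\mathcal F}{\mathcal C\cap\mathcal F\cap\mathcal W}$.

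It remains to see that this quotient is additive and that transporting the structure makes $\mathsf{Ho}(\mathcal A)$ additive. First, $\mathcal C\cap\mathcal F$ is an additive subcategory of $\mathcal A$: it contains $0$ and is closed under biproducts by the stability just mentioned. The quotient of an additive category by an ideal generated by a class of objects closed under finite sums is additive, with the same biproducts. Any category equivalent to an additive category is additive, because Hom-sets acquire group structures through the equivalence and biproducts are preserved. So $\mathsf{Ho}(\mathcal A)$ is additive.

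For the claim ``as additive categories'' it suffices to note that an equivalence between additive categories automatically preserves biproducts, and is therefore an additive functor. The main point needing care, and the one I expect to be the obstacle, is this identification of the relation $\sim$ with the ideal, specifically the closure of $\mathcal C\cap\mathcal F\cap\mathcal W$ under direct sums. That requires verifying stability of (trivial) (co)fibrations under finite (co)products in a general model structure. I would deduce it from the lifting axiom and the retract characterisation of these classes.
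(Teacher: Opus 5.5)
Your proposal is correct and follows the paper's proof. Theorem \ref{thm} gives $\mathsf{Ho}(\mathcal A)\cong(\mathcal C\cap\mathcal F)/_\sim$. Theorem \ref{htcatisaddq}, together with closure of $\mathcal C\cap\mathcal F\cap\mathcal W$ under $\oplus$ (so that $f-g$ factors through $W_1\oplus W_2$), then identifies $\sim$ with the additive ideal defining $\frac{\mathcal C\cap\mathcal F}{\mathcal C\cap\mathcal F\cap\mathcal W}$.

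Your justification of that closure is more careful than the paper's one-line assertion. One adjustment: for the trivial part, use that $0\to W_i$ is a trivial cofibration (each $W_i$ is cofibrant), and trivial cofibrations are stable under coproducts by the lifting property. Do not rely on a general claim that coproducts of weak equivalences are weak equivalences, which fails in arbitrary model structures.
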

\begin{proof} \ By Theorem \ref{thm}, $\mathsf{Ho}(\mathcal{A}) \cong (\mathcal C\cap \mathcal F)/_\sim$.
  Since $\mathcal{C}$,  $\mathcal{F}$ and $\mathcal{W}$ are closed under coproducts, $\mathcal{C}\cap \mathcal{F}$ and $\mathcal{C}\cap \mathcal{F}\cap \mathcal{W}$ are full additive categories of $\mathcal A$.
  We claim that the equivalence relation ideal $\sim$ on $\mathcal{C}\cap \mathcal{F}$ is additive, i.e.,
  $\{f\in \Hom_{\mathcal C}(A, B) \ | \ f\sim 0\}$ is a subgroup of $\Hom_{\mathcal A}(A, B)$. In fact, assume that $f\sim 0$ and $g\sim 0$. Then by
  Theorem \ref{htcatisaddq}, $f$ factors through an object in $\mathcal{C}\cap \mathcal{F}\cap \mathcal{W}$, and $g$ factors through an object in $\mathcal{C}\cap \mathcal{F}\cap \mathcal{W}$. Thus one has
  commutative diagrams
  \[\xymatrix@R=0.5cm{
    A\ar[rr]^-{f}\ar[dr]_-{u_1} & {} & B &&  A\ar[rr]^-{g}\ar[dr]_-{u_2} & {} & B \\
    {} & W_1\ar[ur]_-{v_1} & {} &&  {} & W_2\ar[ur]_-{v_2} & {}
    }\]
  where $W_1, W_2 \in \mathcal{C}\cap \mathcal{F}\cap \mathcal{W}$. It follows that there is a commutative diagram
  \[\xymatrix@R=0.5cm{
    A\ar[rr]^-{f-g}\ar[dr]_-{\binom{u_1}{u_2}} & {} & B\\
    {} & W_1\oplus W_2\ar[ur]_-{(v_1, -v_2)} & {}}\]
  with $W_1\oplus  W_2 \in \mathcal{C}\cap \mathcal{F}\cap \mathcal{W}$. Thus $f-g\sim 0$, by Theorem \ref{htcatisaddq}. This justifies the claim.
  Therefore $(\mathcal C\cap \mathcal F)/_\sim$ is an additive category, since it is obtained by quotienting out the additive equivalence relation ideal $\sim$.

  \vskip5pt

  By Theorem \ref{htcatisaddq}, the functor $$F: (\mathcal C\cap \mathcal F)/_\sim\longrightarrow \frac{\mathcal{C}\cap \mathcal{F}}{\mathcal{C}\cap \mathcal{F}\cap \mathcal{W}}, \qquad A \longmapsto A, \quad \widetilde{f} \longmapsto\overline{f}$$ is an equivalence of additive categories.
\end{proof}

\vskip5pt

\begin{remark} \ Corollary \ref{hcatonadditve} has been known for exact model structures on weakly idempotent complete exact categories in Gillespie {\rm \cite[Proposition 4.4]{Gil11}};
  for the $\omega$-model structures on weakly idempotent complete exact categories in A. Beligiannis and I. Reiten {\rm \cite[VIII, Theorem 4.2]{BR}}  and {\rm \cite[Theorem 1.1]{CLZ}};
  and for exact model structures on triangulated categories in Nakaoka {\rm \cite[Proposition 6.10]{N}}.\end{remark}

\section{\bf Homotopy categories of exceptional model structures}

This section characterizes exceptional exact model structures via their homotopy categories.

\vskip5pt

Let $(\mathcal C,\mathcal F,\mathcal W)$ be a Hovey triple in a weakly idempotent complete extriangulated category $(\mathcal A, \mathbb E, \mathfrak s)$.
Then $\mathsf{Ho}(\mathcal A)\cong \frac{\mathcal C\cap\mathcal F}{\mathcal C\cap\mathcal F\cap\mathcal W}$ as additive categories.
By Nakaoka and Palu \cite[Theorem 6.20]{NP19},
$\mathsf{Ho}(\mathcal A)$ has a triangulated structure, and hence
$\frac{\mathcal C\cap\mathcal F}{\mathcal C\cap\mathcal F\cap\mathcal W}$ has the induced triangulated structure.
However, the induced extriangulated category $(\frac{\mathcal C\cap\mathcal F}{\mathcal C\cap\mathcal F\cap\mathcal W}, \ \overline{\mathbb E}, \ \overline{\mathfrak s})$ is not necessarily {\it canonically triangulated} (see Definition \ref{cantri}): indeed,
Theorem \ref{thm_Frobenius_pair} claims that the Hovey triple $(\mathcal C,\mathcal F,\mathcal W)$ is exceptional if and only if
$(\frac{\mathcal C\cap\mathcal F}{\mathcal C\cap\mathcal F\cap\mathcal W}, \ \overline{\mathbb E}, \ \overline{\mathfrak s})$ is not canonically triangulated.

\vskip5pt

In other words,  $\frac{\mathcal C\cap\mathcal F}{\mathcal C\cap\mathcal F\cap\mathcal W}$ has another extriangulated structure, arising from its triangulated structure.
Comparing the two extriangulated structures on $\frac{\mathcal C\cap\mathcal F}{\mathcal C\cap\mathcal F\cap\mathcal W}$,
one has {\it an extriangle functor} between  them. Then Theorem \ref{thm_Frobenius_pair} claims that $(\mathcal C,\mathcal F,\mathcal W)$ is exceptional if and only if this
extriangle functor is not {\it an extriangle-equivalence}.

\vskip5pt

\subsection{Two extriangulated structures on $\frac{\mathcal C\cap\mathcal F}{\mathcal C\cap\mathcal F\cap\mathcal W}$} $\,$

\vskip5pt

The first extriangulated structure on $\frac{\mathcal C\cap\mathcal F}{\mathcal C\cap\mathcal F\cap\mathcal W}$ comes from the following fact.

\vskip5pt

\begin{lemma} \label{extriangulatedquotint} {\rm (\cite[Proposition 3.30]{NP19})} \ Let $(\mathcal{A}, \mathbb E, \mathfrak s)$ be an extriangulated category, and $\mathcal{B}$ a full additive subcategory with
  $\mathcal{B} \subseteq \mathcal P(\mathcal A) \cap \mathcal I(\mathcal A)$. Then there is an induced extriangulated category $(\mathcal{A}/\mathcal{B}, \ \overline{\mathbb E}, \ \overline{\mathfrak s})$.
\end{lemma}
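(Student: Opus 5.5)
We need to prove \cite[Proposition 3.30]{NP19}: for $\mathcal B\subseteq\mathcal P(\mathcal A)\cap\mathcal I(\mathcal A)$ a full additive subcategory, the quotient $\mathcal A/\mathcal B$ carries an induced extriangulated structure. The plan is to define $\overline{\mathbb E}$ on $\mathcal A/\mathcal B$ by $\overline{\mathbb E}(Z,X):=\mathbb E(Z,X)$. Then I check that it descends to a bifunctor on the quotient. If $f:X\to X'$ factors as $X\xrightarrow{a}B\xrightarrow{b}X'$ with $B\in\mathcal B$, then $f_\ast=b_\ast a_\ast$. Here $a_\ast$ lands in $\mathbb E(Z,B)=0$, since $B$ is injective. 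Dually, if $h:Z'\to Z$ factors through $B$, then $h^\ast$ factors through $\mathbb E(B,X)=0$, since $B$ is projective. Hence $\mathbb E(-,-)$ vanishes on the ideal $[\mathcal B]$ in both variables, and additivity gives a well-defined additive bifunctor $\overline{\mathbb E}:(\mathcal A/\mathcal B)^{\rm op}\times\mathcal A/\mathcal B\to\mathbf{Ab}$. This settles ET1.

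Next define $\overline{\mathfrak s}(\delta)$ as the class of $X\xrightarrow{\bar f}Y\xrightarrow{\bar g}Z$ whenever $\mathfrak s(\delta)=[X\xrightarrow f Y\xrightarrow g Z]$. Equivalent sequences in $\mathcal A$ remain equivalent in $\mathcal A/\mathcal B$, so this is well defined. For the realization property, suppose $(\bar\alpha,\bar\gamma)$ is a morphism of $\overline{\mathbb E}$-extensions. Its lifts $(\alpha,\gamma)$ satisfy $\alpha_\ast\delta=\gamma^\ast\delta'$ already in $\mathbb E$, because the group is the same and the action does not depend on the lift. Applying the realization in $\mathcal A$ gives $\beta$, and $\bar\beta$ works. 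Additivity (ET2) is immediate, since images of split sequences and of direct sums stay split and stay direct sums.

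The axioms ET3 and ET3$^{\rm op}$ are where lifting matters, and this is the main obstacle. Suppose $\bar\beta\bar f=\bar f'\bar\alpha$. In $\mathcal A$ we only have $\beta f-f'\alpha=vu$ with $u:X\to B$ and $v:B\to Y'$. Since $B$ is injective and $f$ is an $\mathbb E$-inflation, $u=u'f$ for some $u':Y\to B$. Replace $\beta$ by $\beta':=\beta-vu'$. Then $\beta' f=f'\alpha$ holds on the nose and $\bar\beta'=\bar\beta$. Now ET3 in $\mathcal A$ supplies $\gamma$, and $(\bar\alpha,\bar\beta,\bar\gamma)$ is a morphism of $\overline{\mathbb E}$-triangles. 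ET3$^{\rm op}$ is dual: use that $B$ is projective and $g'$ is a deflation to adjust $\beta$.

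Finally, ET4 and ET4$^{\rm op}$ only involve given triangles and produce new ones together with equalities in $\mathbb E$. So I lift the given triangles, which are images of triangles in $\mathcal A$, apply ET4 in $\mathcal A$, and take images. The required commutativities and the identities $w^\ast\theta=\delta$, $g_\ast\varepsilon=\eta$, $f_\ast\theta=q^\ast\varepsilon$ survive under the quotient functor, because the quotient functor preserves composition and $\overline{\mathbb E}=\mathbb E$ on objects.

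The one subtlety here is that an $\overline{\mathbb E}$-triangle is determined by $\delta$ only up to equivalence in $\mathcal A/\mathcal B$, and the maps in a given triangle might not literally be images of the chosen realization in $\mathcal A$. I handle this by noting that equivalence classes in the quotient contain the image of the class in $\mathcal A$. The outer terms $A,B,D$ and $B,C,E$ are fixed, so I can transport along the isomorphism of middle terms and take images.
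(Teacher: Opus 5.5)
Your checks of ET1, ET2, ET3 and ET3$^{\rm op}$ are correct. The paper itself gives no argument here; it only cites \cite[Proposition 3.30]{NP19}. The correction $\beta'=\beta-vu'$, using injectivity of $B$, is exactly the right device. In ET3 your remark about changing representatives is harmless, because each triangle keeps its two end terms.

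The gap is in ET4, and dually in ET4$^{\rm op}$. Your last paragraph treats $B$ as an outer term, but $B$ is the \emph{middle} term of $A\xrightarrow{\bar f}B\xrightarrow{\bar g}D$. A realization $A\xrightarrow{f_0}B_0\xrightarrow{g_0}D$ of $\delta$ in $\mathcal A$ only yields an isomorphism $\bar\varphi\colon B_0\to B$ in $\mathcal A/\mathcal B$. For instance, $B=B_0\oplus P$ with $0\neq P\in\mathcal B$ is allowed, and in a Krull--Schmidt category this is not isomorphic to $B_0$. On the other hand, every realization of $\varepsilon\in\mathbb E(E,B)$ in $\mathcal A$ starts at $B$. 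So the middle term of the first lift is not the first term of the second lift, and ET4 of $\mathcal A$ cannot be applied to them. Nor can you transport the second triangle along $\bar\varphi$ inside $\mathcal A$, since no lift of $\bar\varphi$ need be invertible there.

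What is missing is to move the second extension rather than the first triangle.
\begin{enumerate}
\item Choose lifts $\varphi\colon B_0\to B$ of $\bar\varphi$ and $\psi\colon B\to B_0$ of $\bar\varphi^{-1}$. Put $\varepsilon_0=\psi_*\varepsilon\in\mathbb E(E,B_0)$ and realize it as $B_0\xrightarrow{u_0}C_0\xrightarrow{v_0}E$.
\item Apply ET4 in $\mathcal A$ to $\delta$ and $\varepsilon_0$. This gives $\theta,\eta,h,w,q$ and $m=u_0f_0$.
\item Since $\varphi\psi-1_B$ factors through $\mathcal B$, one has $\varphi_*\varepsilon_0=\varepsilon$. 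The realization property then gives $\bar\gamma\colon C_0\to C$ with $\bar\gamma\bar u_0=\bar u\bar\varphi$ and $\bar v\bar\gamma=\bar v_0$.
\item You still need $\bar\gamma$ to be an isomorphism. This is the two-out-of-three statement \cite[Corollary 3.6]{NP19} for $(\mathcal A/\mathcal B,\overline{\mathbb E},\overline{\mathfrak s})$. Alternatively, check it directly in $\mathcal A$: composing comparison maps in both directions gives endomorphisms which, after subtracting a map through $\mathcal B$, differ from the identity by a square-zero map.
\item Transport $A\xrightarrow{\bar m}C_0\xrightarrow{\bar h}F$ along $\bar\gamma$. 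This gives the required triangle with middle term $C$ and first map $\bar u\bar f$.
\item The identities hold because $\overline{\mathbb E}$ sees only residue classes: $\eta=(g_0)_*\varepsilon_0=(g_0\psi)_*\varepsilon=g_*\varepsilon$, and $f_*\theta=\varphi_*(f_0)_*\theta=\varphi_*q^*\varepsilon_0=q^*\varepsilon$.
\end{enumerate}
For ET4$^{\rm op}$ the dual repair is needed: pull $\theta$ back along a lift of the quotient isomorphism between $F$ and the middle term of a realization of $\eta$ in $\mathcal A$.
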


Since $\mathcal C\cap\mathcal F$ is extension closed,   $\mathcal C\cap\mathcal F$ has an extriangulated structure, inherited from $(\mathcal{A}, \mathbb E, \mathfrak s)$. Since $(\mathcal C\cap\mathcal F\cap\mathcal W)\subseteq \mathcal P(\mathcal C\cap\mathcal F) \cap \mathcal I(\mathcal C\cap\mathcal F)$, by Lemma \ref{extriangulatedquotint} there is an induced extriangulated structure
$(\frac{\mathcal C\cap\mathcal F}{\mathcal C\cap\mathcal F\cap\mathcal W}, \ \overline{\mathbb E}, \ \overline{\mathfrak s})$.

\vskip5pt

To see the second extriangulated structure on $\frac{\mathcal C\cap\mathcal F}{\mathcal C\cap\mathcal F\cap\mathcal W}$, one has to recall Nakaoka-Palu's triangulated structure on the homotopy category $\mathsf{Ho}(\mathcal A)$ of the corresponding exact model structure via the Hovey correspondence.
For each $X\in\mathcal A$, since $(\mathcal C,\mathcal F\cap\mathcal W)$ is a complete cotorsion pair,
one can fix an $\mathbb E$-triangle $X\xlongrightarrow{\iota_X}V_X\xlongrightarrow{\pi_X}C_X \xdashrightarrow{\delta_X}$ with $V_X\in\mathcal F\cap\mathcal W$ and $C_X\in\mathcal C$.
For each $f: X\longrightarrow Y$, since $\mathbb E(C_X,V_Y)=0$, there is $g: V_X\longrightarrow V_Y$ such that
$g\circ\iota_X=\iota_Y\circ f$.
By {\bf ET3} one gets a morphism $(f,g,h)$ of $\mathbb E$-triangles:
\[
  \xymatrix@R=15pt{
  X \ar@{->}[r]^{\iota_X} \ar@{->}[d]_{f} & V_X \ar@{->}[r]^{\pi_X} \ar@{.>}[d]^{g} & C_X \ar@{-->}[r]^{\delta_X} \ar@{.>}[d]^{h} & {} \\
  Y \ar@{->}[r]^{\iota_Y} & V_Y \ar@{->}[r]^{\pi_Y} & C_Y \ar@{-->}[r]^{\delta_Y} & {}}\]
The assignments $X\longmapsto C_X$ and $f\longmapsto h$ induce an equivalence
$[1]:\mathsf{Ho}(\mathcal A)\longrightarrow \mathsf{Ho}(\mathcal A)$ (cf. \cite[Proposition 6.14]{NP19}).
For each $\mathbb E$-triangle $X\xlongrightarrow{f}Y\xlongrightarrow{g}Z\xdashrightarrow{\delta}$,  by \cite[the dual of Proposition 3.15]{NP19} one has a commutative diagram of $\mathbb E$-triangles
\[
  \xymatrix@R=15pt{
  X \ar@{->}[r]^{\iota_X} \ar@{->}[d]_-{f} & V_X \ar@{->}[r]^{\pi_X} \ar@{.>}[d] & C_X \ar@{-->}[r]^{\delta_X} \ar@{=}[d] & {} \\
  Y \ar@{->}[d]_-{g} \ar@{.>}[r] & E \ar@{.>}[r]^-{s} \ar@{.>}[d]^-{h} & C_X \ar@{-->}[r]^{\mu} & {} \\
  Z \ar@{-->}[d]_{\delta} \ar@{=}[r] & Z \ar@{-->}[d]^{\xi} & & \\
  {} & {} & &
  }\]
with $h\in\mathsf{TFib}\subseteq\mathsf{Weq}$.
Let $\ell:\mathcal A\to\mathsf{Ho}(\mathcal A)$ be the localization functor and put
\[
  \widetilde{\ell}(\delta)
  =-\ell(s)\circ\ell(h)^{-1}: Z\longrightarrow C_X=X[1].
\]
Then  \[\widetilde{\ell}_{Z,X}:
  \mathbb E(Z,X)\longrightarrow
  \operatorname{Hom}_{\mathsf{Ho}(\mathcal A)}(Z,X[1]),
  \qquad
  \delta\longmapsto\widetilde{\ell}(\delta)
\]
is a binatural transformation (cf. \cite[6.9, 6.10, 6.11, 6.13]{NP19}), i.e., for any $\delta \in \mathbb E(Z, X)$, $f : K \longrightarrow Z$ and $g : X \longrightarrow W$, one has
\begin{equation}\label{binatl}
  \widetilde{\ell}(f^\ast g_\ast \delta) = g [1] \circ \widetilde{\ell}(\delta) \circ f =  \widetilde{\ell}( g_\ast f^\ast\delta).
\end{equation}

\vskip5pt \noindent Moreover, $\mathsf{Ho}(\mathcal A)$ is a triangulated category (cf. \cite[Theorem 6.20]{NP19}), where the standard triangles are
$X\xlongrightarrow{\ell(f)}Y\xlongrightarrow{\ell(g)}Z \xlongrightarrow{\widetilde{\ell}(\delta)}X[1]$,  induced by $\mathbb E$-triangles $X\xlongrightarrow{f}Y\xlongrightarrow{g}Z\xdashrightarrow{\delta}$.

\vskip5pt

The process above uses the equivalence
$\mathsf{Ho}(\mathcal A)\cong \frac{\mathcal C\cap\mathcal F}{\mathcal C\cap\mathcal F\cap\mathcal W}$
of additive categories; see Theorem \ref{hcatonadditve}.
In the special case of exact model structures, this equivalence can also be obtained without Theorem \ref{hcatonadditve}: by Theorem \ref{thm}, one has
$(\mathcal C\cap\mathcal F)/_\sim\cong\mathsf{Ho}(\mathcal A)$,
and
$(\mathcal C\cap\mathcal F)/_\sim\cong
\frac{\mathcal C\cap\mathcal F}{\mathcal C\cap\mathcal F\cap\mathcal W}$
by \cite[Proposition 4.4]{Gil11} or \cite[Proposition 6.10]{N}.

\vskip5pt

The triangulated structure on  $\mathsf{Ho}(\mathcal A)$ can be translated  to the one on $\frac{\mathcal C\cap\mathcal F}{\mathcal C\cap\mathcal F\cap\mathcal W}$ as follows.
\vskip5pt
$(1)$ \ Fix a quasi-inverse $R : \mathsf{Ho}(\mathcal{A}) \longrightarrow \frac{\mathcal C\cap\mathcal F}{\mathcal C\cap\mathcal F\cap\mathcal W}$ which is identical on $\frac{\mathcal C\cap\mathcal F}{\mathcal C\cap\mathcal F\cap\mathcal W}$.
Then there are natural isomorphisms $\varepsilon_X: RX \xlongrightarrow{\sim} X$ in $\mathsf{Ho}(\mathcal{A})$ for each $X\in \mathcal{A}$. In particular, if $X\in (\mathcal{C} \cap \mathcal{F})$ one has
$\varepsilon_X = {\rm Id}_X.$

\vskip5pt

$(2)$ \ The shift functor $\Sigma$ is  $R \circ [1]$.

\vskip5pt

$(3)$ \ $L \xlongrightarrow{u} M \xlongrightarrow{v} N \xlongrightarrow{w} \Sigma L$ is a distinguished triangle in $\frac{\mathcal C\cap\mathcal F}{\mathcal C\cap\mathcal F\cap\mathcal W}$ if and only if
$L \xlongrightarrow{u} M \xlongrightarrow{v} N \xlongrightarrow{\varepsilon_{L[1]}\circ w} L[1]$ is a distinguished triangle in $\mathsf{Ho}(\mathcal{A})$.

\vskip5pt

In particular, any $\mathbb E$-triangle $X\xlongrightarrow{f}Y\xlongrightarrow{g}Z\xdashrightarrow{\delta}$ in $\mathcal{C} \cap \mathcal{F}$ gives a distinguished triangle $X\xlongrightarrow {f} Y\xlongrightarrow{g}Z \xlongrightarrow{\varepsilon_{X[1]}^{-1}\circ\widetilde{\ell}(\delta)}\Sigma X$ in $\frac{\mathcal C\cap\mathcal F}{\mathcal C\cap\mathcal F\cap\mathcal W}$.
Here $R(\widetilde{\ell}(\delta)) = \varepsilon_{X[1]}^{-1}\circ\widetilde{\ell}(\delta)$.

\vskip5pt

{\bf Terminology:} \ This triangulation is denoted by $(\frac{\mathcal C\cap\mathcal F}{\mathcal C\cap\mathcal F\cap\mathcal W}, \ \Sigma, \ \triangle)$.
It gives the second extriangulated structure on $\frac{\mathcal C\cap\mathcal F}{\mathcal C\cap\mathcal F\cap\mathcal W}$. The corresponding additive bifunctor is
\[\Hom_{\frac{\mathcal C\cap\mathcal F}{\mathcal C\cap\mathcal F\cap\mathcal W}}(-, \Sigma -): (\frac{\mathcal C\cap\mathcal F}{\mathcal C\cap\mathcal F\cap\mathcal W})^{\rm op}\times \frac{\mathcal C\cap\mathcal F}{\mathcal C\cap\mathcal F\cap\mathcal W} \longrightarrow {\rm Ab}.\]
This extriangulated structure will be denoted by
$(\frac{\mathcal C\cap\mathcal F}{\mathcal C\cap\mathcal F\cap\mathcal W}, \ \Hom_{\frac{\mathcal C\cap\mathcal F}{\mathcal C\cap\mathcal F\cap\mathcal W}}(-, \Sigma -), \ \mathfrak s_\triangle)$
when it needs to be specified.
For clarity, we call it {\it the extriangulated structure arising from Nakaoka-Palu's triangulated structure.}

\vskip5pt

We now compare the two  extriangulated structures on $\frac{\mathcal C\cap\mathcal F}{\mathcal C\cap\mathcal F\cap\mathcal W}$:
\[\xymatrix{(\frac{\mathcal C\cap\mathcal F}{\mathcal C\cap\mathcal F\cap\mathcal W}, \ \overline{\mathbb E}, \ \overline {\mathfrak s}), \ \ \ \mbox{and} \ \ (\frac{\mathcal C\cap\mathcal F}{\mathcal C\cap\mathcal F\cap\mathcal W}, \ \Hom_{\frac{\mathcal C\cap\mathcal F}{\mathcal C\cap\mathcal F\cap\mathcal W}}(-, \Sigma -), \ \mathfrak s_\triangle).}\]

\vskip5pt

\begin{definition} \label{extrifunctor} \ Let $(\mathcal A, \mathbb E, \mathfrak s)$ and $(\mathcal B,\mathbb F,\mathfrak t)$ be extriangulated categories.
  \vskip5pt
  $(1)$ \ {\rm (\cite[2.32]{BTS21}; \cite[2.11]{NOS22})} \ An \emph{extriangle functor} $(G, \eta) : (\mathcal A, \mathbb E,\mathfrak s) \longrightarrow(\mathcal B, \mathbb F,\mathfrak t)$ consists of an additive functor $G : \mathcal A\longrightarrow\mathcal B$,  and
  a natural transformation $\eta: \mathbb E\longrightarrow  \mathbb F\circ (G^{\rm op}\times G)$ of bifunctors, or explicitly, with bi-natural group homomorphisms:
  \[
    \eta_{Z, X} : \mathbb E (Z, X) \longrightarrow \mathbb F(GZ, GX),  \ \ \ \forall \ X,Z \in \mathcal{A},
  \]
  such that if  $X \xlongrightarrow{f} Y \xlongrightarrow{g} Z \xdashrightarrow {\delta}$ is an $\mathbb E$-triangle in $\mathcal A$, then
  $GX \xlongrightarrow{Gf} GY \xlongrightarrow{Gg} GZ \xdashrightarrow {\eta_{Z, X}(\delta)}$ is an $\mathbb F$-triangle in $\mathcal B$.

  \vskip5pt

  $(2)$ \ {\rm (\cite[2.11, 2.13]{NOS22})} \ An extriangle functor $(G,\eta) : (\mathcal A,\mathbb E,\mathfrak s) \longrightarrow(\mathcal B,\mathbb F,\mathfrak t)$ is an \emph{extriangle-equivalence}
  if $G$ is an equivalence of categories and $\eta$ is a natural isomorphism of bifunctors.

  \vskip5pt Extriangulated categories $(\mathcal A, \mathbb E, \mathfrak s)$ and $(\mathcal B,\mathbb F,\mathfrak t)$ are {\it extriangle-equivalent}
  if there exists  an extriangle-equivalence $(G,\eta) : (\mathcal A,\mathbb E,\mathfrak s) \longrightarrow(\mathcal B,\mathbb F,\mathfrak t)$.
\end{definition}

\vskip5pt

\begin{definition} \label{subfunctor}  \ $(1)$ \ Let $\mathcal A$ be a category, $\mathbb E: \mathcal A^{\rm op} \times \mathcal A \longrightarrow {\rm Ab}$ an additive  bifunctor.
  A bifunctor $\mathbb F: \mathcal A^{\rm op} \times \mathcal A \longrightarrow {\rm Ab}$  is {\it an additive subbifunctor} of $\mathbb E$, denoted by $\mathbb F\subseteq\mathbb E$, provided that it is additive as a bifunctor, and  there exists a natural transformation $\eta: \mathbb F\longrightarrow \mathbb E$ of bifunctors, such that each group homomorphism $\eta_{Z, X}: \mathbb F(Z, X) \longrightarrow \mathbb E(Z, X)  \ (\ \forall \ X,Z \in \mathcal{A})$  \ is injective.

  \vskip5pt

  An additive subbifunctor $\mathbb F: \mathcal A^{\rm op} \times \mathcal A \longrightarrow {\rm Ab}$  is {\it a proper additive subbifunctor} of $\mathbb E$,
  provided that $\mathbb F$ is not naturally isomorphic to $\mathbb E$, i.e.,
  any natural transformation $\eta: \mathbb F\longrightarrow \mathbb E$ of bifunctors is not a natural isomorphism.

  \vskip5pt

  $(2)$ \ (\cite[Definition~3.10, Proposition 3.16]{HLN}; also \cite[Proposition~5.5]{INP24}) \  Let $(\mathcal A,\mathbb E,\mathfrak s)$ be an extriangulated category. An additive subbifunctor $\mathbb F\subseteq\mathbb E$ is \emph{closed} if $(\mathcal A, \eta(\mathbb F), \mathfrak s|_{\eta(\mathbb F)})$ is an extriangulated category.
\end{definition}

\vskip5pt

\begin{lemma}\label{lem_extriangulated_functor} \ Let $(\mathcal C, \mathcal F,\mathcal W)$ be a Hovey triple in a weakly idempotent complete extriangulated category
  $(\mathcal A,\mathbb E,\mathfrak s)$, $(\frac{\mathcal C\cap\mathcal F}{\mathcal C\cap\mathcal F\cap\mathcal W}, \ \overline{\mathbb E}, \ \overline{\mathfrak s})$ the induced extriangulated structure,
  and  $(\frac{\mathcal C\cap\mathcal F}{\mathcal C\cap\mathcal F\cap\mathcal W},  \ \Hom_{\frac{\mathcal C\cap\mathcal F}{\mathcal C\cap\mathcal F\cap\mathcal W}}(-, \Sigma -), \ \mathfrak s_\triangle)$ the extriangulated structure arising from Nakaoka-Palu's triangulated structure.
  Then there is an extriangle functor $({\rm Id}_{\frac{\mathcal C\cap\mathcal F}{\mathcal C\cap\mathcal F\cap\mathcal W}}, \eta): (\frac{\mathcal C\cap\mathcal F}{\mathcal C\cap\mathcal F\cap\mathcal W}, \ \overline{\mathbb E}, \ \overline{\mathfrak s})\longrightarrow (\frac{\mathcal C\cap\mathcal F}{\mathcal C\cap\mathcal F\cap\mathcal W},  \ \Hom_{\frac{\mathcal C\cap\mathcal F}{\mathcal C\cap\mathcal F\cap\mathcal W}}(-, \Sigma -), \ \mathfrak s_\triangle)$, where
  \begin{equation*}\label{eq_extriangulated_functor}
    \eta_{Z,X}: \overline{\mathbb E}(Z,X)\longrightarrow \operatorname{Hom}_{\frac{\mathcal C\cap\mathcal F}{\mathcal C\cap\mathcal F\cap\mathcal W}}(Z,\Sigma X)
  \end{equation*}
  is defined by $\eta_{Z,X}(\delta) =\varepsilon_{X[1]}^{-1}\circ\widetilde{\ell}(\delta)$, and  each $\eta_{Z,X}$ is injective, and the image of this  extriangle functor is precisely the extriangulated category
  $(\frac{\mathcal C\cap\mathcal F}{\mathcal C\cap\mathcal F\cap\mathcal W}, \ \eta(\overline{\mathbb E}), \ (\mathfrak s_\triangle)|_{\eta(\overline{\mathbb E})})$.

  \vskip5pt

  In particular,  $\overline{\mathbb E}$ is a closed subbifunctor of $\Hom_{\frac{\mathcal C\cap\mathcal F}{\mathcal C\cap\mathcal F\cap\mathcal W}}(-, \Sigma -)$.

\end{lemma}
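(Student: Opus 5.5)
We prove the lemma in four steps: well-definedness and binaturality of $\eta$, the extriangle-functor property, injectivity, and closedness of the image.

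\textbf{Well-definedness and binaturality.} By construction of the induced structure in Lemma \ref{extriangulatedquotint}, $\overline{\mathbb E}(Z,X)=\mathbb E(Z,X)$ for $X,Z\in\mathcal C\cap\mathcal F$, and $\overline{\mathbb E}$ acts on classes of morphisms. First we check that $\eta_{Z,X}(\delta)=\varepsilon_{X[1]}^{-1}\circ\widetilde\ell(\delta)$ is well defined on morphism classes. If $f:K\to Z$ factors through $W\in\mathcal C\cap\mathcal F\cap\mathcal W$, then $f^\ast\delta=0$ because $W$ is projective in $\mathcal C\cap\mathcal F$. Likewise $\ell(f)=0$ in $\mathsf{Ho}(\mathcal A)$, since $W\cong 0$ there. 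So both sides of \eqref{binatl} vanish. Binaturality of $\eta$ then follows from \eqref{binatl} together with the naturality of $\varepsilon$, using $\Sigma=R\circ[1]$. Additivity of each $\eta_{Z,X}$ is inherited from $\widetilde\ell_{Z,X}$.

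\textbf{Extriangle-functor property.} Let $X\to Y\to Z\dashrightarrow^{\delta}$ be an $\overline{\mathfrak s}$-triangle. It is the image of an $\mathbb E$-triangle in $\mathcal C\cap\mathcal F$. By item $(3)$ of the translated triangulation, $X\to Y\to Z\xrightarrow{\eta(\delta)}\Sigma X$ is distinguished in $\frac{\mathcal C\cap\mathcal F}{\mathcal C\cap\mathcal F\cap\mathcal W}$. Hence $({\rm Id},\eta)$ is an extriangle functor.

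\textbf{Injectivity.} This is the key step. Suppose $\eta_{Z,X}(\delta)=0$. Then the distinguished triangle $X\xrightarrow{f}Y\xrightarrow{g}Z\xrightarrow{0}\Sigma X$ splits in the triangulated category, so $\overline g$ has a section: there is $s:Z\to Y$ with $g s-\mathrm{Id}_Z$ factoring through some $W\in\mathcal C\cap\mathcal F\cap\mathcal W$. Since $W$ is projective in $\mathcal C\cap\mathcal F$ and $g$ is a deflation there, that factorization $Z\to W\to Z$ lifts through $g$. Correcting $s$ by this lift gives a genuine section $s'$ with $gs'=\mathrm{Id}_Z$. Then $\delta=(gs')^\ast\delta=s'^\ast g^\ast\delta=0$, because $g^\ast\delta=0$ for any $\mathbb E$-triangle. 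Hence $\eta_{Z,X}$ is injective.

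\textbf{Closedness of the image.} The image subbifunctor $\eta(\overline{\mathbb E})$ with the restricted realization $\mathfrak s_\triangle|_{\eta(\overline{\mathbb E})}$ is the transport of $(\overline{\mathbb E},\overline{\mathfrak s})$ along the injective binatural map $\eta$. Indeed, the realization of $\eta(\delta)$ under $\mathfrak s_\triangle$ is the triangle realizing $\delta$, up to equivalence of triangles. Therefore the axioms ET1 through ET4$^{\rm op}$ for $(\overline{\mathbb E},\overline{\mathfrak s})$ transfer verbatim, and $(\frac{\mathcal C\cap\mathcal F}{\mathcal C\cap\mathcal F\cap\mathcal W},\eta(\overline{\mathbb E}),\mathfrak s_\triangle|)$ is extriangulated. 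This is exactly closedness in the sense of Definition \ref{subfunctor}(2).

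The main obstacle is injectivity: one must check carefully that splitting of a triangle in the quotient really forces $\delta=0$. This uses the projectivity of $\mathcal C\cap\mathcal F\cap\mathcal W$ inside $\mathcal C\cap\mathcal F$, as above.
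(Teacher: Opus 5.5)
Your proposal is correct and follows essentially the same route as the paper: binaturality from \eqref{binatl} and the naturality of $\varepsilon$, the extriangle property from the translated triangulation, injectivity from the splitting of a distinguished triangle whose connecting morphism is $0$, and closedness by identifying the image with a transported copy of $(\overline{\mathbb E},\overline{\mathfrak s})$. Your injectivity step spells out what the paper states tersely, namely that the split sequence forces $\delta=0$ in the quotient structure. Lifting the section through $g$ via projectivity of $\mathcal C\cap\mathcal F\cap\mathcal W$ is valid, though not needed, since $\overline g^{\ast}\delta=0$ already holds in $\overline{\mathbb E}$ and a section $\overline s$ of $\overline g$ gives $\delta=\overline s^{\ast}\overline g^{\ast}\delta=0$ directly.
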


\begin{proof} \ For morphisms $\overline f:Z\longrightarrow Z'$ and $\overline g:X\longrightarrow X'$ in $\frac{\mathcal C\cap\mathcal F}{\mathcal C\cap\mathcal F\cap\mathcal W}$,
  with $f:Z\longrightarrow Z'$ and $g:X\longrightarrow X'$ in $\mathcal C\cap\mathcal F$, by the binaturality of $\widetilde{\ell}$ and the naturality of $\varepsilon$, one has
  \begin{align*}
    \eta_{Z,X}(f^*\delta) & =\eta_{Z',X}(\delta)\circ\ell(f), \ \ \ \forall \ \ \delta\in\mathbb E(Z',X)      \\
    \eta_{Z,X'}(g_*\rho)  & =(\Sigma\overline g)\circ\eta_{Z,X}(\rho) \ \ \ \forall \ \ \rho\in\mathbb E(Z,X)
  \end{align*}
  where the second equality uses $\varepsilon_{X'[1]} \circ (\Sigma\overline g) =\ell(g)[1]\circ\varepsilon_{X[1]}$.
  Thus $\eta$ is a natural transformation of bifunctors.

  \vskip5pt

  We show that each $\eta_{Z,X}$ is injective.
  Suppose that $\eta_{Z,X}(\delta)=\varepsilon_{X[1]}^{-1}\circ \widetilde{\ell}(\delta)=0$, and $\delta$ is realized by an $\mathbb E$-triangle $X\xlongrightarrow{f}Y\xlongrightarrow{g}Z\xdashrightarrow{\delta}$.
  Its image $X\xlongrightarrow{f}Y\xlongrightarrow{g}Z \xlongrightarrow{0}\Sigma X$ is a splitting distinguished triangle.
  Thus the sequence $X\xlongrightarrow{\overline f}Y\xlongrightarrow{\overline g}Z$ splits in $\frac{\mathcal C\cap\mathcal F}{\mathcal C\cap\mathcal F\cap\mathcal W}$.
  Since it realizes $\delta$ in the induced extriangulated structure, $\delta=0$ in $\overline{\mathbb E}(Z,X)=\mathbb E(Z,X)$, by \textbf{ET2}.

  \vskip5pt

  By definition $\overline{\mathbb E}$ is an  additive subbifunctor of $\operatorname{Hom}_{\frac{\mathcal C\cap\mathcal F}{\mathcal C\cap\mathcal F\cap\mathcal W}}(-, \Sigma -)$.
  For each $\delta \in \overline{\mathbb E}(Z,X) = \mathbb E(Z,X)$, there is an $\mathbb E$-triangle $X\xlongrightarrow{f}Y\xlongrightarrow{g}Z\xdashrightarrow{\delta}$ in $\mathcal C\cap\mathcal F$. It gives a distinguished triangle $X\xlongrightarrow{\overline f}Y\xlongrightarrow{\overline g}Z\xlongrightarrow{\widetilde{\ell} (\delta)} X[1]$ in $\mathsf{Ho}(\mathcal{A})$.
  By construction, $X \xlongrightarrow{\overline f} Y \xlongrightarrow{\overline g} Z \xlongrightarrow{\varepsilon_{X[1]}^{-1} \circ \widetilde{\ell} (\delta)} \Sigma X$ is a distinguished triangle in $\frac{\mathcal C\cap\mathcal F}{\mathcal C\cap\mathcal F\cap\mathcal W}$.
  Hence $X \xlongrightarrow{\overline f} Y \xlongrightarrow{\overline g} Z \xlongrightarrow{\varepsilon_{X[1]}^{-1} \circ \widetilde{\ell} (\delta)} \Sigma X$ is a $\mathfrak s_{\triangle}$-realization of $\eta_{Z,X}(\delta)= \varepsilon_{X[1]}^{-1} \circ \widetilde{\ell} (\delta)$.
  It follows that a $\overline{\mathfrak s}$-realisation of $\delta$ is sent to a $\mathfrak s_\triangle$-realisation of $\eta_{Z,X}(\delta)$. This proves that $({\rm Id}_{\frac{\mathcal C\cap\mathcal F}{\mathcal C\cap\mathcal F\cap\mathcal W}}, \eta): (\frac{\mathcal C\cap\mathcal F}{\mathcal C\cap\mathcal F\cap\mathcal W}, \ \overline{\mathbb E}, \ \overline{\mathfrak s})\longrightarrow (\frac{\mathcal C\cap\mathcal F}{\mathcal C\cap\mathcal F\cap\mathcal W},  \ \Hom_{\frac{\mathcal C\cap\mathcal F}{\mathcal C\cap\mathcal F\cap\mathcal W}}(-, \Sigma -), \ \mathfrak s_\triangle)$ is an extriangle functor, and that the image of this extriangle functor is precisely the extriangulated category
  $(\frac{\mathcal C\cap\mathcal F}{\mathcal C\cap\mathcal F\cap\mathcal W}, \ \eta(\overline{\mathbb E}), \ (\mathfrak s_\triangle)|_{\eta(\overline{\mathbb E})})$.
  Therefore $\overline{\mathbb E}$ is a closed subbifunctor of $\operatorname{Hom}_{\frac{\mathcal C\cap\mathcal F}{\mathcal C\cap\mathcal F\cap\mathcal W}}(-, \Sigma -)$.
\end{proof}

\vskip5pt

\subsection{When is the triangulated structure on $(\mathcal C\cap\mathcal F)/(\mathcal C\cap\mathcal F\cap\mathcal W)$ canonically triangulated?} $\,$

\vskip5pt

\begin{definition} {\rm(\cite[Section 3.3]{NP19})} \label{cantri} \ An extriangulated category
  $(\mathcal A,\mathbb E,\mathfrak s)$ is \emph{canonically triangulated} if there is an auto-equivalence $[1]:\mathcal A\to\mathcal A$ and natural isomorphisms $\ell_{Z,X}:\mathbb E(Z,X)\xlongrightarrow{\sim}\operatorname{Hom}_{\mathcal A}(Z,X[1])$ for all objects $X,Z\in\mathcal A$, such that $(\mathcal A,[1],\triangle)$ is triangulated, where
  \[
    \triangle=
    \left\{
    X\xlongrightarrow{f}Y\xlongrightarrow{g}Z
    \xlongrightarrow{\ell_{Z,X}(\delta)}X[1]
    \ \middle|\
    X\xlongrightarrow{f}Y\xlongrightarrow{g}Z\xdashrightarrow{\delta}
    \text{ is an $\mathbb E$-triangle}
    \right\}.
  \]\end{definition}

\vskip5pt

The following lists some equivalent characterizations.

\begin{remark} \ Let  $(\mathcal{A}, \mathbb E, \mathfrak s)$ be an extriangulated category.
  Then the following are equivalent.

  \vskip5pt
  $(1)$ \ $(\mathcal{A}, \mathbb E, \mathfrak s)$ is canonically triangulated.
  \vskip5pt
  $(2)$ \ {\rm (\cite[3.3]{Msa25})} \ $0\longrightarrow X$ and $X\longrightarrow0$ are both $\mathbb E$-inflations and $\mathbb E$-deflations for every $X\in\mathcal A$.
  \vskip5pt
  $(3)$ \ All morphisms are both $\mathbb E$-inflations and $\mathbb E$-deflations.
  \vskip5pt
  $(4)$ \ {\rm (\cite[7.4]{NP19})} \ $\mathcal A$ is Frobenius with $\mathcal P(\mathcal A)=\mathcal I(\mathcal A)=\{0\}$.
  \vskip5pt
\end{remark}

\vskip5pt

Note that if $\mathcal{A}$ is a Frobenius extriangulated category, then the stable category $\mathcal{A}/\mathcal{P}(\mathcal{A})$ has the induced extriangulated structure \textup{(cf. \Cref{extriangulatedquotint})}.

\vskip5pt

\begin{proposition}\label{prop_Frobenius_ideal_quotient} \   Let $(\mathcal A,\mathbb E,\mathfrak s)$ be a weakly idempotent complete extriangulated category.

  \vskip5pt

  $(1)$ \ {\rm (\cite[7.5]{NP19}; \cite[3.3]{Msa25})} If $\mathcal A$ is Frobenius, then the induced extriangulated category $(\mathcal A/\mathcal P(\mathcal A), \ \overline{\mathbb E}, \ \overline{\mathfrak s})$ is canonically triangulated.

  \vskip5pt

  $(2)$ \ Conversely, let $\mathcal B$ be a full additive subcategory of $\mathcal A$ with $\mathcal B\subseteq\mathcal P(\mathcal A)\cap\mathcal I(\mathcal A)$.
  If the induced extriangulated category $(\mathcal A/\mathcal B, \ \overline{\mathbb E}, \ \overline{\mathfrak s})$ is canonically triangulated, then $\mathcal A$ is Frobenius, and $P\in\mathcal P(\mathcal A)$ if and only if $P$ is a direct summand of an object of $\mathcal B$.
\end{proposition}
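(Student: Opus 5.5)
Part (2) follows by translating the characterization of canonically triangulated extriangulated categories in the preceding Remark back from $\mathcal A/\mathcal B$ to $\mathcal A$. Two features of the construction in Lemma \ref{extriangulatedquotint} will be used throughout.

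First, the extension bifunctor is unchanged on objects, that is, $\overline{\mathbb E}(Z,X)=\mathbb E(Z,X)$.

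Second, $\overline{\mathfrak s}(\delta)$ is the image in $\mathcal A/\mathcal B$ of a representative $X\xlongrightarrow{f}Y\xlongrightarrow{g}Z$ of $\mathfrak s(\delta)$. So every $\overline{\mathbb E}$-triangle in $\mathcal A/\mathcal B$ comes, up to equivalence in $\mathcal A/\mathcal B$, from an $\mathbb E$-triangle in $\mathcal A$ with the same end terms and the same extension $\delta$.

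The first step identifies the projective and injective objects of the quotient. Since $\overline{\mathbb E}(P,-)=\mathbb E(P,-)$ on objects, an object $P$ is projective in $(\mathcal A/\mathcal B,\overline{\mathbb E},\overline{\mathfrak s})$ if and only if it is projective in $\mathcal A$. Dually for injectives.

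Next, an object $Y$ is isomorphic to $0$ in $\mathcal A/\mathcal B$ if and only if $\mathrm{Id}_Y$ factors through some $B\in\mathcal B$. This holds if and only if $Y$ is a retract of $B$. Because $\mathcal A$ is weakly idempotent complete, the split monomorphism $Y\to B$ has a complement, so $Y$ is a direct summand of an object of $\mathcal B$.

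Assume now that $(\mathcal A/\mathcal B,\overline{\mathbb E},\overline{\mathfrak s})$ is canonically triangulated. By the preceding Remark, (1)$\Leftrightarrow$(4), it is Frobenius with $\mathcal P(\mathcal A/\mathcal B)=\mathcal I(\mathcal A/\mathcal B)=\{0\}$.

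Let $P\in\mathcal P(\mathcal A)$. By the first step, $P$ is projective in the quotient, hence $P\cong 0$ there, hence $P$ is a direct summand of an object of $\mathcal B$. Conversely, direct summands of objects of $\mathcal B\subseteq\mathcal P(\mathcal A)$ are projective, since $\mathbb E(-,-)$ is additive. This gives the asserted description of $\mathcal P(\mathcal A)$. The dual argument shows $\mathcal I(\mathcal A)$ is also the class of direct summands of objects of $\mathcal B$, using $\mathcal B\subseteq\mathcal I(\mathcal A)$. Hence $\mathcal P(\mathcal A)=\mathcal I(\mathcal A)$.

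It remains to show that $\mathcal A$ has enough projective and enough injective objects. By Remark (2), for each $X$ the morphism $0\to X$ is an $\overline{\mathbb E}$-deflation. So there is an $\overline{\mathbb E}$-triangle $K\to 0\to X\overset{\delta}{\dashrightarrow}$ in the quotient.

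By the second feature of the construction, $\delta\in\mathbb E(X,K)$ is realized in $\mathcal A$ by an $\mathbb E$-triangle $K\to Y\to X\overset{\delta}{\dashrightarrow}$. Its image in $\mathcal A/\mathcal B$ is equivalent to $K\to 0\to X$, so $Y\cong 0$ in $\mathcal A/\mathcal B$. By the first step, $Y$ is a summand of an object of $\mathcal B$, hence $Y\in\mathcal P(\mathcal A)$. Thus $\mathcal A$ has enough projectives.

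Dually, using that $X\to 0$ is an $\overline{\mathbb E}$-inflation, $\mathcal A$ has enough injectives. Therefore $\mathcal A$ is Frobenius.

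The point needing the most care is the lifting step. One must check from the proof of \cite[Proposition 3.30]{NP19} that two $\overline{\mathbb E}$-triangles realizing the same $\delta$ have isomorphic middle terms in $\mathcal A/\mathcal B$. This should follow from \textbf{ET2}, via the uniqueness of realizations up to equivalence, applied in $\mathcal A/\mathcal B$. One must also check that the middle term of the lifted $\mathbb E$-triangle is exactly the image of $Y$, which is what the construction of $\overline{\mathfrak s}$ gives.
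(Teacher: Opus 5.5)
Your argument for (2) is correct and follows the paper's route. Like the paper, you lift the $\overline{\mathbb E}$-triangles $K\to 0\to X$ and $X\to 0\to L$ (equivalently, the distinguished triangles $X[-1]\to 0\to X$ and $X\to 0\to X[1]$) to $\mathbb E$-triangles in $\mathcal A$ whose middle terms vanish in $\mathcal A/\mathcal B$, so those middle terms are direct summands of objects of $\mathcal B$. The ``point needing care'' needs no extra check, since $\overline{\mathfrak s}(\delta)$ is by definition a single equivalence class.

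The one variation is how you identify $\mathcal P(\mathcal A)$. You use $\overline{\mathbb E}(P,-)=\mathbb E(P,-)$ together with the fact that a triangulated category has no nonzero projectives. The paper instead splits the $\mathbb E$-triangle $K\to P\to Q$ for $Q\in\mathcal P(\mathcal A)$.

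Your proposal does not address (1); the paper's proof also does not prove it and only cites the literature.
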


\begin{proof}
  $(1)$ \ See \cite[Theorem 3.3]{Msa25} for details.

  \vskip5pt

  $(2)$ \ By \Cref{extriangulatedquotint}, $\mathcal A/\mathcal B$ has the induced extriangulated structure. If it is canonically triangulated, then for any $X\in\mathcal A$ there are distinguished triangles in $\mathcal A/\mathcal B$
  \begin{equation*}
    X[-1]\longrightarrow0\longrightarrow X\xlongrightarrow{\operatorname{Id}_X}X,
    \qquad
    X\longrightarrow0\longrightarrow X[1]\xlongrightarrow{\operatorname{Id}_{X[1]}}X[1].
  \end{equation*}
  By definition, these arise from $\mathbb E$-triangles in $\mathcal A$
  \begin{equation}\label{eq_dist}
    X[-1]\longrightarrow P\longrightarrow X\dashrightarrow,
    \qquad
    X\longrightarrow I\longrightarrow X[1]\dashrightarrow,
  \end{equation}
  where $P\cong0\cong I$ in $\mathcal A/\mathcal B$. Since $\mathcal A$ is weakly idempotent complete,  $P$ and $I$ are direct summands of objects of $\mathcal B$. Since $\mathcal B\subseteq\mathcal P(\mathcal A)\cap\mathcal I(\mathcal A)$, both $P$ and $I$ are projective-injective. Thus $\mathcal A$ has enough projective objects and enough injective objects.

  \vskip5pt

  For $Q\in\mathcal P(\mathcal A)$, by \Cref{eq_dist} there is an $\mathbb E$-triangle $K\longrightarrow P\longrightarrow Q\dashrightarrow$ in $\mathcal A$ with $P\in\mathcal P(\mathcal A)\cap\mathcal I(\mathcal A)$ and $P\cong0$ in $\mathcal A/\mathcal B$. Since $\mathbb E(Q,K)=0$, the $\mathbb E$-triangle splits. Hence $Q$ is a direct summand of $P$, so $Q\cong0$ in $\mathcal A/\mathcal B$ and therefore is a direct summand of an object of $\mathcal B$. Dually, every object of $\mathcal I(\mathcal A)$ is a direct summand of an object of $\mathcal B$. Thus $\mathcal P(\mathcal A)=\mathcal I(\mathcal A)$.
\end{proof}

\vskip5pt

\begin{remark} \ The assumption of weakly idempotent completeness in Proposition \ref{prop_Frobenius_ideal_quotient} can be removed by replacing ``direct summand'' in $(2)$ with {\it retract}.
  Recall that an object $X$ of an additive category $\mathcal A$ is {\it a retract} of an object $Y$ if there are morphisms $i:X\longrightarrow Y$
  and $p:Y\longrightarrow X$ such that $p\circ i={\rm Id}_X$.
  For a full additive subcategory $\mathcal B$ of $\mathcal A$, one has $X\cong0$ in $\mathcal A/\mathcal B$ if and only if $X$ is a retract of an object of $\mathcal B$.
  In fact, $X\cong 0$ in $\mathcal A/\mathcal B$ if and only if ${\rm Id}_X$ factors through an object of $\mathcal B$.
\end{remark}

\vskip5pt

\begin{theorem}\label{thm_Frobenius_pair} \ Suppose that $(\mathcal C,\mathcal F,\mathcal W)$ is a Hovey triple in a weakly idempotent complete extriangulated category $(\mathcal A,\mathbb E,\mathfrak s)$.
  Let $(\frac{\mathcal C\cap\mathcal F}{\mathcal C\cap\mathcal F\cap\mathcal W}, \ \overline{\mathbb E}, \ \overline{\mathfrak s})$ be the induced extriangulated structure,   and  $(\frac{\mathcal C\cap\mathcal F}{\mathcal C\cap\mathcal F\cap\mathcal W},  \ \Hom_{\frac{\mathcal C\cap\mathcal F}{\mathcal C\cap\mathcal F\cap\mathcal W}}(-, \Sigma -), \ \mathfrak s_\triangle)$ the extriangulated structure arising from Nakaoka-Palu's triangulated structure.
  Then the following are equivalent.

  \vskip5pt

  $(1)$ \ $(\mathcal C,\mathcal F,\mathcal W)$ is not exceptional, i.e., $(\mathcal C\cap\mathcal F, \ \mathcal C\cap\mathcal F\cap\mathcal W)$ is a Frobenius pair.

  \vskip5pt

  $(2)$ \ $\mathcal C\cap\mathcal F$ has enough projective objects and $\mathcal P(\mathcal C\cap\mathcal F)=\mathcal C\cap\mathcal F\cap\mathcal W$.

  \vskip5pt

  $(2')$ \ $\mathcal C\cap\mathcal F$ has enough injective objects and $\mathcal I(\mathcal C\cap\mathcal F)=\mathcal C\cap\mathcal F\cap\mathcal W$.

  \vskip5pt

  $(3)$ \ The extriangle functor $({\rm Id}_{\frac{\mathcal C\cap\mathcal F}{\mathcal C\cap\mathcal F\cap\mathcal W}}, \eta): (\frac{\mathcal C\cap\mathcal F}{\mathcal C\cap\mathcal F\cap\mathcal W}, \ \overline{\mathbb E}, \ \overline{\mathfrak s})\longrightarrow (\frac{\mathcal C\cap\mathcal F}{\mathcal C\cap\mathcal F\cap\mathcal W},  \ \Hom_{\frac{\mathcal C\cap\mathcal F}{\mathcal C\cap\mathcal F\cap\mathcal W}}(-, \Sigma -), \ \mathfrak s_\triangle)$ in {\rm \Cref{eq_extriangulated_functor}} is an extriangle-equivalence.

  \vskip5pt

  $(4)$ \ Functors  $\overline{\mathbb E}$ and $\Hom_{\frac{\mathcal C\cap\mathcal F}{\mathcal C\cap\mathcal F\cap\mathcal W}}(-, \Sigma -)$  are
  naturally isomorphic.

  \vskip5pt

  $(5)$ \  $\bigl(\frac{\mathcal C\cap\mathcal F}{\mathcal C\cap\mathcal F\cap\mathcal W}, \ \overline{\mathbb E}, \ \overline{\mathfrak s}\bigr)$ is canonically triangulated.
\end{theorem}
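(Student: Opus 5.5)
Write $\mathcal D=\mathcal C\cap\mathcal F$ and $\mathcal D_0=\mathcal C\cap\mathcal F\cap\mathcal W$. By Fact \ref{lem_Hovey_hereditary}(2), $\mathcal D_0\subseteq\mathcal P(\mathcal D)\cap\mathcal I(\mathcal D)$. Since $\mathcal D$ is extension closed in a weakly idempotent complete $\mathcal A$, and $\mathcal D_0$ is closed under direct summands (as $\mathcal C,\mathcal F,\mathcal W$ are), $\mathcal D$ is also weakly idempotent complete. We will prove the cycle $(1)\Rightarrow(5)\Rightarrow(1)$, then link $(2),(2')$ to $(1)$, and finally $(1)\Rightarrow(3)\Rightarrow(4)\Rightarrow(1)$.

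For $(1)\Rightarrow(5)$, apply Proposition \ref{prop_Frobenius_ideal_quotient}(1) to the Frobenius extriangulated category $\mathcal D$ with $\mathcal P(\mathcal D)=\mathcal D_0$. For $(5)\Rightarrow(1)$, apply Proposition \ref{prop_Frobenius_ideal_quotient}(2) with $\mathcal B=\mathcal D_0$. This gives that $\mathcal D$ is Frobenius and that $\mathcal P(\mathcal D)$ consists of the direct summands of objects of $\mathcal D_0$, which equals $\mathcal D_0$.

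For $(1)\Rightarrow(2)$, the implication is trivial. For $(2)\Rightarrow(1)$, we must show $\mathcal D$ has enough injectives with $\mathcal I(\mathcal D)=\mathcal D_0$.
\begin{itemize}
\item For $X\in\mathcal D$, completeness of $(\mathcal C,\mathcal F\cap\mathcal W)$ gives $X\to V_X\to C_X\dashrightarrow$ with $V_X\in\mathcal F\cap\mathcal W$ and $C_X\in\mathcal C$. Since $X\in\mathcal C$, we get $V_X\in\mathcal C$, hence $V_X\in\mathcal D_0$. Since $X,V_X\in\mathcal F$, the question is whether $C_X\in\mathcal F$.
\item To avoid this, use the projectives directly. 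Let $I\in\mathcal I(\mathcal D)$ and take $K\to P\to I\dashrightarrow$ with $P\in\mathcal P(\mathcal D)=\mathcal D_0$. Then $\mathbb E(I,K)=0$ needs $K\in\mathcal D$, which holds by definition of enough projectives in $\mathcal D$. Hence the triangle splits and $I\in\mathcal D_0$, so $\mathcal I(\mathcal D)\subseteq\mathcal D_0$, and with Fact \ref{lem_Hovey_hereditary}(2) we get $\mathcal I(\mathcal D)=\mathcal D_0$.
\item For enough injectives, the cleanest route is $(2)\Rightarrow(5)\Rightarrow(1)$. Under $(2)$, every $X$ sits in $\Omega X\to P\to X\dashrightarrow$ with $P\cong0$ in the quotient, so every object is an $\overline{\mathbb E}$-deflation target from $0$. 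Dually, I expect the cone $C_X$ to lie in $\mathcal F$ (this is the step requiring care). The plan is to use the triangle $X\to V_X\to C_X$ with $V_X\in\mathcal D_0\subseteq\mathcal P(\mathcal D)$, together with the remark characterisation (2) of canonically triangulated, and show $C_X\in\mathcal F$ via $\mathbb E(\mathcal C\cap\mathcal W,C_X)=0$ from the long exact sequence.
\end{itemize}
The case $(2')$ is dual.

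For $(1)\Rightarrow(3)$, by Lemma \ref{lem_extriangulated_functor} it suffices to show each $\eta_{Z,X}$ is surjective. Under $(1)$, take $X\to V_X\to\Sigma' X\dashrightarrow^{\delta_X}$ inside $\mathcal D$ with $V_X$ projective-injective. Its image under $\eta$ is an isomorphism $\Sigma' X\to\Sigma X$, since the middle term vanishes in the triangulated quotient. Given $w\colon Z\to\Sigma X$, write $w=\eta(\delta_X)\circ\overline u$. Then $\eta(u^\ast\delta_X)=w$ by binaturality, which gives surjectivity. $(3)\Rightarrow(4)$ is immediate. For $(4)\Rightarrow(5)$: a natural isomorphism $\overline{\mathbb E}\cong\operatorname{Hom}(-,\Sigma-)$ makes $\mathrm{Id}_X\in\operatorname{Hom}(X,\Sigma\Sigma^{-1}X)$ correspond to some $\overline{\mathbb E}$-extension. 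Its realization, say $\Sigma^{-1}X\to Y\to X$, maps under $\eta$ to a triangle with third map an isomorphism, so $Y\cong0$. Hence $0\to X$ is a deflation, and dually an inflation, which is $(5)$ by the remark's criterion. The main obstacle here is that the abstract isomorphism in $(4)$ need not equal $\eta$. I would handle this by noting that the arbitrary isomorphism $\theta$ still provides some $\overline{\mathbb E}$-extension $\theta^{-1}(\mathrm{Id})$, and then comparing its $\overline{\mathfrak s}$-realization with the triangulated one via $\eta$ and the triangulated five lemma.
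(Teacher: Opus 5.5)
\textbf{The main gap is $(2)\Rightarrow(1)$.} In your notation $\mathcal D=\mathcal C\cap\mathcal F$ and $\mathcal D_0=\mathcal C\cap\mathcal F\cap\mathcal W$. The whole difficulty here is the injective side, and neither of your bullets handles it.

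Your second bullet has a variance error. The class of $K\to P\to I\dashrightarrow$ lies in $\mathbb E(I,K)$, but injectivity of $I$ in $\mathcal D$ only gives $\mathbb E(K,I)=0$. So nothing splits.

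Your third bullet also cannot work as sketched. For $W\in\mathcal C\cap\mathcal W$, the triangle $X\to V_X\to C_X\dashrightarrow$ gives an exact sequence $\mathbb E(W,X)\to\mathbb E(W,V_X)\to\mathbb E(W,C_X)$ with no further term. So knowing $X,V_X\in\mathcal F$ says nothing about $\mathbb E(W,C_X)$. Example \ref{exmple_ideal} confirms this: there $X=M_{1,1}$ and $V_X=M_{1,3}$ lie in $\mathcal F$, yet $C_X=M_{1,2}\notin\mathcal F$. Checking the remark's criterion (2) directly also needs $X\to 0$ to be an $\overline{\mathbb E}$-inflation, which is exactly the missing injective side, so that route is circular.

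The missing idea is to let the triangulated structure supply the injective side. The paper proves $(2)\Rightarrow(3)$ by the projective mirror of your own surjectivity argument:
\begin{itemize}
\item For $Z\in\mathcal D$, take $K\to P\to Z\dashrightarrow$ with class $\rho_Z$ and $P\in\mathcal D_0$. Since $P\cong 0$ in the quotient, $c_Z=\eta_{Z,K}(\rho_Z)\colon Z\to\Sigma K$ is an isomorphism.
\item Given $h\colon Z\to\Sigma X$, fullness of the autoequivalence $\Sigma$ gives $\overline g\colon K\to X$ with $\Sigma\overline g=h\circ c_Z^{-1}$.
\item Binaturality then gives $\eta_{Z,X}(g_\ast\rho_Z)=h$.
\end{itemize}
Then $(3)\Rightarrow(5)\Rightarrow(1)$ follows, and it is Proposition \ref{prop_Frobenius_ideal_quotient}(2) that produces the triangles $X\to I\to X[1]$ with $I\in\mathcal D_0$. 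Note also that your $(1)\Rightarrow(3)$ argument only uses triangles $X\to V_X\to\Sigma'X$ in $\mathcal D$ with $V_X\in\mathcal D_0$. So it already proves $(2')\Rightarrow(3)$ verbatim.

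\textbf{Your $(4)\Rightarrow(5)$ step also fails as sketched.} The $\overline{\mathfrak s}$-realization of $\theta^{-1}(\mathrm{Id})$ is sent to a distinguished triangle whose third map is $\eta(\theta^{-1}(\mathrm{Id}))$. Nothing forces that map to be invertible, and there is no morphism of triangles to feed into a five lemma. The paper simply asserts $(3)\Leftrightarrow(4)$ from the definitions, so an argument is needed either way. Here is one that works:
\begin{itemize}
\item Set $\beta=\eta\circ\theta^{-1}$. This is a natural endotransformation of $\Hom(-,\Sigma-)$ with injective components.
\item By Yoneda, $\beta_{Z,X}(f)=d_X\circ f$ with $d_X=\beta_{\Sigma X,X}(\mathrm{Id}_{\Sigma X})$. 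Naturality in $X$ together with fullness of $\Sigma$ shows that $d_X$ commutes with every endomorphism of $\Sigma X$.
\item Injectivity makes $d_X$ a monomorphism, hence split in the triangulated quotient, say $r\circ d_X=\mathrm{Id}$.
\item Commuting with $d_X\circ r$ gives $d_X\circ(d_X\circ r)=(d_X\circ r)\circ d_X=d_X$. Cancelling the monomorphism $d_X$ yields $d_X\circ r=\mathrm{Id}$.
\end{itemize}
Hence $\beta$, and therefore $\eta=\beta\circ\theta$, is an isomorphism, which is $(3)$.

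\textbf{The remaining parts are fine.} These are $(1)\Leftrightarrow(5)$ via Proposition \ref{prop_Frobenius_ideal_quotient} and $(1)\Rightarrow(3)\Rightarrow(4)$. One small correction: $\mathcal D$ is weakly idempotent complete because $\mathcal D$ itself, not $\mathcal D_0$, is closed under direct summands in $\mathcal A$.
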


\begin{proof} \ $(1)\Longrightarrow(2)$: This follows from the definition of a Frobenius pair.

  \vskip5pt

  $(2)\Longrightarrow (3)$:  \ By \Cref{lem_extriangulated_functor}, the natural transformation $\eta_{Z,X}:\overline{\mathbb E}(Z,X)\longrightarrow\operatorname{Hom}_{\frac{\mathcal C\cap\mathcal F}{\mathcal C\cap\mathcal F\cap\mathcal W}}(Z,\Sigma X)$ is injective for all $X, Z\in\mathcal C\cap\mathcal F$.
  It remains to prove its surjectivity.
  For each $Z\in\mathcal C\cap\mathcal F$, choose an $\mathbb E$-triangle $K\longrightarrow P\longrightarrow Z\xdashrightarrow{\rho_Z}$ with $P\in\mathcal C\cap\mathcal F\cap\mathcal W$ and $K\in\mathcal C\cap\mathcal F$.
  Then its image in $\frac{\mathcal C\cap\mathcal F}{\mathcal C\cap\mathcal F\cap\mathcal W}$ is a distinguished triangle $K\longrightarrow0\longrightarrow Z \xlongrightarrow{c_Z}\Sigma K$,  where $c_Z=\eta_{Z,K}(\rho_Z)$ is an isomorphism.
  For any $X\in\mathcal C\cap\mathcal F$ and any $h: Z\longrightarrow\Sigma X$ in $\frac{\mathcal C\cap\mathcal F}{\mathcal C\cap\mathcal F\cap\mathcal W},$
  since $\Sigma$ is an auto-equivalence, there exists a morphism $\overline g: K\longrightarrow X$ in $\frac{\mathcal C\cap\mathcal F}{\mathcal C\cap\mathcal F\cap\mathcal W}$ such that $\Sigma\overline g=h\circ c_Z^{-1}$.
  The naturality of $\eta$ yields that
  \[
    \eta_{Z,X}(g_*\rho_Z)
    =(\Sigma\overline g)\circ\eta_{Z,K}(\rho_Z)
    =(h\circ c_Z^{-1})\circ c_Z
    =h.
  \]
  It follows that $\eta_{Z,X}$ is surjective.

  \vskip5pt

  $(3)\Longleftrightarrow (4)$: This follows directly from the definition.

  \vskip5pt

  $(3)\Longrightarrow(5)$: This follows since the target of \Cref{eq_extriangulated_functor} is canonically triangulated.

  \vskip5pt

  $(5)\Longrightarrow (1)$: \  By \Cref{prop_Frobenius_ideal_quotient}, $\mathcal C\cap\mathcal F$ is Frobenius and its projective-injective objects are precisely  $\mathcal C\cap\mathcal F\cap\mathcal W$, i.e., $(\mathcal C\cap\mathcal F, \ \mathcal C\cap\mathcal F\cap\mathcal W)$ is a Frobenius pair. \end{proof}

\subsection{Examples of homotopy categories} $\, $ \vskip5pt

Let $(\mathcal{A}, \mathbb E, \mathfrak s)$ be a weakly idempotent complete extriangulated category, and $(\mathcal C,\mathcal F,\mathcal W)$ a Hovey triple in $\mathcal A$.
The following two examples of exceptional Hovey triples illustrate how the extriangle functor in \Cref{lem_extriangulated_functor}
\[\xymatrix{
  (\mathrm{Id}_{\frac{\mathcal C\cap\mathcal F}{\mathcal C\cap\mathcal F\cap\mathcal W}}, \eta) : (\frac{\mathcal C\cap\mathcal F}{\mathcal C\cap\mathcal F\cap\mathcal W}, \ \overline{\mathbb E}, \ \overline{\mathfrak s}) \longrightarrow (\frac{\mathcal C\cap\mathcal F}{\mathcal C\cap\mathcal F\cap\mathcal W}, \ \Hom_{\frac{\mathcal C\cap\mathcal F}{\mathcal C\cap\mathcal F\cap\mathcal W}}(-, \Sigma -), \ \ \mathfrak s_\triangle)}
\]
fails to be an extriangle-equivalence, or equivalently, how the natural monomorphism $\eta_{Z,X}:\overline{\mathbb E}(Z,X) \hookrightarrow \operatorname{Hom}_{\frac{\mathcal C\cap\mathcal F}{\mathcal C\cap\mathcal F\cap\mathcal W}}(Z,\Sigma X)$ fails to be an isomorphism for some $X,Z\in\mathcal C\cap\mathcal F$.

\vskip5pt

\begin{example}\label{exmple_ideal} \ Let $A=kC_2/J^4$ and $\mathcal A=A\text{-}\mathrm{mod}$, as in \Cref{exm1}.
  One has an exceptional \textup{Hovey} triple $(\mathcal{C}, \mathcal{F}, \mathcal{W})$ of Type I with
  \begin{align*}
    \mathcal C
     & =\operatorname{add}(M_{1,1}\oplus M_{1,2}\oplus M_{1,3}
    \oplus M_{1,4}\oplus M_{2,4}),                                             \\
    \mathcal F
     & =\operatorname{add}(M_{1,1}\oplus M_{1,3}\oplus M_{2,2}
    \oplus M_{1,4}\oplus M_{2,4}),                                             \\
    \mathcal W
     & =\operatorname{add}(M_{2,1}\oplus M_{1,3}\oplus M_{1,4}\oplus M_{2,4}).
  \end{align*}
  By \Cref{lem_sinj_hovey_exceptional} and the formulas in
  \Cref{eq_projectives_sinj_exceptional,eq_injectives_sinj_exceptional}, one has
  \begin{align*}
    \mathcal C\cap\mathcal F
                                            & =\operatorname{add}(M_{1,1}\oplus M_{1,3}\oplus M_{1,4}\oplus M_{2,4}), \\
    \mathcal C\cap\mathcal F\cap\mathcal W
                                            & =\operatorname{add}(M_{1,3}\oplus M_{1,4}\oplus M_{2,4}),               \\
    \mathcal P(\mathcal C \cap \mathcal{F}) & =\mathcal I(\mathcal C \cap \mathcal{F})
    =\mathcal C \cap \mathcal{F}.
  \end{align*}
  Thus
  $\frac{\mathcal C\cap\mathcal F}{\mathcal C\cap\mathcal F\cap\mathcal W}= \operatorname{add}(M_{1,1})$.
  We compute $\Sigma M_{1,1}$ by the definition $\Sigma X=RC_X$.
  Take
  \begin{equation*}\label{eq_type_I_shift}
    0\longrightarrow M_{1,1}\longrightarrow M_{1,3}\longrightarrow M_{1,2}
    \longrightarrow0
  \end{equation*}
  with $V_X=M_{1,3}\in\mathcal F\cap\mathcal W$ and
  $C_X=M_{1,2}\in\mathcal C$.
  Then consider the exact sequence
  \begin{equation*}\label{eq_type_I_replacement}
    0\longrightarrow M_{1,2}\longrightarrow M_{1,1}\oplus M_{1,4}
    \longrightarrow M_{1,3}\longrightarrow0,
  \end{equation*}
  with $M_{1,1}\oplus M_{1,4}\in\mathcal C \cap \mathcal{F}$ and $M_{1,3}\in\mathcal C\cap\mathcal W$.
  Hence $RC_X \cong M_{1,1}\oplus M_{1,4}$ in $\frac{\mathcal C\cap\mathcal F}{\mathcal C\cap\mathcal F\cap\mathcal W}$.
  Since $M_{1,4}\in\mathcal{C} \cap \mathcal{F} \cap \mathcal{W}$, it follows that $\Sigma M_{1,1}= M_{1,1}$.

  \vskip5pt

  Since $\mathcal P(\mathcal C \cap \mathcal{F}) =\mathcal C \cap \mathcal{F}$, $\overline{\mathbb E}(M_{1,1},M_{1,1})= 0$.
  However
  $$\operatorname{Hom}_{\frac{\mathcal C\cap\mathcal F}{\mathcal C\cap\mathcal F\cap\mathcal W}} (M_{1,1}, \Sigma M_{1,1}) = \operatorname{Hom}_{\frac{\mathcal C\cap\mathcal F}{\mathcal C\cap\mathcal F\cap\mathcal W}}(M_{1,1},M_{1,1}) \cong k.$$
  Hence the induced extriangulated category $(\frac{\mathcal C\cap\mathcal F}{\mathcal C\cap\mathcal F\cap\mathcal W}, \ \overline{\mathbb E}, \ \overline{\mathfrak s})$ is not canonically triangulated, and the extriangle functor
  \[\xymatrix{
    (\mathrm{Id}_{\frac{\mathcal C\cap\mathcal F}{\mathcal C\cap\mathcal F\cap\mathcal W}}, \eta) : (\frac{\mathcal C\cap\mathcal F}{\mathcal C\cap\mathcal F\cap\mathcal W}, \ \overline{\mathbb E}, \ \overline{\mathfrak s})
    \longrightarrow (\frac{\mathcal C\cap\mathcal F}{\mathcal C\cap\mathcal F\cap\mathcal W}, \ \Hom_{\frac{\mathcal C\cap\mathcal F}{\mathcal C\cap\mathcal F\cap\mathcal W}}(-, \Sigma -), \ \ \mathfrak s_\triangle)}
  \]
  is not an extriangle-equivalence.
\end{example}

\vskip5pt

\begin{example}\label{exm4} \ Let $A=kC_3/J^3$ and $\mathcal A=A\text{-}\mathrm{mod}$, as in \Cref{exm2}.
  One has an exceptional \textup{Hovey} triple $(\mathcal{C}, \mathcal{F}, \mathcal{W})$ of Type II with
  \begin{align*}
    \mathcal C
     & =\operatorname{add}(M_{1,1}\oplus M_{1,2}\oplus M_{2,1}\oplus M_{2,2}
    \oplus M_{1,3}\oplus M_{2,3}\oplus M_{3,3}),                             \\
    \mathcal F
     & =\operatorname{add}(M_{1,1}\oplus M_{1,2}\oplus M_{2,1}\oplus M_{3,2}
    \oplus M_{1,3}\oplus M_{2,3}\oplus M_{3,3}),                             \\
    \mathcal W
     & =\operatorname{add}(M_{3,1}\oplus M_{1,2}\oplus M_{1,3}
    \oplus M_{2,3}\oplus M_{3,3}).
  \end{align*}
  By \Cref{lem_sinj_hovey_exceptional} and the formulas in
  \Cref{eq_projectives_sinj_exceptional,eq_injectives_sinj_exceptional}, one has
  \begin{align*}
    \mathcal C\cap\mathcal F
     & =\operatorname{add}(M_{1,1}\oplus M_{1,2}\oplus M_{2,1} \oplus M_{1,3}\oplus M_{2,3}\oplus M_{3,3}), \\
    \mathcal C\cap\mathcal F\cap\mathcal W
     & =\operatorname{add}(M_{1,2}\oplus M_{1,3}\oplus M_{2,3}\oplus M_{3,3}),                              \\
    \mathcal P(\mathcal C \cap \mathcal{F})
     & =\operatorname{add}(M_{1,2}\oplus M_{2,1}\oplus M_{1,3} \oplus M_{2,3}\oplus M_{3,3}),               \\
    \mathcal I(\mathcal C \cap \mathcal{F})
     & =\operatorname{add}(M_{1,1}\oplus M_{1,2}\oplus M_{1,3} \oplus M_{2,3}\oplus M_{3,3}).
  \end{align*}
  Thus  $\frac{\mathcal C\cap\mathcal F}{\mathcal C\cap\mathcal F\cap\mathcal W}= \operatorname{add}(M_{1,1}\oplus M_{2,1})$.
  By the same argument one sees  $\Sigma M_{1,1} = M_{2,1}$.

  \vskip5pt

  Since $M_{1,1}\in\mathcal I(\mathcal C \cap \mathcal F)$,  it follows that $\overline{\mathbb E}(M_{2,1},M_{1,1})= 0$.
  However
  $$\operatorname{Hom}_{\frac{\mathcal C\cap\mathcal F}{\mathcal C\cap\mathcal F\cap\mathcal W}} (M_{2,1}, \Sigma M_{1,1}) = \operatorname{Hom}_{\frac{\mathcal C\cap\mathcal F}{\mathcal C\cap\mathcal F\cap\mathcal W}}(M_{2,1},M_{2,1}) \cong k.$$
  Hence the induced extriangulated category $(\frac{\mathcal C\cap\mathcal F}{\mathcal C\cap\mathcal F\cap\mathcal W}, \ \overline{\mathbb E}, \ \overline{\mathfrak s})$ is not canonically triangulated, and the extriangle functor
  \[\xymatrix{
    (\mathrm{Id}_{\frac{\mathcal C\cap\mathcal F}{\mathcal C\cap\mathcal F\cap\mathcal W}}, \eta) : (\frac{\mathcal C\cap\mathcal F}{\mathcal C\cap\mathcal F\cap\mathcal W}, \ \overline{\mathbb E}, \ \overline{\mathfrak s})
    \longrightarrow (\frac{\mathcal C\cap\mathcal F}{\mathcal C\cap\mathcal F\cap\mathcal W}, \ \Hom_{\frac{\mathcal C\cap\mathcal F}{\mathcal C\cap\mathcal F\cap\mathcal W}}(-, \Sigma -), \ \mathfrak s_\triangle)}
  \]
  is not an extriangle-equivalence.
\end{example}

\subsection{A remark on exceptional Hovey triples of Type I} $\,$

\vskip5pt

Let $(\mathcal{C}, \mathcal{F}, \mathcal{W})$ be an exceptional Hovey triple of \textup{Type I} in a weakly idempotent complete extriangulated category $(\mathcal A, \mathbb E, \mathfrak s)$, namely, $(\mathcal{C} \cap \mathcal{F})$ is a Frobenius extriangulated category with $\mathcal{C} \cap \mathcal{F} \cap \mathcal{W} \subsetneqq \mathcal{P}(\mathcal{C} \cap \mathcal{F})$.
Then there are two triangulated categories: one is $(\frac{\mathcal{C} \cap \mathcal{F}}{\mathcal{C} \cap \mathcal{F}\cap \mathcal W}, \ \Sigma, \ \triangle)$ inherited from
Nakaoka-Palu's triangulated structure on the homotopy category $\mathsf{Ho}(\mathcal A)$;
the other is the stable category $\frac{\mathcal{C} \cap \mathcal{F}}{\mathcal P(\mathcal{C} \cap \mathcal{F})}$ by \Cref{prop_Frobenius_ideal_quotient}.
Note that there is a canonical additive quotient functor
\[\xymatrix{\Pi : \frac{\mathcal{C} \cap \mathcal{F}}{\mathcal{C} \cap \mathcal{F}\cap \mathcal W}\longrightarrow \frac{\mathcal{C} \cap \mathcal{F}}{\mathcal P(\mathcal{C} \cap \mathcal{F})}}
\].
We will see that $\Pi$ is not necessarily a triangle functor.

\begin{example}
  Take $A=kC_2/J^6$.
  By \Cref{lem_sinj_hovey_exceptional} there is a \textup{Hovey} triple $(\mathcal{C}, \mathcal{F}, \mathcal{W})$
  \begin{align*}
    \mathcal{C} & = \operatorname{add}(M_{1,1} \oplus M_{1,2} \oplus M_{1,3} \oplus M_{1,4} \oplus M_{1,5} \oplus M_{1,6} \oplus M_{2,6} ), \\
    \mathcal{F} & = \operatorname{add}(M_{1,1} \oplus M_{2,2} \oplus M_{1,3} \oplus M_{2,4} \oplus M_{1,5} \oplus M_{1,6} \oplus M_{2,6} ), \\
    \mathcal{W} & = \operatorname{add}(M_{2,1} \oplus M_{1,5} \oplus M_{1,6} \oplus M_{2,6}).
  \end{align*}
  By \Cref{type} this Hovey triple is of Type I, where
  \begin{align*}
    \mathcal{C} \cap \mathcal{F}                  & = \operatorname{add}(M_{1,1} \oplus M_{1,3} \oplus M_{1,5} \oplus M_{1,6} \oplus M_{2,6}), \\
    \mathcal{C} \cap \mathcal{F} \cap \mathcal{W} & = \operatorname{add}(M_{1,5} \oplus M_{1,6} \oplus M_{2,6}),                               \\
    \mathcal{P}(\mathcal{C} \cap \mathcal{F})     & = \operatorname{add}(M_{1,1} \oplus M_{1,5} \oplus M_{1,6} \oplus M_{2,6}).
  \end{align*}

  Consider the exact sequence $0 \longrightarrow M_{1,1} \longrightarrow M_{1,5}\longrightarrow M_{1,4} \longrightarrow 0$ in $\mathcal{A}$ with $M_{1,1} \in \mathcal{P}(\mathcal{C} \cap \mathcal{F})$, $M_{1,5} \in \mathcal{C} \cap \mathcal{F} \cap \mathcal{W}$ and $M_{1,4} \in \mathcal{C}$.
  Then $M_{1,1}[1] \cong M_{1,4}$ in $\mathsf{Ho}(\mathcal{A})$.
  Consider the exact sequence $0 \longrightarrow M_{2,1} \longrightarrow M_{1,4}\longrightarrow M_{1,3} \longrightarrow 0$ in $\mathcal{A}$ with $M_{2,1} \in \mathcal{W}$ and $M_{1,3} \in \mathcal{C} \cap \mathcal{F}$.
  It follows that $M_{1,4} \cong M_{1,3}$ in $\mathsf{Ho}(\mathcal{A})$,
  and hence $\Sigma M_{1,1} \cong M_{1,3}$ in $\frac{\mathcal C\cap\mathcal F}{\mathcal C\cap\mathcal F\cap\mathcal W}$.
  Since $M_{1,1} \in \mathcal{P}(\mathcal{C} \cap \mathcal{F})$ and $M_{1,3} \notin \mathcal{P}(\mathcal{C} \cap \mathcal{F})$, one sees that $\Pi(M_{1,1}) = 0$ and $\Pi (\Sigma M_{1,1}) \neq 0$.
  It follows that $\Pi (\Sigma M_{1,1})$ is not isomorphic to the suspension of $\Pi M_{1,1}$ in $\frac{\mathcal{C} \cap \mathcal{F}}{\mathcal P(\mathcal{C} \cap \mathcal{F})}$.
  Hence the canonical quotient functor $\Pi : \frac{\mathcal C\cap\mathcal F}{\mathcal C\cap\mathcal F\cap\mathcal W}\longrightarrow \frac{\mathcal{C} \cap \mathcal{F}}{\mathcal P(\mathcal{C} \cap \mathcal{F})}$ is not a triangle functor.
\end{example}

\section{\bf Extriangulated structures arising from an exceptional model structure}

\vskip5pt

Let $(\mathcal C, \mathcal F,\mathcal W)$ be a Hovey triple in a weakly idempotent complete extriangulated category. By Theorem \ref{thm_Frobenius_pair},
if this Hovey triple is exceptional, then there are two different extriangulated structures on $\frac{\mathcal C\cap\mathcal F}{\mathcal C\cap\mathcal F\cap\mathcal W}$,  and an extriangle functor between them:
\[\xymatrix{
  (\mathrm{Id}_{\frac{\mathcal C\cap\mathcal F}{\mathcal C\cap\mathcal F\cap\mathcal W}}, \eta) : (\frac{\mathcal C\cap\mathcal F}{\mathcal C\cap\mathcal F\cap\mathcal W}, \ \overline{\mathbb E}, \ \overline{\mathfrak s}) \longrightarrow (\frac{\mathcal C\cap\mathcal F}{\mathcal C\cap\mathcal F\cap\mathcal W}, \ \Hom_{\frac{\mathcal C\cap\mathcal F}{\mathcal C\cap\mathcal F\cap\mathcal W}}(-, \Sigma -), \ \ \mathfrak s_\triangle)}
\]
which is not an extriangle-equivalence. We will relate ghost maps (cf. \cite{Chr98} and \cite{Bel15}) to the natural transformation $\eta$.

\vskip5pt

Also, if this Hovey triple is exceptional, then  $\overline{\mathbb E}$ is a closed proper subbifunctor of $\Hom_{\frac{\mathcal C\cap\mathcal F}{\mathcal C\cap\mathcal F\cap\mathcal W}}(-, \Sigma -)$.
Using Y. Ogawa's work \cite{Oga21} on  the defect category and H. Enomoto's work  \cite{Eno21} on closed additive subbifunctors,
Theorem \ref{thm_enomoto_ogawa} classifies all the closed additive subbifunctors $\mathbb F$ with
$$\overline{\mathbb E}\subseteq\mathbb F\subseteq\operatorname{Hom}_{\frac{\mathcal C\cap\mathcal F}{\mathcal C\cap\mathcal F\cap\mathcal W}}(-, \Sigma-)$$
when $\mathcal C\cap\mathcal F$ is essentially small and  has enough projective objects.
These  intermediate closed additive subbifunctors $\mathbb F$ are in one-to-one correspondence with Serre subcategories of  $\mathrm{fp}((\frac{\mathcal P(\mathcal C\cap\mathcal F)}{\mathcal C\cap\mathcal F\cap\mathcal W})^{\mathrm{op}}, \mathrm{Ab})$,
and each such $\mathbb F$  produces  an intermediate  extriangulated structure $(\frac{\mathcal C\cap\mathcal F}{\mathcal C\cap\mathcal F\cap\mathcal W}, \ \mathbb F, \ (\mathfrak s_\triangle)|_\mathbb F)$.

\vskip5pt

\subsection {Ghost ideals} $\,$ \vskip5pt

Let $(\mathcal C, \mathcal F,\mathcal W)$ be a Hovey triple in a weakly idempotent complete extriangulated category. The aim of this subsection is to relate
ghost maps to the natural transformation $\eta$ between the two extriangulated structures on $\frac{\mathcal C\cap \mathcal F}{\mathcal C\cap \mathcal F\cap \mathcal W}$, as given in \Cref{lem_extriangulated_functor}.

\vskip5pt

Let $\mathcal{A}$ be an additive category, and $\mathcal{X}$ a full additive subcategory of $\mathcal{A}$.
For each $U,V \in \mathcal{A}$, define
\begin{align*}
  \mathrm{Gh}_\mathcal{X}(U,V)   & = \{ f : U \longrightarrow V \mid \text{$f \circ a = 0 $ for any $a : X \longrightarrow U$ and $X \in \mathcal{X}$}\}, \\
  \mathrm{CoGh}_\mathcal{X}(U,V) & = \{ f : U \longrightarrow V \mid \text{$b \circ f = 0 $ for any $b : V \longrightarrow X$ and $X \in \mathcal{X}$}\}.
\end{align*}

In particular,  $\mathcal{X} = 0$ if and only if  $\mathrm{Gh}_\mathcal{X}(U,V)  = \Hom_\mathcal A (U,V) =\mathrm{CoGh}_\mathcal{X}(U,V)$ for all $U,V \in \mathcal{A}$.
It is clear that $\mathrm{Gh}_\mathcal{X}$ and $\mathrm{CoGh}_\mathcal{X}$ are ideals of $\mathcal{A}$.

\vskip5pt

J. D. Christensen \cite[Definition~2.5]{Chr98} introduced maps of this kind and called them {\it null maps}.
Here we use Beligiannis' terminology in \cite[Definition~2.1]{Bel15}, and call the maps in $\mathrm{Gh}_\mathcal{X}(U,V)$ {\it ghost maps}.

\vskip5pt

\begin{proposition}\label{prop_ghosts} \ Let $(\mathcal{C}, \mathcal{F}, \mathcal{W})$ be a Hovey triple in a weakly idempotent complete extriangulated category $(\mathcal A, \mathbb E, \mathfrak s)$,
  and $(\frac{\mathcal{C} \cap \mathcal{F}}{\mathcal{C} \cap \mathcal{F}\cap \mathcal W}, \ \overline{\mathbb E}, \ \overline{\mathfrak s})$ and  $(\frac{\mathcal C\cap\mathcal F}{\mathcal C\cap\mathcal F\cap\mathcal W},  \ \Hom_{\frac{\mathcal C\cap\mathcal F}{\mathcal C\cap\mathcal F\cap\mathcal W}}(-, \Sigma -), \ \mathfrak s_\triangle)$ the induced extriangulated structure and the extriangulated structure arising from Nakaoka-Palu's triangulated structure, respectively.
  Then for $X, Z\in\mathcal C\cap\mathcal F$ one has

  \vskip5pt

  $(1)$ \ If $\mathcal C\cap\mathcal F$ has enough projective objects, then
  \[
    \overline{\mathbb E}(Z,X)=\mathbb E(Z,X)\stackrel {\eta_{Z, X}}\cong
    \mathrm{Gh}_{\frac{\mathcal P(\mathcal C\cap\mathcal F)}{\mathcal C\cap\mathcal F\cap\mathcal W}}(Z, \Sigma X)
    \subseteq \operatorname{Hom}_{\frac{\mathcal{C} \cap \mathcal{F}}{\mathcal{C} \cap \mathcal{F}\cap \mathcal W}}(Z, \Sigma X),\]
  where $\eta_{Z, X}$ is given in {\rm \Cref{eq_extriangulated_functor}}.
  \vskip5pt

  $(1')$ \ If $\mathcal C\cap\mathcal F$ has enough injective objects, then
  \[
    \overline{\mathbb E}(Z,X)=\mathbb E(Z,X)
    \stackrel {\eta_{Z, X}}\cong
    \mathrm{CoGh}_{\Sigma(\frac{\mathcal I(\mathcal C\cap\mathcal F)}{\mathcal C\cap\mathcal F\cap\mathcal W})}(Z,\Sigma X)
    \subseteq \operatorname{Hom}_{\frac{\mathcal{C} \cap \mathcal{F}}{\mathcal{C} \cap \mathcal{F}\cap \mathcal W}}(Z, \Sigma X).
  \]
\end{proposition}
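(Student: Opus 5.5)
Write $\underline{\mathcal D}=\frac{\mathcal C\cap\mathcal F}{\mathcal C\cap\mathcal F\cap\mathcal W}$. I will prove $(1)$ in detail; $(1')$ is the dual statement, with $\Sigma$ applied on the injective side. By \Cref{lem_extriangulated_functor}, $\eta_{Z,X}$ is already injective. So it suffices to prove two things: its image lies in $\mathrm{Gh}_{\mathcal P(\mathcal C\cap\mathcal F)/(\mathcal C\cap\mathcal F\cap\mathcal W)}(Z,\Sigma X)$, and every ghost map lies in the image.

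\emph{Image consists of ghosts.} Let $\delta\in\mathbb E(Z,X)$, and let $Q\in\mathcal P(\mathcal C\cap\mathcal F)$ with a morphism $a:Q\to Z$ in $\mathcal C\cap\mathcal F$. Every morphism $Q\to Z$ in $\underline{\mathcal D}$ lifts to such an $a$. Binaturality gives $\eta_{Z,X}(\delta)\circ\overline a=\eta_{Q,X}(a^\ast\delta)$. Since $Q$ is projective in $\mathcal C\cap\mathcal F$, we have $a^\ast\delta\in\mathbb E(Q,X)=0$. Hence $\eta_{Z,X}(\delta)$ is a ghost.

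\emph{Every ghost is in the image.} Since $\mathcal C\cap\mathcal F$ has enough projectives, choose an $\mathbb E$-triangle $K\xrightarrow{u}Q\xrightarrow{p}Z\xdashrightarrow{\rho}$ in $\mathcal C\cap\mathcal F$ with $Q\in\mathcal P(\mathcal C\cap\mathcal F)$. By \Cref{lem_extriangulated_functor}, it gives a distinguished triangle $K\xrightarrow{\overline u}Q\xrightarrow{\overline p}Z\xrightarrow{c}\Sigma K$ in $(\underline{\mathcal D},\Sigma,\triangle)$, where $c=\eta_{Z,K}(\rho)$. Now let $h:Z\to\Sigma X$ be a ghost. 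Then $h\circ\overline p=0$. Applying $\operatorname{Hom}(-,\Sigma X)$ to the triangle gives an exact sequence, so $h$ factors through $c$: there is $k:\Sigma K\to\Sigma X$ with $h=k\circ c$. As $\Sigma$ is an auto-equivalence, $k=\Sigma\overline g$ for some $g:K\to X$ in $\mathcal C\cap\mathcal F$. Naturality then gives $\eta_{Z,X}(g_\ast\rho)=(\Sigma\overline g)\circ\eta_{Z,K}(\rho)=h$. This mirrors the surjectivity argument in the proof of \Cref{thm_Frobenius_pair}; the only difference is that $Q$ need not vanish in $\underline{\mathcal D}$, and the ghost hypothesis is exactly what kills $h\circ\overline p$.

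\emph{Dual case.} For $(1')$, use an $\mathbb E$-triangle $X\to I\to L\xdashrightarrow{\sigma}$ in $\mathcal C\cap\mathcal F$ with $I\in\mathcal I(\mathcal C\cap\mathcal F)$. This yields a triangle $X\to I\to L\xrightarrow{c'}\Sigma X$, and rotating it gives $L\xrightarrow{c'}\Sigma X\to\Sigma I\to\Sigma L$. A coghost $h$ with respect to $\Sigma(\mathcal I/\ldots)$ composes to zero with $\Sigma X\to\Sigma I$, so $h$ factors as $c'\circ\overline f$ with $f:Z\to L$. Then $h=\eta(f^\ast\sigma)$. Conversely, for any $b:\Sigma X\to\Sigma I'$ with $I'$ injective, write $b=\Sigma\overline{b'}$ with $b':X\to I'$. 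Then $b\circ\eta(\delta)=\eta(b'_\ast\delta)=0$, since $b'_\ast\delta\in\mathbb E(Z,I')=0$.

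The main obstacle is bookkeeping. One must check that morphisms in $\underline{\mathcal D}$ from objects of $\mathcal P(\mathcal C\cap\mathcal F)$ (respectively into $\Sigma$ of injectives) are exactly the images of genuine morphisms. One must also check that the rotation and sign conventions for $\Sigma$ in the dual case are compatible with the binaturality formula \eqref{binatl} and with the isomorphisms $\varepsilon$. Both points are handled by fullness of $\Sigma$ and the naturality identities already recorded in the proof of \Cref{lem_extriangulated_functor}.
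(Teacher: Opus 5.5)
Your proposal is correct and follows essentially the same route as the paper. Injectivity comes from \Cref{lem_extriangulated_functor}, binaturality shows that the image consists of ghosts (respectively coghosts), and the presentation triangle $K\to Q\to Z\dashrightarrow$ (respectively $X\to I\to L\dashrightarrow$), together with fullness of $\Sigma$ and exactness of $\operatorname{Hom}$ on distinguished triangles, exhibits every ghost (respectively coghost) as $\eta_{Z,X}(g_\ast\rho)$ (respectively $\eta_{Z,X}(f^\ast\sigma)$); the only cosmetic difference is that in $(1')$ you rotate the triangle explicitly, whereas the paper appeals directly to exactness of $\operatorname{Hom}(Z,-)$.
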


\begin{proof} $(1)$ \ By \Cref{eq_extriangulated_functor} one has the injective natural transformation
  \[\eta_{Z,X}: \overline{\mathbb E}(Z,X)\longrightarrow   \mathrm{Hom}_{\frac{\mathcal{C} \cap \mathcal{F}}{\mathcal{C} \cap \mathcal{F}\cap \mathcal W}}(Z,\Sigma X), \qquad \delta \longmapsto \varepsilon_{X[1]}^{-1}\circ \widetilde{\ell}(\delta).
  \]
  To see $\eta_{Z,X} (\mathbb E(Z,X)) \subseteq \mathrm{Gh}_{\frac{\mathcal P(\mathcal C\cap\mathcal F)}{\mathcal C\cap\mathcal F\cap\mathcal W}}(Z, \Sigma X)$, take $\delta \in \mathbb E(Z,X)$, $P \in \mathcal{P}(\mathcal C\cap\mathcal F)$,
  and a morphism $a:P\longrightarrow Z$ in $\mathcal C\cap\mathcal F$. Since $P$ is a projective object of $\mathcal C\cap\mathcal F$, $\overline{\mathbb E}(P, X) =0$.
  Thus $a^* \delta\in \overline{\mathbb E}(P, X) =0$, and then  by the identity (\ref{binatl}) one has
  \[\eta_{Z,X}(\delta) \circ \overline a
    = \varepsilon_{X[1]}^{-1}\circ \widetilde{\ell}(\delta) \circ a
    = \varepsilon_{X[1]}^{-1}\circ \widetilde{\ell}(a^* \delta) = \eta_{Z,X}(a^* \delta)  = 0.
  \]
  Conversely, for  any $h \in \mathrm{Gh}_{\frac{\mathcal P(\mathcal C\cap\mathcal F)}{\mathcal C\cap\mathcal F\cap\mathcal W}}(Z, \Sigma X)$, since $\mathcal C\cap\mathcal F$ has enough projective objects, there is an $\mathbb E$-triangle $K \longrightarrow P \xlongrightarrow{p} Z \xdashrightarrow{\delta}$ with $P \in \mathcal{P}(\mathcal C\cap\mathcal F)$ and $K \in \mathcal C\cap\mathcal F$.
  By definition $h \circ p = 0$ in $\frac{\mathcal{C} \cap \mathcal{F}}{\mathcal{C} \cap \mathcal{F}\cap \mathcal W}$.
  Hence there is a morphism $s : \Sigma K\longrightarrow\Sigma X$ such that $h = s \circ \varepsilon^{-1}_{K[1]} \circ \widetilde{\ell}(\delta)$:
  \[
    \xymatrix@R=15pt{
    K \ar@{->}[r] & P \ar@{->}[r]^-{p} & Z \ar@{->}[r]^-{\varepsilon^{-1}_{K[1]} \circ \widetilde{\ell}(\delta)} \ar@{->}[d]_-{h} & \Sigma K \ar@{.>}[ld]^-{s} \\
    &  & \Sigma X. &
    }
  \]
  Thus $s = \Sigma \bar t$ for a morphism $t: K\longrightarrow X$ in $\mathcal C\cap\mathcal F$, by the commutative square and \Cref{binatl}
  \[
    \xymatrix@R=15pt{
    K[1] \ar@{->}[r]^-{\varepsilon_{K[1]}^{-1}}\ar@{->}[d]_-{\overline t[1]} & R(K[1]) \ar@{=}[r] \ar@{->}[d]_-{\Sigma \bar t}  & \Sigma K \ar@{->}[d]^-{\Sigma \bar t = s} \\
    X[1] \ar@{->}[r]^-{\varepsilon_{X[1]}^{-1}} & R(X[1]) \ar@{=}[r] & \Sigma X\\
    }
  \]
  one has
  $$h = \Sigma \bar t \circ \varepsilon^{-1}_{K[1]} \circ \widetilde{\ell}(\delta) = \varepsilon^{-1}_{X[1]}\circ \bar t[1]\circ \widetilde{\ell}(\delta)
    = \varepsilon^{-1}_{X[1]}\circ \widetilde{\ell}(t_*\delta) = \eta_{Z,X}(t_* \delta)$$
  Since $t_* \delta \in \mathbb E(Z,X)$,  it follows that $h \in \eta_{Z,X}(\mathbb E(Z,X))$.

  \vskip5pt

  $(1')$ \ To see $\eta_{Z,X}(\mathbb E(Z,X))\subseteq
    \mathrm{CoGh}_{\Sigma(\frac{\mathcal I(\mathcal C\cap\mathcal F)}{\mathcal C\cap\mathcal F\cap\mathcal W})}(Z,\Sigma X)$,
  take $\delta\in\mathbb E(Z,X)$, $I\in\mathcal I(\mathcal C\cap\mathcal F)$,
  and a morphism $b:\Sigma X\to\Sigma I$ in
  $\frac{\mathcal{C} \cap \mathcal{F}}{\mathcal{C} \cap \mathcal{F}\cap \mathcal W}$.
  Since $\Sigma$ is an equivalence, $b=\Sigma\overline a$ for some morphism
  $a:X\to I$ in $\mathcal C\cap\mathcal F$.
  Since $I$ is injective, $a_*\delta\in\overline{\mathbb E}(Z,I)=0$.
  Hence, by the naturality of $\varepsilon$ and the identity \eqref{binatl}, one has
  \[
    b\circ\eta_{Z,X}(\delta)
    =(\Sigma\overline a)\circ\varepsilon_{X[1]}^{-1}
    \circ\widetilde{\ell}(\delta)
    =\varepsilon_{I[1]}^{-1}\circ\overline a[1]
    \circ\widetilde{\ell}(\delta)
    =\varepsilon_{I[1]}^{-1}\circ\widetilde{\ell}(a_*\delta)
    =\eta_{Z,I}(a_*\delta)=0.
  \]

  Conversely, for any
  $h\in\mathrm{CoGh}_{\Sigma(\frac{\mathcal I(\mathcal C\cap\mathcal F)}{\mathcal C\cap\mathcal F\cap\mathcal W})}(Z,\Sigma X)$,
  since $\mathcal C\cap\mathcal F$ has enough injective objects, there is an
  $\mathbb E$-triangle
  $X\xlongrightarrow{i}I\longrightarrow K\xdashrightarrow{\delta}$
  with $I\in\mathcal I(\mathcal C\cap\mathcal F)$ and
  $K\in\mathcal C\cap\mathcal F$.
  By definition, $(\Sigma\overline i)\circ h=0$.
  Hence, by the exactness of $\operatorname{Hom}(Z,-)$ for the associated
  distinguished triangle, there is a morphism $\overline s:Z\longrightarrow K$ such that
  $h=\varepsilon_{X[1]}^{-1}\circ\widetilde{\ell}(\delta)\circ\overline s$
  \[
    \xymatrix@R=15pt{
    & & Z \ar@{.>}[d]_-{\overline s} \ar@{->}[dr]^-h & \\
    X \ar@{->}[r]^-{\overline i} & I \ar@{->}[r] & K
    \ar@{->}[r]_-{\varepsilon_{X[1]}^{-1}\circ\widetilde{\ell}(\delta)}
    & \Sigma X .
    }
  \]
  Choosing a representative $s:Z\longrightarrow K$ in $\mathcal C\cap\mathcal F$, by
  \eqref{binatl} one has
  \[
    h=\varepsilon_{X[1]}^{-1}\circ\widetilde{\ell}(\delta)\circ\overline s
    =\varepsilon_{X[1]}^{-1}\circ\widetilde{\ell}(s^*\delta)
    =\eta_{Z,X}(s^*\delta).
  \]
  Since $s^*\delta\in\mathbb E(Z,X)$, it follows that
  $h\in\eta_{Z,X}(\mathbb E(Z,X))$.\end{proof}


\begin{remark}
  The subcategory $\overline{\mathcal I}$ need not be closed under $\Sigma$, even when $\mathcal C\cap\mathcal F$ has enough injective objects.
  Indeed, in \Cref{exm4}, $\mathcal C\cap\mathcal F$ has enough injective objects by \Cref{corollary_D_enough_PI}.
  Then one has $\overline{\mathcal I}=\operatorname{add}M_{1,1}$, but $\Sigma M_{1,1}=M_{2,1} \notin \overline{\mathcal{I}}$.
\end{remark}

\vskip5pt

\subsection{Defects and intermediate extriangulated structures} $\,$

\vskip5pt

Let $(\mathcal C, \mathcal F,\mathcal W)$ be an exceptional  Hovey triple in a weakly idempotent complete extriangulated category
$(\mathcal A,\mathbb E,\mathfrak s)$. Then one has two different extriangulated structures on $\frac{\mathcal C\cap\mathcal F}{\mathcal C\cap\mathcal F\cap\mathcal W}$:
\[\xymatrix{(\frac{\mathcal C\cap\mathcal F}{\mathcal C\cap\mathcal F\cap\mathcal W}, \ \overline{\mathbb E}, \ \overline{\mathfrak s}) \ \ \ \mbox{and} \ \ \
    (\frac{\mathcal C\cap\mathcal F}{\mathcal C\cap\mathcal F\cap\mathcal W},  \ \Hom_{\frac{\mathcal C\cap\mathcal F}{\mathcal C\cap\mathcal F\cap\mathcal W}}(-, \Sigma -), \ \mathfrak s_\triangle)}\]
and  $\overline{\mathbb E}$ is a {\it closed} subbifunctor of $\Hom_{\frac{\mathcal C\cap\mathcal F}{\mathcal C\cap\mathcal F\cap\mathcal W}}(-, \Sigma -)$ (cf. Lemma \ref{lem_extriangulated_functor}).
Using defect categories in \cite{Oga21} and the isomorphism of posets in \cite{Eno21},
Theorem \ref{thm_enomoto_ogawa} classifies all the closed additive subbifunctors $\mathbb F$ with
$\overline{\mathbb E}\subseteq\mathbb F\subseteq\operatorname{Hom}_{\frac{\mathcal C\cap\mathcal F}{\mathcal C\cap\mathcal F\cap\mathcal W}}(-, \Sigma-)$,
in case $\mathcal C\cap\mathcal F$ is essentially small and  has enough projective objects.
These  intermediate closed additive subbifunctors $\mathbb F$ are in one-to-one correspondence with Serre subcategories of  $\mathrm{fp}((\frac{\mathcal P(\mathcal C\cap\mathcal F)}{\mathcal C\cap\mathcal F\cap\mathcal W})^{\mathrm{op}}, \mathrm{Ab})$,
and each such $\mathbb F$  gives an  extriangulated structure $(\frac{\mathcal C\cap\mathcal F}{\mathcal C\cap\mathcal F\cap\mathcal W}, \ \mathbb F, \ (\mathfrak s_\triangle)|_\mathbb F)$ between
the two known extriangulated structures.

\vskip5pt

Let $\mathcal A$ be an essentially small additive category, and $\mathrm{Fun}(\mathcal A^{\mathrm{op}}, \mathrm{Ab})$ the category of contravariant additive functors from $\mathcal A$ to ${\rm Ab}$.
Then $\mathrm{Fun}(\mathcal A^{\mathrm{op}},\mathrm{Ab})$ is an abelian category.
Denote by $\mathrm{fp}(\mathcal A^{\mathrm{op}}, \mathrm{Ab})$ the full subcategory of $\mathrm{Fun}(\mathcal A^{\mathrm{op}},\mathrm{Ab})$ consisting of finitely presented functors, i.e., $F\in\mathrm{fp}(\mathcal A^{\mathrm{op}},\mathrm{Ab})$ if and only if there is a morphism $g: Y\longrightarrow Z$ in $\mathcal A$ such that there is  an exact sequence in  $\mathrm{Fun}(\mathcal A^{\mathrm{op}},\mathrm{Ab})$:
\[\Hom_\mathcal A(-,Y)\xlongrightarrow{\Hom_\mathcal A(-,g)}
  \Hom_\mathcal A(-,Z)\longrightarrow F\longrightarrow 0.
\]
Denote by ${\rm coh}\mathcal A$ the full subcategory of $\mathrm{Fun}(\mathcal A^{\mathrm{op}},\mathrm{Ab})$ consisting of finitely presented functors $F$ such that each finitely generated subfunctor of $F$ is
finitely presented.

\vskip5pt

Let $\mathcal{A}$ be an additive category, and $g : Y \longrightarrow Z$ a morphism in $\mathcal{A}$. By definition, a \emph{weak kernel} of $g$ is a morphism $f:X\longrightarrow Y$ such that $g\circ f=0$ and, for any $h:W\longrightarrow Y$ with $g\circ h=0$, there is a morphism $h':W\longrightarrow X$ satisfying $f\circ h'=h$.
The category $\mathcal{A}$ has weak kernels, provided that each morphism in $\mathcal{A}$ has a weak kernel.

\vskip5pt

\begin{lemma} \label{lem_weak_kernels} \ Let $\mathcal{A}$ be an essentially small additive category. Then

  \vskip5pt

  $(1)$ \ {\rm (\cite[Theorem 1.4]{Fre66})} \ $\mathrm{fp}(\mathcal A^{\mathrm{op}},\mathrm{Ab})$
  is  an exact abelian subcategory of $\mathrm{Fun}(\mathcal A^{\mathrm{op}},\mathrm{Ab})$ if and only if $\mathcal A$ has weak kernels.

  \vskip5pt

  $(2)$ \ {\rm (\cite[Proposition 1.5]{Her})} \ ${\rm coh}\mathcal{A}$ is  an exact abelian subcategory of $\mathrm{Fun}(\mathcal A^{\mathrm{op}},\mathrm{Ab})$.
\end{lemma}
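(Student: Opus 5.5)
For part (1), I would work directly in $\mathrm{Fun}(\mathcal A^{\mathrm{op}},\mathrm{Ab})$. First I would check which one of $\mathrm{fp}$-closure or weak kernels drives the other. Suppose $\mathcal A$ has weak kernels. For a finitely presented $F=\operatorname{Coker}\Hom_\mathcal A(-,g)$ with $g:Y\to Z$, I would pick a weak kernel $f:X\to Y$ of $g$. Then $\Hom(-,X)\to\Hom(-,Y)\to\Hom(-,Z)$ is exact, so every finitely presented functor has a projective resolution by representables that can be continued indefinitely. Next I would show that $\mathrm{fp}$ is closed under kernels and cokernels in $\mathrm{Fun}$. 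Closure under cokernels is standard: lift a morphism $F\to G$ to presentations by Yoneda, and the cokernel is presented by the combined map $\Hom(-,Y_G\oplus Z_F)\to \Hom(-,Z_G)$. For kernels, I would note that the image of a map between representables $\Hom(-,Z)\to\Hom(-,Z')$ is finitely presented exactly because weak kernels exist. Then I would use the horseshoe and snake-lemma-style argument, as in Freyd, to present the kernel. Extensions are handled similarly via the horseshoe lemma. Since kernels and cokernels are then computed as in $\mathrm{Fun}$, $\mathrm{fp}$ is an exact abelian subcategory.

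For the converse, assume $\mathrm{fp}$ is an exact abelian subcategory. Given $g:Y\to Z$, the kernel $K$ of $\Hom(-,g)$ in $\mathrm{Fun}$ is then the kernel in $\mathrm{fp}$, hence finitely presented, and in particular finitely generated. So there is an epimorphism $\Hom(-,X)\to K$, which by Yoneda comes from some $f:X\to Y$. Unwinding, $f$ is a weak kernel of $g$.

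For part (2), I would use Herzog's/Auslander's argument. The plan is to show that coherent functors are closed under kernels, cokernels and extensions in $\mathrm{Fun}$, via the standard lemmas for coherent objects: a finitely generated subobject of a coherent object is coherent, and the kernel of a map between coherent objects is coherent. These follow by the classical coherence arguments, which compare finitely generated and finitely presented subobjects using the pseudo-coherence exact-sequence lemmas.

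The main obstacle I expect is the kernel closure in (1), which requires careful bookkeeping of presentations. Since these are cited results (Freyd, Herzog), I would in practice simply invoke them rather than reprove them in full.
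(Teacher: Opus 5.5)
Your proposal is correct and takes the same approach as the paper. The paper gives no argument of its own for this lemma; it simply cites Freyd's Theorem 1.4 and Herzog's Proposition 1.5.

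Your outline follows the classical arguments behind those citations:
\begin{itemize}
\item weak kernels make the image of a map between representables finitely presented, which gives closure of $\mathrm{fp}$ under kernels;
\item the Yoneda lifting of an epimorphism onto the kernel produces a weak kernel, which gives the converse;
\item the standard closure lemmas for coherent objects give part (2).
\end{itemize}
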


\vskip5pt

Note that if  $\mathcal A$ has weak kernels then  $\mathrm{fp}(\mathcal A^{\mathrm{op}},\mathrm{Ab}) = {\rm coh}\mathcal{A}$.

\vskip5pt

Let $(\mathcal A, \mathbb E, \mathfrak s)$ be an essentially small extriangulated category. For $\delta\in\mathbb E(Z,X)$ and an $\mathbb E$-triangle $X\xlongrightarrow{f}Y\xlongrightarrow{g}Z\xdashrightarrow{\delta}$,
\emph{the contravariant defect} (or simply, \emph{the defect}) of $\delta$ (\cite[p.128]{ARS}; \cite[Definition~2.4]{Oga21}) is the finitely presented functor
\[
  \delta^*
  :=\operatorname{Coker}\bigl(
  \Hom_\mathcal A(-,Y)
  \xlongrightarrow{\Hom_\mathcal A(-,g)}
  \Hom_\mathcal A(-,Z)
  \bigr)
  \in\mathrm{fp}(\mathcal A^{\mathrm{op}},\mathrm{Ab}).
\]
Denote by $\operatorname{def}\mathbb E$ the full subcategory of $\mathrm{fp}(\mathcal A^{\mathrm{op}},\mathrm{Ab})$ consisting of functors isomorphic to defects.

\vskip5pt

\begin{theorem}\label{prop_ogawa_colocalisation} \ Let $(\mathcal A,\mathbb E,\mathfrak s)$ be an essentially small extriangulated category. Then

  \vskip5pt

  $(1)$ \ {\rm (\cite[Proposition 2.9]{Eno21}; \cite[Proposition 2.5]{Oga21})} \ $\operatorname{def}\mathbb E$ is a Serre subcategory of ${\rm coh}\mathcal{A}$.
  In particular, if $\mathcal A$ has weak kernels then $\operatorname{def}\mathbb E$ is a Serre subcategory of $\mathrm{fp}(\mathcal A^{\mathrm{op}},\mathrm{Ab})$.

  \vskip5pt

  $(2)$ \ {\rm (\cite[Proposition 2.16]{Oga21})} \ If $(\mathcal A,\mathbb E,\mathfrak s)$ has weak kernels and enough projective objects, then
  $\operatorname{def}\mathbb E = \mathrm{fp}((\mathcal A/\mathcal P(\mathcal A))^{\mathrm{op}},\mathrm{Ab})$, and the Serre quotient
  $\mathrm{fp}(\mathcal A^{\mathrm{op}},\mathrm{Ab})/\operatorname{def}\mathbb E \cong  \mathrm{fp}(\mathcal P(\mathcal A)^{\mathrm{op}},\mathrm{Ab}).$
\end{theorem}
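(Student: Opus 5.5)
The plan for part $(1)$ is to identify each defect with a subfunctor of $\mathbb E(-,X)$ and then check the three Serre conditions by manipulating $\mathbb E$-triangles. For an $\mathbb E$-triangle $X\xlongrightarrow{f}Y\xlongrightarrow{g}Z\xdashrightarrow{\delta}$, the long exact sequence $\Hom_{\mathcal A}(-,Y)\to\Hom_{\mathcal A}(-,Z)\xlongrightarrow{\delta_\sharp}\mathbb E(-,X)\to\mathbb E(-,Y)$ (from \cite{NP19}) gives $\delta^*\cong\operatorname{Im}\delta_\sharp$. Under this isomorphism, the image of $\Hom_{\mathcal A}(-,W)\xrightarrow{\Hom(-,h)}\Hom_{\mathcal A}(-,Z)\to\delta^*$ for a morphism $h:W\to Z$ is exactly $(h^*\delta)^*$.

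First I show that defects lie in ${\rm coh}\mathcal A$. A defect is finitely presented by definition. Any finitely generated subfunctor of $\delta^*$ is a quotient of some $\Hom_{\mathcal A}(-,W)$. By Yoneda this map lifts through $\Hom_{\mathcal A}(-,Z)$, so the subfunctor is $(h^*\delta)^*$, which is again finitely presented. The same observation gives closure under subobjects inside ${\rm coh}\mathcal A$, because subobjects in ${\rm coh}\mathcal A$ are finitely generated.

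For quotients, let $K=(h^*\delta)^*\subseteq\delta^*$. Then $\delta^*/K$ is the cokernel of $\Hom_{\mathcal A}(-,Y\oplus W)\to\Hom_{\mathcal A}(-,Z)$ induced by $(g,h)$. I will use \cite[Corollary 3.16]{NP19} to see that $(g,h):Y\oplus W\to Z$ is again an $\mathbb E$-deflation, since $g$ is a deflation. Its defect is then $\delta^*/K$.

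For extensions $0\to\delta_1^*\to G\to\delta_2^*\to0$ in ${\rm coh}\mathcal A$, I will lift the representable presentations in a horseshoe manner. I then combine the two deflations using {\bf ET4}$^{\mathrm{op}}$ (composition of deflations) to produce an $\mathbb E$-triangle whose defect is $G$. I expect this extension step to be the main obstacle: one has to arrange that the lifted map to the middle term really is a deflation with the correct cokernel. If the direct construction gets messy, the fallback is Enomoto's argument via $\mathbb E$ being an extension of $\mathbb E$-closed subfunctors. The statement for $\mathrm{fp}(\mathcal A^{\mathrm{op}},\mathrm{Ab})$ then follows from Lemma \ref{lem_weak_kernels}, since ${\rm coh}\mathcal A=\mathrm{fp}(\mathcal A^{\mathrm{op}},\mathrm{Ab})$ when $\mathcal A$ has weak kernels.

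For part $(2)$, I first show that defects vanish on projectives. For $P\in\mathcal P(\mathcal A)$, the map $\Hom(P,g)$ is surjective, so $\delta^*(P)=0$ and $\delta^*$ factors through $\mathcal A/\mathcal P(\mathcal A)$. For the converse, take $F$ presented by $\overline u:Y\to Z$ in the stable category. I choose a deflation $p:Q\to Z$ with $Q$ projective and note that $(u,p):Y\oplus Q\to Z$ is a deflation, again by \cite[Corollary 3.16]{NP19}. Its defect is the cokernel of $\Hom(-,Y\oplus Q)\to\Hom(-,Z)$. Because maps into $Z$ that factor through projectives factor through $p$, this cokernel is $F$.

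Finally, for the Serre quotient I consider the restriction functor $\mathrm{fp}(\mathcal A^{\mathrm{op}},\mathrm{Ab})\to\mathrm{fp}(\mathcal P(\mathcal A)^{\mathrm{op}},\mathrm{Ab})$. It is exact and essentially surjective; a presentation over $\mathcal P(\mathcal A)$ extends by left Kan extension, which preserves representables. Its kernel is precisely $\operatorname{def}\mathbb E$ by the previous step. The left adjoint given by Kan extension is fully faithful, so the standard localization criterion yields the stated equivalence.
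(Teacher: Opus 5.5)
The paper gives no argument for this statement; it quotes \cite[Proposition 2.9]{Eno21} and \cite[Propositions 2.5 and 2.16]{Oga21}. Your proposal is a correct, self-contained proof of what the paper outsources. It treats defects as the subfunctors of $\mathbb E(-,X)$ generated by $\delta$, uses closure properties of deflations, and handles the quotient via the adjunction between left Kan extension and restriction.

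Compared with the citation, your route shows which hypotheses are used where. The Serre properties need only \textbf{ET2} and \textbf{ET4}$^{\mathrm{op}}$; in fact they hold for finitely generated subfunctors and for extensions in the whole functor category. The equality $\operatorname{def}\mathbb E=\mathrm{fp}((\mathcal A/\mathcal P(\mathcal A))^{\mathrm{op}},\mathrm{Ab})$ needs only enough projectives. Weak kernels enter only to make the categories in the Serre quotient abelian. The citation buys brevity.

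Three steps you left open should be written out, though none is a real obstacle.

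\textbf{Extensions.} The horseshoe gives $G\cong\operatorname{Coker}\Hom(-,\phi)$ with $\phi=\left(\begin{smallmatrix}g_1&-u\\0&g_2\end{smallmatrix}\right)\colon Y_1\oplus Y_2\to Z_1\oplus Z_2$. Here $u\colon Y_2\to Z_1$ lifts, through $\Hom(-,Z_1)\to\delta_1^*$, the composite $\Hom(-,Y_2)\to\Hom(-,Z_2)\to G$, which lands in $\delta_1^*$. Then $\phi=\left(\begin{smallmatrix}1&0\\0&g_2\end{smallmatrix}\right)\left(\begin{smallmatrix}1&-u\\0&1\end{smallmatrix}\right)\left(\begin{smallmatrix}g_1&0\\0&1\end{smallmatrix}\right)$ is a composite of deflations (by \textbf{ET2}) and an automorphism. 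Hence $\phi$ is a deflation by \textbf{ET4}$^{\mathrm{op}}$, so the step you flagged as the main obstacle goes through and no fallback is needed.

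\textbf{Quotients.} The same trick works: $(g,h)=(1_Z,h)\circ\left(\begin{smallmatrix}g&0\\0&1_W\end{smallmatrix}\right)$. Here $(1_Z,h)$ is the split deflation $(1_Z,0)$ precomposed with an automorphism of $Z\oplus W$. If you prefer to cite, it is the dual of \cite[Corollary 3.16]{NP19}, not the corollary itself, that makes $(g,h)$ a deflation.

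\textbf{Part $(2)$.} You must check that restriction actually maps $\mathrm{fp}(\mathcal A^{\mathrm{op}},\mathrm{Ab})$ into $\mathrm{fp}(\mathcal P(\mathcal A)^{\mathrm{op}},\mathrm{Ab})$, and this is where enough projectives is used. Take a deflation $Q\to Z$ with cocone $K$ and a deflation $Q'\to K$ with $Q,Q'$ projective. Since $\mathbb E(P,K)=0$ for projective $P$, one gets a presentation $\Hom_{\mathcal P(\mathcal A)}(-,Q')\to\Hom_{\mathcal P(\mathcal A)}(-,Q)\to\Hom_{\mathcal A}(-,Z)|_{\mathcal P(\mathcal A)}\to 0$. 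You should also note that $\mathcal P(\mathcal A)$ inherits weak kernels: compose a weak kernel $K\to Q_1$ taken in $\mathcal A$ with a deflation onto $K$ from a projective. This makes the target abelian, so ``restriction is exact'' makes sense and the localization criterion with the fully faithful left adjoint applies.
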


\vskip5pt

Applying Theorem \ref{prop_ogawa_colocalisation} to exact model structures,  one has

\vskip5pt

\begin{corollary}\label{defect} \ Let $(\mathcal C,\mathcal F,\mathcal W)$ be a Hovey triple in a weakly idempotent complete extriangulated category $(\mathcal A,\mathbb E,\mathfrak s)$,
  and $(\frac{\mathcal C\cap\mathcal F}{\mathcal C\cap\mathcal F\cap\mathcal W}, \ \overline{\mathbb E}, \ \overline{\mathfrak s})$ the induced extriangulated category. Assume that $\mathcal C\cap\mathcal F$ is essentially small and has enough projective objects. Then

  \vskip10pt

  $(1)$ \ $\operatorname{def}\overline{\mathbb E}$ is a Serre subcategory of $\mathrm{fp}((\frac{\mathcal C\cap\mathcal F}{\mathcal C\cap\mathcal F\cap\mathcal W})^{\mathrm{op}},\mathrm{Ab});$ and
  $\operatorname{def}\overline{\mathbb E} = \mathrm{fp}((\frac{\mathcal C\cap\mathcal F}{\mathcal P(\mathcal C\cap\mathcal F)})^{\!\mathrm{op}}, \ \mathrm{Ab}).$

  \vskip10pt

  $(2)$ \ The Serre quotient
  $\mathrm{fp}((\frac{\mathcal C\cap\mathcal F}{\mathcal C\cap\mathcal F\cap\mathcal W})^{\mathrm{op}},\mathrm{Ab})/\operatorname{def}\overline{\mathbb E} \cong  \mathrm{fp}((\frac{\mathcal P(\mathcal C\cap\mathcal F)}{\mathcal C\cap\mathcal F\cap\mathcal W})^{\mathrm{op}},\mathrm{Ab}).$
\end{corollary}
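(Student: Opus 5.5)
The plan is to apply Theorem \ref{prop_ogawa_colocalisation} to the extriangulated category $\mathcal D:=\frac{\mathcal C\cap\mathcal F}{\mathcal C\cap\mathcal F\cap\mathcal W}$ with the induced structure $(\overline{\mathbb E},\overline{\mathfrak s})$. Its hypotheses are that $\mathcal D$ is essentially small, has weak kernels, and has enough projective objects. Essential smallness is inherited from $\mathcal C\cap\mathcal F$.

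\textbf{Weak kernels.} This comes for free: by Lemma \ref{lem_extriangulated_functor} and \cite[Theorem 6.20]{NP19}, the underlying additive category of $\mathcal D$ is triangulated via $(\Sigma,\triangle)$. In a triangulated category every morphism $g:Y\to Z$ has a weak kernel, namely the first map of a distinguished triangle $X\to Y\xrightarrow{g}Z\to\Sigma X$, by exactness of $\operatorname{Hom}(W,-)$. Lemma \ref{lem_weak_kernels} then shows that $\mathrm{fp}(\mathcal D^{\mathrm{op}},\mathrm{Ab})$ is abelian and equals ${\rm coh}\,\mathcal D$.

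\textbf{Enough projectives.} I will check that $\mathcal D$ has enough projectives and identify them. By Lemma \ref{extriangulatedquotint} (see \cite[Proposition 3.30]{NP19}), $\overline{\mathbb E}(Z,X)=\mathbb E(Z,X)$ for $X,Z\in\mathcal C\cap\mathcal F$, and the $\overline{\mathbb E}$-triangles are exactly the images of $\mathbb E$-triangles in $\mathcal C\cap\mathcal F$. Hence an object is projective in $\mathcal D$ iff it is isomorphic in $\mathcal D$ to an object of $\mathcal P(\mathcal C\cap\mathcal F)$. Here I need that any $Q\in\mathcal C\cap\mathcal F$ isomorphic in $\mathcal D$ to some $P$ satisfies $\mathbb E(Q,-)=0$. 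To see this, note that $\overline{\mathbb E}$ is a functor on $\mathcal D$, so $\overline{\mathbb E}(Q,-)\cong\overline{\mathbb E}(P,-)=0$. Thus $\mathcal P(\mathcal D)$ is the image $\frac{\mathcal P(\mathcal C\cap\mathcal F)}{\mathcal C\cap\mathcal F\cap\mathcal W}$, up to isomorphism and summands. Enough projectives in $\mathcal C\cap\mathcal F$ then give $\overline{\mathbb E}$-triangles $K\to P\to Z\dashrightarrow$ in $\mathcal D$, so $\mathcal D$ has enough projectives.

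\textbf{Conclusion.} Theorem \ref{prop_ogawa_colocalisation}(1) gives that $\operatorname{def}\overline{\mathbb E}$ is Serre in $\mathrm{fp}(\mathcal D^{\mathrm{op}},\mathrm{Ab})$. Part (2) gives $\operatorname{def}\overline{\mathbb E}=\mathrm{fp}((\mathcal D/\mathcal P(\mathcal D))^{\mathrm{op}},\mathrm{Ab})$ and the quotient $\cong\mathrm{fp}(\mathcal P(\mathcal D)^{\mathrm{op}},\mathrm{Ab})$. It remains to rewrite these.

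First, $\mathcal D/\mathcal P(\mathcal D)$ is canonically isomorphic to $\frac{\mathcal C\cap\mathcal F}{\mathcal P(\mathcal C\cap\mathcal F)}$. This holds because $\mathcal C\cap\mathcal F\cap\mathcal W\subseteq\mathcal P(\mathcal C\cap\mathcal F)$ by Fact \ref{lem_Hovey_hereditary}(2), so quotienting in two steps equals quotienting once. A morphism of $\mathcal C\cap\mathcal F$ factors through $\mathcal P(\mathcal D)$ in $\mathcal D$ iff it factors through $\mathcal P(\mathcal C\cap\mathcal F)$ in $\mathcal C\cap\mathcal F$ modulo maps through $\mathcal C\cap\mathcal F\cap\mathcal W$, which themselves factor through $\mathcal P(\mathcal C\cap\mathcal F)$.

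Second, $\mathcal P(\mathcal D)$ is the full subcategory $\frac{\mathcal P(\mathcal C\cap\mathcal F)}{\mathcal C\cap\mathcal F\cap\mathcal W}$, possibly closed under isomorphism. Since $\mathrm{fp}$ is invariant under passing to an equivalent category, this gives (2).

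\textbf{Main obstacle.} The only delicate point is the precise identification of $\mathcal P(\mathcal D)$ in the enough-projectives step, i.e. the isomorphism-closure argument, together with bookkeeping for the equivalence invariance of $\mathrm{fp}$. I would handle both via the functoriality of $\overline{\mathbb E}$ on $\mathcal D$ as above.
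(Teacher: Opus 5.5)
Your proposal is correct and follows the paper's own proof. Like the paper, you apply Theorem~\ref{prop_ogawa_colocalisation} to $(\frac{\mathcal C\cap\mathcal F}{\mathcal C\cap\mathcal F\cap\mathcal W},\overline{\mathbb E},\overline{\mathfrak s})$: weak kernels come from Nakaoka--Palu's triangulation, essential smallness and enough projectives are inherited from $\mathcal C\cap\mathcal F$, and the projectives are identified as $\frac{\mathcal P(\mathcal C\cap\mathcal F)}{\mathcal C\cap\mathcal F\cap\mathcal W}$. Your extra bookkeeping, namely the two-step quotient $\mathcal D/\mathcal P(\mathcal D)\cong\frac{\mathcal C\cap\mathcal F}{\mathcal P(\mathcal C\cap\mathcal F)}$ via $\mathcal C\cap\mathcal F\cap\mathcal W\subseteq\mathcal P(\mathcal C\cap\mathcal F)$, just makes explicit what the paper leaves implicit; the isomorphism-closure worry is moot, since $\overline{\mathbb E}(Q,-)=\mathbb E(Q,-)$ on $\mathcal C\cap\mathcal F$ already gives $\mathcal P(\mathcal D)=\mathcal P(\mathcal C\cap\mathcal F)$ as classes of objects.
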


\begin{proof} \ Apply Theorem \ref{prop_ogawa_colocalisation} to the induced extriangulated category $(\frac{\mathcal C\cap\mathcal F}{\mathcal C\cap\mathcal F\cap\mathcal W}, \ \overline{\mathbb E}, \ \overline{\mathfrak s})$.
  Since $\frac{\mathcal C\cap\mathcal F}{\mathcal C\cap\mathcal F\cap\mathcal W}$ admits a triangulated structure, it has weak kernels.
  Since by the assumption $\mathcal C\cap\mathcal F$ is essentially small and has enough projective objects, it follows that
  $\frac{\mathcal C\cap\mathcal F}{\mathcal C\cap\mathcal F\cap\mathcal W}$ is essentially small and has enough projective objects, with
  $\mathcal P(\frac{\mathcal C\cap\mathcal F}{\mathcal C\cap\mathcal F\cap\mathcal W}) = \frac{\mathcal P(\mathcal C\cap\mathcal F)}{\mathcal C\cap\mathcal F\cap\mathcal W}$.
  Then the assertion follows directly from Theorem \ref{prop_ogawa_colocalisation}.
\end{proof}

\vskip5pt

Let $(\mathcal A,\mathbb E,\mathfrak s)$ be an extriangulated category. Enomoto \cite{Eno21} classified all the closed additive subbifunctors of $\mathbb E$ by the Serre subcategories of $\operatorname{def}\mathbb E$.

\vskip5pt

\begin{theorem} \label{prop_enomoto_correspondence} {\rm (\cite[Theorem~B]{Eno21})} \  Let $(\mathcal A,\mathbb E,\mathfrak s)$ be an essentially small extriangulated category.
  Then there is an isomorphism of posets$:$
  \[
    \left\{
    \textup{closed additive subbifunctors of $\mathbb E$}
    \right\}
    \longrightarrow
    \left\{
    \textup{Serre subcategories of }\operatorname{def}\mathbb E
    \right\},
    \qquad
    \mathbb F\longmapsto\operatorname{def}\mathbb F
  \]
  with the inverse given by $\mathcal S\mapsto \mathbb F_{\mathcal S}$, where  $\mathbb F_{\mathcal S}(Z,X)
    =
    \{\delta\in\mathbb E(Z,X)\mid\delta^*\in\mathcal S\}$.
\end{theorem}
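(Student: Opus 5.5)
The plan is to establish the two maps and check that they are mutually inverse and order preserving, using only defect calculus together with the criterion that an additive subbifunctor is closed if and only if its inflations are closed under composition (\cite[Proposition 3.16]{HLN}). Throughout, for an $\mathbb E$-triangle $X\xlongrightarrow{f}Y\xlongrightarrow{g}Z\xdashrightarrow{\delta}$, I use the description $\delta^*=\Hom_\mathcal A(-,Z)/\operatorname{Im}\Hom_\mathcal A(-,g)$.

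\textbf{Step 1: $\operatorname{def}\mathbb F$ is a Serre subcategory of $\operatorname{def}\mathbb E$.} If $\mathbb F$ is closed, then $(\mathcal A,\mathbb F,\mathfrak s|_{\mathbb F})$ is extriangulated. Theorem \ref{prop_ogawa_colocalisation}(1) then shows that $\operatorname{def}\mathbb F$ is Serre in ${\rm coh}\mathcal A$. Clearly $\operatorname{def}\mathbb F\subseteq\operatorname{def}\mathbb E$, and $\operatorname{def}\mathbb E$ is itself Serre in ${\rm coh}\mathcal A$. Hence $\operatorname{def}\mathbb F$ is Serre in $\operatorname{def}\mathbb E$. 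Monotonicity of $\mathbb F\mapsto\operatorname{def}\mathbb F$ is obvious.

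\textbf{Step 2: $\mathbb F_{\mathcal S}$ is a closed additive subbifunctor.} Let $\mathcal S$ be Serre in $\operatorname{def}\mathbb E$. I will check four properties in turn.
\begin{enumerate}
\item For $u:Z'\to Z$, the defect $(u^*\delta)^*$ is the image of $\Hom(-,Z')\to\delta^*$, so it is a subobject of $\delta^*$.
\item For $v:X\to X'$, the defect $(v_*\delta)^*$ is a quotient of $\delta^*$, since $g$ factors through the deflation of $v_*\delta$.
\item The defect $(\delta\oplus\delta')^*$ equals $\delta^*\oplus\delta'^*$. Since $\delta+\delta'=\nabla_*\Delta^*(\delta\oplus\delta')$, the defect of $\delta+\delta'$ is a subquotient of $\delta^*\oplus\delta'^*$.
\end{enumerate}
Closure of $\mathcal S$ under subobjects, quotients and extensions therefore makes $\mathbb F_{\mathcal S}$ an additive subbifunctor.

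For closedness, I verify that $\mathbb F_{\mathcal S}$-deflations compose. Given deflations $g_1,g_2$ with defects in $\mathcal S$, the ET4$^{\mathrm{op}}$ diagram yields an exact sequence $\delta_1^*\to(\text{composite})^*\to\delta_2^*\to 0$ in ${\rm coh}\mathcal A$. Hence the middle term is an extension of a quotient of $\delta_1^*$ by $\delta_2^*$, and it lies in $\mathcal S$.

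\textbf{Step 3: the two maps are mutually inverse.} The identity $\operatorname{def}\mathbb F_{\mathcal S}=\mathcal S$ is immediate: every object of $\mathcal S\subseteq\operatorname{def}\mathbb E$ is $\delta^*$ for some $\delta$, and that $\delta$ lies in $\mathbb F_{\mathcal S}$ by definition. The remaining identity $\mathbb F_{\operatorname{def}\mathbb F}=\mathbb F$ is, I expect, the main obstacle. Concretely, I must show that if $\delta^*\cong\varepsilon^*$ with $\varepsilon\in\mathbb F(Z',X')$, then $\delta\in\mathbb F(Z,X)$. I would proceed in three moves.
\begin{enumerate}
\item \emph{Reduce to a common third term.} The isomorphism $\delta^*\to\varepsilon^*$ sends the class of $\mathrm{Id}_Z$ to the class of some $u:Z\to Z'$. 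Then $(u^*\varepsilon)^*$, the image of $\Hom(-,Z)\to\varepsilon^*$, is identified with $\delta^*$ as a quotient of $\Hom(-,Z)$. Moreover $u^*\varepsilon\in\mathbb F$.
\item \emph{Compare deflations.} So assume $\varepsilon\in\mathbb F(Z,X')$ and $\delta\in\mathbb E(Z,X)$, with deflations $g':Y'\to Z$ and $g:Y\to Z$ whose images in $\Hom(-,Z)$ coincide. Then $g'=g\circ v$ for some $v$.
\item \emph{Transport via ET3$^{\mathrm{op}}$.} Axiom ET3$^{\mathrm{op}}$ gives a morphism of triangles $(w,v,\mathrm{Id}_Z)$ from the $\varepsilon$-triangle to the $\delta$-triangle. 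This yields $\delta=w_*\varepsilon$, which lies in $\mathbb F$ by closure under pushforward.
\end{enumerate}
Combined with Step 1, this also shows that the inverse map $\mathcal S\mapsto\mathbb F_{\mathcal S}$ is order preserving, so the bijection is an isomorphism of posets.
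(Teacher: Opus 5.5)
The paper gives no proof of this statement: it quotes \cite[Theorem~B]{Eno21} verbatim and uses it only as a black box in the proof of Theorem~\ref{thm_enomoto_ogawa}. Your argument is a correct and essentially self-contained reconstruction. It relies only on \Cref{prop_ogawa_colocalisation}(1) and the composition criterion \cite[Proposition~3.16]{HLN}, both of which the paper also quotes.

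Two places should be made explicit.

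First, two of your identifications rest on one fact: the identification of $(u^*\delta)^*$ with the image of $\Hom_{\mathcal A}(-,Z')\to\delta^*$ in Step~2, and the equality of kernels in the first move of Step~3. Both come from exactness of $\Hom_{\mathcal A}(W,Y'')\to\Hom_{\mathcal A}(W,Z)\to\mathbb E(W,X')$ \cite[Corollary~3.12]{NP19}. Concretely, $u\circ a$ factors through the deflation of $\varepsilon$ iff $(ua)^*\varepsilon=0$, iff $a^*(u^*\varepsilon)=0$, iff $a$ factors through the deflation of $u^*\varepsilon$.

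Second, the sequence $\delta_1^*\to(g_2g_1)^*\to\delta_2^*\to 0$ does not come from the ET4$^{\mathrm{op}}$ diagram. It is just the kernel--cokernel sequence of the composable pair $\Hom_{\mathcal A}(-,g_1)$, $\Hom_{\mathcal A}(-,g_2)$ in $\mathrm{Fun}(\mathcal A^{\mathrm{op}},\mathrm{Ab})$. ET4$^{\mathrm{op}}$ is needed only to know that $g_2g_1$ is an $\mathbb E$-deflation. That puts the middle term, and hence the image of the first map, in $\operatorname{def}\mathbb E$, where the Serre closure properties of $\mathcal S$ apply.

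Compared with the bare citation, your route isolates the rigidity step in Step~3. An $\mathbb E$-extension whose defect is isomorphic to that of an $\mathbb F$-extension already lies in $\mathbb F$. You show this by pulling back along a Yoneda representative $u$ and applying ET3$^{\mathrm{op}}$ with the identity on the third term. This uses only that $\mathbb F$ is an additive subbifunctor, so $\mathbb F\mapsto\operatorname{def}\mathbb F$ is injective on all additive subbifunctors, not just closed ones.

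The same pull-back/ET3$^{\mathrm{op}}$ trick also shows that, for any additive subbifunctor $\mathbb F$, $\operatorname{def}\mathbb F$ is closed under subobjects and quotients in $\operatorname{def}\mathbb E$. So closedness of $\mathbb F$ is exactly extension-closure of $\operatorname{def}\mathbb F$. The citation hides this structure, while your write-up makes it visible at little extra cost.
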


\vskip5pt

Combining Corollary \ref{defect} and \Cref{prop_enomoto_correspondence},
one obtains the following result which classifies the closed extriangulated substructures lying between $\overline{\mathbb E}$ and $\operatorname{Ext}_{\frac{\mathcal C\cap\mathcal F}{\mathcal C\cap\mathcal F\cap\mathcal W}}^1$ in terms of Serre subcategories of $\mathrm{fp}((\frac{\mathcal P(\mathcal C\cap\mathcal F)}{\mathcal C\cap\mathcal F\cap\mathcal W})^{\!\mathrm{op}}, \ \mathrm{Ab})$.

\vskip10pt

\begin{theorem} \label{thm_enomoto_ogawa} \ Let $(\mathcal C,\mathcal F,\mathcal W)$ be a  Hovey triple in a weakly idempotent complete extriangulated category $(\mathcal A,\mathbb E,\mathfrak s)$,
  and $(\frac{\mathcal C\cap\mathcal F}{\mathcal C\cap\mathcal F\cap\mathcal W}, \ \overline{\mathbb E}, \ \overline{\mathfrak s})$
  the induced extriangulated structure.
  Assume that $\mathcal C\cap\mathcal F$ is essentially small.

  \vskip5pt

  $(1)$  \ If $\mathcal C\cap\mathcal F$ has enough projective objects, then the map $\mathbb F\mapsto {\rm def}\mathbb F$ gives an isomorphism of posets$:$

  \[\xymatrix{\left\{\mathbb F \ \middle| \ \overline{\mathbb E}\subseteq\mathbb F,
    \ \mathbb F \ \textup{is a closed additive subbifunctor of} \operatorname{Hom}_{\frac{\mathcal C\cap\mathcal F}{\mathcal C\cap\mathcal F\cap\mathcal W}}(-, \Sigma-)\right\}\ar[d] \\
    \left\{\textup{Serre subcategories of} \ \mathrm{fp}((\frac{\mathcal P(\mathcal C\cap\mathcal F)}{\mathcal C\cap\mathcal F\cap\mathcal W})^{\mathrm{op}}, \mathrm{Ab}). \right\}}\]

  \vskip5pt

  $(1')$ \ If  $\mathcal C\cap\mathcal F$ has enough injective objects, then there is an isomorphism of posets from
  $\{\mathbb F \ |\
    \overline{\mathbb E}\subseteq\mathbb F,
    \ \mathbb F\text{ is a closed additive subbifunctor of} \ \operatorname{Hom}_{\frac{\mathcal C\cap\mathcal F}{\mathcal C\cap\mathcal F\cap\mathcal W}}(-, \Sigma-)\}$ to $\{\textup{Serre subcategories of} \ \mathrm{fp}(\frac{\mathcal I(\mathcal C\cap\mathcal F)}{\mathcal C\cap\mathcal F\cap\mathcal W}, \mathrm{Ab})\}$.

  \vskip5pt

  $(2)$ \ If $\mathcal C\cap\mathcal F$ has enough projective objects and enough injective objects, then there is an isomorphism of posets
  from $\{\textup{Serre subcategories of }
    \ \mathrm{fp}((\frac{\mathcal P(\mathcal C\cap\mathcal F)}{\mathcal C\cap\mathcal F\cap\mathcal W})^{\mathrm{op}}, \mathrm{Ab})\}$ to $\{
    \textup{Serre subcategories of }
    \ \mathrm{fp}(\frac{\mathcal I(\mathcal C\cap\mathcal F)}{\mathcal C\cap\mathcal F\cap\mathcal W}, \mathrm{Ab})\}$.
\end{theorem}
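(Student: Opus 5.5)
Write $\mathcal T=\frac{\mathcal C\cap\mathcal F}{\mathcal C\cap\mathcal F\cap\mathcal W}$ and $\mathbb H=\Hom_{\mathcal T}(-,\Sigma-)$. The plan is to apply Enomoto's correspondence (\Cref{prop_enomoto_correspondence}) not to $\overline{\mathbb E}$ but to the triangulated extriangulated structure $(\mathcal T,\mathbb H,\mathfrak s_\triangle)$. Its closed additive subbifunctors correspond bijectively, and order-preservingly, to Serre subcategories of $\operatorname{def}\mathbb H$. Since $\mathcal T$ is triangulated, every morphism is an $\mathfrak s_\triangle$-deflation, so every cokernel of a representable map is a defect. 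Hence $\operatorname{def}\mathbb H=\mathrm{fp}(\mathcal T^{\mathrm{op}},\mathrm{Ab})$, which is abelian because $\mathcal T$ has weak kernels (\Cref{lem_weak_kernels}). By \Cref{lem_extriangulated_functor}, $\overline{\mathbb E}$ (identified with $\eta(\overline{\mathbb E})$) is a closed subbifunctor of $\mathbb H$. The $\mathfrak s_\triangle$-defect of $\eta(\delta)$ equals the $\overline{\mathfrak s}$-defect of $\delta$, since both are computed from the same deflation $\overline g$. Thus $\operatorname{def}\overline{\mathbb E}$, computed inside $(\mathcal T,\mathbb H,\mathfrak s_\triangle)$, is the Serre subcategory of Corollary \ref{defect}(1).

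For (1), Enomoto's isomorphism restricts to a poset isomorphism between the closed subbifunctors $\mathbb F$ with $\overline{\mathbb E}\subseteq\mathbb F\subseteq\mathbb H$ and the Serre subcategories $\mathcal S$ of $\mathrm{fp}(\mathcal T^{\mathrm{op}},\mathrm{Ab})$ containing $\operatorname{def}\overline{\mathbb E}$. Here I use that $\mathbb F\mapsto\operatorname{def}\mathbb F$ is order-preserving and reflecting, so $\overline{\mathbb E}\subseteq\mathbb F$ holds if and only if $\operatorname{def}\overline{\mathbb E}\subseteq\operatorname{def}\mathbb F$. Next I use the standard fact that, for a Serre subcategory $\mathcal D$ of an abelian category $\mathcal B$, Serre subcategories of $\mathcal B$ containing $\mathcal D$ correspond bijectively, via the quotient functor $Q$ and its preimage, to Serre subcategories of $\mathcal B/\mathcal D$. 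Composing with the equivalence of Corollary \ref{defect}(2), namely $\mathrm{fp}(\mathcal T^{\mathrm{op}},\mathrm{Ab})/\operatorname{def}\overline{\mathbb E}\cong\mathrm{fp}((\frac{\mathcal P(\mathcal C\cap\mathcal F)}{\mathcal C\cap\mathcal F\cap\mathcal W})^{\mathrm{op}},\mathrm{Ab})$, gives the asserted isomorphism. Concretely, the composite sends $\mathbb F$ to the image of $\operatorname{def}\mathbb F$ under the quotient, which under Ogawa's equivalence is restriction of functors to $\frac{\mathcal P(\mathcal C\cap\mathcal F)}{\mathcal C\cap\mathcal F\cap\mathcal W}$.

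The main obstacle is the standard Serre-subcategory correspondence. I must check that $\mathcal S\mapsto Q(\mathcal S)$ (closure under isomorphism) yields a Serre subcategory, and that $Q^{-1}Q(\mathcal S)=\mathcal S$ when $\mathcal D\subseteq\mathcal S$. For the latter, if $Q(F)\cong Q(S)$ then $F$ and $S$ are joined by a roof whose kernels and cokernels lie in $\mathcal D\subseteq\mathcal S$, and closure of $\mathcal S$ under extensions, subobjects and quotients gives $F\in\mathcal S$. I would simply cite this (e.g.\ Gabriel) rather than reprove it.

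Part (1$'$) is the dual. Apply Enomoto's theorem with covariant defects $\delta_*=\operatorname{Coker}\Hom(Y,-)\to\Hom(X,-)$; Enomoto's theorem has this version, and the covariant defects are the duals of the contravariant ones. Use the dual of Ogawa's result for enough injectives, which gives $\mathrm{fp}(\mathcal T,\mathrm{Ab})/\operatorname{def}_*\overline{\mathbb E}\cong\mathrm{fp}(\frac{\mathcal I(\mathcal C\cap\mathcal F)}{\mathcal C\cap\mathcal F\cap\mathcal W},\mathrm{Ab})$. Here $\mathcal T$ has weak cokernels, being triangulated, and $\mathcal I(\mathcal T)=\frac{\mathcal I(\mathcal C\cap\mathcal F)}{\mathcal C\cap\mathcal F\cap\mathcal W}$. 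Part (2) then follows by composing the isomorphism of (1$'$) with the inverse of that of (1), since both are isomorphisms out of the same poset of intermediate closed subbifunctors.
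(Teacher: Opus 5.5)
Your proposal is correct and follows essentially the paper's own proof: apply Enomoto's correspondence to your $(\mathcal T,\mathbb H,\mathfrak s_\triangle)$, use $\operatorname{def}\mathbb H=\mathrm{fp}(\mathcal T^{\mathrm{op}},\mathrm{Ab})$, restrict to the interval above $\overline{\mathbb E}$, pass to the Serre quotient by $\operatorname{def}\overline{\mathbb E}$ and identify it via Corollary~\ref{defect}(2), then obtain (2) by composing (1) and (1$'$). Two of your steps differ slightly from the paper: your explicit check that the $\mathfrak s_\triangle$-defect of $\eta(\delta)$ equals the $\overline{\mathfrak s}$-defect of $\delta$ supplies a point the paper leaves implicit, and your treatment of (1$'$) via covariant defects and dual Ogawa is equivalent to the paper's appeal to the opposite Hovey triple $(\mathcal F^{\mathrm{op}},\mathcal C^{\mathrm{op}},\mathcal W^{\mathrm{op}})$.
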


\begin{proof} \ Consider Nakaoka-Palu's triangulated structure $(\frac{\mathcal C\cap\mathcal F}{\mathcal C\cap\mathcal F\cap\mathcal W}, \ \Sigma, \ \triangle)$, which is an extriangulated category with the additive bifunctor
  $\operatorname{Hom}_{\frac{\mathcal C\cap\mathcal F}{\mathcal C\cap\mathcal F\cap\mathcal W}}(-, \Sigma-)$.
  Since $\frac{\mathcal C\cap\mathcal F}{\mathcal C\cap\mathcal F\cap\mathcal W}$ is triangulated, it has weak kernels. Thus
  $\mathrm{fp}((\frac{\mathcal C\cap\mathcal F}{\mathcal C\cap\mathcal F\cap\mathcal W})^{\mathrm{op}},\mathrm{Ab})$ is abelian, by \Cref{lem_weak_kernels}.
  Since every morphism of $\frac{\mathcal C\cap\mathcal F}{\mathcal C\cap\mathcal F\cap\mathcal W}$ can be completed to a distinguished triangle, every finitely presented functor is isomorphic to $\delta^*$ for some
  $\delta\in\operatorname{Hom}_{\frac{\mathcal C\cap\mathcal F}{\mathcal C\cap\mathcal F\cap\mathcal W}}(-, \Sigma-)$. Hence
  \[\xymatrix{\operatorname{def}\operatorname{Hom}_{\frac{\mathcal C\cap\mathcal F}{\mathcal C\cap\mathcal F\cap\mathcal W}}(-, \Sigma-)=\mathrm{fp}((\frac{\mathcal C\cap\mathcal F}{\mathcal C\cap\mathcal F\cap\mathcal W})^{\mathrm{op}},\mathrm{Ab}).}\]
  Applying \Cref{prop_enomoto_correspondence} to the extriangulated category
  $(\frac{\mathcal C\cap\mathcal F}{\mathcal C\cap\mathcal F\cap\mathcal W}, \operatorname{Hom}_{\frac{\mathcal C\cap\mathcal F}{\mathcal C\cap\mathcal F\cap\mathcal W}}(-, \Sigma-), \mathfrak s_\triangle)$,
  one has an isomorphism of posets:
  \[\xymatrix{\left\{\textup{closed additive subbifunctors of} \ \operatorname{Hom}_{\frac{\mathcal C\cap\mathcal F}{\mathcal C\cap\mathcal F\cap\mathcal W}}(-, \Sigma-)\right\}\ar[d] \\  \left\{
    \textup{Serre subcategories of } \mathrm{fp}((
    \frac{\mathcal C\cap\mathcal F}{\mathcal C\cap\mathcal F\cap\mathcal W})^{\mathrm{op}}, \mathrm {Ab})\right\}}\]
  sending $\mathbb F$ to $\operatorname{def}\mathbb F$.
  Restricting the poset in the upper row to the interval above $\overline{\mathbb E}$,  the corresponding image is
  \[\xymatrix{\left\{\textup{Serre subcategory} \ \mathcal V \ \mbox{of } \mathrm{fp}((\frac{\mathcal C\cap\mathcal F}{\mathcal C\cap\mathcal F\cap\mathcal W})^{\mathrm{op}},\mathrm{Ab}) \ \middle|\
    \operatorname{def}\overline{\mathbb E}\subseteq\mathcal V  \right\}.}\]
  This is isomorphic to the poset
  \[\xymatrix{\left\{\textup{Serre subcategory of} \ \mathrm{fp}((\frac{\mathcal C\cap\mathcal F}{\mathcal C\cap\mathcal F\cap\mathcal W})^{\mathrm{op}}, \mathrm{Ab})/ \operatorname{def}\overline{\mathbb E}\right\}}\]
  By \Cref{defect} one has $\mathrm{fp}((\frac{\mathcal C\cap\mathcal F}{\mathcal C\cap\mathcal F\cap\mathcal W})^{\mathrm{op}},\mathrm{Ab})/\operatorname{def}\overline{\mathbb E} \cong  \mathrm{fp}((\frac{\mathcal P(\mathcal C\cap\mathcal F)}{\mathcal C\cap\mathcal F\cap\mathcal W})^{\mathrm{op}},\mathrm{Ab})$, and hence one gets
  the desired  isomorphism of posets.

  \vskip5pt

  The assertion $(1')$ is the dual of $(1)$. Note that $(\mathcal F^{\mathrm{op}}, \mathcal C^{\mathrm{op}},\mathcal W^{\mathrm{op}})$ is a Hovey triple in a weakly idempotent complete extriangulated category $(\mathcal A^{\mathrm{op}}, \mathbb E^{\mathrm{op}},\mathfrak s^{\mathrm{op}})$ (\cite[p.~1.6, Remark~2]{Q1}, \cite[Caution~2.20]{NP19}). The assertion $(2)$ follows from $(1)$ and $(1')$.
\end{proof}

\vskip5pt

\subsection{An application}

$\,$

\vskip5pt

Let $A$ be a representation-finite finite-dimensional algebra over a field $k$, and let $(\mathcal C,\mathcal F,\mathcal W)$ be a Hovey triple in $A\text{-}\mathrm{mod}$.
Viewing $A\text{-}\mathrm{mod}$ as an extriangulated category,  one writes it as $(A\text{-}\mathrm{mod}, \mathbb E, \mathfrak s)$ with $\mathbb E = {\rm Ext}^1_A(-, -): (A\text{-}\mathrm{mod})^{\rm op}\times A\text{-}\mathrm{mod} \longrightarrow k\mbox{-}{\rm mod}$,
where $k\mbox{-}{\rm mod}$ is the category of finite-dimensional $k$-linear spaces. Then one has the induced extriangulated category
$(\frac{\mathcal C\cap\mathcal F}{\mathcal C\cap\mathcal F\cap\mathcal W}, \overline {\mathbb E}, \overline {\mathfrak s})$. Also, Nakaoka--Palu's
triangulated structure $(\frac{\mathcal C\cap\mathcal F}{\mathcal C\cap\mathcal F\cap\mathcal W}, \Sigma, \triangle)$ induces an extriangulated category, denoted by
$(\frac{\mathcal C\cap\mathcal F}{\mathcal C\cap\mathcal F\cap\mathcal W}, \Hom_{\frac{\mathcal C\cap\mathcal F}{\mathcal C\cap\mathcal F\cap\mathcal W}}(-, \Sigma -), \mathfrak s_\triangle)$.
We will study the closed $k$-linear subbifunctors $\mathbb F$ between $\overline {\mathbb E}$ and $\Hom_{\frac{\mathcal C\cap\mathcal F}{\mathcal C\cap\mathcal F\cap\mathcal W}}(-, \Sigma -)$.

\vskip10pt

\begin{corollary} \label{number} \ Let $A$ be a representation-finite finite-dimensional algebra over a field $k$, and $(\mathcal C,\mathcal F,\mathcal W)$ a Hovey triple in $A\text{-}\mathrm{mod}$. Then
  $\mathcal C\cap\mathcal F$ has enough projective and enough injective objects, with $\left|\operatorname{Ind}\mathcal P(\mathcal C\cap\mathcal F)\right| = \left|\operatorname{Ind}\mathcal I(\mathcal C\cap\mathcal F)\right|;$ there are $2^n$ closed $k$-linear subbifunctors $\mathbb F$ satisfying
  $\overline{\mathbb E}\subseteq\mathbb F\subseteq
    \Hom_{\frac{\mathcal C\cap\mathcal F}{\mathcal C\cap\mathcal F\cap\mathcal W}}(-, \Sigma -)$,
  and there are $2^n$ Serre subcategories of \ $\mathrm{fp}((\frac{\mathcal P(\mathcal C\cap\mathcal F)}{\mathcal C\cap\mathcal F\cap\mathcal W})^{\mathrm{op}}, k\mbox{-}{\rm mod})$, where $n = \left|
    \operatorname{Ind}\mathcal P(\mathcal C\cap\mathcal F) \right| - \left|
    \operatorname{Ind}(\mathcal C\cap\mathcal F\cap\mathcal W)
    \right|$.
\end{corollary}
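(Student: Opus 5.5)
Enough projectives and enough injectives come first, directly from Proposition \ref{prop_D_enough_PI}. Since $A$ is representation-finite, $\mathcal C\cap\mathcal F=\operatorname{add}N$ for a finite-dimensional module $N$. Hence it is essentially small and Krull--Schmidt, and so are the quotients $\frac{\mathcal P(\mathcal C\cap\mathcal F)}{\mathcal C\cap\mathcal F\cap\mathcal W}$ and $\frac{\mathcal I(\mathcal C\cap\mathcal F)}{\mathcal C\cap\mathcal F\cap\mathcal W}$. All of Theorem \ref{thm_enomoto_ogawa} therefore applies, working $k$-linearly throughout; the $k$-linear version follows verbatim because all Hom spaces and $\mathbb E$ are $k$-spaces and closed subbifunctors are automatically $k$-submodules via $\lambda\cdot$ acting through morphisms.

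To count Serre subcategories, put $\mathcal Q=\frac{\mathcal P(\mathcal C\cap\mathcal F)}{\mathcal C\cap\mathcal F\cap\mathcal W}$. It is a Krull--Schmidt $k$-category with finitely many indecomposables, namely the indecomposable projectives of $\mathcal C\cap\mathcal F$ not lying in $\mathcal C\cap\mathcal F\cap\mathcal W$. There are exactly $n$ of these, using Fact \ref{Wcontainsp}(2) and that $\mathcal C\cap\mathcal F\cap\mathcal W$ is closed under summands. Thus $\mathcal Q\simeq\operatorname{proj}\Lambda$ for the finite-dimensional basic algebra $\Lambda=\operatorname{End}(Q)$, with $Q$ the sum of these $n$ indecomposables. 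Consequently $\mathrm{fp}(\mathcal Q^{\mathrm{op}},k\mbox{-}\mathrm{mod})\simeq\Lambda\mbox{-}\mathrm{mod}$ (or right modules; it does not matter), which is a length category with exactly $n$ simples. A Serre subcategory of a finite length category is determined by the set of simples it contains, and every subset occurs, since the modules whose composition factors lie in the subset form a Serre subcategory. So there are $2^n$ Serre subcategories, and Theorem \ref{thm_enomoto_ogawa}(1) gives $2^n$ intermediate closed subbifunctors.

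For $|\operatorname{Ind}\mathcal P|=|\operatorname{Ind}\mathcal I|$, I would apply the same count to $\mathcal Q'=\frac{\mathcal I(\mathcal C\cap\mathcal F)}{\mathcal C\cap\mathcal F\cap\mathcal W}$. Theorem \ref{thm_enomoto_ogawa}(2) gives a poset isomorphism between the Serre lattices of $\mathrm{fp}(\mathcal Q^{\mathrm{op}},\cdot)$ and $\mathrm{fp}(\mathcal Q',\cdot)$. The latter is the Boolean lattice on $n'=|\operatorname{Ind}\mathcal I(\mathcal C\cap\mathcal F)|-|\operatorname{Ind}(\mathcal C\cap\mathcal F\cap\mathcal W)|$ atoms. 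Isomorphic finite Boolean lattices have the same number of atoms, so $2^n=2^{n'}$ and $n=n'$. Adding back $|\operatorname{Ind}(\mathcal C\cap\mathcal F\cap\mathcal W)|$ gives the equality.

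The main obstacle is justifying carefully that $\mathrm{fp}$ over the finite Krull--Schmidt category $\mathcal Q$ is a module category of a finite-dimensional algebra. The key point is that $\operatorname{End}(Q)$ is finite-dimensional, so every finitely presented functor has finite length. One also has to check that the Serre subcategory lattice is Boolean with simples as atoms, which relies on the Jordan--Hölder theorem for finite length categories.
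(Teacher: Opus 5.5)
Your proposal is correct and follows essentially the same route as the paper. It uses the same steps: Proposition \ref{prop_D_enough_PI} for enough projective and injective objects, and the identification of $\mathrm{fp}$ over the finite category $\frac{\mathcal P(\mathcal C\cap\mathcal F)}{\mathcal C\cap\mathcal F\cap\mathcal W}=\operatorname{add}Q$ with modules over the finite-dimensional algebra $\operatorname{End}(Q)$, which has $n$ simples. It then counts $2^n$ Serre subcategories in this length category, and compares the $\mathcal P$-side with the $\mathcal I$-side via Theorem \ref{thm_enomoto_ogawa}$(2)$ to force $n_{\mathcal P}=n_{\mathcal I}$. Beyond the paper, you also justify two points it leaves implicit: the indecomposables of the quotient are exactly the $n$ projectives outside $\mathcal C\cap\mathcal F\cap\mathcal W$, and additive subbifunctors of a $k$-linear bifunctor are automatically $k$-linear.
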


\begin{proof} \ By \Cref{corollary_D_enough_PI}, $\mathcal C\cap\mathcal F$ has enough projective and enough injective objects. To apply \Cref{thm_enomoto_ogawa}
  in this case,  one should replace $\mathrm{fp}((\frac{\mathcal P(\mathcal C\cap\mathcal F)}{\mathcal C\cap\mathcal F\cap\mathcal W})^{\mathrm{op}}, \mathrm{Ab})$ by
  $\mathrm{fp}((\frac{\mathcal P(\mathcal C\cap\mathcal F)}{\mathcal C\cap\mathcal F\cap\mathcal W})^{\mathrm{op}}, k\mbox{-}{\rm mod})$, the category of  finitely presented contravariant $k$-linear functors from
  $\frac{\mathcal P(\mathcal C\cap\mathcal F)}{\mathcal C\cap\mathcal F\cap\mathcal W}$ to $k\mbox{-}{\rm mod}$.
  Thus the closed $k$-linear subbifunctors between $\overline {\mathbb E}$ and $\Hom_{\frac{\mathcal C\cap\mathcal F}{\mathcal C\cap\mathcal F\cap\mathcal W}}(-, \Sigma -)$ are in bijection with the Serre subcategories of $\mathrm{fp}((\frac{\mathcal P(\mathcal C\cap\mathcal F)}{\mathcal C\cap\mathcal F\cap\mathcal W})^{\mathrm{op}}, k\mbox{-}{\rm mod})$.

  \vskip5pt

  Since $A$ is representation-finite, there is an object $P$ such that $\frac{\mathcal P(\mathcal C\cap\mathcal F)}{\mathcal C\cap\mathcal F\cap\mathcal W} = {\rm add} P$. Hence
  \[\xymatrix{\mathrm{fp}((\frac{\mathcal P(\mathcal C\cap\mathcal F)}{\mathcal C\cap\mathcal F\cap\mathcal W})^{\mathrm{op}}, k\mbox{-}{\rm mod})\cong \big(\operatorname{End}_{\frac{\mathcal P(\mathcal C\cap\mathcal F)}{\mathcal C\cap\mathcal F\cap\mathcal W}}(P)\big)^{\mathrm{op}}\text{-}\mathrm{mod}.}\] The module category $\big(\operatorname{End}_{\frac{\mathcal P(\mathcal C\cap\mathcal F)}{\mathcal C\cap\mathcal F\cap\mathcal W}}(P)\big)^{\mathrm{op}}\text{-}\mathrm{mod}$ has $n_{\mathcal P}$ simple modules,
  where $n_{\mathcal P}$ is the number of pairwise non-isomorphic indecomposable direct summands of $P$, and hence $n_{\mathcal P} = \left|
    \operatorname{Ind}\mathcal P(\mathcal C\cap\mathcal F) \right| - \left|
    \operatorname{Ind}(\mathcal C\cap\mathcal F\cap\mathcal W)
    \right|$.
  Thus $\mathrm{fp}((\frac{\mathcal P(\mathcal C\cap\mathcal F)}{\mathcal C\cap\mathcal F\cap\mathcal W})^{\mathrm{op}}, k\mbox{-}{\rm mod})$ is a length abelian category with $n_{\mathcal P}$ simple objects.
  A Serre subcategory of a length abelian category is uniquely determined by the simple objects it contains, and every subset of the simple objects uniquely determines a Serre subcategory. In this way one sees that $\mathrm{fp}((\frac{\mathcal P(\mathcal C\cap\mathcal F)}{\mathcal C\cap\mathcal F\cap\mathcal W})^{\mathrm{op}}, k\mbox{-}{\rm mod})$ has $2^{n_{\mathcal P}}$ Serre subcategories.

  \vskip5pt

  Dually,  $\mathrm{fp}(\frac{\mathcal I(\mathcal C\cap\mathcal F)}{\mathcal C\cap\mathcal F\cap\mathcal W}, k\mbox{-}{\rm mod})$ has $2^{n_{\mathcal I}}$ Serre subcategories, where $n_{\mathcal I} = \left|
    \operatorname{Ind}\mathcal I(\mathcal C\cap\mathcal F) \right| - \left|
    \operatorname{Ind}(\mathcal C\cap\mathcal F\cap\mathcal W)
    \right|$.
  By Theorem \ref{thm_enomoto_ogawa}$(2)$ one has
  $2^{n_{\mathcal P}}=2^{n_{\mathcal I}}$, so $n_{\mathcal P}=n_{\mathcal I}$. Denote this common number by $n$. Then we are done by Theorem \ref{thm_enomoto_ogawa}$(1)$. \end{proof}

\vskip5pt

Note that the corollary above does not give the number of equivalence classes of Serre subcategories of \ $\mathrm{fp}((\frac{\mathcal P(\mathcal C\cap\mathcal F)}{\mathcal C\cap\mathcal F\cap\mathcal W})^{\mathrm{op}}, k\mbox{-}{\rm mod})$, since different
Serre subcategories are possibly equivalent as abelian categories.

\vskip5pt

\section{\bf Appendix}

$\,$

This Appendix contains the technique facts which have been used in $\S 3$ and $\S 4$.

\vskip5pt

\subsection{Morphisms factor through projective objects} $\,$

\vskip5pt

Let $A$ be a finite-dimensional algebra over a field $k$, and $A\text{-}\mathrm{mod}$ the category of finitely generated $A$-modules.
Recall that for $M, N\in A\text{-}\mathrm{mod}$,  one has the Auslander-Reiten formula (see e.g. {\rm \cite[IV, Theorem 2.13]{ASS06}})
$$\mathrm{Ext}^1_A(M,N)\cong D\underline{\mathrm{Hom}}_A(\tau^{-1}N, M) \cong D \overline{\mathrm{Hom}}_A(N, \tau M)$$
where $\tau = D\circ \mathrm{Tr}$,  $\tau^{-1} = \mathrm{Tr} \circ D$,  $\mathrm{Tr}$ is the Auslander--Reiten transpose, and $D ={\rm Hom}_k(-, k)$.

\vskip5pt

\begin{lemma} \label{radical} \ Let $\Lambda$ be any finite-dimensional algebra.
  \vskip5pt
  $(1)$ \ Let $i: M \longrightarrow N$ be a $\Lambda$-monomorphism between non-injective indecomposable modules.
  Then $i$ does not factor through any injective module.
  \vskip5pt
  $(1')$ \ Let $p: M \longrightarrow N$ be a $\Lambda$-epimorphism between non-projective indecomposable modules.
  Then $p$ does not factor through any projective module.
\end{lemma}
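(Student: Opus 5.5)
We prove $(1)$ and obtain $(1')$ by the dual argument; alternatively, $(1')$ follows from $(1)$ by applying the duality $D$, which exchanges projective and injective modules and monomorphisms and epimorphisms.

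For $(1)$, suppose $i: M\to N$ is a monomorphism with $M,N$ indecomposable non-injective, and suppose $i = g\circ f$ with $f: M\to I$ and $I$ injective. Let $\sigma: M\hookrightarrow I(M)$ be the injective envelope. Since $I$ is injective and $\sigma$ is a monomorphism, $f$ extends along $\sigma$: there is $f'$ with $f = f'\circ\sigma$. Hence $i = h\circ\sigma$ where $h = g\circ f': I(M)\to N$.

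Next we show that $h$ is a monomorphism. Its kernel $\ker h$ satisfies $\ker h\cap \sigma(M) = 0$, because $h\sigma = i$ is injective. Since $\sigma(M)$ is essential in $I(M)$, this forces $\ker h = 0$. Thus $h: I(M)\to N$ is a monomorphism from an injective module, so it splits, and $I(M)$ is a direct summand of $N$.

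Finally we use the hypotheses. The module $N$ is indecomposable and $I(M)\neq 0$ (because $M\neq0$), so $h$ is an isomorphism. Then $N\cong I(M)$ is injective, which contradicts the assumption that $N$ is non-injective. Note that the non-injectivity of $M$ is not actually needed; only the non-injectivity of $N$ is used.

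For $(1')$, we dualize. Write $p = g\circ f$ with $f: M\to P$ and $P$ projective. Let $\pi: P(N)\to N$ be the projective cover. Since $P$ is projective, $g$ lifts through $\pi$, so $p = \pi\circ k$ for some $k: M\to P(N)$. Because $\pi$ is a superfluous epimorphism and $\pi\circ k = p$ is surjective, $\operatorname{Im} k + \ker\pi = P(N)$ gives $\operatorname{Im}k = P(N)$, so $k$ is an epimorphism. As $P(N)$ is projective, $k$ splits, and $P(N)$ is a nonzero direct summand of the indecomposable module $M$. Hence $M\cong P(N)$ is projective, a contradiction. The only slightly delicate point is the essential/superfluous argument, and that is standard.
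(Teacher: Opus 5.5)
Your proof is correct and follows the paper's argument. Both proofs factor $f$ through the injective envelope $\sigma\colon M\hookrightarrow I(M)$ and conclude that $I(M)$ is a nonzero direct summand of the indecomposable non-injective module $N$. The only difference is the key step. You show $g\circ f'$ is monic using the essentiality of $\sigma(M)$ in $I(M)$. The paper instead extends the envelope along $i$ to a map $N\to I(M)$ and invokes the left minimality of the envelope. Your explicit proof of $(1')$, via superfluity of the kernel of the projective cover, is the corresponding dual, which the paper leaves to the reader.
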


\begin{proof} \ We only prove $(1)$.  Assume that $i$ factors through an injective module $I$, i.e., there is a factorization $i = g\circ f$ with $f :M \longrightarrow I$ and $g : I \longrightarrow N$.
  Then $f$ factors through the injective envelope $j : M \longrightarrow I(M)$. Thus $f = \varphi \circ j$ for some $\varphi: I(M)\longrightarrow I$.
  Hence $i = g \circ \varphi \circ j$. Then $j = h \circ i$ for some $h: N \longrightarrow I(M)$, and thus $j = h \circ g \circ \varphi \circ j$.
  Since $j$ is left minimal, $h \circ g \circ \varphi$ is an isomorphism, i.e., $I(M)$ is a direct summand of $N$, a contradiction.
\end{proof}

\vskip5pt

Let $\mathcal A$ be an extriangulated category, $M, N$ objects of $\mathcal A$. We fix the notation $\Omega M$ and $\mho N$. If $\mathcal A$ has enough projective objects,  then
there is an $\mathbb E$-triangle \ $\Omega M \xlongrightarrow{\kappa} P \xlongrightarrow {p} M \xdashrightarrow \delta $ with $P\in\mathcal P(\mathcal A)$.
If $\mathcal A$ has enough injective objects, then
there is an $\mathbb E$-triangle \  $N\xlongrightarrow{i} I \xlongrightarrow {c} \mho N \xdashrightarrow \eta$ with $I\in\mathcal I(\mathcal A)$.

\vskip5pt

\begin{lemma} \label{extinfrobenius} \ Let $\mathcal A$ be a Frobenius extriangulated category. Then \ $$\mathbb{E}(M,N)\cong \underline{\mathrm{Hom}}_\mathcal A(M,\mho N) \cong \underline{\mathrm{Hom}}_\mathcal A(\Omega M, N).$$
\end{lemma}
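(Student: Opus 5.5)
We prove the two isomorphisms $\mathbb E(M,N)\cong\underline{\mathrm{Hom}}_\mathcal A(M,\mho N)$ and $\mathbb E(M,N)\cong\underline{\mathrm{Hom}}_\mathcal A(\Omega M,N)$ separately. By duality (pass to $\mathcal A^{\mathrm{op}}$, which is again Frobenius with projectives and injectives interchanged) it suffices to prove the second. Throughout we use that in a Frobenius extriangulated category $\underline{\mathrm{Hom}}=\overline{\mathrm{Hom}}$, since $\mathcal P(\mathcal A)=\mathcal I(\mathcal A)$.

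Fix the $\mathbb E$-triangle $\Omega M \xlongrightarrow{\kappa} P \xlongrightarrow{p} M \xdashrightarrow{\delta}$ with $P$ projective (hence injective). Apply $\mathbb E(-,N)$ and $\mathrm{Hom}_\mathcal A(-,N)$ and use the long exact sequence for extriangulated categories (\cite[Corollary 3.12]{NP19}):
\[
\mathrm{Hom}_\mathcal A(P,N)\xlongrightarrow{\kappa^*}\mathrm{Hom}_\mathcal A(\Omega M,N)\xlongrightarrow{\delta^\sharp}\mathbb E(M,N)\xlongrightarrow{p^*}\mathbb E(P,N)=0 .
\]
Here $\delta^\sharp(f)=f_*\delta$. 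Hence $\mathbb E(M,N)\cong\mathrm{Hom}_\mathcal A(\Omega M,N)/\operatorname{Im}\kappa^*$.

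The key step is to identify $\operatorname{Im}\kappa^*$ with the subgroup $\mathcal P(\Omega M,N)$ of morphisms factoring through a projective object.

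First, $\operatorname{Im}\kappa^*\subseteq\mathcal P(\Omega M,N)$: a map of the form $g\circ\kappa$ factors through $P$.

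Conversely, let $f=b\circ a$ with $a:\Omega M\to Q$ and $b:Q\to N$, where $Q$ is projective. Then $Q$ is injective, so $\mathbb E(M,Q)=0$. Exactness of $\mathrm{Hom}(P,Q)\to\mathrm{Hom}(\Omega M,Q)\to\mathbb E(M,Q)=0$ shows that $a$ extends along $\kappa$, i.e. $a=a'\circ\kappa$ for some $a'$. Therefore $f=(b\circ a')\circ\kappa\in\operatorname{Im}\kappa^*$.

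This gives $\mathbb E(M,N)\cong\underline{\mathrm{Hom}}_\mathcal A(\Omega M,N)$.

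The dual argument uses the triangle $N\xlongrightarrow{i}I\longrightarrow\mho N\xdashrightarrow{\eta}$ and the sequence
\[
\mathrm{Hom}(M,I)\to\mathrm{Hom}(M,\mho N)\to\mathbb E(M,N)\to\mathbb E(M,I)=0 .
\]
Here we need that maps $M\to\mho N$ factoring through an injective (which is also projective) lift through the deflation $I\to\mho N$; this holds because that object is projective.

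The only delicate point is this identification of the image with the projectively-stable ideal. It uses precisely the equality $\mathcal P(\mathcal A)=\mathcal I(\mathcal A)$, together with the vanishing $\mathbb E(-,\text{inj})=0$ and $\mathbb E(\text{proj},-)=0$. The isomorphisms are natural in $N$ and in $M$ respectively, although the statement only asks for isomorphisms of groups.
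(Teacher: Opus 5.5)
Your proof is correct and takes essentially the same route as the paper. The paper proves the $\mho N$ version from the exact sequence $\mathrm{Hom}_{\mathcal A}(M,I)\to\mathrm{Hom}_{\mathcal A}(M,\mho N)\to\mathbb E(M,N)\to 0$ and declares the $\Omega M$ version dual; you prove the $\Omega M$ version first and also justify the step the paper calls ``clear'', namely that the image of the Hom-map is exactly $\mathcal P(-,-)$, using $\mathcal P(\mathcal A)=\mathcal I(\mathcal A)$ together with $\mathbb E(-,\text{inj})=0$ and $\mathbb E(\text{proj},-)=0$.
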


\begin{proof} \ We only show $\mathbb{E}(M,N)\cong \underline{\mathrm{Hom}}_\mathcal A(M,\mho N)$.
  The isomorphism $\mathbb{E}(M,N)$ $\cong \underline{\mathrm{Hom}}_\mathcal A(\Omega M,N)$ can be proved dually.
  One has the exact sequence
  \begin{equation*}
    \resizebox{1\linewidth}{!}{$
        \mathrm{Hom}_{\mathcal A}(M, N) \xlongrightarrow{\mathrm{Hom}_{\mathcal A}(M, i)} \mathrm{Hom}_{\mathcal A}(M, I) \xlongrightarrow{\mathrm{Hom}_{\mathcal A}(M, c)}  \mathrm{Hom}_{\mathcal A}(M, \mho N) \xlongrightarrow{\eta_\sharp}  \mathbb{E}(M, N) \longrightarrow 0
      $}
  \end{equation*}
  Since $\mathcal{A}$ is Frobenius, it is clear that $\mathrm{Im}(\mathrm{Hom}_{\mathcal A}(M, c)) = \mathcal{P}(M, \mho N)$.
  Then one has

  \vskip8pt

  \hskip15pt $\mathbb{E}(M, N) \cong \frac{\mathrm{Hom}_{\mathcal A}(M, \mho N)}{\operatorname{Im}(\mathrm{Hom}_{\mathcal A}(M, c))} = \frac{\mathrm{Hom}_{\mathcal A}(M, \mho N)}{\mathcal{P}(M, \mho N)} = \underline{\mathrm{Hom}}_\mathcal A(M, \mho N).
  $
\end{proof}

\subsection{Facts on Nakayama algebras}

In this subsection  $A$ is a Nakayama algebra without simple projective modules.

\vskip5pt

\begin{lemma}\label{simphom} \ For any simple $A$-module $S_{i} = M_{i, 1}$ and $A$-module $M_{j,l}$, one has
  \vskip5pt
  $(1)$ \ If $M_{j,l}$ is non-projective, then $\underline{\mathrm{Hom}}_A(M_{j,l}, M_{i, 1}) = 0$ if and only if $j \ne i$.
  \vskip5pt
  $(2)$ \ If $M_{j,l}$ is non-injective, then $\overline{\mathrm{Hom}}_A(M_{i, 1}, M_{j,l}) = 0$ if and only if $j \ne i - l + 1$.
\end{lemma}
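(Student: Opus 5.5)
The plan is to reduce both statements to an ordinary $\mathrm{Hom}$ computation using uniseriality, and then to use Lemma \ref{radical} to show that the one candidate nonzero map survives in the stable (respectively, costable) category.

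For $(1)$, I would first note that a nonzero homomorphism $M_{j,l}\longrightarrow S_i$ has image $S_i$, so it factors through the top of $M_{j,l}$. Since $M_{j,l}$ is uniserial with $\mathrm{top}(M_{j,l})=S_j$, this gives $\mathrm{Hom}_A(M_{j,l},S_i)\neq 0$ if and only if $j=i$. Hence if $j\neq i$, then already $\mathrm{Hom}_A(M_{j,l},S_i)=0$, and a fortiori $\underline{\mathrm{Hom}}_A(M_{j,l},S_i)=0$.

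If $j=i$, I take the canonical epimorphism $p:M_{i,l}\twoheadrightarrow M_{i,1}=S_i$. Both modules are indecomposable. $M_{i,l}$ is non-projective by hypothesis, and $S_i$ is non-projective because $A$ has no simple projective modules. By Lemma \ref{radical}$(1')$, $p$ does not factor through a projective module, so $\underline{\mathrm{Hom}}_A(M_{i,l},S_i)\neq0$.

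For $(2)$, I argue dually using socles. A nonzero map $S_i\longrightarrow M_{j,l}$ is injective with image $\mathrm{soc}(M_{j,l})$. Since $\mathrm{soc}(M_{j,l})=S_{j+l-1}$, such a map exists if and only if $i=j+l-1$, i.e. $j=i-l+1$. If $j\neq i-l+1$, the Hom space is zero and we are done.

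If $j=i-l+1$, the socle inclusion $S_i\hookrightarrow M_{j,l}$ is a monomorphism between indecomposable modules, so I want to apply Lemma \ref{radical}$(1)$. For that I need $S_i$ to be non-injective. I expect this to be the only point needing care. Since $A=kC_n/I$ with $I\subseteq J^2$, every vertex $i$ has an incoming arrow $a_i:i-1\to i$ with $a_i\notin I$. Hence the uniserial module $M_{i-1,2}$ exists and has socle $S_i$, so $I_i$ has length at least $2$ and $S_i\neq I_i$. With $M_{j,l}$ non-injective by hypothesis, Lemma \ref{radical}$(1)$ shows the inclusion does not factor through an injective module, so $\overline{\mathrm{Hom}}_A(S_i,M_{j,l})\neq0$.
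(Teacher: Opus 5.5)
Your proof is correct and follows essentially the same route as the paper. The paper also first uses that the uniserial module $M_{j,l}$ has unique top $S_j$ and unique socle $S_{j+l-1}$, which kills the ordinary Hom in the off-cases. It then applies Lemma~\ref{radical} to the canonical epimorphism $M_{i,l}\twoheadrightarrow S_i$, respectively to the socle inclusion $S_i\hookrightarrow M_{j,l}$.

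Your explicit checks that $S_i$ is neither projective nor injective are details the paper leaves implicit. Strictly speaking they are not needed: the proof of Lemma~\ref{radical} only uses non-projectivity of the source in $(1')$ and non-injectivity of the target in $(1)$, and both are exactly your hypotheses on $M_{j,l}$.
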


\begin{proof} \  We show $(1)$.
  In fact, if $j \neq i$, then $\mathrm{Hom}_A(M_{j,l}, M_{i, 1}) = 0$.
  Conversely, it suffices to show that if $j = i$ then $\underline{\mathrm{Hom}}_A(M_{j,l}, M_{i, 1}) \neq 0$. By assumption, $M_{j,l}$ is not projective.
  One has the canonical  epimorphism $p : M_{i, l} \twoheadrightarrow \mathrm{top}(M_{i, l}) =  M_{i, 1}$ between non-projective indecomposable modules.
  By \Cref{radical}, $p$ does not factor through a  projective module. Thus $\underline{\mathrm{Hom}}_A(M_{i,l}, M_{i, 1}) \neq 0$.

  \vskip5pt

  To justify the index,  we show $(2)$. If $j \neq i-l+1$, then $\mathrm{Hom}_A(M_{i,1}, M_{j, l}) = 0$, since $\mathrm{soc}(M_{j,l}) = S_{j+l-1} \ncong S_i$.
  Conversely, it suffices to show that if $j = i-l+1$ then $\overline{\mathrm{Hom}}_A(M_{i, 1}, M_{j,l}) \neq 0$.
  By assumption, $M_{j,l}$ is not injective. Then the monomorphism $M_{i, 1} = \mathrm{soc}(M_{j, l}) \hookrightarrow M_{j,l}$ between non-injective modules does not factor through injective modules (cf. \Cref{radical}). Thus $\overline{\mathrm{Hom}}_A(M_{i,1}, M_{j,l}) \neq 0$. \end{proof}

\vskip5pt

\begin{lemma}\label{factorthrough} \ Let  $s \le r\le l\le c_i$ be positive integers.

  \vskip5pt

  $(1)$ \  For any $A$-epimorphisms $p$ and $\pi$, there is an epimorphism $q$ such that $\pi\circ q=p:$
  \[\xymatrix@R = 0.5cm{& M_{i,r}\ar@{->>}[d]^\pi \\
        M_{i,l}\ar@{->>}[r]^p\ar@{.>>}[ur]^q & M_{i,s}}\]

  \vskip5pt

  $(1')$ \ For any $A$-monomorphisms $e$ and $\sigma$, there is a monomorphism $e'$ such that $e'\circ\sigma=e:$
  \[\xymatrix@R = 0.5cm{M_{i+l-s,s}\ar@{^(->}[r]^-e \ar@{^(->}[d]_\sigma & M_{i,l}\\
        M_{i+l-r,r}\ar@{^(.>}[ur]_{e'}}\]

\end{lemma}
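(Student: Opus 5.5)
Here $A$ is a Nakayama algebra without simple projectives, so every indecomposable is uniserial. Set $W=M_{i,l}$, whose submodules form the chain $\operatorname{rad}^{m}M_{i,l}=M_{i+m,l-m}$, and whose quotients are $M_{i,m}$ for $m\le l$. The plan is to deduce everything from this uniserial structure. I would give $(1)$ in detail and obtain $(1')$ by the dual argument, either through the duality $D$ (which exchanges $A$-mod and $A^{\mathrm{op}}$-mod, both Nakayama) or by repeating the argument with socles in place of tops.

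For $(1)$, first identify the kernels. Since $M_{i,l}$ is uniserial with top $S_i$, any epimorphism $p:M_{i,l}\twoheadrightarrow M_{i,s}$ has kernel a submodule of length $l-s$. Hence $\ker p=\operatorname{rad}^{s}M_{i,l}$, and this kernel does not depend on the choice of $p$. Because $r\ge s$, the submodule $\operatorname{rad}^{r}M_{i,l}$ is contained in $\operatorname{rad}^{s}M_{i,l}$.

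Next construct $q$. Consider the canonical projection $q_0:M_{i,l}\twoheadrightarrow M_{i,l}/\operatorname{rad}^rM_{i,l}\cong M_{i,r}$. Since $\ker q_0\subseteq\ker p$, we have $p=\bar p\circ q_0$ for some epimorphism $\bar p:M_{i,r}\twoheadrightarrow M_{i,s}$. Both $\bar p$ and $\pi$ are epimorphisms $M_{i,r}\to M_{i,s}$, so by the same kernel argument they have the same kernel $\operatorname{rad}^sM_{i,r}$. Thus there is an automorphism $\alpha$ of $M_{i,s}$ with $\bar p=\alpha\circ\pi$, which alone does not give $\pi\circ q=p$. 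The step I expect to need care is therefore lifting $\alpha$ along $\pi$.

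To lift $\alpha$, use that $M_{i,r}$ is cyclic with top $S_i$. Write $M_{i,r}=Ae_i/\operatorname{rad}^{r}$, so $\alpha$ is determined by the image of the generator $\bar e_i$. That image $x\in e_iM_{i,s}$ lifts to some $y\in e_iM_{i,r}$ with $\pi(y)=x$. Right multiplication by $y$ gives an endomorphism $\tilde\alpha$ of $M_{i,r}$; it is well defined because $M_{i,r}$ is the quotient of $P_i$ by $\operatorname{rad}^r$ and $\operatorname{rad}^r$ is sent into $\operatorname{rad}^r M_{i,r}=0$. By construction $\pi\tilde\alpha=\alpha\pi$. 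Since $x\notin\operatorname{rad}M_{i,s}$, the lift $y$ is a top element, so $\tilde\alpha$ is surjective and hence an automorphism. Setting $q=\tilde\alpha\circ q_0$ gives an epimorphism with $\pi\circ q=\alpha\circ\pi\circ q_0=\bar p\circ q_0=p$.

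For $(1')$, dually, any monomorphism into $M_{i,l}$ from a module of length $s$ has image the unique submodule $\operatorname{rad}^{l-s}M_{i,l}$. Embed $M_{i+l-r,r}\cong\operatorname{rad}^{l-r}M_{i,l}$ canonically, then correct by an automorphism of $M_{i+l-s,s}$. This automorphism extends along $\sigma$ by the dual argument, since $M_{i+l-r,r}$ is cocyclic with simple socle. The required $e'$ is then a monomorphism.
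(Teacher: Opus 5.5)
Your argument is correct, but it takes a different route from the paper.

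The paper's route is explicit. It fixes a path basis of each $M_{i,x}$ and introduces the nilpotent shift $R_x$ and the canonical quotients $\rho_{x,y}$. It shows that every epimorphism $M_{i,x}\to M_{i,y}$ has the form $a(R_y)\circ\rho_{x,y}$ with $a(0)\neq0$. It then writes down $q=b(R_r)^{-1}a(R_r)\circ\rho_{l,r}$, using $\rho_{r,s}\circ R_r=R_s\circ\rho_{r,s}$.

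Your route avoids bases altogether. Kernels of epimorphisms are radical powers, so $p$ factors through the canonical $q_0$. The leftover automorphism is absorbed by lifting a generator along $\pi$, which only uses that $M_{i,r}=Ae_i/J^re_i$ is cyclic and killed by $J^r$. Your version is more conceptual and does not need the description of $\operatorname{Hom}$-spaces as polynomials in the shift. The paper's version gives an explicit formula for $q$.

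One step needs to be stated precisely. The element you lift must be $x=\alpha(\pi(\bar e_i))$, where $\bar e_i$ is the generator of $M_{i,r}$. It must not be $\alpha$ applied to the canonical generator of $M_{i,s}$. Otherwise $\pi\circ\tilde\alpha=\alpha\circ\pi$ fails as soon as $\pi(\bar e_i)$ is not that generator, e.g.\ for $A=k[T]/(T^t)$, $s\geq 2$, $\pi(1)=1+T$.

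With the correct choice, $x=\bar p(\bar e_i)$ is simply $p$ of the generator of $M_{i,l}$. So you can drop $\alpha$ and $q_0$ entirely:
\begin{itemize}
\item lift $p$ of the generator of $M_{i,l}$ to some $y\in e_iM_{i,r}$;
\item define $q$ on $M_{i,l}=Ae_i/J^le_i$ by sending the generator to $y$;
\item $q$ is well defined because $J^ly\subseteq J^rM_{i,r}=0$;
\item $q$ is surjective because $y\notin\operatorname{rad}M_{i,r}$.
\end{itemize}
The dual statement $(1')$ then follows as you indicate.
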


\begin{proof} \ We prove $(1)$. \ For $x\in\{l,r,s\}$, since $M_{i, x}$ is a quotient of indecomposable projective module $P_i = Ae_i$, one can choose a path basis $\{\mathbf e^{(x)}_0,\mathbf e^{(x)}_1,\ldots,\mathbf e^{(x)}_{x-1}\}$ of $M_{i,x}$,
  where $\mathbf e^{(x)}_j$ starts at vertex $i$ and ends at vertex $i+j, \ 0\le j\le x-1$. In particular, $\mathbf e^{(x)}_0$ is the image of $e_i$ in  $M_{i, x}$.
  Consider the right shift endomorphism
  \[
    R_x : M_{i,x} \longrightarrow M_{i,x}, \qquad \mathbf e^{(x)}_j \mapsto
    \begin{cases}
      \mathbf e^{(x)}_{j+n}, & j+n<x,     \\
      0,                     & j+n\geq x.
    \end{cases}
  \]
  Clearly $R_x$ is nilpotent.
  For $x, y \in \{l,r,s\}$ with $x\geq y$, the canonical quotient is given by
  \[
    \rho_{x,y}: M_{i,x}\twoheadrightarrow M_{i,y}, \qquad \mathbf e^{(x)}_j \mapsto
    \begin{cases}
      \mathbf e^{(y)}_j, & 0\leq j<y, \\
      0,                 & y\leq j<x.
    \end{cases}
  \]
  Thus $\rho_{l,s}=\rho_{r,s}\circ\rho_{l,r}$ and $\rho_{r,s}\circ R_r=R_s\circ\rho_{r,s}$.
  For $x\geq y$, since $M_{i,x} = A\mathbf e^{(x)}_0$, any epimorphism
  $h:M_{i,x}\twoheadrightarrow M_{i,y}$ is uniquely determined by
  \[
    h(\mathbf e^{(x)}_0)
    =a_0\mathbf e^{(y)}_0+a_1\mathbf e^{(y)}_n+\cdots
    +a_m\mathbf e^{(y)}_{mn},
    \qquad
    m=\lfloor (y-1) / n\rfloor,
    \quad a_0\neq0.
  \]
  Hence there are polynomials $a(T),b(T)\in k[T]$ with $a(0),b(0)\neq0$ such that
  \[
    p=a(R_s)\circ\rho_{l,s},
    \qquad
    \pi=b(R_s)\circ\rho_{r,s}.
  \]
  Note that $R_x$ is nilpotent and $a(0), b(0) \neq 0$.
  Then $a(R_s)$ and $b(R_s)$ are automorphisms.
  Similarly $b(R_r)^{-1}$ exists, which is again a polynomial in $R_r$. Put
  \[
    c(R_r)=b(R_r)^{-1} \circ a(R_r), \qquad q=c(R_r)\circ\rho_{l,r}.
  \]
  Since $c(0)=b(0)^{-1} \cdot a(0)\neq0$, $c(R_r)$ is an automorphism.
  Thus $q$ is an epimorphism, and
  \[
    \pi\circ q =b(R_s)\circ\rho_{r,s}\circ c(R_r)\circ\rho_{l,r} =b(R_s)\circ c(R_s)\circ\rho_{r,s}\circ\rho_{l,r}=a(R_s)\circ\rho_{l,s}=p.
  \]
  This proves $(1)$.
  The proof of $(1')$ is dual.
\end{proof}

\vskip5pt

\begin{fact}\label{pp} \ Let $i\in\mathbb Z/n\mathbb Z$ and let $l$ be a positive integer. Let $r,s$ be non-negative integers with $r\le l$, and adopt the convention $M_{a,0}=0$.
  Suppose that $l+s\le c_i$ and $l-r+s\le c_{i+r}$.
  Then there is the following diagram,  which is a pullback square and a pushout square:
  \[
    \xymatrix{
    M_{i+r,l-r+s} \ar@{->>}[d]_-{p} \ar@{^{(}->}[r]^-{e} & M_{i,l+s} \ar@{->>}[d]^-{p'} \\
    M_{i+r,l-r} \ar@{^{(}->}[r]^-{e'} & M_{i,l}
    }
  \]
  In particular, there is an exact sequence
  \[
    0 \longrightarrow M_{i+r,l-r+s} \xrightarrow{\binom pe}  M_{i+r,l-r}\oplus M_{i,l+s}  \xrightarrow{(-e', p')} M_{i,l} \longrightarrow 0.
  \]
\end{fact}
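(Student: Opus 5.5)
We have to prove Fact \ref{pp}: the square with $e,e',p,p'$ is a pullback and a pushout, and hence the displayed sequence is exact. The plan is to define the four maps canonically, check commutativity, and reduce both universal properties to exactness of that sequence, which we verify by a length count plus one injectivity and one surjectivity check.

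First we fix the maps. Both hypotheses $l+s\le c_i$ and $l-r+s\le c_{i+r}$ are needed so that all four modules exist. Since $\operatorname{rad}^r M_{i,l+s}\cong M_{i+r,l+s-r}$, we take $e$ to be the inclusion of this radical power. We take $p'$ to be the canonical quotient $M_{i,l+s}\twoheadrightarrow M_{i,l}$ by $\operatorname{rad}^l M_{i,l+s}$. Similarly $e'$ is the inclusion $\operatorname{rad}^r M_{i,l}\hookrightarrow M_{i,l}$, and $p$ is the quotient $M_{i+r,l-r+s}\twoheadrightarrow M_{i+r,l-r}$.

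Next we check that the square commutes: $p'\circ e$ maps $\operatorname{rad}^rM_{i,l+s}$ onto $\operatorname{rad}^rM_{i,l}$. With the path basis of Lemma \ref{factorthrough}, $e$ sends $\mathbf e_j\mapsto \mathbf e_{j+r}$. The composite $p'\circ e$ therefore kills exactly the basis vectors with $j+r\ge l$, which is what $e'\circ p$ does.

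We then prove exactness of
\[0\to M_{i+r,l-r+s}\xrightarrow{\binom pe} M_{i+r,l-r}\oplus M_{i,l+s}\xrightarrow{(-e',p')} M_{i,l}\to0.\]
\begin{itemize}
\item $\binom pe$ is injective because $e$ is.
\item $(-e',p')$ is surjective because $p'$ is.
\item The composite is $-e'p+p'e=0$ by commutativity.
\item The lengths add up: $(l-r+s)+l=(l-r)+(l+s)$.
\end{itemize}
Since the composite is zero, the image of $\binom pe$ lies in the kernel of $(-e',p')$. The length equality forces that kernel to have exactly the length of $M_{i+r,l-r+s}$, so the inclusion is an equality and the sequence is exact.

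A commutative square whose associated sequence $0\to X\to Y\oplus Z\to W\to 0$ is exact is both a pullback and a pushout. Exactness at the left and middle gives the pullback property, and exactness at the middle and right gives the pushout property. This completes the proof.

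The only mildly delicate step is the compatibility of the radical identifications with the quotients, that is, the commutativity check. The basis description makes it routine.
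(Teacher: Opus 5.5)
Your proof is correct. The setup is the same as the paper's: $e,e'$ are the inclusions of $r$-th radicals, $p,p'$ are the canonical quotients, and the square commutes because $p$ is the restriction of $p'$ to $\operatorname{rad}^r M_{i,l+s}$.

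Where you differ is in how the universal properties are obtained.
\begin{itemize}
\item The paper argues structurally. From $\operatorname{Ker}p=M_{i+l,s}=\operatorname{Ker}p'$, the map $e$ restricts to an isomorphism between the kernels of the two vertical epimorphisms, which gives the pullback. From $\operatorname{Coker}e=M_{i,r}=\operatorname{Coker}e'$ it gets the pushout. The exact sequence is then read off as a consequence.
\item You reverse the order. You prove exactness of $0\to M_{i+r,l-r+s}\to M_{i+r,l-r}\oplus M_{i,l+s}\to M_{i,l}\to 0$ directly: injectivity from $e$, surjectivity from $p'$, and middle exactness from the length count $(l-r)+(l+s)-l=l-r+s$. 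You then invoke the standard characterization of bicartesian squares in an abelian category.
\end{itemize}
Your route is shorter and needs no identification of kernels or cokernels, but it relies on a length count. The paper's comparison of kernels and cokernels is purely structural and would work verbatim in any abelian category.

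Two small remarks. First, in the commutativity check you should say explicitly that both composites send $\mathbf e_j\mapsto\mathbf e_{j+r}$ for $j<l-r$ and kill the remaining basis vectors; merely having the same kernel would not suffice. Second, the hypothesis $l-r+s\le c_{i+r}$ is not independently needed. The identification $\operatorname{rad}^rM_{i,l+s}\cong M_{i+r,l+s-r}$ already forces it, since that radical is a quotient of $P_{i+r}$.
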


\begin{proof} \ Since $\operatorname{soc} M_{i+r, l-r+s} = S_{i+l+s-1} = \operatorname{soc} M_{i,l+s}$, one has $M_{i+r,l-r+s}=\operatorname{rad}^{r}(M_{i,l+s})$, by the structure of uniserial modules.  Similarly, if $r < l$, then
$\operatorname{soc} M_{i+r, l-r} = S_{i+l-1} = \operatorname{soc} M_{i,l}$, and hence $M_{i+r,l-r}=\operatorname{rad}^{r}(M_{i,l})$, and this also holds for $r= l$.
  Let $e,e'$ be the corresponding inclusions, and let $p,p'$ be the canonical quotient maps obtained by removing the last $s$ composition factors.
  Since $p$ is the restriction of $p'$ on $\operatorname{rad}^{r}(M_{i,l+s}) = M_{i+r,l-r+s}$, it follows that $p'\circ e=e'\circ p$.

  \vskip5pt

  Since ${\rm Ker} \ p = M_{i+l, s} = {\rm Ker} \ p'$, the commutative square is a  pullback square of $e'$ and $p'$. Since ${\rm Coker}\ e = M_{i, r} = {\rm Coker}\ e'$, the commutative square is a  pushout square of $e$ and $p$.
  This completes the proof.
\end{proof}

\end{document}